\newcommand{\cosimp}[3]{\xymatrix@1{#1 \ar@<.4ex>[r] \ar@<-.4ex>[r] & {\ }#2 \ar@<0.8ex>[r] \ar[r] \ar@<-.8ex>[r] & {\ } #3 \ar@<1.2ex>[r] \ar@<.4ex>[r] \ar@<-.4ex>[r] \ar@<-1.2ex>[r] & \cdots }}
\newcommand{\colim}{\mathop{\mathrm{colim}}}
\newcommand{\adjunction}[4]{\xymatrix@1{#1{\ } \ar@<0.3ex>[r]^{ {\scriptstyle #2}} & {\ } #3 \ar@<0.3ex>[l]^{ {\scriptstyle #4}}}}
\newtheorem{theorem}{Theorem}[subsection]
\newtheorem*{theorem*}{Theorem}
\newtheorem*{definition*}{Definition}
\newtheorem{proposition}[theorem]{Proposition}
\newtheorem{lemma}[theorem]{Lemma}
\newtheorem{corollary}[theorem]{Corollary}
\theoremstyle{definition}
\newtheorem{definition}[theorem]{Definition}
\newtheorem{remark}[theorem]{Remark}
\newtheorem{example}[theorem]{Example}
\newtheorem{claim}[theorem]{Claim}
\newtheorem{definition/proposition}[theorem]{Definition/Proposition} 
\newcommand{\Perf}{\mathrm{Perf}}
\newcommand{\Hom}{\mathrm{Hom}}
\newcommand{\sHom}{R\mathscr{H}\mathrm{om}}
\newcommand{\Spa}{\mathrm{Spa}}
\newcommand{\Spec}{\mathrm{Spec}}
\newcommand{\IC}{\mathrm{IC}}
\newcommand{\Bun}{\mathrm{Bun}}
\newcommand{\Groth}{\mathrm{Groth}}
\newcommand{\wtBun}{\widetilde{\mathrm{Bun}}}
\newcommand{\Eis}{\mathrm{Eis}}
\newcommand{\CT}{\mathrm{CT}}
\newcommand{\calE}{\mathcal{E}}
\newcommand{\calF}{\mathcal{F}}
\newcommand{\calH}{\mathcal{H}}
\newcommand{\bD}{\mathbb{D}}
\def\basic{\mathrm{basic}}
\def\Aut{\mathrm{Aut}}
\def\Lie{\mathrm{Lie}}
\def\cInd{\mathrm{cInd}}
\newcommand{\ra}{\rightarrow}
\newcommand{\la}{\leftarrow}
\newcommand{\Mcoinv}{\mathbb{X}_{*}(M^{\mathrm{ab}}_{\ol{E}})_{\Gamma}}
\newcommand{\Mcoinvdom}{\mathbb{X}_{*}(M^{\mathrm{ab}}_{\ol{E}})_{\Gamma}^{+}}
\def\Spd{\mathop{\rm Spd}}
\def\ul{\underline}
\def\ol{\overline}
\def\wt{\widetilde}
\def\Div{\mathrm{Div}}
\def\nmEisshriekP{\mathrm{Eis}_{P!}}
\def\nmEisstarP{\mathrm{Eis}_{P*}}
\def\IC{\mathrm{IC}}
\def\Gr{\mathrm{Gr}}
\def\basic{\mathrm{basic}}
\def\Coh{\mathrm{Coh}}
\def\mf{\mathfrak}
\def\mc{\mathcal}
\def\Mod{\mathrm{Mod}}
\def\disj{\mathrm{disj}}
\def\GL{\mathrm{GL}}
\def\ren{\mathrm{ren}}
\def\Gr{\mathrm{Gr}}
\newcommand*{\da@rightarrow}{\mathchar"0\hexnumber@\symAMSa 4B }
\newcommand*{\da@leftarrow}{\mathchar"0\hexnumber@\symAMSa 4C }
\newcommand*{\xdashrightarrow}[2][]{%
  \mathrel{%
    \mathpalette{\da@xarrow{#1}{#2}{}\da@rightarrow{\,}{}}{}%
  }%
}
\newcommand{\xdashleftarrow}[2][]{%
  \mathrel{%
    \mathpalette{\da@xarrow{#1}{#2}\da@leftarrow{}{}{\,}}{}%
  }%
}
\newcommand*{\da@xarrow}[7]{%
  % #1: below
  % #2: above
  % #3: arrow left
  % #4: arrow right
  % #5: space left 
  % #6: space right
  % #7: math style 
  \sbox0{$\ifx#7\scriptstyle\scriptscriptstyle\else\scriptstyle\fi#5#1#6\m@th$}%
  \sbox2{$\ifx#7\scriptstyle\scriptscriptstyle\else\scriptstyle\fi#5#2#6\m@th$}%
  \sbox4{$#7\dabar@\m@th$}%
  \dimen@=\wd0 %
  \ifdim\wd2 >\dimen@
    \dimen@=\wd2 %   
  \fi
  \count@=2 %
  \def\da@bars{\dabar@\dabar@}%
  \@whiledim\count@\wd4<\dimen@\do{%
    \advance\count@\@ne
    \expandafter\def\expandafter\da@bars\expandafter{%
      \da@bars
      \dabar@ 
    }%
  }%  
  \mathrel{#3}%
  \mathrel{%   
    \mathop{\da@bars}\limits
    \ifx\\#1\\%
    \else
      _{\copy0}%
    \fi
    \ifx\\#2\\%
    \else
      ^{\copy2}%
    \fi
  }%   
  \mathrel{#4}%
}
\providecommand{\bysame}{\leavevmode\hbox to3em{\hrulefill}\thinspace}
\providecommand{\MR}{\relax\ifhmode\unskip\space\fi MR }
\providecommand{\href}[2]{#2}
\begin{document}

\title{Geometric Eisenstein series I: Finiteness theorems}
\author{Linus Hamann, David Hansen and Peter Scholze}

\begin{abstract} We develop the theory of geometric Eisenstein series and constant term functors for $\ell$-adic sheaves on stacks of bundles on the Fargues-Fontaine curve. In particular, we prove essentially optimal finiteness theorems for these functors, analogous to the usual finiteness properties of parabolic inductions and Jacquet modules. We also prove a geometric form of Bernstein's second adjointness theorem, generalizing the classical result and its recent extension to more general coefficient rings proved in \cite{DHKM}. As applications, we decompose the category of sheaves on $\Bun_G$ into cuspidal and Eisenstein parts, and show that the gluing functors between strata of $\Bun_G$ are continuous in a very strong sense.
\end{abstract}

\maketitle

\tableofcontents

\section{Introduction}
\subsection{Prologue}
In \cite{FarguesOverview}, Fargues laid out a remarkable and far-reaching vision, recasting many structures around the representation theory of $p$-adic groups and the local Langlands correspondence as \emph{geometric} Langlands over the Fargues-Fontaine curve. The work of Fargues-Scholze \cite{FarguesScholze} then put this vision on solid technical foundations and applied it to construct a general semisimple local Langlands correspondence.

In this paper and its sequel \cite{HHS2}, we develop some additional foundations for this program. More precisely, our goal in these papers is to geometrize the classical operations of parabolic induction and Jacquet module into functors between sheaf categories on suitable moduli stacks. In the present paper we define these functors and prove their basic finiteness properties.

\subsection{Main results} To state our main results, let $E$ be a non-archimedean local field, and let $H/E$ be a linear algebraic group. One then has the associated Artin v-stack $\Bun_H$ parametrizing $H$-bundles on the Fargues-Fontaine curve. This is functorial in $H$. In particular, if $G$ is a connected reductive group and $P=MU \subset G$ is a parabolic subgroup, we have a natural diagram \[\Bun_M \xleftarrow{\mathfrak{q}} \Bun_P \xrightarrow{\mathfrak{p}} \Bun_G.\] We note that $\mathfrak{q}$ is cohomologically smooth, but not representable, while $\mathfrak{p}$ is representable in locally spatial diamonds but is neither smooth nor proper.

Now, let $\Lambda$ be a $\mathbb{Z}_\ell[\sqrt{q}]$-algebra, where $q$ denotes the cardinality of the residue field of $E$ and $\ell \nmid q$. One can define a canonical invertible object $\IC_{\Bun_P} \in D(\Bun_P,\Lambda)$, which is a square root of the dualizing complex and can be described explicitly in terms of the modulus character \cite[Theorem~1.5]{HI}. Our main interest is in the functors\footnote{At present, the six-functor formalism required to make sense of these formulas is only developed in the literature under the additional assumption that $\Lambda$ is killed by a power of $\ell$. However, forthcoming work \cite{MotivicGeometrization} of the third author will correct this issue. For the moment, the reader may wish to assume that $\Lambda$ is a torsion ring. However, we have written this paper in such a way that once the formalism developed in \cite{MotivicGeometrization} is available, everything will apply to general $\mathbb{Z}_\ell[\sqrt{q}]$-algebras.}
\[\begin{aligned}\Eis_{P!}(-)&:= \mathfrak{p}_!(\mathfrak{q}^\ast(-) \otimes \IC_{\Bun_P} ), \\
\Eis_{P\ast}(-)&:= \mathfrak{p}_{\ast}(\mathfrak{q}^\ast(-) \otimes \IC_{\Bun_P} ), \end{aligned}\]
and
\[\begin{aligned}\CT_{P*}(-)&:= \mathfrak{q}_\ast(\mathfrak{p}^!(-) \otimes \IC_{\Bun_P}^{-1} ), \\
\CT_{P!}(-)&:= \mf{q}_{!}(\mathfrak{p}^{*}(-) \otimes \IC_{\Bun_{P}}). \end{aligned}\]
The first two functors here are two possible geometrizations of normalized parabolic induction: Over the locus of trivial $G$-bundles, the map $\mathfrak p$ is proper and so both functors agree there. The functor $\CT_{P!}$ is the natural geometrization of the normalized Jacquet module and is the left adjoint of $\Eis_{P*}$ via the identification $\mf{q}^{*}(-) \otimes \IC_{\Bun_{P}}^{\otimes 2} = \mf{q}^{!}(-)$. On the other hand, $\CT_{P*}$ is the right adjoint of $\Eis_{P!}$ by definition. By the ``second adjunction'' theorem of Bernstein, this happens to agree with the normalized Jacquet module for the opposite parabolic in classical representation theory; one of our main theorems will be an extension of second adjunction to our geometric setting.

In the classical world of representation theory, parabolic inductions and Jacquet modules have extremely good finiteness properties. With characteristic zero coefficients these are classical results of Bernstein, and with general coefficients these are very recent results of Dat-Helm-Kurinczuk-Moss \cite{DHKM} (incidentally, relying on \cite{FarguesScholze}). Informally, our main result says that the geometrizations of these functors still have very strong finiteness properties.

More precisely, recall that the sheaf category $D(\Bun_G,\Lambda)$ constructed in \cite{FarguesScholze} comes with two intrinsic finiteness conditions, namely the notions of compact sheaves and ULA sheaves. Moreover, there are two natural duality operations on $D(\Bun_G,\Lambda)$, Verdier duality, denoted $\mathbb{D}_{\mathrm{Verd}}$, and Bernstein-Zelevinsky duality, denoted $\mathbb{D}_{\mathrm{BZ}}$, which induce self-anti-equivalences on ULA sheaves and compact sheaves, respectively.  Our main results explain how these finiteness conditions and their dualities interact with the functors of Eisenstein series and constant term. 

\begin{theorem}[Theorem \ref{thm:Eiscompact}, Corollary \ref{cor:CTprelim}, Theorem \ref{thm:secondadj}, Theorem \ref{thm:hardfiniteness}] \label{thm:maintheorem}Let $G \supset P=MU$ be as above, with $P^-$ the opposite parabolic.
\begin{enumerate}

\item The functor $\Eis_{P!}$ preserves compact objects. If $A\in D(\Bun_M,\Lambda)$ is ULA and supported on finitely many connected components, then $\Eis_{P!}(A)$ and $\Eis_{P\ast}(A)$ are ULA.

\item The functor $\CT_{P*}$ preserves ULA objects. If $A\in D(\Bun_G,\Lambda)$ is compact, then $\CT_{P!}(A)|_{\Bun_{M}^\alpha}$ is compact for all $\alpha \in \pi_0(\Bun_M)$.

\item There is a canonical isomorphism of functors $\CT_{P!} \cong \CT_{P^- \ast}$.

\item We have the following duality isomorphisms:

i. $\mathbb{D}_{\mathrm{BZ}}\Eis_{P!}\cong \Eis_{P^- !}\mathbb{D}_{\mathrm{BZ}}^M$ on compact objects.

ii. $\mathbb{D}_{\mathrm{Verd}}\Eis_{P!}\cong \Eis_{P\ast}\mathbb{D}_{\mathrm{Verd}}^M$ on all objects.

iii. $\mathbb{D}_{\mathrm{Verd}}^M\CT_{P!}\cong \CT_{P\ast} \mathbb{D}_{\mathrm{Verd}}$ on all objects.

\end{enumerate}
\end{theorem}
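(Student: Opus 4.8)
The plan is to reduce every assertion to the geometry of the two charts $\mf q\colon\Bun_P\to\Bun_M$ and $\mf p\colon\Bun_P\to\Bun_G$, organised along the Harder--Narasimhan stratification of $\Bun_G$ — equivalently, along an exhaustion of $\Bun_G$ by quasi-compact open substacks $\Bun_G^{\le\mu}$. For \textbf{(1)}, compactness of $\Eis_{P!}$ is formal: $\mf q^*$ preserves compact objects because its right adjoint $\mf q_*$ is continuous (cohomological smoothness of $\mf q$, together with boundedness of its Banach--Colmez-type fibres); $(-)\otimes\IC_{\Bun_P}$ preserves them because $\IC_{\Bun_P}$ is invertible; and $\mf p_!$ preserves them because its right adjoint $\mf p^!$ is continuous, $\mf p$ being representable in locally spatial diamonds of locally finite cohomological dimension. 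For the ULA statement, $\mf q^*$ and $(-)\otimes\IC_{\Bun_P}$ visibly preserve ULA-ness, so the only issue is $\mf p_!$, which need not preserve ULA-ness since $\mf p$ is not proper; here the hypothesis that $A$ is supported on finitely many components of $\Bun_M$ (equivalently of $\Bun_P$, since $\pi_0(\Bun_P)=\pi_0(\Bun_M)$) is used to factor $\Eis_{P!}(A)=\ol{\mf p}_!j_!(\mf q^*A\otimes\IC_{\Bun_P})=\ol{\mf p}_\ast j_!(\mf q^*A\otimes\IC_{\Bun_P})$ through a partial compactification $j\colon\Bun_P\hookrightarrow\olBun_P$, $\ol{\mf p}\colon\olBun_P\to\Bun_G$, along which $\mf p$ becomes proper: the boundary strata of $\olBun_P$ contribute Eisenstein functors for the intermediate parabolics applied again to ULA sheaves on finitely many components, so running this over the exhaustion and checking ULA-ness stratum by stratum gives the claim; the same argument, now with $\Eis_{P\ast}(A)=\ol{\mf p}_\ast j_\ast(\mf q^*A\otimes\IC_{\Bun_P})$ and $j_\ast$ of the ULA input again ULA under the finiteness hypothesis, handles $\Eis_{P\ast}$.

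For \textbf{(2)}, the cleanest route combines with \textbf{(3)}, proven independently below: the isomorphism $\CT_{P!}\cong\CT_{P^-\ast}$, applied with $P^-$ in place of $P$, gives $\CT_{P\ast}\cong\CT_{P^-!}=\mf q^-_!\big(\mf p^{-\ast}(-)\otimes\IC_{\Bun_{P^-}}\big)$, which is assembled only from the ULA-preserving operations $\mf p^{-\ast}$ ($\ast$-pullback), $(-)\otimes\IC_{\Bun_{P^-}}$, and $\mf q^-_!$ (shriek-pushforward along a cohomologically smooth map with cohomologically bounded fibres); hence $\CT_{P\ast}$ preserves ULA objects. (Alternatively, this can be obtained directly by the partial-compactification argument of \textbf{(1)}, dualised.) For the second half, if $A\in D(\Bun_G,\Lambda)$ is compact then it is supported on some $\Bun_G^{\le\mu}$, so $\mf p^{\ast}A\otimes\IC_{\Bun_P}$ is supported on $\mf p^{-1}(\Bun_G^{\le\mu})$; intersecting with a single component $\Bun_P^\alpha$ this is quasi-compact, the restricted sheaf is compact there, and $\mf q_!$ preserves compactness (as $\mf q^!$ is continuous), so $\CT_{P!}(A)|_{\Bun_M^\alpha}$, computed by base change as $\mf q_!$ of the restriction to $\Bun_P^\alpha$, is compact; only the failure of the support to lie in finitely many components obstructs global compactness.

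Part \textbf{(3)} is the main obstacle. I would establish $\CT_{P!}\cong\CT_{P^-\ast}$ by a contraction principle: a cocharacter $\mathbb{G}_m\to Z(M)$ which is regular dominant (so that its centraliser in $G$ is $M$ and its attractor is $P$) should induce a contracting $\mathbb{G}_m$-action on the relevant Hecke-type charts of $\Bun_G$, respectively on the Banach--Colmez-type fibres of $\mf q$, whose attractor, fixed-point and repeller loci are $\Bun_P$, $\Bun_M$ and $\Bun_{P^-}$, with structure maps $\mf p$, $\mf q$ and their $P^-$-analogues; a version of Braden's hyperbolic localisation theorem for Artin v-stacks — which has to be set up in this generality, the partial compactification $\olBun_P$ being the device that makes the relevant limits representable and proper — then identifies the $!$-localisation towards the attractor with the $\ast$-localisation towards the repeller, and bookkeeping with the $\IC$-normalisations turns this into the stated isomorphism. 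I would cross-check by building the comparison map directly from the correspondence $\Bun_P\times_{\Bun_G}\Bun_{P^-}$ over $\Bun_M$ — into which $\Bun_M$ embeds as the locus of parabolic reductions in relative position the identity, the complement contracting onto it — and verifying it is an isomorphism stratum by stratum, on each Harder--Narasimhan stratum reducing to a situation where $\mf p$ and $\mf p^-$ are proper and the statement is proper base change. The genuine difficulties are that $\Bun_G$ carries no global $\mathbb{G}_m$-action, so the localisation must be performed chart by chart and glued, and that the non-properness of $\mf p$ and $\mf p^-$ must be controlled tightly enough for the localisation to make sense — which is exactly the role of $\olBun_P$.

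Given \textbf{(1)}--\textbf{(3)}, the duality isomorphisms in \textbf{(4)} are formal manipulations with the known commutations of $\mathbb{D}_{\mathrm{Verd}}$ and $\mathbb{D}_{\mathrm{BZ}}$ with the six operations. For (ii),
\begin{align*}
\mathbb{D}_{\mathrm{Verd}}\Eis_{P!}(A)
&= \mathbb{D}_{\mathrm{Verd}}\,\mf p_!\big(\mf q^{*}A\otimes\IC_{\Bun_P}\big)
\simeq \mf p_\ast\,\mathbb{D}_{\mathrm{Verd}}\big(\mf q^{*}A\otimes\IC_{\Bun_P}\big) \\
&\simeq \mf p_\ast\big(\mf q^{!}\,\mathbb{D}^M_{\mathrm{Verd}}A\otimes\IC_{\Bun_P}^{-1}\big)
\simeq \mf p_\ast\big(\mf q^{*}\,\mathbb{D}^M_{\mathrm{Verd}}A\otimes\IC_{\Bun_P}\big)
= \Eis_{P\ast}\mathbb{D}^M_{\mathrm{Verd}}A,
\end{align*}
using $\mathbb{D}_{\mathrm{Verd}}\mf p_!\simeq\mf p_\ast\mathbb{D}_{\mathrm{Verd}}$, $\mathbb{D}_{\mathrm{Verd}}\mf q^{*}\simeq\mf q^{!}\mathbb{D}^M_{\mathrm{Verd}}$, $\mathbb{D}_{\mathrm{Verd}}(-\otimes\IC_{\Bun_P})\simeq\mathbb{D}_{\mathrm{Verd}}(-)\otimes\IC_{\Bun_P}^{-1}$ (as $\IC_{\Bun_P}^{\otimes 2}$ is the dualizing complex), and the identity $\mf q^{!}\simeq\mf q^{*}(-)\otimes\IC_{\Bun_P}^{\otimes2}$ from the introduction. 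Statement (iii) is the entirely analogous computation for $\mathbb{D}^M_{\mathrm{Verd}}\CT_{P!}$, now using $\mathbb{D}_{\mathrm{Verd}}\mf q_!\simeq\mf q_\ast\mathbb{D}_{\mathrm{Verd}}$ and $\mathbb{D}_{\mathrm{Verd}}\mf p^{*}\simeq\mf p^{!}\mathbb{D}_{\mathrm{Verd}}$. Finally (i) is the same kind of computation with $\mathbb{D}_{\mathrm{BZ}}$ in place of $\mathbb{D}_{\mathrm{Verd}}$, staying inside compact objects by \textbf{(1)}; the appearance of $P^-$ rather than $P$ reflects that $\mathbb{D}_{\mathrm{BZ}}$ differs from $\mathbb{D}_{\mathrm{Verd}}$ by an involution which, in the chart picture of \textbf{(3)}, interchanges the attractor and the repeller — so this case, too, rests on the second-adjointness input.
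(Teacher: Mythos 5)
Your proposal treats as formal exactly the steps where the real content lies, and the justifications offered are circular. In a compactly generated setting, ``$\mf{p}^!$ is continuous'' is \emph{equivalent} to ``$\mf{p}_!$ preserves compact objects,'' so it cannot serve as the reason for it; likewise the asserted continuity of $\mf{q}_*$ and $\mf{q}^!$ (the fibres of $\mf{q}$ are classifying stacks of Banach--Colmez spaces, not quasicompact) is unproven, and is again essentially the statement being claimed. More decisively: by Corollary \ref{cor: Eisparabinduction}, $\Eis_{P!}\,i_{1!}^M \cong i_{1!}\,i_P^G$ and $i_1^{*,M}\CT_{P!}\cong r_P^G\,i_1^*$, so a soft six-functor argument for (1) and for the compactness half of (2) would yield a soft proof that parabolic induction and Jacquet functors preserve compact objects over arbitrary $\mathbb{Z}_\ell[\sqrt q]$-algebras --- that is the Dat--Helm--Kurinczuk--Moss finiteness/second-adjointness theorem, which the paper takes as an irreplaceable input. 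The paper's actual proof of Theorem \ref{thm:Eiscompact} is an induction over quasicompact opens of $\Bun_G$, reducing to basic $b\in B(M)$, moving between parabolics via the triangle $i_{b\sharp}^{\ren,N}\to i_{b!}^{\ren,N}\to C$ (with $C$ supported on strictly more semistable strata), with DHKM and the computations of Corollary \ref{cor: dominantreductionConstantEisensteinDescription} as seeds, and z-extensions for general $G$. The same objection applies to your proof of (3): the Braden/hyperbolic-localization strategy on charts is precisely the approach the authors state they could not make work, and they note a formal argument of this kind likely cannot exist (second adjointness fails in sheaf theories with excision and bad coefficients); you flag the two genuine obstructions (no global $\mathbb{G}_m$-action on $\Bun_G$, non-properness of $\mf{p},\mf{p}^-$) but do not overcome them, and your fallback ``on each HN stratum it is proper base change'' is false --- on the strata the statement \emph{is} classical second adjointness. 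The paper instead proves the equivalent form (4).i (Theorem \ref{thm:secondadj}) by an excision induction parallel to the compactness proof, with DHKM and the Fargues--Scholze computation of $\mathbb{D}_{\mathrm{BZ}}$ on $i_{b\sharp}$-generators as seed cases, after constructing the comparison map from the kernel $\Bun_{P^-}\times_{\Bun_G}\Bun_P \supset \Bun_M$ (this last construction does match your ``cross-check'' map).

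The ULA assertions likewise cannot be dispatched ``stratum by stratum'' as you claim. The boundary strata of the relevant compactification $\wtBun_P$ are $\Bun_P\times_{\Bun_M}\Mod^{+,\theta}_{\Bun_M}$ --- degenerations by positive Hecke modifications of the $M$-bundle --- not loci governed by intermediate parabolics, so your description of the boundary is wrong; and the crux is to prove that $\tilde j_{P!}\IC_{\Bun_P}$ (equivalently $\tilde j_{P*}\IC_{\Bun_P}$) is ULA over $\Bun_M$, i.e.\ that ULA-ness extends across this boundary. In $p$-adic geometry there is no Deligne generic-ULA theorem to spread it out, and this is exactly what Sections 5--7 of the paper supply: properness of $\wt{\mf{p}}_P^\alpha$ via the moduli $\mathcal{S}\mathrm{ub}_{\mathcal E}^{n,d}$ and the determinant trick, Zastava local models carrying a contracting $\mathbb{G}_m$-action, and the contraction principle of Theorem \ref{thm: hyplocalpropvariant} propagating ULA-ness into deeper strata. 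Granting that, (1)-ULA and the compactness of $\CT_{P!}(-)|_{\Bun_M^\alpha}$ follow as in Theorem \ref{thm:hardfiniteness} (the latter via Lemma \ref{lem:functor-preserves-compacts} with kernel $\tilde j_!\IC_{\Bun_P}$, using properness over $\Bun_G$ and ULA-ness over $\Bun_M$ --- not ``continuity of $\mf q^!$''), while ULA-preservation by $\CT_{P*}$ is the genuinely formal consequence of (1)-compactness (the right adjoint of a compact-preserving functor preserves ULA), rather than of your unproven claims that $\mf{p}^{-*}$ and $\mf{q}^-_!$ preserve ULA objects. Your computations for (4).ii--iii are correct and agree with the paper, and your observation that (4).i is tied to second adjointness matches the paper's proof that (3) and (4).i are equivalent; but as it stands the proposal leaves (1), (2), (3) and (4).i without proof.
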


We will also show that (3) is logically equivalent to the duality isomorphism (4).i. These statements can both be regarded as geometrizations of Bernstein's famous second adjointness theorem.

We now sketch some of the ideas in the proofs. The duality isomorphisms (4).ii-(4).iii are straightforward consequences of the definitions, and we state them for completeness. The first serious task is to prove that $\Eis_{P!}$ preserves compact objects. We prove this by a complicated but reasonably direct induction, building on the fact that after restricting to certain strata in $\Bun_M$, $\Eis_{P!}$ is closely related to classical parabolic inductions and to the gluing functors $i_{b\sharp}$ defined in \cite{FarguesScholze}, both of which are known to preserve compact objects. Note that the work of Dat-Helm-Kurinczuk-Moss \cite{DHKM} is a key input to this induction, and we do not reprove their results. Once we know that $\Eis_{P!}$ preserves compact objects, it is formal to see that $\CT_{P\ast}$ preserves ULA objects. With this preservation of compacts in hand, we then define a canonical map $\mathbb{D}_{\mathrm{BZ}}\Eis_{P!}\to \Eis_{P^- !}\mathbb{D}_{\mathrm{BZ}}^M$. We prove this map is an isomorphism by an inductive argument parallel to the proof that $\Eis_{P!}$ preserves compact objects, using classical second adjunction as well as a similar adjunction for the gluing functor $i_{b\sharp}$ as input.\footnote{While we were finalizing this paper, these results have been proved independently by Takaya \cite{Takaya}, along very similar lines.}

The remaining results in (1)-(2) lie deeper, and require some new geometric input. The key idea here is that $\Bun_{P}$ admits a relative compactification over $\Bun_G$ which is still well-behaved relative to $\Bun_M$. This is the famous \emph{Drinfeld compactification} $\wt{\Bun}_P$, which is a fundamental object in the classical geometric Langlands program. Roughly speaking, this stack parametrizes $G$-bundles equipped with a generic reduction to a $P$-bundle; see Definition \ref{def: tildecompactification} and \S \ref{sec: fixingthecenter} for the precise definition. However, we emphasize that while it is reasonably straightforward to copy the definition of $\wt{\Bun}_P$ into the setting of \cite{FarguesScholze}, its desired geometric properties are not obvious in this setting.

\begin{theorem}[Proposition \ref{prop: openimmersion}, Theorem \ref{thm: drinfeldisrelativecompactification},  Proposition \ref{prop: tildestratadesc}, Theorem \ref{thm: ULAtheorem}, Proposition \ref{prop: keypropertiesofDrinfeldInGeneralCase}]\label{thm:maingeometry} Let $G \supset P=MU$ be as above.

\begin{enumerate}

\item There is a naturally defined Artin v-stack $\wt{\Bun}_P$ sitting in a commutative diagram
\[\xymatrix{
\Bun_{P}\ar[drr]^{\mathfrak{p}_{P}}\ar@{_(->}[dr]_{\tilde{j}_P}\ar[dd]_{\mathfrak{q}_{P}}\\
 & \widetilde{\mathrm{Bun}}_{P}\ar[dl]^{\tilde{\mathfrak{q}}_{P}}\ar[r]_{\tilde{\mathfrak{p}}_{P}} & \mathrm{Bun}_{G}\\
\text{} \mathrm{Bun}_{M} & 
}\]
where $\tilde{j}_P$ is an open immersion. The map $\tilde{\mathfrak{p}}_{P}$ is partially proper and representable in locally spatial diamonds, and of locally finite $\mathrm{dim.trg}$.

\item If $\Bun_{M}^{\alpha} \subset \Bun_M$ is any connected component with open-closed preimage $\wt{\Bun}_{P}^{\alpha}$, the induced map $\tilde{\mathfrak{p}}_{P}^{\alpha}: \wt{\Bun}_{P}^{\alpha} \to \Bun_G$ is proper.

\item The stack $\wt{\Bun}_{P}$ admits a locally closed stratification by certain strata $\phantom{}_{\theta}\wt{\Bun}_{P}$, indexed by $\theta \in \Lambda_{G,P}^{\mathrm{pos}}$. Each stratum has a canonical fiber product decomposition
\[  \phantom{}_{\theta}\wt{\Bun}_{P} \cong \Bun_{P} \times_{\Bun_{M}} \Mod^{+,\theta}_{\Bun_{M}}, \]
where $\Mod^{+,\theta}_{\Bun_{M}}$ is a certain closed subspace of a relative Beilinson-Drinfeld Grassmannian living over $\Div^{(\theta)}$, a partially symmetrized version of the mirror curve. 

\item The sheaves $\tilde{j}_{P!}(\IC_{\Bun_P})$ and $\tilde{j}_{P\ast}(\IC_{\Bun_P})$ are $\tilde{\mathfrak{q}}_{P}$-ULA.
\end{enumerate}
\end{theorem}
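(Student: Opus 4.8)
The plan is as follows. We may assume $\Lambda$ is killed by a power of $\ell$. First I would reduce to showing that $\tilde j_{P\ast}\IC_{\Bun_P}$ alone is $\tilde{\mathfrak q}_P$-ULA. Indeed, since $\tilde j_P$ is an open immersion and $\tilde{\mathfrak q}_P\circ\tilde j_P=\mathfrak q_P$, relative Verdier duality over $\Bun_M$ gives $\mathbb D_{\wt\Bun_P/\Bun_M}(\tilde j_{P!}\IC_{\Bun_P})\cong\tilde j_{P\ast}\RHom(\IC_{\Bun_P},\omega_{\mathfrak q_P})\cong\tilde j_{P\ast}\IC_{\Bun_P}$, using that $\IC_{\Bun_P}$ is invertible with $\IC_{\Bun_P}^{\otimes 2}\cong\omega_{\mathfrak q_P}$; and relative Verdier duality exchanges $\tilde{\mathfrak q}_P$-ULA objects with $\tilde{\mathfrak q}_P$-ULA objects. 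Since ULA may be checked componentwise on $\Bun_M$, I would also restrict to an open-closed $\wt\Bun_P^\alpha$, where $\tilde{\mathfrak p}_P^\alpha$ is proper and only finitely many strata ${}_\theta\wt\Bun_P$ meet $\wt\Bun_P^\alpha$ below a given height, by the properness and partial properness in Theorem~\ref{thm:maingeometry}(1)--(2). Finally, I record that $\IC_{\Bun_P}$ is itself $\mathfrak q_P$-ULA, being invertible with $\mathfrak q_P$ cohomologically smooth.

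Next I would invoke a stratified criterion for universal local acyclicity applied to the stratification of Theorem~\ref{thm:maingeometry}(3): to conclude that $\tilde j_{P\ast}\IC_{\Bun_P}$ is $\tilde{\mathfrak q}_P$-ULA it suffices to know that for every stratum the restrictions $i_\theta^\ast(\tilde j_{P\ast}\IC_{\Bun_P})$ and $i_\theta^!(\tilde j_{P\ast}\IC_{\Bun_P})$ are ULA relative to the induced map ${}_\theta\wt\Bun_P\to\Bun_M$, and that this local data is compatible with $\tilde{\mathfrak q}_P$ in the manner the criterion requires. The $!$-restrictions cause no trouble: for $\theta=0$ one recovers $\IC_{\Bun_P}$, and for $\theta\neq0$ the stratum lies in the closed complement of $\tilde j_P(\Bun_P)$, so the costalk of the $\ast$-extension there vanishes. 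The substance of the argument is thus the computation of the $\ast$-restrictions $i_\theta^\ast(\tilde j_{P\ast}\IC_{\Bun_P})$ for $\theta\neq0$, together with the verification that they are ULA over $\Bun_M$.

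For this I would analyze $\wt\Bun_P$ transversally to each stratum. Using the decomposition ${}_\theta\wt\Bun_P\cong\Bun_P\times_{\Bun_M}\Mod^{+,\theta}_{\Bun_M}$, one expects a neighborhood of ${}_\theta\wt\Bun_P$ in $\wt\Bun_P$ to be identified, transversally to the stratum, with a neighborhood of the corresponding boundary orbit inside the ambient relative Beilinson--Drinfeld Grassmannian over $\Div^{(\theta)}$, carrying a contracting action whose fixed locus is the stratum; the contraction principle then expresses $i_\theta^\ast(\tilde j_{P\ast}\IC_{\Bun_P})$, via the decomposition of Theorem~\ref{thm:maingeometry}(3), in terms of $\IC_{\Bun_P}$ and the $\ast$-costalk along the relevant boundary stratum of the suitably shifted and twisted constant sheaf on $\Mod^{+,\theta}_{\Bun_M}$ --- a Zastava-type costalk. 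Granting this, ULA-ness over $\Bun_M$ would follow by assembling standard facts: $\IC_{\Bun_P}$ is $\mathfrak q_P$-ULA; constant sheaves on relative Beilinson--Drinfeld Grassmannians, and the costalks of their constant sheaves along Schubert strata, are ULA relative to the base, by \cite{FarguesScholze}; $\Mod^{+,\theta}_{\Bun_M}$ is ind-proper over its base and ind-proper pushforward preserves ULA; and ULA is stable under the relevant tensor and pullback operations. The dual computation, or the reduction of the first step, then also supplies the needed control of $i_\theta^!(\tilde j_{P!}\IC_{\Bun_P})$.

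I expect the two geometric inputs just invoked to be the main obstacles. First, one must genuinely construct the transversal local structure and the contraction principle along the strata: the ``deformation to the normal cone'' arguments that render this routine for the classical Drinfeld compactification do not transfer verbatim to Artin $v$-stacks, and the relevant contracting structure has to be built by hand from the definition of $\wt\Bun_P$ and the factorization structure of the mirror curve. Second, one must pin down the precise form of the stratified criterion for universal local acyclicity that applies here: since ULA relative to a fixed base is \emph{not} preserved under restriction to an arbitrary closed substratum, the usable version requires that the strata, their maps to $\Bun_M$, and their normal geometry interact correctly with $\tilde{\mathfrak q}_P$ --- which ultimately rests on $\wt\Bun_P$ being a well-behaved relative compactification in the sense of Theorem~\ref{thm:maingeometry}(1)--(2).
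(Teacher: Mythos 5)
Your proposal addresses only part (4) of the statement, taking (1)--(3) as given inputs; but the theorem as stated includes them, and their proofs are a substantial portion of the paper: the Tannakian construction of $\wt{\Bun}_P$ and the openness of $\tilde j_P$ (Proposition \ref{prop: openimmersion}), the identification of the strata ${}_{\theta}\wt{\Bun}_P\cong \Bun_P\times_{\Bun_M}\Mod^{+,\theta}_{\Bun_M}$ (Proposition \ref{prop: tildestratadesc}), the reduction of general $G$ to simply connected derived group in \S\ref{sec: fixingthecenter}, and above all the properness in (2), which rests on the theory of flat coherent sheaves and on Theorem \ref{thm: Subisquasicompact} --- proved via the exterior-power/determinant reduction and an explicit argument on $Y_{\Spa(A,A^+),[1,q]}$ for products of points (Lemmas \ref{lem:quasicompactnesscriterion}, \ref{lem:properness}). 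None of this is touched by your argument, so even granting your sketch of (4) the proposal does not prove the theorem.

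For part (4) itself there is a genuine gap at the step you call a ``stratified criterion for universal local acyclicity.'' No such criterion holds in the generality you need: knowing that $i_\theta^\ast$ and $i_\theta^!$ of the extension are ULA over $\Bun_M$ stratum by stratum does not imply ULA-ness of the extension, and you acknowledge you cannot pin down a usable version. The paper's actual mechanism is different and is precisely the missing ingredient: a contraction principle (Theorem \ref{thm: hyplocalpropvariant}, a strengthening of Theorem \ref{thm: hyplocULA}) asserting that if $A$ is $\mathbb{G}_m$-equivariant and ULA on the complement of a section toward which the action totally attracts, then $j_!A$ is ULA --- the equivariant contracting structure is what replaces any stratumwise criterion (even the ``iff'' statement in terms of $j^\ast B$ and $i^\ast B$ requires equivariance) --- combined with the smooth-local charts of ${}_{\leq\theta}\wt{\Bun}_P$ built from Zastava spaces with their factorization structure and the \'etale-local $\mathbb{G}_m$-action over $\Div^{(\theta),\prime}$ (Corollary \ref{cor: propertofkeydiag}), and an induction on $\theta$ starting from the cohomological smoothness of $\mathfrak q_P$. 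In that argument the ULA-ness of the $!$-restrictions to strata is an output (Lemma \ref{lemma: hyplocalpreservesULA}), not an input, and the paper proves the $!$-extension statement first, deducing the $\ast$-statement by relative Verdier duality (the reverse of your reduction, which is harmless). You correctly identify the construction of the contracting local models and the correct form of the ULA-propagation statement as ``the main obstacles,'' but you do not supply either; the proposal therefore stops exactly where the substance of the proof begins.
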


The proofs of (1)-(3) require a good understanding of relatively flat coherent sheaves on relative Fargues-Fontaine curves. This theory was initiated in \cite{AL1}, further refined in \cite[Section~5.2.3]{HamGeomES}, and we now develop it further here. In particular, given a relative Fargues-Fontaine curve $X_S$ equipped with a relatively flat coherent sheaf $\mathcal{E}/X_S$, we make a careful study of the moduli space $\mathcal{S}\mathrm{ub}_{\mathcal{E}}^{n,d}$ parametrizing fiberwise-injective maps $\mathcal{F} \to \mathcal{E}$ from a (variable) relatively flat coherent sheaf $\mathcal{F}$ of fiberwise-constant generic rank $n$ and degree $d$. The first key result is that $\mathcal{S}\mathrm{ub}_{\mathcal{E}}^{n,d}$ is a locally spatial diamond which is partially proper and locally of finite dim.trg over $S$, and the subfunctor where the quotient sheaf $\mathcal{E}/\mathcal{F}$ is fiberwise a vector bundle is open. This formally implies (1), and related arguments reduce the properness in (2) to the following theorem.

\begin{theorem}\label{thm:properness} The natural map $\mathcal{S}\mathrm{ub}_{\mathcal{E}}^{n,d} \to S$ is proper.
\end{theorem}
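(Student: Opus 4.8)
Since partial properness of $\mathcal{S}\mathrm{ub}_{\mathcal{E}}^{n,d}\to S$ (and local finiteness of $\mathrm{dim.trg}$) is already known, the content of the theorem is \emph{quasi-compactness} of this map. As quasi-compactness and properness may be checked locally — even v-locally — on the target, I would first reduce to the case where $S$ is quasi-compact, say affinoid perfectoid; over such a base the Harder--Narasimhan polygon and the torsion length of the fibers $\mathcal{E}_s$ are bounded uniformly in $s\in S$, a semicontinuity statement for relatively flat coherent sheaves on relative Fargues--Fontaine curves belonging to the theory developed here, refining \cite{AL1} and \cite[\S5.2.3]{HamGeomES}. The plan is then to run Grothendieck's classical boundedness argument for Quot schemes in this setting, realizing $\mathcal{S}\mathrm{ub}_{\mathcal{E}}^{n,d}$ as a closed subfunctor of a relative Grassmannian.

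The key estimate is pure Harder--Narasimhan theory on $X_s$: for any subsheaf $\mathcal{F}\hookrightarrow\mathcal{E}_s$ of generic rank $n$ and degree $d$ one has $\mu_{\max}(\mathcal{F}/\mathcal{F}_{\mathrm{tor}})\le\mu_{\max}(\mathcal{E}_s/\mathcal{E}_{s,\mathrm{tor}})$ (a nonzero map from a semistable bundle of larger slope cannot exist), and since the HN polygon of $\mathcal{F}/\mathcal{F}_{\mathrm{tor}}$ is concave with endpoints pinned by $n$ and $d-\mathrm{length}(\mathcal{F}_{\mathrm{tor}})$, also $\mu_{\min}(\mathcal{F}/\mathcal{F}_{\mathrm{tor}})$ is bounded below; dually $\mathcal{Q}/\mathcal{Q}_{\mathrm{tor}}$, for $\mathcal{Q}=\mathcal{E}_s/\mathcal{F}$, is a quotient of $\mathcal{E}_s/\mathcal{E}_{s,\mathrm{tor}}$ of controlled generic rank and degree, so its slopes lie in a bounded range. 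As $\mathrm{length}(\mathcal{F}_{\mathrm{tor}}),\mathrm{length}(\mathcal{Q}_{\mathrm{tor}})\le\mathrm{length}(\mathcal{E}_{s,\mathrm{tor}})$, the full numerical type of $\mathcal{F}$ and of $\mathcal{Q}$ is bounded, and by the reduction above uniformly in $s$. Feeding this into the cohomology of coherent sheaves on the Fargues--Fontaine curve — global generation and $H^1$-vanishing after a twist controlled by the slopes, together with Riemann--Roch $\chi(\mathcal{V})=\deg\mathcal{V}+\mathrm{rk}\,\mathcal{V}$ — and into cohomology-and-base-change for $\pi\colon X_S\to S$ on relatively flat sheaves, I obtain a single integer $k$, depending only on $n,d,\mathcal{E}$, such that for every $\mathcal{F}\hookrightarrow\mathcal{E}_T$ in $\mathcal{S}\mathrm{ub}_{\mathcal{E}}^{n,d}(T)$ over any $T\to S$ the sheaves $\mathcal{F}(k)$, $\mathcal{E}(k)$ and $(\mathcal{E}/\mathcal{F})(k)$ are fiberwise globally generated with vanishing $H^1$. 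In particular $\mathcal{V}:=\pi_*(\mathcal{E}(k))$ is a finite locally free $\mathcal{O}_S$-module of constant rank, compatible with base change, and $\delta:=h^0((\mathcal{E}/\mathcal{F})(k))=(\deg\mathcal{E}-d)+(k+1)(\mathrm{rk}\,\mathcal{E}-n)$ is a fixed integer.

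Grothendieck's argument now identifies $\mathcal{S}\mathrm{ub}_{\mathcal{E}}^{n,d}$ with a subfunctor of the relative Grassmannian $\mathrm{Gr}(\mathcal{V},\delta)$: to $\mathcal{F}\hookrightarrow\mathcal{E}_T$ assign $W:=\pi_*(\mathcal{F}(k))$, which by exactness of $0\to W\to\mathcal{V}_T\to\pi_*((\mathcal{E}/\mathcal{F})(k))\to0$ (using $R^1\pi_*\mathcal{F}(k)=0$) is a rank $(\mathrm{rk}\,\mathcal{V}-\delta)$ subbundle of $\mathcal{V}_T$; conversely $\mathcal{F}$ is recovered as the image of $W\otimes\mathcal{O}_{X_T}(-k)\to\mathcal{E}_T$, because $\mathcal{F}(k)$ is globally generated. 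Thus $\mathcal{F}\mapsto W$ is a monomorphism. Since $\mathcal{S}\mathrm{ub}_{\mathcal{E}}^{n,d}\to S$ is partially proper and $\mathrm{Gr}(\mathcal{V},\delta)\to S$ is separated, this monomorphism is partially proper, hence a closed immersion; as $\mathrm{Gr}(\mathcal{V},\delta)\to S$ is proper, so is $\mathcal{S}\mathrm{ub}_{\mathcal{E}}^{n,d}\to S$. (Equivalently, one checks that the subfunctor is cut out inside the Grassmannian by conditions on the fiberwise rank and degree of the sheaf generated by $W$, and verifies the valuative criterion using that of $\mathrm{Gr}(\mathcal{V},\delta)$.)

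The formal part above is routine once its inputs are available; the genuine obstacle is the uniform boundedness. One must prove that over a quasi-compact $S$ the family of all subsheaves of $\mathcal{E}$ of a fixed numerical type is uniformly $k$-regular, which requires semicontinuity and quasi-compact boundedness of Harder--Narasimhan polygons and torsion lengths for relatively flat coherent sheaves on relative Fargues--Fontaine curves, the sharp cohomological vanishing and generation statements together with Riemann--Roch on the curve, and cohomology-and-base-change for $\pi\colon X_S\to S$ on such sheaves. Establishing these \emph{in families}, rather than merely fiberwise over a point where they are classical, is exactly the further development of the theory of coherent sheaves on relative Fargues--Fontaine curves that this step demands, and I expect it to be the technical heart of the argument.
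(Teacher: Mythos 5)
Your reduction to quasi-compactness over a quasi-compact base, and your slope estimates bounding the numerical type of $\mathcal{F}$ and $\mathcal{E}_T/\mathcal{F}$, are fine; the fatal step is the next one. On the Fargues--Fontaine curve the pushforward $\pi_*(\mathcal{E}(k))$ is \emph{not} a finite locally free $\mathcal{O}_S$-module: for any bundle with a positive-slope part, $H^0(X_C,\mathcal{E}(k))$ is an infinite-dimensional Banach--Colmez space over $E$ (already $H^0(X_C,\mathcal{O}(1))=B^{\varphi=\pi}$ is infinite-dimensional), and in families one only gets the relative Banach--Colmez diamond $\mathcal{H}^0(\mathcal{E}(k))$, which is partially proper but not of finite rank, and the classical formula $\chi=\deg+\mathrm{rk}$ computing your $\delta$ has no literal meaning. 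Consequently there is no relative Grassmannian $\mathrm{Gr}(\mathcal{V},\delta)$ of finite-rank quotients into which $\mathcal{S}\mathrm{ub}_{\mathcal{E}}^{n,d}$ could be embedded, and Grothendieck's Quot-scheme boundedness argument does not get off the ground --- the obstruction is not, as you suggest at the end, about proving uniform regularity ``in families'' rather than fiberwise; it already fails over a geometric point. The one case where the section-space device does work is rank one: $\mathcal{S}\mathrm{ub}_{\mathcal{E}}^{1,d}\cong(\mathcal{H}^0(\mathcal{E}(-d))\smallsetminus 0)/\underline{E^\times}$, and the \emph{projectivization} of a Banach--Colmez space is proper by \cite[Proposition~II.2.16]{FarguesScholze}; there is no analogous properness statement for higher-rank ``Grassmannians'' of Banach--Colmez spaces.

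The paper's proof circumvents exactly this problem. After reducing to $\mathcal{E}$ a vector bundle, it uses the determinant map $\bigwedge^n\colon \mathcal{S}\mathrm{ub}_{\mathcal{E}}^{n,d}\to \mathcal{S}\mathrm{ub}_{\bigwedge^n\mathcal{E}}^{1,d}$: the target is quasi-compact over $S$ by the rank-one presentation above, so it suffices to prove that $\bigwedge^n$ itself is quasi-compact. This is checked via a product-of-points criterion (Lemma \ref{lem:quasicompactnesscriterion}): given compatible sub-bundles over points $\Spa(K_i,K_i^+)$ and a determinant line $L$ over the product $\Spa(A,A^+)$, one must glue them to a rank-$n$ subsheaf over $A$; this is done by explicit commutative algebra on an affinoid piece $Y_{\Spa(A,A^+),[1,q]}$ of the cover of the curve, using the Pl\"ucker embedding of the classical Grassmannian and Beilinson--Drinfeld ind-properness, with the boundedness supplied precisely by the existence of the line $L$ (Lemma \ref{lem:properness}). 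If you want to rescue your strategy, you would have to replace the finite Grassmannian by some projectivized/Banach--Colmez substitute, and in higher rank that is exactly the difficulty the determinant trick is designed to avoid.
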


When $\mathcal{E}$ is a vector bundle and $n=1$, this follows easily from results in \cite{FarguesScholze}. In the general case, one can easily reduce to the case that $\mathcal E$ is a vector bundle. Then the statement is equivalent to the assertion that any given set of sub-vector bundles $\mathcal F_i\subset \mathcal E\times_{X_S} X_{C_i}$ (defined over fields $\Spa C_i$) of constant rank and degree induce a sub-vector bundle $\mathcal F$ of $\mathcal E\times_{X_S} X_T$ for the ``product of points'' $T=\Spa (\prod_i \mathcal O_{C_i})[\tfrac 1\varpi]$. Producing this sub-vector bundle is a question that is local on the Fargues-Fontaine curve, but the assumption of constant rank and degree is a (necessary) global condition that we did not see how to make use of. Eventually, we realized the (in hindsight obvious) possibility of using the determinant map. This reduces one to proving properness of the map
\[
\bigwedge^n: \mathcal{S}\mathrm{ub}_{\mathcal{E}}^{n,d}\to \mathcal{S}\mathrm{ub}_{\bigwedge^n \mathcal{E}}^{1,d}
\]
where now one can in fact forget the degree restriction. After this reduction, it is possible to localize the statement from the Fargues-Fontaine curve $X_S$ to its cover $Y_S$ or some affinoid piece thereof, and argue by explicit commutative algebra.

With the geometry under control, it remains to prove Theorem \ref{thm:maingeometry}.(4). The analogue of this result in classical geometric Langlands is well-known \cite[Theorem~5.1.5]{BG}, but the standard proof makes use of Deligne's generic ULA theorem to ensure the existence of some open subspace in $\Bun_{M}$ over which the ULA result holds, which can then be spread out to all of $\Bun_{M}$ by Hecke correspondences. Since the generic ULA theorem is definitely false in $p$-adic geometry, this approach is not available to us. Another approach for the Borel is provided in \cite[Section~5.2]{BG}; however, this argument fails in our situation at the very first step, since being ULA over the point is not a trivial condition in the setting of adic geometry and the sheaf categories we are considering. To solve our problems, we instead make a careful induction on the boundary strata of $\wt{\Bun}_P$, using hyperbolic localization to propagate the ULA property deeper and deeper into the boundary. The key idea here is embodied in the following result, which works in essentially any six-functor formalism.

\begin{proposition}
Let $\pi:X \to S$ be a reasonable map equipped with a section $i:S\to X$ and a $\mathbb{G}_m$-action which is totally attracting towards the section. Let $X^\circ = X\smallsetminus S$ with $j:X^\circ \to X$ the natural open immersion, and set $\pi^\circ = \pi \circ j$. If $A\in D(X^\circ,\Lambda)$ is $\mathbb{G}_m$-equivariant and $\pi^\circ$-ULA, then $j_!A$ is $\pi$-ULA.
\end{proposition}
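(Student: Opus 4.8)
The plan is to reduce the claim to a statement about the nearby/generic behavior of $A$ that can be checked after passing to the $\mathbb{G}_m$-fixed section via hyperbolic localization. Recall that $\pi$-ULA-ness of a sheaf $F$ on $X$ is equivalent to the assertion that the canonical map
\[ \pi^\ast(-) \otimes F \longrightarrow R\mathscr{H}\mathrm{om}(\mathbb{D}_{X/S}(F), \pi^!(-)) \]
is an isomorphism, equivalently that $F$ is "universally locally acyclic" in the sense that it commutes with the relevant base changes; the key point is that this is a condition that interacts well with the recollement attached to the decomposition $X = S \sqcup X^\circ$. So I would first write down the recollement triangle $i_\ast i^! j_! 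A \to j_! A \to Rj_\ast j^\ast j_! A$ — wait, more precisely the triangle $j_! A \to Rj_\ast A \to i_\ast i^\ast Rj_\ast A$ together with $i_\ast i^! j_! A \to j_! A \to Rj_\ast A$. Since $A$ is already $\pi^\circ$-ULA on $X^\circ$, the middle-type terms behave well; the whole problem concentrates on understanding $i^\ast j_! A$ (which vanishes, as $i^\ast j_! = 0$) and $i^! j_! A$. Thus $j_!A$ is ULA over $S$ iff its restriction to $X^\circ$ is ULA over $S$ (true by hypothesis) and $i^! j_! A$ is ULA over $S$, by the standard fact that ULA-ness can be checked on the strata of a stratification together with the gluing data being "ULA" — so really the content is: $i^! j_! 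A$ is $\pi|_S = \mathrm{id}$-ULA, i.e. dualizable/perfect in an appropriate sense, uniformly in $S$.

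Now comes the use of the $\mathbb{G}_m$-action. Because the action is \emph{totally attracting} towards the section $i$, hyperbolic localization (in the form proven by Braden, and in the six-functor setting by Richarz/Drinfeld–Gaitsgory, and adapted to diamonds in \cite{FarguesScholze}) gives a canonical identification of the "attracting" and "repelling" functors. Concretely, for the attracting diagram $X \xleftarrow{p} X^+ \xrightarrow{q} X^{\mathbb{G}_m} = S$ — but here, since the action is totally attracting, $X^+ = X$ and $p = \mathrm{id}$, while $q = \pi$. The content of hyperbolic localization in this degenerate-looking case is the statement that for $\mathbb{G}_m$-equivariant $B$ on $X$ one has $i^! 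B \cong i^\ast B$ after a Tate twist/shift, or rather that the natural map $q_! p^\ast B \to q_\ast p^! B$ — here $\pi_! B \to \pi_\ast B$ — is an isomorphism when restricted appropriately. The precise input I want: for $\mathbb{G}_m$-equivariant $A$ on $X^\circ$, the sheaf $i^! j_\ast A$ — equivalently the "repelling" restriction — is computed by the contraction of $A$ along the flow, and totally-attracting-ness forces this to be the (co)stalk of $A$ near $S$, which by the $\pi^\circ$-ULA hypothesis is a "constructible-in-the-$S$-direction", i.e. ULA over $S$. The cleanest route: apply hyperbolic localization to identify $i^! j_! A$ with $\pi_!$ of something built from $A$ that is manifestly ULA over $S$ because $A$ is $\pi^\circ$-ULA and ULA-ness is preserved by proper-ish pushforward along the contracting flow — one genuinely uses that $q = \pi$ restricted to the attracting locus is "proper in the flow direction" for totally attracting actions.

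The steps in order: (1) record the recollement triangles and reduce $\pi$-ULA-ness of $j_!A$ to $\pi^\circ$-ULA-ness of $A$ (given) plus a statement about $i^!j_!A$ being "$S$-ULA", using that ULA glues along stratifications; (2) use the recollement $i^!j_! A[1] \cong i^\ast j_\ast A$ (from $j^\ast j_\ast \cong \mathrm{id}$ and the octahedron relating $j_!, j_\ast, i_\ast$) to replace $i^!j_!A$ by $i^\ast j_\ast A$; (3) invoke the Braden/hyperbolic-localization theorem for the totally attracting $\mathbb{G}_m$-action — in the six-functor formalism this is available for the "reasonable" maps in play — to compute $i^\ast j_\ast A \cong i^\ast(\text{const along flow}) $, more usefully to identify it with $\pi$-pushforward of a sheaf on $X^\circ$ directly built from $A$; (4) conclude that this is ULA over $S$ since $A$ is $\pi^\circ$-ULA and the relevant pushforward (along the contracting direction, which is "partially proper / cohomologically proper" by total attraction) preserves ULA-ness over $S$. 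The main obstacle I anticipate is step (3)–(4): making hyperbolic localization genuinely bite in the \emph{totally attracting} (non-generic) situation, where $X^+ = X$ and the fixed locus is a section, so that the usual Braden statement degenerates — one must check that the contraction principle (sheaves pushed forward along a contracting $\mathbb{G}_m$-flow compute the costalk at the fixed points) holds in this six-functor formalism with the needed uniformity in $S$, and that this contraction interacts correctly with the ULA condition rather than merely with constructibility. Everything else is formal recollement yoga.
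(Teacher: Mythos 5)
Your step (1) is where the argument breaks down. The ``standard fact that ULA-ness can be checked on the strata of a stratification'' is not a fact: ULA-ness is emphatically not stratification-local, and this is precisely what makes the statement nontrivial. A minimal counterexample: take $\pi=\mathrm{id}_S$, $Z\subset S$ a closed point of a curve (or any nontrivial closed subspace), $U$ its complement, and $B=j_!\Lambda_U$. Then $j^\ast B=\Lambda_U$ is ULA for the (\'etale) map $U\to S$, and both $i^\ast B=0$ and $i^!B$ (a shifted Tate twist) are perfectly dualizable, yet $B$ is not ULA for $\mathrm{id}_S$ since it is not lisse. So knowing that $A$ is $\pi^\circ$-ULA and that $i^!j_!A\cong i^\ast j_\ast A[-1]$ is dualizable over $S$ does not imply that $j_!A$ is $\pi$-ULA; the condition controlling the interaction of $j_!A$ with the $S$-direction near the section is exactly what has to be proved. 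Worse, the statement you invoke --- that for $\mathbb{G}_m$-equivariant sheaves ULA-ness can be tested on $X^\circ$ together with dualizability of the restriction to the section --- is recorded in the paper as a \emph{consequence} of this very proposition, so using it as an input makes the argument circular. Your steps (3)--(4) also lean on an unproved preservation claim ($\pi^\circ$ is not proper, so ``pushforward along the contracting flow preserves ULA over $S$'' needs an argument), but the decisive gap is (1).

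The actual proof does not isolate $i^!j_!A$ at all. One tests ULA-ness directly via the comparison map $\alpha\colon p_1^\ast\mathbb{D}_{X/S}(j_!A)\otimes p_2^\ast j_!A\to R\mathscr{H}\mathrm{om}(p_1^\ast j_!A,p_2^!j_!A)$ on $X\times_S X$. The hypothesis on $A$ shows $\alpha$ is an isomorphism over $X^\circ\times_S X$ and over $X\times_S X^\circ$, so its cone is supported on $(i\times i)(S)$, and it suffices to kill the cone by $(i\times i)^!$. Here the $\mathbb{G}_m$-action enters through the contraction principle applied to $X\times_S X$ with the diagonal action, converting $(i\times i)^!$ into $(\pi\times\pi)_!$ on equivariant inputs; K\"unneth together with $\pi_!\mathbb{D}_{X/S}(j_!A)=\pi_!j_\ast\mathbb{D}_{X^\circ/S}(A)=i^!j_\ast\mathbb{D}_{X^\circ/S}(A)=0$ kills the source, and $(i\times i)^\ast p_1^\ast j_!=i^\ast j_!=0$ kills the target. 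If you want to salvage your outline, you would have to replace the gluing claim in (1) by an argument of this kind on $X\times_S X$; there is no purely recollement-theoretic reduction.
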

Note that as a formal consequence, we get that also $j_{\ast}A$ is $\pi$-ULA, and that $i^\ast j_\ast A \in D(S,\Lambda)$ is dualizable. We also see that if $B \in D(X,\Lambda)$ is any $\mathbb{G}_m$-equivariant sheaf, then it is $\pi$-ULA iff $j^\ast B$ is $\pi^\circ$-ULA and $i^\ast B$ is dualizable.

To employ this technique, we need good local models for the boundary strata of $\wt{\Bun}_P$. These are provided by the theory of \emph{Zastava spaces}, which smooth-locally model suitable portions of $\wtBun_P$ in terms of certain intersections of semi-infinite orbits inside the $B_{\mathrm{dR}}^{+}$-affine Grassmannians. For all $\theta \in \Lambda_{G,P}^{\mathrm{pos}}$, there exists a Zastava space $\tilde{Z}^{\theta} \ra \Mod^{+,\theta}_{M}$, equipped with a section $\mf{s}^{\theta}: \Mod^{+,\theta}_{M} \ra \tilde{Z}^{\theta}$ and (\'etale locally on $\Mod^{+,\theta}_{M}$) a $\mathbb{G}_{m}$-action which attracts to the section. The image of this section is smooth-locally isomorphic to the strata $\phantom{}_{\theta}\wt{\Bun}_{P}$, which puts us in a situation where we can apply the previous Theorem (or rather a slight generalization of it, see Theorem \ref{thm: hyplocalpropvariant} for the exact statement we need) to inductively propagate the ULA property of the sheaf $\tilde{j}_{P!}(\IC_{\Bun_{P}})$ into deeper and deeper strata of the Drinfeld compactification.

With Theorem \ref{thm:maingeometry} in hand, the remaining statements in Theorem \ref{thm:maintheorem}.(1)-(2) follow from some straightforward manipulations with the definitions of the functors, using the miraculous properties of ULA sheaves quite heavily; see Theorem \ref{thm:hardfiniteness} for details.

We also note that our argument for Theorem \ref{thm:maingeometry}.(4) goes through without change in the setting of classical geometric Langlands, and gives a new proof which works uniformly in all the sheaf theories usually considered in that setting.

\subsection{Some applications}

Our results have immediate applications to the gluing functors between strata in $\Bun_G$. More precisely, recall that, for any $b\in B(G)$, we have functors \[i_{b\sharp}, i_{b!}, i_{b\ast}:D(G_b(E),\Lambda) \to D(\Bun_G,\Lambda)\] and also functors \[i_{b}^{\ast}, i_{b}^{!}:D(\Bun_G,\Lambda)\to D(G_b(E),\Lambda).\] These satisfy adjunctions
\[ i_{b\sharp} \dashv i_{b}^{\ast} \dashv i_{b\ast}\;\;\mathrm{and}\;\;i_{b!} \dashv i_{b}^{!}.  \]
 Moreover, the analysis in \cite{FarguesScholze} shows that $i_{b \sharp}, i_{b!},$ and $i_{b}^{\ast}$ preserve compact objects, while $i_{b!}$, $i_{b\ast}$, $i_{b}^{\ast}$ and $i_{b}^!$ preserve ULA objects. The following result settles the remaining finiteness questions for these functors.

\begin{theorem}The functor $i_{b\sharp}$ preserves ULA objects, and the functor $i_{b}^{!}$ preserves compact objects. If $A \in D(G_b(E),\Lambda)$ is compact, then $i_{b\ast}A$ has compact stalks.
\end{theorem}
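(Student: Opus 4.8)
\section*{Proof proposal for the final Theorem}

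The plan is to deduce all three assertions from the finiteness theorems for $\Eis_P$ and $\CT_P$ (Theorem~\ref{thm:maintheorem}), by identifying the stratum functors with Eisenstein and constant-term functors composed with the open immersion of the semistable stratum of $\Bun_M$. Fix $b\in B(G)$ with associated Levi $M=M_b$ and (Harder--Narasimhan) parabolic $P=P_b\supseteq M$, and let $b_M\in B(M)_{\mathrm{basic}}$ be the reduction of $b$ to $M$, so that $\Bun_M^{b_M}$ is the semistable stratum of $\Bun_M$, an open substack of the connected component $\Bun_M^{[b_M]}$ with $\Bun_M^{b_M}\cong[\ast/\underline{G_b(E)}]$; write $j^M\colon\Bun_M^{b_M}\hookrightarrow\Bun_M$. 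Uniqueness of the Harder--Narasimhan reduction identifies $\mathfrak{q}^{-1}(\Bun_M^{b_M})$ with the stratum $\Bun_G^b$, compatibly with the locally closed immersion $\Bun_G^b\hookrightarrow\Bun_G$ (induced by $\mathfrak{p}$) and with the projection $\Bun_G^b=[\ast/\widetilde{G}_b]\to[\ast/\underline{G_b(E)}]$ (induced by $\mathfrak{q}$). Since by \cite[Theorem~1.5]{HI} the sheaf $\IC_{\Bun_P}$ is, along $\mathfrak{q}$, pulled back from the base up to a shift and the modulus character, base change and the projection formula will give canonical isomorphisms, up to a cohomological shift and a character twist of $G_b(E)$:
\[ i_{b!}\cong\Eis_{P!}\circ j^M_!,\qquad i_{b\ast}\cong\Eis_{P\ast}\circ j^M_\ast,\qquad i_b^{\ast}\cong j^{M\ast}\circ\CT_{P!},\qquad i_b^{!}\cong j^{M\ast}\circ\CT_{P\ast}. \]
Combining with the second adjunction $\CT_{Q!}\cong\CT_{Q^-\ast}$ of Theorem~\ref{thm:maintheorem}.(3) (with $Q=P$ and $Q=P^-$), one rewrites the fourth isomorphism as $i_b^{!}\cong j^{M\ast}\circ\CT_{P^-!}$, and — passing to left adjoints in the third, using $\Eis_{P^-!}\dashv\CT_{P^-\ast}$ and $j^M_!\dashv j^{M\ast}$ — obtains $i_{b\sharp}\cong\Eis_{P^-!}\circ j^M_!$.

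Given these identifications, (1) and (2) are immediate. For (1): if $A$ is ULA then $j^M_!A\in D(\Bun_M,\Lambda)$ is ULA (its $\ast$-restriction to $\Bun_M^{b_M}$ is $A$ and to every other stratum of $\Bun_M$ is $0$) and is supported on the single component $\Bun_M^{[b_M]}$, so $i_{b\sharp}(A)\cong\Eis_{P^-!}(j^M_!A)$ is ULA by Theorem~\ref{thm:maintheorem}.(1). For (2): if $A\in D(\Bun_G,\Lambda)$ is compact, then by Theorem~\ref{thm:maintheorem}.(2) applied to $P^-$ the sheaf $\CT_{P^-!}(A)$ is compact on every component of $\Bun_M$; restriction to the open substack $\Bun_M^{b_M}\subseteq\Bun_M^{[b_M]}$ preserves compactness, so $i_b^{!}(A)\cong j^{M\ast}(\CT_{P^-!}(A))$ is compact.

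Statement (3) is the one that will require real work. Via $i_{b\ast}\cong\Eis_{P\ast}\circ j^M_\ast$ it reduces to showing that $\Eis_{P\ast}(j^M_\ast A)$ has compact stalks for $A$ compact — equivalently, that the gluing functors $i_{b'}^{\ast}\,i_{b\ast}$ preserve compact objects, which is the sort of strong-continuity statement promised in the introduction. I would argue by induction on the semisimple rank of $G$. When $b$ is basic, $\Bun_G^b$ is open and closed and $i_{b\ast}=i_{b!}$ preserves compacts, which gives the base case. For the inductive step one first checks that $j^M_\ast A$ has compact stalks on $\Bun_M$ — using the inductive hypothesis for the proper Levi $M$ together with the identifications above for $M$ — and then that $\Eis_{P\ast}$ sends sheaves with compact stalks to sheaves with compact stalks. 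The latter is the main obstacle, and is where the Drinfeld compactification is indispensable: writing $\mathfrak{p}=\widetilde{\mathfrak{p}}_P\circ\widetilde{j}_P$ with $\widetilde{j}_P$ an open immersion and $\widetilde{\mathfrak{p}}_P$ proper on each component (Theorem~\ref{thm:maingeometry}.(2)), one has $\Eis_{P\ast}(\mathcal{B})\cong\widetilde{\mathfrak{p}}_{P\ast}\big(\widetilde{j}_{P\ast}(\mathfrak{q}^{\ast}\mathcal{B}\otimes\IC_{\Bun_P})\big)$; one then analyzes $\widetilde{j}_{P\ast}$ along the stratification of $\widetilde{\Bun}_P$ (Theorem~\ref{thm:maingeometry}.(3)), exploiting the $\widetilde{\mathfrak{q}}_P$-ULA property of $\widetilde{j}_{P\ast}\IC_{\Bun_P}$ (Theorem~\ref{thm:maingeometry}.(4)) and the properness of $\widetilde{\mathfrak{p}}_P$, so as to express the stalks of $\Eis_{P\ast}(\mathcal{B})$ in terms of the constant-term and stratum functors for $G$ and for smaller groups, whose finiteness properties are already available.
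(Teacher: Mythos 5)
Your treatment of the first two assertions is in substance the paper's own argument: you identify $i_{b\sharp}$ with an Eisenstein functor applied to the $!$-extension from the semistable stratum of $\Bun_M$, and $i_b^!$ with a restriction of a constant-term functor (these identifications are exactly Corollary \ref{cor: dominantreductionConstantEisensteinDescription} and Corollary \ref{cor: naturalfunctorscalculation}, up to the renormalizing shift and twist), and then quote Theorem \ref{thm:maintheorem}.(1)--(3). The one point you gloss over is that the Levi $M_b$ and the dynamic parabolic of $\nu_b$ are only available inside $G$ over $E$ when $G$ is quasisplit; the paper first argues for quasisplit $G$ and then removes the hypothesis by twisting to the connected-center case and passing to z-extensions. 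That is a minor omission.

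For the third assertion (compact stalks of $i_{b\ast}A$) your route diverges from the paper and has a real gap. First, your base case is false: for basic $b$ the stratum $\Bun_G^b$ is open but \emph{not} closed -- it is the dense semistable stratum of its connected component -- so $i_{b\ast}\neq i_{b!}$, and computing $i_{b'}^\ast i_{b\ast}$ for basic $b$ is already a nontrivial instance of the statement. Second, the step you yourself flag as the main obstacle, namely that $\Eis_{P^-\ast}$ (or equivalently an analysis of $\tilde j_{P\ast}$ along the boundary strata of $\wtBun_P$) carries sheaves with compact stalks to sheaves with compact stalks, is never proved; as stated it is a substantial claim, and nothing in Theorem \ref{thm:maingeometry} gives it directly, since compactness of stalks is not one of the finiteness conditions that the ULA machinery propagates. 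The paper avoids all of this with a short formal argument: factor $i_b$ as $\Bun_G^b\xrightarrow{h}\Bun_G^{\preceq b}\xrightarrow{j}\Bun_G$ with $h$ a closed immersion into a quasicompact open, so $i_{b\ast}=j_\ast h_\ast$ and $h_\ast=h_!$ preserves compacts; it then suffices to show that $j_\ast$ preserves compact objects for any quasicompact open immersion $U\subset V$ in $\Bun_G$, and by stripping off one stratum at a time one reduces to $U=V\smallsetminus\{b\}$, where the excision identity $i_b^\ast j_\ast\cong i_b^! j_![1]$ together with the facts that $j_!$ and $i_b^!$ preserve compacts (the latter being exactly the second assertion of the theorem) finishes the proof. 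If you want to salvage your approach you would need to actually establish the compact-stalks property of $\Eis_{P\ast}$, which is considerably harder than the excision argument; I recommend adopting the latter.
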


As another application, with rational coefficients, we decompose the category $D(\Bun_G,\Lambda)$ into cuspidal and Eisenstein parts. More precisely, assume that $G$ is quasisplit, and define $D(\Bun_G,\Lambda)^{\Eis}$ as the full subcategory generated under colimits by the images of the functors $\Eis_{P!}$ for all proper parabolics $P$, and define $D(\Bun_G,\Lambda)^{\mathrm{cusp}}$ as the common kernel of the functors $\CT_{P\ast}$ for all proper parabolics $P$.
\begin{theorem}\label{thm:cuspEisdecomposition}
\begin{enumerate}
    \item The subcategory $D(\Bun_G,\Lambda)^{\mathrm{cusp}}$ is stable under all limits and colimits, and under Verdier duality. Any object $A \in D(\Bun_G,\Lambda)^{\mathrm{cusp}}$ is both $!$- and $\ast$-extended from the semistable locus $\Bun_{G}^{\mathrm{ss}} \subset \Bun_G$, and $i_{b}^{\ast} A \in D(G_b(E),\Lambda)$ is cuspidal in the sense of Vigneras (i.e all of its Jacquet modules with respect to proper parabolics are $0$) for any basic $b$.

    \item The pair $D(\Bun_G,\Lambda)^{\Eis}, D(\Bun_G,\Lambda)^{\mathrm{cusp}}$ forms a semiorthogonal decomposition of $D(\Bun_G,\Lambda)$. 

    \item When $\Lambda=\overline{\mathbb{Q}_\ell}$, there is a canonical \emph{orthogonal} decomposition
    \[D(\Bun_G,\overline{\mathbb{Q}_\ell}) = D(\Bun_G,\overline{\mathbb{Q}_\ell})^{\Eis} \oplus D(\Bun_G,\overline{\mathbb{Q}_\ell})^{\mathrm{cusp}} \]
    stable under Verdier duality, and (on compact objects) under Bernstein-Zelevinsky duality.
\end{enumerate}
\end{theorem}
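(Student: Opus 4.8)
The plan is to treat the three parts in order: (1) and (2) come out as formal consequences of Theorem~\ref{thm:maintheorem} together with the standard comparison of the constant-term functors with classical Jacquet modules along the Harder--Narasimhan stratification of $\Bun_G$, while (3) requires one genuinely new orthogonality input that is available only with characteristic-zero coefficients. \textbf{Part (1).} The key observation is that $\CT_{P\ast}$ preserves all colimits: although this is not visible from its definition as $\mathfrak{q}_\ast(\mathfrak{p}^!(-)\otimes\IC_{\Bun_P}^{-1})$ --- note that $\mathfrak q$ is smooth but not proper --- it follows from Theorem~\ref{thm:maintheorem}.(3), which identifies $\CT_{P\ast}$ with $\CT_{P^-!}=\mathfrak{q}_{P^-!}(\mathfrak{p}_{P^-}^{\ast}(-)\otimes\IC_{\Bun_{P^-}})$, a composite of colimit-preserving functors. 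Being simultaneously a right adjoint (of $\Eis_{P!}$), $\CT_{P\ast}$ also preserves all limits, so its kernel, and hence $D(\Bun_G,\Lambda)^{\mathrm{cusp}}$, is closed under all limits, colimits, retracts and shifts. Stability under Verdier duality is then immediate: combining $\mathbb{D}_{\mathrm{Verd}}^M\CT_{P!}\cong\CT_{P\ast}\mathbb{D}_{\mathrm{Verd}}$ with $\CT_{P!}\cong\CT_{P^-\ast}$, if $A$ is cuspidal then $\CT_{P\ast}\mathbb{D}_{\mathrm{Verd}}A\cong\mathbb{D}_{\mathrm{Verd}}^M\CT_{P^-\ast}A=0$ for every proper $P$. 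For the support statement I would invoke the comparison (recorded in \cite{FarguesScholze} and used throughout this paper) of $\CT$ with Jacquet modules on the HN-strata: for non-basic $b$ with Levi $M_b$ and parabolic $P_b$, the stalk $i_b^{\ast} A$ and costalk $i_b^! A$ are, up to a shift and twist, pulled back along $\Bun_G^b\to\Bun_{M_b}$ from $\CT_{P_b!}(A)$ resp.\ $\CT_{P_b\ast}(A)$ over the basic (= semistable) locus of $\Bun_{M_b}$; cuspidality makes both vanish, so $i_b^{\ast} A=i_b^! A=0$ for all non-basic $b$, and a dévissage along the (locally finite) stratification of the non-semistable locus shows $A$ is at once $!$- and $\ast$-extended from $\Bun_G^{\mathrm{ss}}$. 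The same comparison identifies the proper Jacquet modules of $i_b^{\ast} A\in D(G_b(E),\Lambda)$, for $b$ basic, with summands of the basic-locus stalks of the $\CT_{P\ast}A$ ($P$ proper), which vanish; hence $i_b^{\ast} A$ is cuspidal in Vigneras's sense.

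\textbf{Part (2).} This is formal. Since $D(\Bun_G,\Lambda)$ and each $D(\Bun_M,\Lambda)$ are compactly generated (\cite{FarguesScholze}) and $\Eis_{P!}$ preserves colimits, $D(\Bun_G,\Lambda)^{\Eis}$ is the presentable subcategory generated by the set of objects $\Eis_{P!}(c)$, $P$ proper and $c$ compact --- which are themselves compact by Theorem~\ref{thm:maintheorem}.(1) --- so its inclusion has a right adjoint, giving for each $A$ a functorial fibre sequence $E_A\to A\to C_A$ with $C_A$ right-orthogonal to $D(\Bun_G,\Lambda)^{\Eis}$. Finally, since mapping out of a colimit is a limit, an object $C$ is right-orthogonal to $D(\Bun_G,\Lambda)^{\Eis}$ iff $\mathrm{Map}(\Eis_{P!}(B),C)\simeq0$ for all proper $P$ and all $B$, iff $\mathrm{Map}(B,\CT_{P\ast}(C))\simeq0$ for all such $B$ by adjunction, iff $\CT_{P\ast}(C)=0$ for all proper $P$, i.e. $C\in D(\Bun_G,\Lambda)^{\mathrm{cusp}}$. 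Thus $(D(\Bun_G,\Lambda)^{\Eis},D(\Bun_G,\Lambda)^{\mathrm{cusp}})$ is the asserted semiorthogonal decomposition.

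\textbf{Part (3).} Relative to (2), the only missing ingredient over $\overline{\mathbb Q_\ell}$ is the reverse orthogonality $\mathrm{Map}(C,E)\simeq0$ for $C$ cuspidal and $E$ Eisenstein: granting it, the boundary map of the fibre sequence of (2) lies in $\mathrm{Map}(C_A,E_A[1])=0$, so that sequence splits functorially and $D(\Bun_G,\overline{\mathbb Q_\ell})=D(\Bun_G,\overline{\mathbb Q_\ell})^{\Eis}\oplus D(\Bun_G,\overline{\mathbb Q_\ell})^{\mathrm{cusp}}$. To prove the orthogonality I would first promote (1) to an equivalence of $D(\Bun_G,\overline{\mathbb Q_\ell})^{\mathrm{cusp}}$ with the category of objects $!$-extended from $\Bun_G^{\mathrm{ss}}$ with cuspidal stalks: the converse to (1) follows by computing $\CT_{P\ast}$ stalkwise, using transitivity of constant terms and the vanishing of Jacquet modules of cuspidal representations, and for such objects the natural map $j^{\mathrm{ss}}_!\to j^{\mathrm{ss}}_\ast$ (where $j^{\mathrm{ss}}\colon\Bun_G^{\mathrm{ss}}\hookrightarrow\Bun_G$) is an isomorphism ("cleanness": its cone is supported on non-basic strata, where it computes a constant term of a cuspidal, hence vanishes). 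This exhibits $D(\Bun_G,\overline{\mathbb Q_\ell})^{\mathrm{cusp}}$ as generated under colimits by the objects $j^{\mathrm{ss}}_!(\rho)$, $\rho$ a compact cuspidal $\overline{\mathbb Q_\ell}$-representation, which are compact in $D(\Bun_G,\overline{\mathbb Q_\ell})$; moreover $\mathbb{D}_{\mathrm{BZ}}$ preserves this subcategory on compacts, as it commutes with $j^{\mathrm{ss}}_!$ and the smooth dual of a cuspidal representation is cuspidal. Using compact generation on both sides, $\mathrm{Map}(C,E)\simeq0$ reduces to the case $C$ compact cuspidal and $E=\Eis_{P!}(B)$ with $B$ compact; then by Bernstein--Zelevinsky duality and Theorem~\ref{thm:maintheorem}.(4).i,
\[
\mathrm{Map}\big(C,\Eis_{P!}(B)\big)\;\simeq\;\mathrm{Map}\big(\mathbb{D}_{\mathrm{BZ}}\Eis_{P!}(B),\,\mathbb{D}_{\mathrm{BZ}}C\big)\;\simeq\;\mathrm{Map}\big(\Eis_{P^-!}(\mathbb{D}_{\mathrm{BZ}}^M B),\,\mathbb{D}_{\mathrm{BZ}}C\big),
\]
and the right-hand side vanishes by the semiorthogonality of (2), since $\mathbb{D}_{\mathrm{BZ}}C$ is again cuspidal. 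The compatibilities with duality then fall out: the cuspidal side is $\mathbb{D}_{\mathrm{Verd}}$-stable by (1) and $\mathbb{D}_{\mathrm{BZ}}$-stable on compacts by the cleanness description, and since the decomposition is now orthogonal --- so each side determines the other as its orthogonal complement, and both sides are closed under all limits and colimits --- the Eisenstein side inherits the same stabilities, with Theorem~\ref{thm:maintheorem}.(4).i--ii and the dévissage comparing $\Eis_{P!}$ with $\Eis_{P\ast}$ handling the remaining bookkeeping.

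\textbf{The main difficulty.} The substantive point is the orthogonality in (3), and within it the cleanness step --- the assertion that cuspidal objects are glued from $\Bun_G^{\mathrm{ss}}$ without correction, hence are compactly generated by honest compact sheaves on $\Bun_G$. This is precisely where the hypothesis $\Lambda=\overline{\mathbb Q_\ell}$ enters: it rests on the Bernstein-theoretic facts that cuspidal blocks of $D(G_b(E),\overline{\mathbb Q_\ell})$ are compactly generated and that cuspidality is preserved by smooth duality --- neither of which survives with torsion coefficients --- together with the stalkwise comparison of constant terms with Jacquet modules. Everything else, in particular the semiorthogonal decomposition of (2) and the formal parts of (1) and (3), is a direct consequence of Theorem~\ref{thm:maintheorem}.
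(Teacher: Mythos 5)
Your proposal is correct and follows essentially the same route as the paper: parts (1) and (2) are the same formal arguments (adjunctions plus the stalk/costalk--constant-term comparisons of Corollary \ref{cor: dominantreductionConstantEisensteinDescription}), and for (3) both proofs hinge on showing that $D(\Bun_G,\overline{\mathbb{Q}_\ell})^{\mathrm{cusp}}$ is compactly generated with compact objects stable under $\mathbb{D}_{\mathrm{BZ}}$ --- via its identification over each basic stratum with supercuspidal-supported sheaves --- and then deducing the missing left-orthogonality from $\mathbb{D}_{\mathrm{BZ}}\Eis_{P!}\cong\Eis_{P^-!}\mathbb{D}_{\mathrm{BZ}}^M$ applied to compact generators. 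The only (minor, essentially cosmetic) difference is how that key step is packaged: you get BZ-stability of compact cuspidals from ``smooth duals of supercuspidal-supported objects are supercuspidal-supported'' together with commutation of $\mathbb{D}_{\mathrm{BZ}}$ with $!$-extension from the open semistable locus, while the paper phrases it as Verdier-stability of the cuspidal part (from (1)) plus the observation that $\mathbb{D}_{\mathrm{BZ}}$ and $\mathbb{D}_{\mathrm{Verd}}$ induce the same involution on Bernstein components --- the same representation-theoretic content, and your ``cleanness'' step (the converse to (1), sketched via a stalkwise computation of $\CT_{P\ast}$) is likewise the input the paper leaves implicit in the phrase ``identifies with the support of a union of supercuspidal Bernstein components.''
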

In the proof of Theorem \ref{thm:cuspEisdecomposition}.(3), we use the theory of Bernstein components rather critically, which restricts us to characteristic zero coefficients. In fact, via reduction modulo $\ell$, cuspidal Bernstein components can meet with Eisenstein components, so it is necessary to invert $\ell$ in part (3).

Note that in classical geometric Langlands, one only has a semiorthogonal decomposition: an orthogonal decomposition as in (3) cannot hold, since the locus of irreducible local systems is dense in the stack of $L$-parameters. In our setting, however, the irreducible locus (i.e the locus defined by unramified twists of supercuspidal parameters) is open and closed in the stack of $L$-parameters $\mathrm{Par}_G$. In fact, writing $e^{\mathrm{irr}},e^{\mathrm{red}} \in \mathcal{O}(\mathrm{Par}_G)$ for the complementary idempotents cutting out the irreducible and reducible loci, the spectral action gives a similar decomposition
\[
D(\Bun_G,\overline{\mathbb Q_\ell})\cong e^{\mathrm{red}} D(\Bun_G,\overline{\mathbb{Q}_\ell})\oplus e^{\mathrm{irr}} D(\Bun_G,\overline{\mathbb{Q}_\ell})
\]
and some mild results on the compatibility of Eisenstein functors with $L$-parameters (that follow easily from our inductive arguments analyzing the Eisenstein functor in terms of parabolic induction and the gluing functor $i_{b\sharp}$) imply that
\[D(\Bun_G,\overline{\mathbb{Q}_\ell})^{\mathrm{Eis}}\subset e^{\mathrm{red}} D(\Bun_G,\overline{\mathbb{Q}_\ell})\]
and hence dually
\[D(\Bun_G,\overline{\mathbb{Q}_\ell})^{\mathrm{cusp}}\supset e^{\mathrm{irr}} D(\Bun_G,\overline{\mathbb{Q}_\ell})\]
We (and also Takaya \cite{Takaya}) conjecture that these inclusions are equalities.

\subsection{Other applications and future developments}
We end the introduction by commenting on the sequel paper \cite{HHS2} and some other works which rely on the results in this paper.

First, we note that some of the main results in the paper \cite{HamGeomES}, which were applied in \cite{HL} to prove vanishing theorems for the cohomology of \emph{global} Shimura varieties, crucially relied on Theorem \ref{thm:maingeometry}.(4) in the case where $P=B$ is a Borel.\footnote{This case does not seem substantially easier than the case of a general parabolic.} Thus the present paper makes \cite{HamGeomES} and \cite{HL} entirely unconditional.

In the follow-up paper \cite{HHS2}, we will establish the compatibility of Eisenstein and constant term functors with Hecke operators, and give applications to the cohomology of local Shimura varieties. We also expect to construct and analyze the intersection cohomology sheaf $\mathrm{IC}_{\wtBun_P}$ on $\wtBun_P$ and the associated compactified Eisenstein series functor $\Eis_{P!\ast}$. Since there is no general theory of perverse sheaves in $p$-adic geometry, the sheaf $\mathrm{IC}_{\wtBun_P}$ does not exist for soft reasons. We eventually realized that it is possible to construct it directly by adapting some ideas from \cite{GaitsgorySemiinf}.\footnote{We thank Joakim Faergeman and Andreas Hayash for patient explanations about these matters.} This construction requires us to import yet more techniques from geometric Langlands - in particular, ideas around Ran geometry and factorization structures - into the Fargues-Scholze setting.

Remarkably, the categorical geometric Langlands conjecture has a natural adaptation to the Fargues-Scholze setting, where it translates into a categorical upgrade of the local Langlands conjecture (see \cite[Conjecture I.10.2]{FarguesScholze} for a precise statement). In ongoing work \cite{HM}, the second author and Lucas Mann have formulated a program to prove this conjecture with $\overline{\mathbb{Q}_{\ell}}$-coefficients for many groups, in particular for $\mathrm{GL}_n$. Since the cuspidal part of the categorical conjecture is known for many groups (including $\mathrm{GL}_n$) by global methods, the decomposition of Theorem \ref{thm:cuspEisdecomposition}.(3) allows one to focus on the Eisenstein part, where the problem clearly has a flavor which is inductive on Levis. To deal with the Eisenstein part, a key ingredient in the strategy of \cite{HM} is the a priori compatibility of a certain functor from the automorphic side to the spectral side with $\Eis_{P!}$ and its spectral analogue. This compatibility will be addressed elsewhere, building on the techniques in this paper and \cite{HHS2}.

\subsection*{Acknowledgments} We would like to thank Brian Conrad, Joakim Faergeman, Dennis Gaitsgory, Andreas Hayash, Lucas Mann, and Xinwen Zhu for some very helpful conversations. Part of this work was carried out while the first author was visiting the MPIM, and he thanks them very much for their hospitality and support. Scholze was supported by a Leibniz Prize, and through the Hausdorff Centre of Mathematics (grant no. EXC-2047/1–390685813) at the MPIM Bonn.

\subsection*{Notation} Throughout this paper, $E$ denotes a fixed nonarchimedean local field, with ring of integers $\mathcal O_E$, uniformizer $\pi$ and residue field $\mathbb F_q$. We fix a separable closure $\ol{E}$ with completion $C$, and let $\ol{\mathbb F}_q$ be its residue field. We denote the absolute Galois group by $\Gamma=\mathrm{Gal}(\ol{E}/E)$. As usual, we denote by $\breve{E}\subset C$ the completion of the maximal unramified extension of $E$, with residue field $\ol{\mathbb F}_q$. We write $\sigma$ for the automorphism of $\breve{E}$ lifting the $q$-th power Frobenius.

For coefficients, we fix a $\mathbb Z_\ell[\sqrt{q}]$-algebra $\Lambda$. Using the fixed choice of $\sqrt{q}\in \Lambda$, we can form distinguished square roots of modulus characters, etc. As our sheaf theory, we will be using the base change of the motivic $6$-functor formalism from \cite{MotivicGeometrization} along the symmetric monoidal functor $D_{\mathrm{et,mot}}(\ol{\mathbb F}_q)\to D(\mathbb Z_\ell)\to D(\Lambda)$ given by the \'etale realization. If some power of $\ell$ vanishes in $\Lambda$, this agrees with $D_{\mathrm{et}}$ considered in \cite{FarguesScholze}. In general, when specialized to $\mathrm{Bun}_G$, it agrees with the hand-crafted $D_{\mathrm{lis}}$ considered in \cite{FarguesScholze}. Contrary to the $D_{\mathrm{lis}}$-theory, the motivic formalism satisfies excision in general, which we will excessively use.

Throughout, we fix a reductive group $G$ over $E$. As a rule, we do not make any further assumptions on $G$. At certain points, it is however convenient to assume that $G$ is quasisplit, in which case we usually fix a choice of maximal torus and Borel $T\subset B\subset G$.

As usual, $B(G) = G(\Breve{E})/(g \sim hg\sigma(h)^{-1})$ denotes the Kottwitz set of $G$. For $b\in B(G)$ (or rather a representative in $G(\breve{E})$) we write $G_b$ for the $\sigma$-centralizer of $b$. We recall that the Kottwitz set $B(G)$ is equipped with two maps:
\begin{enumerate}
    \item The slope homomorphism
    \[ \nu: B(G) \rightarrow \mathbb{X}_{*}(T_{\overline{E}})^{+,\Gamma}_{\mathbb{Q}} \]
    \[ b \mapsto \nu_{b}, \]
    where $\mathbb{X}_{*}(T_{\overline{E}})_{\mathbb{Q}}^{+}$ is the set of rational geometric dominant cocharacters of a maximal torus $T_{\ol{E}}\subset B_{\ol{E}}\subset G_{\ol{E}}$.
    \item The Kottwitz invariant
    \[ \kappa_{G}: B(G) \rightarrow \pi_{1}(G)_{\Gamma}, \]
    where $\pi_1(G)=\pi_1(G_{\ol{E}})$ denotes the algebraic fundamental group of Borovoi.
\end{enumerate}
We write $B(G)_{\basic} \subset B(G)$ for the set of basic elements; i.e.~the elements for which $\nu_{b}$ factors through the center of $G$. We follow the standard normalization of these invariants so that for $G=\mathbb G_m$, the element $b=\pi\in \breve E^\times$ maps (under both maps) to $1\in \pi_1(\mathbb G_m)=\mathbb Z$. If we identify $B(G)=|\Bun_G|$, this means, however, that $\nu(\mathcal O(1))=\kappa(\mathcal O(1))=-1$, noting that the line bundle $\mathcal O(1)$ corresponds to the $\sigma$-conjugacy class of $\pi^{-1}$. Thus, in bundle-theoretic terms, $-\nu$ gives the usual slopes.

By $\Bun_G$, we denote the stack of $G$-bundles, taking any perfectoid space $S$ to the groupoid of $G$-bundles on the Fargues-Fontaine curve $X_S$. We note that we always work over the fixed base $\ast := \Spd{\ol{\mathbb{F}}_{q}}$, and denote by $\Perf$ the category of perfectoid spaces in characteristic $p$ over $\ol{\mathbb F}_q$ endowed with the $v$-topology. For $b \in B(G)$, we write $\mathcal{F}_{b}$ for the associated $G$-bundle on $X_S$, for any $S\in \Perf$. We write $i_{b}: \Bun_{G}^{b} \hookrightarrow \Bun_{G}$ for the locally closed Harder-Narasimhan-stratum on it.

As usual, we denote by $\Div^{1} := \Spd{\Breve{E}}/\mathrm{Frob}^{\mathbb{Z}}$ the mirror curve, and, for a finite extension $E'/E$, we write $\Div^{1}_{E'}$ for the corresponding object for $E'$. More generally, for $I$ a finite index set, we let $\Div^{I}$ denote the product $\prod_{i \in I} \Div^{1}$. For $n \in \mathbb{Z}$, we let $\Div^{(n)} = (\Div^{1})^{n}/S_{n}$ denote the $n$th symmetric power of the mirror curve, where $S_{n}$ is the symmetric group on $n$ letters. 

We consider parabolics $P\subset G$ with Levi factor $M$ and unipotent radical $U$. Unless we also use an opposite parabolic $P^-$, we treat $M$ merely as a quotient of $P$ and do not fix an embedding $M\subset P$. We write $\tilde{\mathcal{J}}_{G,P}$ for the set of fundamental coroots corresponding to $P\subset G$; this is a $\Gamma$-invariant finite set of cocharacters $\mathbb G_m\to Z(M)_{\ol{E}}$. We let $\mathcal J_{G,P} = \tilde{\mathcal{J}}_{G,P}/\Gamma$ be the set of Galois orbits.

We write $M^{\mathrm{ab}}$ for the abelianization of $M$, and let $\Lambda_{G,P} := \Mcoinv=B(M^{\mathrm{ab}})$ denote the coinvariant lattice. The coroots define a canonical injective map
\[
\mathcal J_{G,P} = \tilde{\mathcal J}_{G,P}/\Gamma\to \Mcoinv = \Lambda_{G,P}: i\mapsto \alpha_i
\]
and we let $\Lambda_{G,P}^{\mathrm{pos}}\subset \Lambda_{G,P}$ be the free abelian monoid generated by $\mathcal J_{G,P}$.

The adjoint action of $P$ on $\Lie (U)$ induces 
\[
P \to \Aut( \det \Lie (U) ) \cong \mathbb{G}_{m}, 
\] 
which factors through $M^{\mathrm{ab}}$ and defines an element which we denote by $\xi_P \in \mathbb{X}^{*}(M^{\mathrm{ab}})$. 

\section{The geometry of $\Bun_{P}$ and Eisenstein functors}{\label{section: geometryofBunP}}
We will now review the geometry of $\Bun_{P}$ and the description of the geometric Eisenstein functors following \cite{HI}. 
\subsection{Eisenstein functors and the geometry of $\Bun_{P}$}{\label{section: defnofEisFunctors}}
Let $E$ be a nonarchimedean local field, let $G$ be a reductive group over $E$, and let $P \subset G$ be a parabolic with Levi quotient $M$. Let $M^{\mathrm{ab}}$ be its torus quotient. We recall that we have a natural diagram
\[ \begin{tikzcd}
\Bun_{P} \arrow[r,"\mf{p}_{P}"] \arrow[d,"\mf{q}_{P}"] \arrow[dd,bend left=60,"\mf{q}^{\dagger}_{P}"] & \Bun_{G}& \\
\Bun_{M} \arrow[d,"q_{M}"] & & \\
\Bun_{M^{\mathrm{ab}}}& &.
\end{tikzcd} \] 
By \cite[Corollary~IV.1.23]{FarguesScholze}, we have decompositions 
\[ \Bun_{M} = \bigsqcup_{\alpha \in \pi_{1}(M)_{\Gamma}} \Bun_{M}^{\alpha} \text{  and   
 } \Bun_{M^{\mathrm{ab}}} = \bigsqcup_{\ol{\alpha} \in \Mcoinv} \Bun_{M^{\mathrm{ab}}}^{\ol{\alpha}} \]
into connected components via the Kottwitz invariant. We let $\Bun_{P}^{\alpha}$ denote the fiber of $\mf{q}_{P}$ over the connected component $\Bun_{M}^{\alpha}$, and write $\mf{q}_{P}^{\alpha}$ (resp. $\mf{p}_{P}^{\alpha}$) for the base-change of $\mf{q}_{P}$ (resp. $\mf{p}_{P}$) to this connected component. We write $\ol{(-)}$ for the natural composition: 
\[ \pi_{1}(M)_{\Gamma} \xrightarrow{\simeq}^{\kappa_{M}^{-1}} B(M)_{\basic} \ra B(M^{\mathrm{ab}}) \xrightarrow{\simeq}^{\kappa_{M^{\mathrm{ab}}}} \mathbb{X}_{*}(M^{\mathrm{ab}}_{\ol{E}})_{\Gamma}. \]
This map is an isomorphism when the derived group of $M$ is simply connected, and in general is surjective with finite fibers. In Section~\ref{sec: fixingthecenter}, we will explain a general reduction to the case that the derived group of $G$ (hence, that of $M$) is simply connected, so the reader is invited to make this extra assumption.

\begin{remark}
The notation $\Bun_{M}^{\alpha}$ for the connected component attached to $\alpha \in \pi_{1}(M)_{\Gamma} \xrightarrow{\simeq}^{\kappa_{M}^{-1}} B(M)_{\basic}$ is perhaps a bit confusing, since it could also refer to the HN-strata attached to the element $\alpha$. To alleviate the confusion, we will always write $b_{\alpha} \in B(M)_{\basic}$ for the element attached to $\alpha \in \pi_{1}(M)_{\Gamma}$ under the isomorphism $\kappa_{M}^{-1}$, and in turn always write $\Bun_{M}^{b_{\alpha}}$ for the HN-strata.
\end{remark}

We recall the following result.
\begin{proposition}{\label{prop: qissmooth}}{\cite[Proposition~3.5]{HI}}
The map $\mf{q}_{P}$ is a cohomologically smooth map of Artin $v$-stacks with connected fibers. Over the connected component $\Bun_{M}^{\alpha}$ it is pure of $\ell$-dimension equal to $d_{\alpha} := \langle \xi_{P}, \ol{\alpha} \rangle$. 
\end{proposition}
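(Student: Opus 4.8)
The plan is to realize $\mf{q}_{P}$ as a tower of fibrations by Banach--Colmez spaces attached to the unipotent radical $U = R_{u}(P)$, and then to read off cohomological smoothness, connectedness of fibers, and the $\ell$-dimension from the analysis of such spaces in \cite{FarguesScholze}. Concretely, a point of $\Bun_{P}$ over $S$ lying above $\mathcal{F}_{M}\in\Bun_{M}(S)$ is the same datum as a torsor on $X_{S}$ under the twisted group scheme $\mathcal{F}_{M}\times^{M}U$. I would choose a chain of $P$-stable normal subgroups $U = U_{1}\supsetneq U_{2}\supsetneq\cdots\supsetneq U_{N+1}=1$ (e.g. the descending central series) with each $\mathrm{gr}_{i}:=U_{i}/U_{i+1}$ central in $U/U_{i+1}$ and, as a group with $M$-action, a vector group with linear $M$-action; such a filtration exists for any parabolic in any characteristic since $U$ is built from root subgroups. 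Moduli of bundles along $P = P/U_{N+1}\twoheadrightarrow\cdots\twoheadrightarrow P/U_{1}=M$ then factor $\mf{q}_{P}$ as a composite of maps $f_{i}\colon\Bun_{P/U_{i+1}}\to\Bun_{P/U_{i}}$. Since cohomological smoothness is stable under composition with additive $\ell$-dimensions, and a universally open map with connected fibers over a connected base has connected total space, it suffices to treat each $f_{i}$.

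For a single $f_{i}$: over $\mathcal{G}\in\Bun_{P/U_{i}}(S)$ with underlying $M$-bundle $\mathcal{F}_{M}$, centrality of $\mathrm{gr}_{i}$ turns the problem of lifting $\mathcal{G}$ to $\Bun_{P/U_{i+1}}$ into a torsor problem for the vector bundle $\mathcal{V}_{i}:=\mathcal{F}_{M}\times^{M}\mathrm{gr}_{i}$ on $X_{S}$; the obstruction to a lift lies in $H^{2}(X_{S},\mathcal{V}_{i})=0$ (coherent cohomological dimension $1$), so the fiber of $f_{i}$ over $\mathcal{G}$ is a torsor under the stack of $\mathcal{V}_{i}$-torsors on $X_{S}$, which sits in an extension $0\to B\,\BC(\mathcal{V}_{i})\to(-)\to\BC(\mathcal{V}_{i}[1])\to 0$. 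By the theory of Banach--Colmez spaces in \cite{FarguesScholze}---after d\'evissage along the relative Harder--Narasimhan filtration of $\mathcal{V}_{i}$ into slope-positive, slope-zero and slope-negative parts---both $\BC(\mathcal{V}_{i})$ and $\BC(\mathcal{V}_{i}[1])$ are cohomologically smooth Artin $v$-stacks over $S$; since the classifying stack of a cohomologically smooth group $v$-stack is again cohomologically smooth (with negated $\ell$-dimension, by descent along the tautological torsor), $f_{i}$ is cohomologically smooth, and hence so is $\mf{q}_{P}$. For connectedness: $B\,\BC(\mathcal{V}_{i})$ is topologically a point, so the geometric fiber of $f_{i}$ is topologically $|\BC(\mathcal{V}_{i}[1])|$, which is connected because $H^{1}(X_{S},\mathcal{V}_{i})$ only sees the slope-negative part of $\mathcal{V}_{i}$ and $\BC(\mathcal{E}[1])$ attached to a slope-negative bundle $\mathcal{E}$ is a connected diamond by \cite{FarguesScholze}; propagating up the tower, using that cohomologically smooth maps are universally open, gives connected fibers for $\mf{q}_{P}$.

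It remains to compute the $\ell$-dimension, which is locally constant on $\Bun_{M}$ because the relative dualizing complex is invertible. Over a geometric point $\mathcal{F}_{M}$ of $\Bun_{M}^{\alpha}$ it equals $\sum_{i}\big(\dim\BC(\mathcal{V}_{i}[1])-\dim\BC(\mathcal{V}_{i})\big)=-\sum_{i}\deg\mathcal{V}_{i}=-\deg\big(\mathcal{F}_{M}\times^{M}\Lie(U)\big)$, where I use that on the Fargues--Fontaine curve the Banach--Colmez Euler characteristic of a vector bundle equals its degree. Since $\det\big(\mathcal{F}_{M}\times^{M}\Lie(U)\big)$ is, by the very definition of $\xi_{P}\in\mathbb{X}^{*}(M^{\mathrm{ab}})$, the line bundle on $X_{S}$ associated with $\mathcal{F}_{M^{\mathrm{ab}}}$ through $\xi_{P}$, its degree is $-\langle\xi_{P},\ol{\alpha}\rangle$ once one matches the normalization of the Kottwitz invariant against degree (recall $\kappa(\mathcal{O}(1))=-1$ while $\deg\mathcal{O}(1)=1$). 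Thus the $\ell$-dimension over $\Bun_{M}^{\alpha}$ is $\langle\xi_{P},\ol{\alpha}\rangle=d_{\alpha}$, and purity follows since this is constant on the connected stack $\Bun_{M}^{\alpha}$.

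The main obstacle is the input from \cite{FarguesScholze}: one needs cohomological smoothness and connectedness of Banach--Colmez spaces uniformly in families and for arbitrary slope profiles, which requires running the Harder--Narasimhan d\'evissage inside the six-functor formalism and carefully tracking the disconnected, $\ell$-dimension-zero contribution of the slope-zero part. By comparison, the vanishing of the $H^{2}$-obstruction, the passage between torsors under the stack of $\mathcal{V}_{i}$-torsors and its constituents, the openness argument for connectedness, and the bookkeeping identifying $-\deg(\mathcal{F}_{M}\times^{M}\Lie(U))$ with $\langle\xi_{P},\ol{\alpha}\rangle$ are routine.
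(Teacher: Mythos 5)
The paper does not actually prove this statement -- it is quoted from \cite[Proposition~3.5]{HI} -- so the comparison is with the standard argument there, and your overall route (filter $U$ by $P$-stable subgroups with vector-group quotients, reduce each stage of $\Bun_{P/U_{i+1}}\to\Bun_{P/U_i}$ to the stack of torsors under the twisted bundle $\mathcal{V}_i=\mathcal{F}_M\times^M\mathrm{gr}_i$, kill the $H^2$-obstruction by cohomological dimension $1$, and get the $\ell$-dimension from $-\deg(\mathcal{F}_M\times^M\Lie U)=\langle\xi_P,\ol{\alpha}\rangle$ with the sign convention $\kappa(\mathcal{O}(1))=-1$) is exactly the expected one; the connectedness propagation and the dimension bookkeeping are fine.

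However, the key smoothness step has a genuine gap as written. First, you invoke a ``d\'evissage along the relative Harder--Narasimhan filtration of $\mathcal{V}_i$'': such a filtration does not exist over a general base $S$ (the HN polygon jumps), and cohomological smoothness cannot be checked stratum by stratum on the base, so the d\'evissage cannot be run in families in the form you describe. Second, and more seriously, the assertion that $\BC(\mathcal{V}_i)$ is cohomologically smooth is false whenever $\mathcal{V}_i$ has a slope-zero part, which happens over many points of $\Bun_M$ (e.g.\ the trivial $M$-bundle, where $\mathcal{V}_i$ is trivial): there $\BC(\mathcal{V}_i)$ is a locally profinite set such as $\underline{E}^n$, which is \emph{not} $\ell$-cohomologically smooth (its compactly supported cohomology is too large for Poincar\'e duality with an invertible dualizing complex), even though its classifying stack $[\ast/\underline{E}^n]$ is smooth of dimension $0$ by the Fargues--Scholze result on classifying stacks of locally profinite groups. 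So your inference ``$\BC(\mathcal{V}_i)$ smooth $\Rightarrow B\,\BC(\mathcal{V}_i)$ smooth'' starts from a false premise, and the smoothness of the slope-zero contribution must be extracted from the classifying-stack theorem rather than from the group itself. The standard repair, which also fixes the family issue, is to choose (locally on $S$) a two-term resolution $0\to\mathcal{E}^{-1}\to\mathcal{E}^0\to\mathcal{V}_i\to 0$ by bundles with everywhere negative slopes (a surjection $\mathcal{O}(-n)^N\twoheadrightarrow\mathcal{V}_i$ for $n\gg 0$ has negative-slope kernel), and then to identify the stack of $\mathcal{V}_i$-torsors with the quotient $[\BC(\mathcal{E}^0[1])/\BC(\mathcal{E}^{-1}[1])]$ of a cohomologically smooth diamond by a cohomologically smooth group diamond; this is smooth of the expected dimension $-\deg\mathcal{V}_i$ uniformly in families, after which your dimension, connectedness, and sign arguments go through unchanged.
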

This gives the following corollary, using that cohomologically smooth maps are universally open.
\begin{corollary}{\label{cor: BunPconncomp}}
We have a decomposition into connected components:
\[ \Bun_{P} = \bigsqcup_{\alpha \in B(M)_{\basic}} \Bun_{P}^{\alpha}. \]
\end{corollary}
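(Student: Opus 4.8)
The plan is to deduce the connected component decomposition of $\Bun_P$ from the already-established decomposition $\Bun_M = \bigsqcup_{\alpha} \Bun_M^{\alpha}$ together with the topological properties of $\mf{q}_P$ recorded in Proposition~\ref{prop: qissmooth}. The key point is that a cohomologically smooth morphism is universally open, and that the fibers of $\mf{q}_P$ are connected (and nonempty). Thus $\mf{q}_P$ restricts to $\mf{q}_P^{\alpha}\colon \Bun_P^{\alpha}\to\Bun_M^{\alpha}$, the preimage of the open-closed substack $\Bun_M^{\alpha}$; since preimages of open-closed substacks are open-closed, we immediately get a decomposition $\Bun_P = \bigsqcup_{\alpha} \Bun_P^{\alpha}$ into open-closed pieces indexed by $\pi_1(M)_\Gamma \cong B(M)_{\basic}$.

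It then remains to check that each $\Bun_P^{\alpha}$ is itself connected, so that this is genuinely the decomposition into connected components. For this I would argue on the level of the underlying topological space $|\Bun_P^{\alpha}|$. The map $|\mf{q}_P^{\alpha}|\colon|\Bun_P^{\alpha}|\to|\Bun_M^{\alpha}|$ is open (by universal openness of cohomologically smooth maps), surjective (the fibers are connected, hence in particular nonempty — alternatively, $\mf{q}_P$ admits a section over any point coming from choosing the trivial $U$-extension), and has connected fibers; moreover $|\Bun_M^{\alpha}|$ is connected. A standard point-set argument now shows that the total space is connected: if $|\Bun_P^{\alpha}| = V_1 \sqcup V_2$ were a nontrivial decomposition into open-closed subsets, then since each fiber is connected it lies entirely in $V_1$ or entirely in $V_2$, so the $V_i$ would descend to disjoint open (by openness of $|\mf{q}_P^\alpha|$) subsets of $|\Bun_M^{\alpha}|$ covering it, contradicting connectedness of the base. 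One small technical care: $\Bun_M^\alpha$ and $\Bun_P^\alpha$ are v-stacks rather than topological spaces, so strictly speaking one should phrase connectedness via the usual convention (a v-stack is connected if it is not the disjoint union of two nonempty open substacks, equivalently if $|{-}|$ is connected as a topological space using that these Artin v-stacks are spatial in the relevant sense); the argument above is exactly the one adapted to that setting.

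The main obstacle — really the only nonformal input — is the surjectivity and connected-fibers statement for $\mf{q}_P$, but this is precisely the content of Proposition~\ref{prop: qissmooth} (connected fibers is asserted there, and cohomological smoothness forces the fibers to be nonempty over every point, via e.g.\ the section given by the trivial reduction, or simply because a cohomologically smooth map is in particular surjective onto its open image and $\mf{q}_P$ is known to be surjective). Granting that, everything reduces to the elementary topological lemma that an open surjection with connected fibers onto a connected space has connected total space. So I would structure the write-up as: (i) invoke universal openness to get the open-closed decomposition indexed by $\pi_0(\Bun_M) = B(M)_{\basic}$; (ii) invoke connectedness of fibers plus the topological lemma to conclude each piece is connected; (iii) reindex via $\kappa_M^{-1}$ to match the statement.
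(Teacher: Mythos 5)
Your proposal is correct and matches the paper's (essentially one-line) argument: the paper deduces the corollary from Proposition~\ref{prop: qissmooth} precisely by using that $\mf{q}_P$ is universally open (being cohomologically smooth) with connected, nonempty fibers over the connected components $\Bun_M^{\alpha}$, which is exactly your open-surjection-with-connected-fibers lemma applied to the pullbacks $\Bun_P^{\alpha}\to\Bun_M^{\alpha}$. The extra care you take about passing from fibers of the v-stack map to fibers of $|\mf{q}_P|$ and about the section given by the trivial $U$-reduction is consistent with, and slightly more explicit than, what the paper leaves implicit.
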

We recall that $\Bun_{M}$ is cohomologically smooth of pure $\ell$-dimension $0$ by \cite[Theorem~1.4.1]{FarguesScholze}. It therefore follows from the previous corollary that $\Bun_{P}$ is cohomologically smooth. We write 
\[ \dim(\Bun_{P}): |\Bun_{P}| \ra \mathbb{Z} \]
for the locally constant function specified by the $\ell$-dimension. The previous Proposition tells us that this is equal to $d_{\alpha}$ on the connected component $\Bun_{P}^{\alpha}$. However, the dualizing complex on $\Bun_{P}$ also includes some character twists which one needs to incorporate in order to get the morally correct ``normalized'' Eisenstein functor.
\begin{definition}
Let $\delta_{P}=|\xi_P|: M(E) \ra \Lambda^{*}$ (resp. $\delta_{P}^{1/2}=|\xi_P|^{\tfrac 12}$) denote the modulus character of $P$ (resp.~square root of the modulus character of $P$, fixed by our choice of $\sqrt{q}\in \Lambda$). As $\xi_P$ factors over $M^{\mathrm{ab}}$, these factor over characters $\ol{\delta}_{P}$ (resp. $\ol{\delta}_{P}^{1/2}$) of $M^{\mathrm{ab}}(E)$. We let $\ol{\Delta}_{P} \in D(\Bun_{M^{\mathrm{ab}}},\Lambda)$ (resp. $\ol{\Delta}_{P}^{1/2}$), be the sheaf whose value on the connected component $\Bun_{M^{\mathrm{ab}}}^{\ol{\alpha}} \simeq [\ast/\underline{M^{\mathrm{ab}}(E)}]$ is given by the smooth characters $\ol{\delta}_{P}$ (resp. $\ol{\delta}_{P}^{1/2}$) for all $\ol{\alpha} \in \mathbb{X}_{*}(M^{\mathrm{ab}}_{\ol{E}})_{\Gamma}$. We  write $\Delta_{P}$ (resp. $\Delta_{P}^{1/2}$) for $q_{M}^{*}(\ol{\Delta}_{P})$ (resp. $q_{M}^{*}(\ol{\Delta}_{P}^{1/2})$), and set $\IC_{\Bun_{P}} := \mf{q}_{P}^{*}(\Delta_{P}^{1/2})[\dim(\Bun_{P})]$.
\end{definition}
The key result is then as follows. 
\begin{theorem}{\cite{HI}}{\label{thm: dualcomplexonBunP}}
There is an isomorphism 
\[ K_{\Bun_{P}} \simeq \mf{q}_{P}^{*}(\Delta_{P})[2\dim(\Bun_{P})] \]
of sheaves in $D(\Bun_{P},\Lambda)$. In particular, the sheaf $\IC_{\Bun_{P}}$ is Verdier self-dual. 
\end{theorem}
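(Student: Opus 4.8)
This theorem is \cite{HI}; the plan is to deduce it, as in \emph{loc.\ cit.}, from a computation of the relative dualizing complex of $\mf{q}_P$. Since $\Bun_M$ is cohomologically smooth over $\ast=\Spd\ol{\mathbb F}_q$ of pure $\ell$-dimension $0$ with trivial dualizing complex ($K_{\Bun_M}\simeq\Lambda$; cf.\ \cite{FarguesScholze}), functoriality of dualizing complexes for the composite $\Bun_P\xrightarrow{\mf{q}_P}\Bun_M\to\ast$ gives $K_{\Bun_P}\simeq\mf{q}_P^!\Lambda$. As $\mf{q}_P$ is cohomologically smooth (Proposition~\ref{prop: qissmooth}), $\mf{q}_P^!\Lambda$ is $\otimes$-invertible, and over the component $\Bun_P^\alpha$ it is concentrated in $\ell$-degree $-2d_\alpha$; hence $K_{\Bun_P}|_{\Bun_P^\alpha}\simeq\mathcal L_\alpha[2d_\alpha]$ for a rank-one local system $\mathcal L_\alpha$. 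Granting the identification $\mathcal L_\alpha\simeq\mf{q}_P^\ast(\Delta_P|_{\Bun_M^\alpha})$, the ``in particular'' is then formal: $\mathbb D_{\Bun_P}(\IC_{\Bun_P})=\RHom(\mf{q}_P^\ast\Delta_P^{1/2}[\dim\Bun_P],\,\mf{q}_P^\ast\Delta_P[2\dim\Bun_P])\simeq\mf{q}_P^\ast\big(\Delta_P\otimes(\Delta_P^{1/2})^{-1}\big)[\dim\Bun_P]=\IC_{\Bun_P}$, using $(\Delta_P^{1/2})^{\otimes2}=\Delta_P$.

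So the content is to identify the relative dualizing complex $K_{\mf{q}_P}$, up to the locally constant shift $[2\dim\Bun_P]$, with the local system $\mf{q}_P^\ast\Delta_P$. The guiding principle is that this rank-one local system is a ``determinant of cohomology'' twist attached to the relative tangent complex of $\mf{q}_P$ — at a $P$-bundle $\mathcal F_P$ this is $R\Gamma(X_{(-)},\mathfrak u_{\mathcal F_P})[1]$ for $\mathfrak u=\Lie(U)$ — and that this is controlled by the \emph{line bundle} $\det\mathfrak u$. To make this precise I would filter $U$ by its lower central series $U=U^{(1)}\supset U^{(2)}\supset\cdots$, whose graded pieces $\mathfrak u_j$ are representations of $M$ (with trivial $U$-action). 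Correspondingly $\mf{q}_P$ factors as a tower $\Bun_P=\Bun_{P/U^{(k+1)}}\to\cdots\to\Bun_{P/U^{(2)}}\to\Bun_{P/U}=\Bun_M$, in which each map is a torsor under the $S$-relative (derived) Banach--Colmez stack $\BC(\mathcal E_j[1])$ attached to the bundle $\mathcal E_j$ obtained by twisting $\mathfrak u_j$ by the universal $M$-bundle. Each such map is cohomologically smooth, and — this is the crux — its relative dualizing complex is the pullback of a rank-one local system on the base depending only on $\det\mathcal E_j=\det(\mathfrak u_j)^{\mathrm{univ}}$, with the nontrivial part of this dependence concentrated on the slope-$0$ component of $\mathcal E_j$ (an $\underline{E^\times}$-torsor's worth of data over $S$) and given there by composition with $|\cdot|:E^\times\to\Lambda^\ast$.

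Multiplying over $j$, and using that $\det\mathfrak u=\prod_j\det\mathfrak u_j$ as characters of $M$ and that $\det\mathrm{Ad}|_{\mathfrak u}=\xi_P$ factors through $M^{\mathrm{ab}}$, one finds that $K_{\mf{q}_P}$ (up to the shift) is the pullback along $\Bun_{\xi_P}\circ\mf{q}_P^\dagger:\Bun_P\to\Bun_{M^{\mathrm{ab}}}\to\Bun_{\mathbb G_m}$ of the universal rank-one local system on $\Bun_{\mathbb G_m}=\bigsqcup_{\mathbb Z}[\ast/\underline{E^\times}]$ given by $|\cdot|:E^\times\to\Lambda^\ast$ on each component. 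A direct computation in the rank-one case pins this down — most transparently over the trivial bundle, where $\mf{q}_P$ restricts to the classifying map $[\ast/\underline{P(E)}]\to[\ast/\underline{M(E)}]$ of $P(E)\twoheadrightarrow M(E)$, whose relative dualizing complex is visibly the modular character $\delta_P$ of the non-unimodular group $P(E)$, matching $|\xi_P|$. Unwinding the identifications, the resulting local system is exactly $\mf{q}_P^\ast q_M^\ast\ol\Delta_P=\mf{q}_P^\ast\Delta_P$, which completes the proof.

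I expect the main obstacle to be the crux step of the second paragraph: establishing, in the six-functor formalism of \cite{MotivicGeometrization}, the precise form of the relative dualizing complex of a (derived) Banach--Colmez stack $\BC(\mathcal E[1])$ over $S$ — namely that it is the pullback of a local system on $S$ depending only on $\det\mathcal E$, and identifying that dependence with the modular/Haar twist. This needs a careful analysis of the dualizing complexes of the positive and negative Banach--Colmez spaces $\BC(\mathcal O(\lambda))$, $\BC(\mathcal O(\lambda)[1])$ (the ``affine space'' case, with trivial dualizing complex up to shift and Tate twist) glued with the classifying-stack case $[\ast/\underline{E^\times}]$, where the twist by $|\cdot|$ genuinely appears; everything else — the reduction along the lower central series and the final bookkeeping identification with $\Delta_P$ — is then routine.
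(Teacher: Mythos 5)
The paper itself gives no argument for this statement: it is imported verbatim from \cite{HI} (and the only ingredient of \cite{HI} reproved or used here is the consequence $K_{\Bun_M}\simeq\Lambda$ and the corollaries drawn after the theorem), so there is no internal proof to compare against. Your sketch follows the same strategy as the cited source: the formal deduction of Verdier self-duality of $\IC_{\Bun_P}$ from invertibility of $K_{\Bun_P}$ is correct, and the d\'evissage of $\mf{q}_P$ along the lower central series of $U$ into fibrations by the stacks $\BC(\mathcal E_j[1])$, with the modulus character entering through the norm-character (Haar measure) twist attached to the slope-zero, classifying-stack contributions, is exactly the mechanism by which $\delta_P$ appears in \cite{HI} (compare the character $\delta_b$ and \cite[Corollary~1.6]{HI} used later in this paper). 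The one caveat is the step you yourself flag as the crux: the assertion that the relative dualizing complex of $\BC(\mathcal E_j[1])\to S$ is pulled back from $S$ and depends only on $\det\mathcal E_j$ cannot be checked slope-by-slope, because the Harder--Narasimhan type of $\mathcal E_j$ jumps over $\Bun_M$; proving that one gets a single rank-one local system on the base, insensitive to this stratification and matching $|\cdot|\circ\xi_P$, is precisely the technical core of \cite{HI}, so your plan correctly isolates, but does not itself discharge, the main difficulty.
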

We recall that we have an isomorphism $K_{\Bun_{M}} \simeq \Lambda$ \cite[Proposition~1.1]{HI}. Using this, we deduce the following corollary. 
\begin{corollary}{\label{cor: reldualcomploverBunM}}
We have an isomorphism 
\[ K_{\Bun_{P}/\Bun_{M}} \simeq \mf{q}_{P}^{*}(\Delta_{P})[2\dim(\Bun_{P})] \]
of sheaves in $D(\Bun_{P},\Lambda)$.
\end{corollary}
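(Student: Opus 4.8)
The plan is to combine Theorem \ref{thm: dualcomplexonBunP} with the transitivity of relative dualizing complexes along the composite $\mf{q}_P\colon \Bun_P \to \Bun_M \to \ast$. Recall that for a composable pair of maps $X \xrightarrow{f} Y \xrightarrow{g} Z$ of Artin $v$-stacks (each, say, compactifiable or suitably nice so that the six-functor formalism applies), one has a canonical isomorphism $K_{X/Z} \simeq f^{*}(K_{Y/Z}) \otimes K_{X/Y}$; equivalently, $K_{X/Y} \simeq K_{X/Z} \otimes f^{*}(K_{Y/Z})^{-1}$ whenever $K_{Y/Z}$ is invertible. First I would apply this with $f = \mf{q}_P$, $g$ the structure map $\Bun_M \to \ast = \Spd \ol{\mathbb{F}}_q$, and $Z = \ast$, so that $X/Z = \Bun_P$ and $Y/Z = \Bun_M$ in the notation $K_{(-)} := K_{(-)/\ast}$ used in the excerpt.

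Concretely: by Theorem \ref{thm: dualcomplexonBunP} we have $K_{\Bun_P} \simeq \mf{q}_P^{*}(\Delta_P)[2\dim(\Bun_P)]$, and by \cite[Proposition~1.1]{HI} we have $K_{\Bun_M} \simeq \Lambda$, hence $\mf{q}_P^{*}(K_{\Bun_M}) \simeq \Lambda$ as well. Feeding both into the transitivity isomorphism gives
\[ K_{\Bun_P/\Bun_M} \simeq K_{\Bun_P} \otimes \mf{q}_P^{*}(K_{\Bun_M})^{-1} \simeq \mf{q}_P^{*}(\Delta_P)[2\dim(\Bun_P)] \otimes \Lambda \simeq \mf{q}_P^{*}(\Delta_P)[2\dim(\Bun_P)], \]
which is exactly the claim. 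Here $\Delta_P = q_M^{*}(\ol{\Delta}_P)$ is by construction an invertible object (locally on each component it is a rank-one smooth character), so all the tensor inverses above make sense, and the shift $[\dim(\Bun_P)]$ is by the locally constant function recorded after Corollary \ref{cor: BunPconncomp}.

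The only point requiring care — and the step I would expect to be the main (minor) obstacle — is justifying the transitivity isomorphism $K_{X/Z} \simeq f^{*}(K_{Y/Z}) \otimes K_{X/Y}$ in the precise setting of Artin $v$-stacks and the motivic six-functor formalism of \cite{MotivicGeometrization}: one must know that $g$ and $g \circ f$ lie in the class of morphisms for which $g^{!}$, $(gf)^{!}$ are defined and for which $(gf)^{!} \simeq f^{!} g^{!}$, so that applying these to the unit and using $f^{!}(-) \simeq f^{*}(-) \otimes K_{X/Y}$ (valid because $\mf{q}_P$ is cohomologically smooth, by Proposition \ref{prop: qissmooth}) yields the stated formula. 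Since $\Bun_M$ and $\Bun_P$ are cohomologically smooth Artin $v$-stacks (as recalled in the excerpt), all the relevant functors and their compatibilities are available, so this is a formal bookkeeping matter rather than a substantive difficulty. Alternatively — and perhaps more cleanly — one avoids the abstract transitivity statement entirely by observing that cohomological smoothness of $\mf{q}_P$ gives directly $K_{\Bun_P/\Bun_M} \simeq \mf{q}_P^{*}(\text{a character twist})[2\dim(\Bun_P)]$ with the twist being $\Delta_P$ by the very definition of $\IC_{\Bun_P}$ together with its Verdier self-duality in Theorem \ref{thm: dualcomplexonBunP}; one then only needs $K_{\Bun_M} \simeq \Lambda$ to identify the absolute and relative dualizing complexes up to this twist.
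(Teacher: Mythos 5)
Your proposal is correct and matches the paper's (essentially implicit) argument: the paper deduces the corollary directly from Theorem \ref{thm: dualcomplexonBunP} together with $K_{\Bun_M}\simeq \Lambda$ from \cite[Proposition~1.1]{HI}, exactly via the relation $K_{\Bun_P}\simeq K_{\Bun_P/\Bun_M}\otimes \mf{q}_P^{*}(K_{\Bun_M})$ that you spell out. Your remarks on transitivity and cohomological smoothness of $\mf{q}_P$ are the right (routine) justification, so nothing further is needed.
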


\begin{definition}
We define the normalized Eisenstein functors
\[ \nmEisshriekP: D(\Bun_{M},\Lambda) \ra D(\Bun_{G},\Lambda)   \]
\[ A \mapsto \mf{p}_{P!}(\mf{q}_{P}^{*}(A) \otimes \IC_{\Bun_{P}}) \]
and
\[ \nmEisstarP: D(\Bun_{M},\Lambda) \ra D(\Bun_{G},\Lambda) \]
\[ A \mapsto \mf{p}_{P*}(\mf{q}_{P}^{*}(A) \otimes \IC_{\Bun_{P}}). \]
Similarly, we define the normalized constant term functors
\[\CT_{P*}: D(\Bun_{G},\Lambda) \ra D(\Bun_{M},\Lambda) \] 
\[ A \mapsto \mathfrak{q}_\ast(\mathfrak{p}^!(-) \otimes \IC_{\Bun_P}^{-1}) \]
and
\[ \CT_{P!}: D(\Bun_{G},\Lambda) \ra D(\Bun_{M},\Lambda) \]
\[ A \mapsto \mf{q}_{!}(\mathfrak{p}^{*}(A) \otimes \IC_{\Bun_{P}}). \]
If we want to emphasize the dependence on the group $G$, we will do this by using the superscript $(-)^{G}$.
\end{definition}
Using Corollary \ref{cor: BunPconncomp}, we get evident decompositions of all of these functors. We use the superscript $(-)^\alpha$ to denote the constituents of these decompositions, so e.g. $\Eis_{P!}^\alpha:D(\Bun_{M}^{\alpha},\Lambda)\to D(\Bun_{G},\Lambda)$ is the precomposition of $\Eis_{P!}$ with extension by zero from the connected component labelled by $\alpha$. 

For completeness, we note the following simple observation.
\begin{lemma}{\label{lemma: constanttermEisensteinSeriesAdjunctions}}
There are adjunctions 
\[ \nmEisshriekP \dashv \CT_{P*} \]
and
\[ \CT_{P!} \dashv \nmEisstarP \]
\end{lemma}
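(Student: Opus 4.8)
The plan is to deduce both adjunctions purely formally from the standard $(f^*, f_*)$ and $(f_!, f^!)$ adjunctions associated to $\mathfrak{p}_P$ and $\mathfrak{q}_P$, together with the key geometric input of Corollary~\ref{cor: reldualcomploverBunM}, which identifies the relative dualizing complex $K_{\Bun_P/\Bun_M}$ with $\mathfrak{q}_P^*(\Delta_P)[2\dim(\Bun_P)]$, i.e.\ with $\IC_{\Bun_P}^{\otimes 2}$ up to the character twist encoded in $\Delta_P$. The first adjunction $\Eis_{P!} \dashv \CT_{P*}$ is essentially definitional: since $\Eis_{P!}(A) = \mathfrak{p}_{P!}(\mathfrak{q}_P^*(A) \otimes \IC_{\Bun_P})$ and $\CT_{P*}(B) = \mathfrak{q}_{P*}(\mathfrak{p}_P^!(B) \otimes \IC_{\Bun_P}^{-1})$, one computes
\[
\Hom_{\Bun_G}(\Eis_{P!}(A), B) = \Hom_{\Bun_G}(\mathfrak{p}_{P!}(\mathfrak{q}_P^*(A)\otimes \IC_{\Bun_P}), B) = \Hom_{\Bun_P}(\mathfrak{q}_P^*(A)\otimes \IC_{\Bun_P}, \mathfrak{p}_P^!(B))
\]
using $(\mathfrak{p}_{P!},\mathfrak{p}_P^!)$-adjunction; then tensoring by the invertible object $\IC_{\Bun_P}^{-1}$ rewrites this as $\Hom_{\Bun_P}(\mathfrak{q}_P^*(A), \mathfrak{p}_P^!(B)\otimes \IC_{\Bun_P}^{-1})$, and $(\mathfrak{q}_P^*, \mathfrak{q}_{P*})$-adjunction finishes it off as $\Hom_{\Bun_M}(A, \CT_{P*}(B))$. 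All isomorphisms are natural in $A$ and $B$, so this is an adjunction.

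For the second adjunction $\CT_{P!} \dashv \Eis_{P*}$, I would run the analogous manipulation but now starting from the $(\mathfrak{q}_{P!}, \mathfrak{q}_P^!)$ and $(\mathfrak{p}_P^*, \mathfrak{p}_{P*})$ adjunctions. Here the only nontrivial point — and the place where the geometric input is genuinely used — is the identity $\mathfrak{q}_P^!(-) \cong \mathfrak{q}_P^*(-)\otimes \IC_{\Bun_P}^{\otimes 2}$, which follows from Corollary~\ref{cor: reldualcomploverBunM} together with the fact that $\mathfrak{q}_P$ is cohomologically smooth (Proposition~\ref{prop: qissmooth}), so that $\mathfrak{q}_P^!(-) = \mathfrak{q}_P^*(-)\otimes K_{\Bun_P/\Bun_M}$. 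Concretely, for $A \in D(\Bun_G,\Lambda)$ and $B \in D(\Bun_M,\Lambda)$ one has
\[
\Hom_{\Bun_M}(\CT_{P!}(A), B) = \Hom_{\Bun_M}(\mathfrak{q}_{P!}(\mathfrak{p}_P^*(A)\otimes \IC_{\Bun_P}), B) = \Hom_{\Bun_P}(\mathfrak{p}_P^*(A)\otimes \IC_{\Bun_P}, \mathfrak{q}_P^!(B)),
\]
and substituting $\mathfrak{q}_P^!(B) = \mathfrak{q}_P^*(B)\otimes \IC_{\Bun_P}^{\otimes 2}$ and cancelling one copy of the invertible $\IC_{\Bun_P}$ turns the right-hand side into $\Hom_{\Bun_P}(\mathfrak{p}_P^*(A), \mathfrak{q}_P^*(B)\otimes \IC_{\Bun_P}) = \Hom_{\Bun_G}(A, \mathfrak{p}_{P*}(\mathfrak{q}_P^*(B)\otimes \IC_{\Bun_P})) = \Hom_{\Bun_G}(A, \Eis_{P*}(B))$.

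I do not expect any serious obstacle; the content is entirely bookkeeping with the six functors, invertibility of $\IC_{\Bun_P}$, and the smoothness of $\mathfrak{q}_P$. The one subtlety worth flagging is that $\mathfrak{p}_P$ is only representable in locally spatial diamonds (not proper), so $\mathfrak{p}_{P!} \neq \mathfrak{p}_{P*}$ in general and one must keep the $!$ and $*$ versions scrupulously distinct; but since we are invoking the abstract adjunctions $(\mathfrak{p}_{P!}, \mathfrak{p}_P^!)$ and $(\mathfrak{p}_P^*, \mathfrak{p}_{P*})$ which hold in the six-functor formalism for any such map, this causes no difficulty. One could alternatively phrase the whole argument in terms of the projection formula and the identity $\IC_{\Bun_P}^{\otimes 2} = K_{\Bun_P/\Bun_M}$ once and for all, which makes the parallel with the classical statement that $\CT_{P!}$ is left adjoint to $\Eis_{P*}$ ``via $\mathfrak{q}^*(-)\otimes \IC^{\otimes 2} = \mathfrak{q}^!(-)$'' transparent, exactly as indicated in the introduction.
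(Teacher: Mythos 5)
Your proposal is correct and follows essentially the same route as the paper: the first adjunction is obtained by composing the standard $(\mathfrak{p}_{P!},\mathfrak{p}_P^!)$ and $(\mathfrak{q}_P^*,\mathfrak{q}_{P*})$ adjunctions with the invertibility of $\IC_{\Bun_P}$, and the second reduces to the identification $\mathfrak{q}_P^!(-)\simeq \mathfrak{q}_P^*(-)\otimes \IC_{\Bun_P}^{\otimes 2}$ coming from cohomological smoothness of $\mathfrak{q}_P$ and Corollary~\ref{cor: reldualcomploverBunM}, exactly as in the paper's proof.
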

\begin{proof}
The first adjunction follows immediately from the definitions, and the second adjunction follows from the definitions and the identifications: $\mf{q}_{P}^{!}(-) \simeq \mf{q}_{P}^{*}(-) \otimes \mf{q}_{P}^{*}(\Delta_{P}) = \mf{q}_{P}^{*}(-) \otimes \IC_{\Bun_{P}}^{\otimes 2}$ guaranteed by Corollary \ref{cor: reldualcomploverBunM}.
\end{proof}
We now give some more flavor for how these functors behave.
\subsection{$M$-reducible elements and stalks of Eisenstein series}
In this section, we will describe the stalks of the geometric Eisenstein and constant term functors in a variety of situations that will be relevant for our arguments later. We will begin with the easiest case.  
\begin{corollary}\label{cor: Eisparabinduction}
We have identifications
\begin{enumerate}
\item $\Eis_{P!}i^{M}_{1!}(-) \simeq i_{1!}i_{P}^{G}(-)$
\item $\Eis_{P*}i^{M}_{1*}(-) \simeq i_{1*}i_{P}^{G}(-)$
\item  $i_{1}^{*,M}\CT_{P!}(-) \simeq r_{P}^{G}i_{1}^{*}(-)$
\item $i_{1}^{*,M}\CT_{P*}(-) \simeq r_{P^{-}}^{G}i_{1}^{*}(-)$
\end{enumerate}
In the last part, $P^-\subset G$ denotes any parabolic opposite to $P$.
\end{corollary}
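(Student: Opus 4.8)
The plan is to reduce all four statements to the basic compatibility of the geometric functors with the locus of trivial bundles, where $\mathfrak{p}_P$ becomes the proper map classifying $P$-structures on the trivial $G$-bundle, i.e. the $p$-adic analogue of $[G/P]$. The starting point is that the inclusion $i_1 \colon \mathrm{Bun}_G^1 = [\ast/\underline{G(E)}] \hookrightarrow \mathrm{Bun}_G$ identifies $D(\mathrm{Bun}_G^1,\Lambda)$ with the category of smooth $G(E)$-representations, and similarly for $M$; under these identifications the classical normalized parabolic induction $i_P^G$ and Jacquet module $r_P^G$ (and $r_{P^-}^G$) are recovered. First I would form the Cartesian-type diagram relating $\mathfrak{q}_P, \mathfrak{p}_P$ to their restrictions over the trivial strata: pulling back $\mathrm{Bun}_P \to \mathrm{Bun}_G$ along $i_1$ gives a map whose source is the union of the locally closed pieces of $\mathrm{Bun}_P$ lying over $\mathrm{Bun}_G^1$, and the key geometric input (from \cite{FarguesScholze}, and recollected in \cite{HI}) is that the stratum $\mathrm{Bun}_P^{1} \cap \mathfrak{p}_P^{-1}(\mathrm{Bun}_G^1)$ — equivalently the connected component indexed by the trivial element — maps properly to $\mathrm{Bun}_G^1$ via a map modelled on $[(\underline{G(E)}/\underline{P(E)})/\underline{G(E)}]$, while its image in $\mathrm{Bun}_M$ lands in $\mathrm{Bun}_M^1$.

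For part (1), I would start from $\Eis_{P!} i_{1!}^M(A) = \mathfrak{p}_{P!}(\mathfrak{q}_P^\ast i_{1!}^M A \otimes \mathrm{IC}_{\mathrm{Bun}_P})$ and use proper base change along $i_1^M$ to rewrite $\mathfrak{q}_P^\ast i_{1!}^M A$ as extension by zero from $\mathfrak{q}_P^{-1}(\mathrm{Bun}_M^1)$; since $\mathfrak{q}_P$ is an open map with connected fibers (Proposition \ref{prop: qissmooth}, Corollary \ref{cor: BunPconncomp}) this preimage is exactly $\mathrm{Bun}_P^1$, on which $\mathrm{IC}_{\mathrm{Bun}_P}$ is the (shifted, twisted) constant sheaf realizing the normalization. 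Then $\mathfrak{p}_{P!}$ restricted to this locus is proper (this is the geometric content we are importing), so $\mathfrak{p}_{P!} = \mathfrak{p}_{P\ast}$ there and lands in $\mathrm{Bun}_G^1$, i.e. factors through $i_{1!} = i_{1\ast}$; unwinding the resulting expression on representation categories and matching the character twist $\Delta_P^{1/2}[\dim]$ with the normalization $\delta_P^{1/2}$ gives precisely $i_{1!} i_P^G(A)$. Part (2) is formally the same computation with lower-$\ast$ in place of lower-$!$, again using properness of $\mathfrak{p}_P$ over the trivial locus to collapse the distinction. For (3), I would apply $i_1^{\ast,M}$ to $\CT_{P!}(A) = \mathfrak{q}_{P!}(\mathfrak{p}_P^\ast A \otimes \mathrm{IC}_{\mathrm{Bun}_P})$ and use base change of $\mathfrak{q}_{P!}$ along $i_1^M$ together with the identification of $\mathfrak{q}_P^{-1}(\mathrm{Bun}_M^1) = \mathrm{Bun}_P^1$ and its proper map to $\mathrm{Bun}_G^1$; the resulting $G(E)$-to-$M(E)$ functor is $\underline{G(E)}$-equivariant cohomology of $\underline{G(E)}/\underline{P(E)}$ with $A|_{\mathrm{Bun}_G^1}$-coefficients, which is the (normalized) Jacquet functor $r_P^G$. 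Part (4) follows by the same analysis applied to $\CT_{P\ast}(A) = \mathfrak{q}_{P\ast}(\mathfrak{p}_P^! A \otimes \mathrm{IC}_{\mathrm{Bun}_P}^{-1})$, except that here the relevant stratum over $\mathrm{Bun}_G^1$ that controls the $\ast$-pushforward is cut out by the \emph{opposite} parabolic — this is the reflection, at the level of trivial bundles, of the fact that $\mathfrak{q}_{P\ast}$ sees the attracting/repelling behaviour of the $\mathbb{G}_m$-action differently, and it is exactly the geometric shadow of Bernstein's second adjointness; alternatively one can deduce (4) from (3) together with the adjunction $\CT_{P!} \dashv \Eis_{P\ast}$ of Lemma \ref{lemma: constanttermEisensteinSeriesAdjunctions} and the classical adjunction $i_P^G \dashv r_{P^-}^G$, passing through part (2).

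The main obstacle is the bookkeeping in parts (3) and (4): I need to be careful that base change for $\mathfrak{q}_{P!}$ (resp. $\mathfrak{q}_{P\ast}$) along the non-representable but cohomologically smooth map $i_1^M$ is licensed — this is fine in the motivic six-functor formalism we are using, but must be invoked correctly — and that the various shifts and modulus-character twists bundled into $\mathrm{IC}_{\mathrm{Bun}_P}$ and $\mathrm{IC}_{\mathrm{Bun}_P}^{-1}$ assemble to exactly the classical normalization factors $\delta_P^{1/2}$ rather than, say, $\delta_P^{1/2}$ for the wrong parabolic or an unnormalized version. Identifying which stratum of $\mathrm{Bun}_P$ over the trivial locus carries the $\ast$-pushforward in (4), and checking it is governed by $P^-$, is the one genuinely non-formal point; everything else is proper base change plus translating the geometry of $[\underline{G(E)}/\underline{P(E)}]$ into the language of smooth representations.
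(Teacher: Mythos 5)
For parts (1)--(3) your route is essentially the paper's: the whole corollary rests on the identification $\Bun_P^{1_M}=\mathfrak q_P^{-1}(\Bun_M^{b=1})\simeq[\ast/\ul{P(E)}]$ (this is an HN-slope statement -- a $P$-bundle whose associated $M$-bundle is trivial is itself trivial -- not a consequence of $\mathfrak q_P$ being open with connected fibers, so phrase it that way), after which (1) and (2) are base change plus properness of $[\ast/\ul{P(E)}]\to[\ast/\ul{G(E)}]$, exactly as in the paper. Your direct computation of (3) also works, since $!$-pushforward satisfies base change against arbitrary pullback; just note that the resulting functor is compactly supported pushforward along the fibers $[\ast/\ul{U(E)}]$, i.e.\ $U(E)$-coinvariants twisted by $\delta_P^{1/2}$, rather than ``$G(E)$-equivariant cohomology of $G(E)/P(E)$'' -- the conclusion $r_P^G\, i_1^\ast$ is right, the description is not. (The paper instead gets (3) formally from (2) by passing to left adjoints, using $\CT_{P!}\dashv\Eis_{P\ast}$ and Frobenius reciprocity $r_P^G\dashv i_P^G$.)

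Part (4) is where your proposal has a genuine gap. Your primary route -- identifying ``which stratum carries the $\ast$-pushforward'' -- is left unexecuted, and it cannot be made purely geometric: running the same base-change argument as in (3) (using that $i_1^M$ is an open immersion, so $i_1^{\ast,M}=i_1^{!,M}$ commutes with $\mathfrak q_{P\ast}$) computes $i_1^{\ast,M}\CT_{P\ast}$ as \emph{derived $U(E)$-invariants} of the restriction of $i_1^\ast A$, and identifying that with the $U^-(E)$-coinvariants $r_{P^-}^G i_1^\ast A$ is precisely classical second adjointness; no stratum bookkeeping will produce $P^-$ for free. Your fallback is the right idea but is miswired: from (3), the adjunction $\CT_{P!}\dashv\Eis_{P\ast}$, and part (2) you only recover (3)/(2) again, since $\CT_{P\ast}$ is not adjoint to $\CT_{P!}$ or $\Eis_{P\ast}$. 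The correct formal deduction -- and the paper's -- is to pass to \emph{right} adjoints in part (1): the right adjoint of $\Eis_{P!}\,i^M_{1!}$ is $i_1^{\ast,M}\CT_{P\ast}$ (using $\Eis_{P!}\dashv\CT_{P\ast}$ and $i_{1!}\dashv i_1^\ast$, valid because $i_1$, $i_1^M$ are open immersions onto the basic strata), while the right adjoint of $i_{1!}\,i_P^G$ is $r_{P^-}^G\, i_1^\ast$ by the Dat--Helm--Kurinczuk--Moss second adjointness $i_P^G\dashv r_{P^-}^G$. With that one-line correction your argument closes.
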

\begin{proof}
We have an identification $\Bun_{P}^{1_{M}} \simeq [\ast/\ul{P(E)}]$, which follows from HN-slope considerations. The first two isomorphisms follow easily from this identification combined with smooth and proper base-change, noting that the map $[\ast/\ul{P(E)}] \ra [\ast/\ul{G(E)}]$ is proper. The last two isomorphism follow by passing to left/right adjoints via Lemma \ref{lemma: constanttermEisensteinSeriesAdjunctions}, Frobenius reciprocity, and classical second adjointness \cite[Corollary~1.3]{DHKM}.
\end{proof}
\begin{remark}{\label{rem: relationtoclassicalsecondadjointness}}
In particular, this result (and its proof) showcases the direct connection between geometric second adjointness (Theorem \ref{thm:maintheorem}) and classical second adjointness. 
\end{remark}
We will now move into a slightly more complicated situation. In order to formulate our results properly, we need to properly take into account certain modulus character twists. To do this, we will introduce the following renormalized functors. We recall $i_{b\sharp}$, the left adjoint of the $*$-pullback considered in \cite[Proposition~VII.7.2]{FarguesScholze}. 
\begin{definition}
For $b \in B(G)$, we let $\delta_{b}: G_{b}(E) \ra \Lambda^{*}$ be the character defined in \cite[Definition~3.14]{HI}, which by our choice of $\sqrt{q}$ has a distinguished square root. For $? \in \{\sharp,*,!\}$, we define 
\[ i_{b?}^{\ren}(-) := i_{b?}^{\ren}(- \otimes \delta_{b}^{-1/2})[-\langle 2\rho_{G}, \nu_{b} \rangle] \]
Similarly, for $? \in \{*,!\}$, we define
\[ i_{b}^{\ren?}(-) := i_{b}^{?}(- \otimes \delta_{b}^{1/2})[\langle 2\rho_{G}, \nu_{b} \rangle]. \]
We note that we have natural adjunctions 
\[ i_{b!}^{\ren} \vdash i_{b}^{\ren!} \]
and 
\[ i_{b\sharp}^{\ren} \vdash i_{b}^{\ren*} \vdash i_{b*}^{\ren} \]
inherited from the usual adjunction relationships.
\end{definition}
The point is that while one has a canonical identification $D(\Bun_{G}^{b},\Lambda) = D(G_{b}(E),\Lambda)$ as in \cite[Proposition~VII.7.1]{FarguesScholze}, the space $\tilde{G}_{b}$ carries an $\ell$-adically contractible unipotent part whose compactly supported cohomology/dualizing complex carries a non-trivial $G_{b}(E)$-structure. In particular, these twists give rise to the relationship
\[ \mathbb{D}_{\mathrm{Verd}}i_{b!}^{\ren} \simeq i_{b*}^{\ren}\mathbb{D}_{\mathrm{sm}} \]
using \cite[Corollary~1.6]{HI}, where $\mathbb D_{\mathrm{sm}}$ denotes smooth duality.

Assume now that $G$ is quasisplit and fix a Borel $B\subset G$ as well as a maximal torus $T\subset B$. In that case, for any $b\in B(G)$, the slope homomorphism $\nu_b$ has a unique representative as a dominant rational cocharacter of $T$. For a fixed $b \in B(G)$, we let $G_{\{ b \}}$ denote the centralizer of $\nu_b$. We recall, by \cite[Propositions~6.2,6.3]{KottIsocrystalsI} that there exists a unique basic reduction of $b \in B(G)$ to $B(G_{\{b\}})$, which we abusively also denote by $b$, satisfying the property that $\nu_{b,G_{\{b\}}}$ is dominant with respect to the choice of Borel; in other words, $\nu_{b,G_{\{b\}}}$ agrees with $\nu_b$, as rational cocharacters of $T$. The basic reduction has the additional feature that $(G_{\{b\}})_{b} \simeq G_{b}$. In particular, $G_{\{b\}}$ is a quasi-split inner form of $G_b$. We take $P_{\{b\}}$ to be the dynamic parabolic of the slope cocharacter $\nu_{b}$, which is the standard parabolic with Levi $G_{\{b\}}$. Let $P^-_{\{b\}}$ be the opposite parabolic.

If we write $\Bun_{P_{\{b\}}}^{b}$ and $\Bun_{P^{-}_{\{b\}}}^{b}$ for the fibers of $\mf{q}_{P_{\{b\}}}$ and $\mf{q}_{P_{\{b\}}^{-}}$ over $i_{b}^{G_{\{b\}}}: \Bun_{G_{\{b\}}}^{b} \hookrightarrow \Bun_{G_{\{b\}}}$, respectively, then we have isomorphisms 
\[ \Bun_{P_{\{b\}}}^{b}\simeq \mathcal{M}_{b} \]
and
\[ \Bun_{P_{\{b\}}^{-}}^{b} \xrightarrow{\simeq} \Bun_{G}^{b}, \]
as in \cite[Corollary~1.6]{HI}. Here $\pi_{b}: \mathcal{M}_{b} \ra \Bun_{G}$ is the cohomologically smooth chart  of $\Bun_{G}$ considered in \cite[Section~V.3]{FarguesScholze}. Moreover, the map $\mf{q}$ induces the natural maps $q_{b}: \mathcal{M}_{b} \ra \Bun_{G_{\{b\}}}^{b} \simeq [\ast/\ul{G_{b}(E)}]$ and $p_{b}: \Bun_{G}^{b} \simeq [\ast/\tilde{G}_{b}] \ra [\ast/\ul{G_{b}(E)}]$ considered in \cite{FarguesScholze}. 
\begin{remark}{\label{rem: confusingslopes}}
We note that, since $\nu_b$ is dominant and $P_{\{b\}}$ is a standard parabolic, we would expect that $\Bun_{P_{\{b\}}}^{b}$ should parametrize split $P_{\{b\}}$-structures and not $\Bun_{P_{\{b\}}^{-}}^{b}$. This counter intuitive behavior comes from the minus sign when passing between $G$-bundle and isocrystals, which leads to the Harder-Narasimhan slopes of the bundle $\mathcal{F}_{b}$ being the negatives of the isocrystal slopes attached to $b$. One could of course normalize this sign away, but then one would run into discrepancies when comparing with the literature on local and global Shimura varieties.
\end{remark}
We obtain the following description of the functors described above.
\begin{corollary}\label{cor: dominantreductionConstantEisensteinDescription}
With notation as above, we have the following isomorphisms. Note that for $i_b^{G_{\{b\}}}$, the renormalized functors agree with the standard functors as $b\in B(G_{\{b\}})_{\basic}$.
\begin{enumerate}
\item $\Eis_{P_{\{b\}}^{-}!}i_{b!}^{G_{\{b\}}}(-) \simeq i_{b!}^{\ren}(-)$
\item $\Eis_{P_{\{b\}}^{-}*}i_{b\ast}^{G_{\{b\}}}(-) \simeq i_{b*}^{\ren}(-)$
\item $i_{b}^{G_{\{b\}}*}\CT_{P_{\{b\}}^{-}*}(-) \simeq i_{b}^{\ren!}(-)$
\item $i_{b}^{G_{\{b\}}*}\CT_{P_{\{b\}}^{-}!}(-) \simeq i_{b}^{\ren*}(-)$
\item $\Eis_{P_{\{b\}}!}i_{b!}^{G_{\{b\}}}(-) \simeq i_{b\sharp}^{\ren}(-)$.
\end{enumerate}
\end{corollary}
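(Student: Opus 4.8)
The plan is to deduce all five identifications from a single geometric input together with the already-established description of the stacks $\Bun_{P_{\{b\}}}^b$ and $\Bun_{P_{\{b\}}^-}^b$ and then propagate the statements to the various (renormalized) functors by adjunction. The key geometric fact, recalled above from \cite[Corollary~1.6]{HI}, is the fiber-product picture: over the HN-stratum $\Bun_G^b \simeq [\ast/\tilde{G}_b]$, the map $\mathfrak{q}_{P_{\{b\}}^-}$ restricts to the isomorphism $\Bun_{P_{\{b\}}^-}^b \xrightarrow{\simeq} \Bun_G^b$, while over the basic stratum $\Bun_{G_{\{b\}}}^b \simeq [\ast/\ul{G_b(E)}]$ the map $\mathfrak{q}_{P_{\{b\}}}$ restricts to the cohomologically smooth chart $q_b : \mathcal{M}_b \to [\ast/\ul{G_b(E)}]$, with $\pi_b : \mathcal{M}_b \to \Bun_G$ the chart of \cite[Section~V.3]{FarguesScholze}. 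Thus over the relevant strata the Eisenstein/constant-term correspondence degenerates into exactly the correspondence $[\ast/\ul{G_b(E)}] \xleftarrow{q_b} \mathcal{M}_b \xrightarrow{\pi_b} \Bun_G$ computing $i_{b\sharp}$, $i_{b!}$, $i_{b\ast}$ and their adjoints, up to the character twist $\delta_b^{1/2}$ and the shift $[\langle 2\rho_G,\nu_b\rangle]$ that are precisely what the renormalized functors $i_{b?}^{\ren}$, $i_b^{\ren?}$ absorb.

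First I would prove (5). Since $i_{b!}^{G_{\{b\}}}$ lands on the open stratum $\Bun_{G_{\{b\}}}^b$ and $\mathfrak{q}_{P_{\{b\}}}$ is cohomologically smooth (Proposition \ref{prop: qissmooth}), base change along $\Bun_{G_{\{b\}}}^b \hookrightarrow \Bun_{G_{\{b\}}}$ identifies $\mathfrak{q}_{P_{\{b\}}}^\ast(i_{b!}^{G_{\{b\}}}(-)) \otimes \IC_{\Bun_{P_{\{b\}}}}$, restricted to the open-closed substack $\Bun_{P_{\{b\}}}^b$, with $q_b^\ast(-)$ tensored with the appropriate shift-and-twist; applying $\mathfrak{p}_{P_{\{b\}}!}$ and using the description of $i_{b\sharp}$ from \cite[Proposition~VII.7.2]{FarguesScholze} as $\pi_{b!}$ applied to a $q_b$-pullback gives exactly $i_{b\sharp}^{\ren}(-)$ once one matches the $\IC$-normalization against $\delta_b^{-1/2}$ and $-\langle 2\rho_G,\nu_b\rangle$. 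The analogous argument with $P_{\{b\}}^-$ in place of $P_{\{b\}}$, now using the isomorphism $\Bun_{P_{\{b\}}^-}^b \xrightarrow{\simeq} \Bun_G^b$, yields (1): here $\mathfrak{p}_{P_{\{b\}}^-}$ restricted to $\Bun_{P_{\{b\}}^-}^b$ is, up to the isomorphism, just the locally closed immersion $i_b$, so $\Eis_{P_{\{b\}}^-!}i_{b!}^{G_{\{b\}}}(-) \simeq i_{b!}^{\ren}(-)$ after again bookkeeping the twist. Statement (2) follows from (1) by the same computation with $\mathfrak{p}_{P_{\{b\}}^-\ast}$ in place of $\mathfrak{p}_{P_{\{b\}}^-!}$, or alternatively by Verdier duality using Theorem \ref{thm: dualcomplexonBunP} and the relation $\mathbb{D}_{\mathrm{Verd}} i_{b!}^{\ren} \simeq i_{b\ast}^{\ren}\mathbb{D}_{\mathrm{sm}}$ recorded above.

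For (3) and (4) I would pass to adjoints. By Lemma \ref{lemma: constanttermEisensteinSeriesAdjunctions}, $\CT_{P_{\{b\}}^-\ast}$ is right adjoint to $\Eis_{P_{\{b\}}^-!}$ and $\CT_{P_{\{b\}}^-!}$ is left adjoint to $\Eis_{P_{\{b\}}^-\ast}$; combining (1)--(2) with the renormalized adjunctions $i_{b!}^{\ren}\dashv i_b^{\ren!}$ and $i_{b\sharp}^{\ren}\dashv i_b^{\ren\ast}\dashv i_{b\ast}^{\ren}$, together with the fact that $i_{b!}^{G_{\{b\}}}$ (resp.\ $i_{b\ast}^{G_{\{b\}}}$) is a full embedding of the open stratum with left (resp.\ right) adjoint $i_b^{G_{\{b\}}\ast}$, one extracts $i_b^{G_{\{b\}}\ast}\CT_{P_{\{b\}}^-\ast}(-) \simeq i_b^{\ren!}(-)$ and $i_b^{G_{\{b\}}\ast}\CT_{P_{\{b\}}^-!}(-) \simeq i_b^{\ren\ast}(-)$ by uniqueness of adjoints. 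I expect the main obstacle to be purely bookkeeping: carefully matching the character $\Delta_{P_{\{b\}}^{(-)}}^{1/2}$ and the dimension function $\dim(\Bun_{P_{\{b\}}^{(-)}})$ appearing in $\IC_{\Bun_{P_{\{b\}}^{(-)}}}$ against the twist $\delta_b^{\pm 1/2}$ and shift $\pm\langle 2\rho_G,\nu_b\rangle$ built into $i_{b?}^{\ren}$ and $i_b^{\ren?}$, keeping track of the sign subtlety flagged in Remark \ref{rem: confusingslopes} (the HN-slopes of $\mathcal{F}_b$ are the negatives of the isocrystal slopes) so that the dynamic parabolic $P_{\{b\}}$ versus its opposite enters on the correct side. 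The geometry itself is entirely supplied by \cite[Corollary~1.6]{HI} and \cite[Section~V.3]{FarguesScholze}; no new input is needed beyond smooth base change and the formal adjunction calculus.
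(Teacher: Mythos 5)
Your proposal is correct and follows essentially the same route as the paper: (1)–(2) from the stratum identifications $\Bun_{P_{\{b\}}^-}^b\simeq \Bun_G^b$ and $\Bun_{P_{\{b\}}}^b\simeq\mathcal{M}_b$ together with smooth/proper base change, (3)–(4) by passing to adjoints via Lemma \ref{lemma: constanttermEisensteinSeriesAdjunctions}, and (5) from the description $i_{b\sharp}\simeq \pi_{b\natural}q_b^\ast$ of \cite[Proposition~VII.7.2]{FarguesScholze} combined with the cohomological smoothness of $\pi_b$ and the dualizing-complex computation of \cite[Corollary~1.6]{HI}, which supplies exactly the twist and shift absorbed by the renormalization. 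The only difference is cosmetic (you treat (5) first and leave the $\pi_{b\natural}=\pi_{b!}(-\otimes\pi_b^!\Lambda)$ bookkeeping slightly implicit), which matches the paper's level of detail.
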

\begin{proof}
The first and second isomorphisms follow easily from the isomorphism $\Bun_{P_{\{b\}}^{-}}^{b} \simeq [\ast/\tilde{G}_{b}]$, and the cohomological smoothness of $\mathfrak q_P$. Recall that the identification $D(\Bun_{G}^{b},\Lambda) \simeq D(G_{b}(E),\Lambda)$ is induced by the pullback map $p_{b}: \Bun_{G}^{b} \simeq [\ast/\tilde{G}_{b}] \ra [\ast/\ul{G_{b}(E)}]$.

The next two isomorphisms follow from the first by passing to adjoints via Lemma \ref{lemma: constanttermEisensteinSeriesAdjunctions}. It remains to justify the fifth isomorphism. To do this we note, by \cite[Proposition~VII.7.2]{FarguesScholze}, that the left adjoint $i_{b\sharp}$ is isomorphic to $\pi_{b\natural}q_{b}^{*}$, where $\pi_{b\natural}$ is the left adjoint of $\pi_{b}^{*}$. However, by \cite[Theorem~V.3.7]{FarguesScholze}, the map $\pi_{b}$ is cohomologically smooth and therefore it follows that $\pi_{b!}(- \otimes \pi_{b}^{!}(\Lambda)) \simeq \pi_{b\natural}(-)$. However, we note by \cite[Proposition~1.1]{HI}, the dualizing complex on $\Bun_{G}$ is the constant sheaf, and therefore it follows that $\pi_{b}^{!}(\Lambda)$ identifies with the dualizing complex of $\mathcal{M}_{b}$, which by \cite[Corollary~1.6]{HI} is isomorphic to $q_{b}^{*}(\delta_{b})[2\langle 2\rho_{G}, \nu_{b} \rangle]$. Therefore, we deduce that $\pi_{b!}q_{b}^{*}(- \otimes \delta_{b})[2\langle 2\rho_{G}, \nu_{b} \rangle] \simeq i_{b\sharp}(-)$, which implies that 
\[  \pi_{b!}q_{b}^{*}(- \otimes \delta_{b}^{1/2})[\langle 2\rho_{G}, \nu_{b} \rangle] \simeq i_{b\sharp}^{\ren}(-), \]
but the LHS is precisely $\Eis_{P_{b}!}i_{b!}^{G_{\{b\}}}$, by the above discussion and proper base-change.
\end{proof}
\begin{remark}
We note that in this particular case Theorem \ref{thm:maintheorem} (3) can be deduced from the adjunctions 
\[ i_{b!}^{\ren} \vdash i_{b}^{\ren!} \]
and 
\[ i_{b\sharp}^{\ren} \vdash i_{b}^{\ren*} \vdash i_{b*}^{\ren}. \]
\end{remark}

In the remainder of this section, we combine the previous propositions to give a description of the stalks of geometric Eisenstein series in certain simple cases. More precisely, we evaluate $\Eis_{P!}$ when restricted to basic strata of $\Bun_M$ that are either dominant or anti-dominant with respect to $P$. We continue to assume that $G$ is quasisplit, and fix $T\subset B\subset G$. We first recall a combinatorial description of the following set, for some standard Levi $M$ of $G$.

\begin{definition}
We define $B(G)_{M} := \mathrm{Im}(i_{M}: B(M)_{\basic} \ra B(G))$, the set of elements of $B(G)$ admitting a basic reduction to $M$.
\end{definition}

For $b \in B(G)_{M}$, we consider the collection of Weyl group elements $W_{M,b} := W[M,G_{\{b\}}]$, as defined in \cite[Section~5.3]{BMO}. We recall the definition: Let $W_G$ denote the (relative) Weyl group of $G$, i.e.~the quotient
\[ W_G = N(T)(E)/T(E). \]
Then $W[M,G_{\{b\}}]\subset W_G$ is the set of elements $w\in W_G$ such that 
\[ w(M) \subset G_{\{b\}} \]
\[ w^{-1}(G_{\{b\}} \cap B) \subset B. \] 

We will use the following notions of dominance throughout the paper. Let $\Mcoinvdom \subset \Mcoinv$ denote the positive Weyl chamber inside the coinvariant lattice for a standard Levi $M$, where this is defined with respect to our fixed choice of Borel.

\begin{definition}
We say an element $\theta \in B(M)_{\basic}$ is \emph{dominant} if $\ol{\theta}$ lies in the positive Weyl chamber $\Mcoinvdom \subset \Mcoinv$. Similarly, if it lies in the negative Weyl chamber we say that it is \emph{anti-dominant}.
\end{definition}

\begin{remark} The previous definition applies more generally for any reductive group $G$ equipped with a parabolic $P\subset G$ with Levi $M$: We say that an element $\theta\in B(M)_{\basic}$ is \emph{$P$-dominant} (resp. \emph{$P$-antidominant}) if $\kappa_M(\theta)$ lies in the positive (resp. negative) Weyl chamber defined with respect to any choice of Borel $B\subset P_{\ol{E}}$ .\footnote{Note that as $\theta$ is assumed to be basic, this notion depends only on $P$ and not the choice of Borel $B\subset P_{\ol{E}}$.}
\end{remark}

We have the following lemma.
\begin{lemma}{\cite[Lemma~4.23]{HI}}{\label{lemma: paramoffibers}}
For every element $b \in B(G)_{M}$, there exists an injective map of sets
\begin{align*}
i_{M}^{-1}(b) &\ra W_{M,b} \\  
b_{\alpha} &\mapsto w_{\alpha} 
\end{align*}
that sends $b_{\alpha}$ attached to $\alpha \in \pi_{1}(M)_{\Gamma}$ to the unique $w_{\alpha} \in W_{M,b}$ such that $w_{\alpha}(b_{\alpha}) \in B(w_{\alpha}(M))_{\basic}$ is dominant. It follows that $w_{\alpha}(b_{\alpha}) \in B(w_{\alpha}(M))_{\basic}$ is a reduction of $b \in B(G_{\{b\}})_{\basic}$ to $B(w_{\alpha}(M))_{\basic}$. Moreover, the image of this map is given by the set of $w \in W_{M,b}$ such that $w(M) \subset G_{\{b\}}$ transfers to a Levi subgroup of $G_{b}$ under the inner twisting $(G_{\{b\}})_{b} \simeq G_{b}$. 
\end{lemma}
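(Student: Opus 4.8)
The goal is to prove Lemma~\ref{lemma: paramoffibers}: for $b\in B(G)_M$ there is an injective map $i_M^{-1}(b)\to W_{M,b}$ characterized by a dominance property, with the stated image. The plan is to translate the fiber $i_M^{-1}(b)$ into a purely combinatorial parametrization via Kottwitz's theory of isocrystals, and then match it up with the double-coset description of $W[M,G_{\{b\}}]$ from \cite{BMO}.

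First I would unwind the definitions. An element $b_\alpha\in i_M^{-1}(b)$ is a basic element of $B(M)$ whose image in $B(G)$ is $b$; equivalently it is a reduction of the $G$-isocrystal $b$ to an $M$-isocrystal which is basic as an $M$-isocrystal. Since $b$ has a canonical basic reduction to $B(G_{\{b\}})_{\basic}$ (using \cite[Propositions~6.2,6.3]{KottIsocrystalsI}, already recalled in the excerpt), and since $(G_{\{b\}})_b\simeq G_b$, the reductions of $b$ to basic elements of standard Levis $M$ are controlled by the geometry of $G_{\{b\}}$: a basic reduction of $b$ to $M$ is the same as a $G$-conjugate of $M$ lying inside $G_{\{b\}}$ as a Levi, together with a compatible basic structure. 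The rough mechanism is: given $b_\alpha$, the slope cocharacter $\nu_{b_\alpha}$ (taken with its dominant representative in $\mathbb X_*(T)_{\mathbb Q}^+$) need not coincide with $\nu_b$, but after applying a unique Weyl element $w_\alpha$ that makes $w_\alpha(b_\alpha)$ dominant one lands inside $G_{\{b\}}=G_{\{w_\alpha(b_\alpha)\}}$. The constraints $w_\alpha(M)\subset G_{\{b\}}$ and $w_\alpha^{-1}(G_{\{b\}}\cap B)\subset B$ are precisely the two defining conditions of $W[M,G_{\{b\}}]$, so the map lands in $W_{M,b}$; this is essentially the content of the definition recalled in the excerpt, and the hands-on check is that $w_\alpha$ is uniquely determined by the requirement that $w_\alpha(b_\alpha)$ be $G_{\{b\}}$-dominant and that $w_\alpha$ be the minimal-length representative of its coset.

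For injectivity I would argue that $b_\alpha$ is recovered from $w_\alpha$ as $w_\alpha^{-1}$ applied to the canonical basic reduction of $b$ to $w_\alpha(M)\subset G_{\{b\}}$ — that reduction is unique by Kottwitz, and the Kottwitz invariant $\kappa_M$ then recovers $\alpha\in\pi_1(M)_\Gamma$, so distinct $w_\alpha$'s necessarily come from distinct $\alpha$'s and conversely. For the characterization of the image, I would observe that a class $w\in W_{M,b}$ arises from some $b_\alpha$ exactly when the $G_b$-isocrystal obtained by transporting the canonical basic reduction of $b$ to $w(M)$ along the inner twisting $(G_{\{b\}})_b\simeq G_b$ is defined over $E$, i.e.\ when $w(M)$ transfers to a Levi subgroup of the inner form $G_b$; this is a standard obstruction-theoretic statement for inner forms of Levis, and it is exactly the stated condition. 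One has to be slightly careful with the Galois action throughout since $M$ is a standard Levi of the possibly-non-split group $G$, but the relevant sets ($\pi_1(M)_\Gamma$, the relative Weyl group $W_G=N(T)(E)/T(E)$) are the $\Gamma$-rational versions and everything is compatible.

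The main obstacle I expect is the bookkeeping around the transfer condition in the description of the image: making precise, in a way compatible with the conventions of \cite{BMO} and \cite{HI}, that ``$w(M)\subset G_{\{b\}}$ transfers to a Levi of $G_b$'' is equivalent to the existence of a basic $M$-reduction of $b$ over $E$. Concretely this comes down to comparing two cohomology sets — $H^1$ of $\Gamma$ with values in (an adjoint quotient of) $M$ versus with values in $G_b$ — via the inner twist, and checking that the obstruction class vanishes precisely when $w$ is in the image. A secondary subtlety is the sign/normalization issue between isocrystal slopes and $G$-bundle Harder--Narasimhan slopes (flagged in Remark~\ref{rem: confusingslopes}), which affects which Weyl chamber ``dominant'' refers to; but once the convention is fixed consistently with the rest of the paper this is purely notational. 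Since this lemma is quoted as \cite[Lemma~4.23]{HI}, I would in practice cite that reference for the details and only sketch the above argument.
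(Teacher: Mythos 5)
The first thing to say is that the paper contains no proof of this statement: it is imported verbatim, with citation, from \cite[Lemma~4.23]{HI}, so there is no in-paper argument to compare yours against. Your instinct to sketch the mechanism and defer the details to \cite{HI} is exactly what the paper itself does, and your outline does follow the expected route: classify elements of $B(\cdot)$ by the pair $(\nu,\kappa)$, use dominance of $w_\alpha(b_\alpha)$ to force its slope to equal $\nu_b$ and hence $w_\alpha(M)\subset G_{\{b\}}$, use the distinguished coset representatives $W[M,G_{\{b\}}]$ to pin down $w_\alpha$, and detect the image by whether the reduction survives the inner twisting $(G_{\{b\}})_b\simeq G_b$.

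As a standalone proof, however, your injectivity step has a genuine gap. You recover $b_\alpha$ as $w_\alpha^{-1}$ applied to ``the canonical basic reduction of $b$ to $w_\alpha(M)$,'' asserting that ``that reduction is unique by Kottwitz.'' But \cite[Propositions~6.2, 6.3]{KottIsocrystalsI} give existence and uniqueness of the dominant basic reduction to the full slope centralizer $G_{\{b\}}=\mathrm{Cent}(\nu_b)$; they say nothing directly about uniqueness of a dominant basic reduction to a proper Levi $w_\alpha(M)\subsetneq G_{\{b\}}$ lying over $b$. What injectivity actually requires is that a dominant basic element of $B(w_\alpha(M))$ is determined by its image in $B(G)$; unwinding through $\kappa$ and $\nu$, two such elements over the same $b$ would differ by a torsion class in $\pi_1(w_\alpha(M))_\Gamma$ killed in $\pi_1(G)_\Gamma$, and ruling this out (or giving some equivalent rigidity argument) is precisely the non-formal content of the lemma — it must be proved, not quoted from Kottwitz. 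Likewise the characterization of the image is only gestured at: the equivalence between ``$b$ admits a basic reduction to $M$ over $E$ with associated Weyl element $w$'' and ``$w(M)\subset G_{\{b\}}$ transfers to a Levi of $G_b$'' needs the explicit comparison of rational structures through the inner twisting, as in Lemma \ref{lemma: parabolicstransfer}, rather than a generic appeal to obstruction theory. Relying on the citation to \cite{HI} is fine (the paper does the same), but as written your sketch does not yet close these two points.
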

The element $w_{\alpha}$ will be important for describing the stalks of the Eisenstein functor. We will also need the following complementary lemma. Here $P$ denotes the standard parabolic with Levi $M$.
\begin{lemma}{\cite[Lemma~4.24]{HI}}{\label{lemma: parabolicstransfer}}
An element $b \in B(G)$ lies in $B(G)_{M}$ if and only if there exists $w \in W_{M,b}$ such that the parabolic $w(P) \cap G_{\{b\}}$ of $G_{\{b\}}$ transfers to a parabolic subgroup $Q_{b,w} \subset G_{b}$ under the inner twisting between $G_{\{b\}}$ and $G_{b}$. More precisely, if $b_{\alpha}$ maps to $b \in B(G)$ with corresponding Weyl group element $w_{\alpha}$ as in the previous lemma then $w_{\alpha}(P) \cap G_{\{b\}}$ transfers to a parabolic subgroup of $G_{\{b\}}$. Moreover, for every element $\alpha \in B(M)_{\mathrm{basic}}$ mapping to $b$, the Levi factor of $Q_{b,w_{\alpha}}$ is equal to $w_{\alpha}(M)_{w_{\alpha}(b_{\alpha})}$.
\end{lemma}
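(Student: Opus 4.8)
The plan is to reduce, via Lemma~\ref{lemma: paramoffibers}, to a statement living purely inside the quasisplit group $G_{\{b\}}$ about transferring a parabolic along the inner twist $G_{\{b\}} \rightsquigarrow G_b$, and then to resolve that via the standard correspondence between reductions of a basic $\sigma$-conjugacy class to a Levi subgroup and transfers of the associated parabolic.

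For the reduction step, recall that $b \in B(G)_M$ iff $i_M^{-1}(b) \neq \emptyset$. If $i_M(b_\alpha) = b$, then Lemma~\ref{lemma: paramoffibers} provides $w_\alpha \in W_{M,b}$ such that $w_\alpha(b_\alpha) \in B(w_\alpha(M))_{\basic}$ is a reduction of $b \in B(G_{\{b\}})_{\basic}$ to the Levi $N := w_\alpha(M) \subseteq G_{\{b\}}$. Conversely, given any $w \in W_{M,b}$ and any reduction $b_N \in B(w(M))_{\basic}$ of $b \in B(G_{\{b\}})_{\basic}$, conjugating by a lift $\dot{w}^{-1} \in N_G(T)(E)$ produces $b_\alpha := w^{-1}(b_N) \in B(M)_{\basic}$ with $i_M(b_\alpha) = b$ in $B(G)$; here one uses that conjugation acts trivially on $B(G)$, the naturality of the transfer maps under the isomorphism $\mathrm{Int}(\dot{w}^{-1})$, and that the image of the basic class $b \in B(G_{\{b\}})_{\basic}$ in $B(G)$ is the original $b$. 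Moreover, for $w \in W_{M,b}$ the defining conditions force $Q := w(P) \cap G_{\{b\}}$ to be an $E$-rational (standard) parabolic of $G_{\{b\}}$ with Levi $w(M)$. Thus $b \in B(G)_M$ iff some $w \in W_{M,b}$ admits a reduction $b_N$ of $b$ (viewed in $B(G_{\{b\}})_{\basic}$) to $N = w(M)$, and for the ``moreover'' clause it remains only to identify the Levi of the transfer of $Q$ with $N_{b_N} = w_\alpha(M)_{w_\alpha(b_\alpha)}$.

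It therefore suffices to prove, for a quasisplit group $H = G_{\{b\}}$, $b \in B(H)_{\basic}$ with inner form $H_b = G_b$, and a Levi $N \supseteq T$ of $H$ over $E$ with associated parabolic $Q$: that $b$ admits a basic reduction $b_N$ to $N$ iff $Q$ transfers to $H_b$, and that in that case the Levi of the transferred parabolic is $N_{b_N}$. For ``$\Rightarrow$'', choose a representative $b_N \in N(\breve{E})$; the maximal split central torus $A_N$ of $N$ commutes with $b_N$, hence defines a split $E$-torus inside $H_b$, and $Z_{H_b}(A_N)$ coincides with the $\sigma$-centralizer of $b_N$ in $N$, i.e. with the inner form $N_{b_N}$, so $N_{b_N}$ is a Levi of $H_b$. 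Since $\mathrm{Int}(b_N)$ preserves the $A_N$-weight grading on $\Lie(H)$, the relative root systems $\Phi(H_b, A_N)$ and $\Phi(H, A_N)$ agree, so the $\Gamma$-stable relative positive system cutting out $Q$ also cuts out an $E$-rational parabolic $Q'$ of $H_b$ with Levi $N_{b_N}$ (invoking the Borel--Tits fact that, over a Levi defined over $E$, every $\Gamma$-stable relative positive system is realized by an $E$-rational parabolic). For ``$\Leftarrow$'', given a transfer $Q'$ with Levi $N'$ (necessarily an inner form of $N$), one recovers a basic reduction of $b$ to $N$ by realizing the maximal split central torus of $N'$ inside $H_b \subseteq H(\breve{E})$; alternatively one argues in Galois cohomology, identifying the transfer obstruction with the image of $H^1(E, \overline{N}) \to H^1(E, H^{\mathrm{ad}})$, where $\overline{N}$ is the image of $N$ in $H^{\mathrm{ad}}$, and matching this, through the Kottwitz isomorphisms, against the condition on Kottwitz invariants and Newton points characterizing $i_N^{-1}(b) \neq \emptyset$.

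I expect the ``$\Leftarrow$'' direction to be the crux: extracting the reduction $b_N$ from an abstract transfer $Q'$, and — essentially the same issue — controlling exactly which inner form of $N$ occurs as the Levi of $Q'$. The difficulty is that $H_b$ need not be quasisplit, so transfers of parabolics are genuinely obstructed, and that obstruction, living in Galois cohomology of $H^{\mathrm{ad}}$, must be reconciled carefully with the elementary but delicate bookkeeping of Kottwitz invariants and Newton points governing the fiber $i_N^{-1}(b) \subseteq B(N)_{\basic}$ (it is precisely the Newton-point constraint that rules out spurious reductions, e.g. of an anisotropic basic class). The forward direction and the identification of the Levi with $N_{b_N}$ are, by contrast, fairly mechanical once one commits to representing $b$ by an element of $N(\breve{E})$.
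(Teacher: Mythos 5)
A preliminary remark: the paper does not actually prove this lemma; it is imported verbatim from \cite{HI} (Lemma~4.24 there), so there is no in-paper argument to compare yours against, and I can only judge your proposal on its own terms.

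Your reduction via Lemma~\ref{lemma: paramoffibers} and Weyl-group conjugation to a statement about the quasisplit group $H=G_{\{b\}}$, a basic class $b\in B(H)_{\basic}$, the Levi $N=w(M)$ and the parabolic $Q=w(P)\cap G_{\{b\}}$ is the right move, and your forward direction is essentially correct: representing the reduction by $b_N\in N(\breve{E})$, the split central torus $A_N$ lands in $H_b$, its centralizer there is $N_{b_N}$, and the transferred parabolic can be produced either by your weight-space argument or, more directly, by noting that $b_N\in N(\breve{E})$ normalizes $Q$ and $N$, so both are stable under $\mathrm{Int}(b_N)\circ\sigma$ and descend to subgroups of $H_{b_N}\cong H_b$; this also yields the ``moreover'' identification of the Levi with $w_\alpha(M)_{w_\alpha(b_\alpha)}$.

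The genuine gap is the converse direction, which you yourself flag as the crux and leave as two unexecuted alternatives; moreover your diagnosis of the difficulty is misplaced. The worry about ``spurious reductions'' and the Newton-point constraint is a non-issue: by functoriality of the Newton map, any reduction of a basic class to a Levi is automatically basic, since its Newton point must map to the central $\nu_b$, hence equals $\nu_b$ and is already central in the Levi. The real missing step is to extract \emph{some} reduction of $b$ to $N$ from a transferred parabolic $Q'\subset G_b$, and neither of your sketches does this: the ``maximal split central torus of $N'$'' idea does not, as written, produce a class in $B(w(M))$ mapping to $b$, and the $H^1(E,\overline{N})\to H^1(E,H^{\mathrm{ad}})$ route is precisely the bookkeeping you would have to carry out but do not. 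The standard argument is short: $E$-rational parabolics of $G_b$ correspond to $\sigma_b$-stable parabolics of $H_{\breve{E}}$; since $H^1(\breve{E},Q)=1$, a $\sigma_b$-stable parabolic in the geometric class of $Q$ is of the form $gQg^{-1}$ with $g\in H(\breve{E})$, and stability forces $g^{-1}b\sigma(g)\in Q(\breve{E})$; projecting to the Levi and using that $B(Q)\to B(w(M))$ is a bijection gives a reduction of $b$ to $w(M)$, automatically basic by the remark above, whence $b\in B(G)_M$ after undoing the $w$-twist as in your first paragraph. With this inserted your outline closes up, but as submitted the ``if'' half of the equivalence is not proved.
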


We write 
\[ (-)^{w_{\alpha}}: D(M_{b_\alpha},\Lambda) \xrightarrow{\simeq} D(w_{\alpha}(M)_{w_{\alpha}(b_{\alpha})},\Lambda) \]
for the equivalence induced by the isomorphism $M_{b_\alpha} \simeq w_{\alpha}(M)_{w_{\alpha}(b_{\alpha})}$ given by $w_{\alpha}$. We write $(-)^{w_{\alpha}^{-1}}$ for its inverse.

We now have the following, which is simply a reinterpretation of \cite[Theorem~4.26]{HI} in terms of the renormalized pushforwards.
\begin{proposition}{\label{prop: splitcontribution to Eisensteinfunctor}} 
For all $\alpha \in \pi_{1}(M)_{\Gamma}$ corresponding to $b_{\alpha} \in B(M)_{\mathrm{basic}}$ mapping to $b \in B(G)_{M}$ with associated Weyl group element $w_{\alpha} \in W_{M,b}$, we consider $A \in D(M_{b_{\alpha}}(E),\Lambda)$. Then we have an isomorphism
\[ i_{b}^{\ren*}\Eis_{P!}i_{b_{\alpha}!}(A) \simeq i_{Q_{b,w_{\alpha}}}^{G_{b}}(A^{w_{\alpha}}) \]
in $D(\Bun_{G},\Lambda)$. Moreover, if $b_{\alpha}$ is anti-dominant then this defines an isomorphism
\[ \Eis_{P!}i_{b_{\alpha!}}(-) \simeq i_{b!}^{\ren}i_{Q_{b,w_{\alpha}}}^{G_{b}}(-)^{w_{\alpha}}: D(M_{b_{\alpha}}(E),\Lambda)\ra D(\Bun_{G},\Lambda) \]
where $w_{\alpha}$ is a minimal length representative of the element of longest length. 
\end{proposition}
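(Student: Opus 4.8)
The plan is to reduce everything to the already-established Corollary~\ref{cor: dominantreductionConstantEisensteinDescription} and the combinatorial input of Lemmas~\ref{lemma: paramoffibers} and~\ref{lemma: parabolicstransfer} by means of a ``change of parabolic'' argument. Fix $\alpha \in \pi_1(M)_\Gamma$ with associated $b_\alpha \in B(M)_\basic$ mapping to $b \in B(G)_M$, and let $w_\alpha \in W_{M,b}$ be the Weyl element supplied by Lemma~\ref{lemma: paramoffibers}. The key geometric observation is that the stratum $\Bun_P^{b_\alpha} \subset \Bun_P$, being a single basic HN-stratum in a connected component, is of the form $[\ast/\underline{P_{b_\alpha}(E)}]$ up to the contractible unipotent fuzz, and that its image under $\mathfrak p_P$ lands in the HN-stratum $\Bun_G^b$. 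Applying $i_b^{\ast}$ (equivalently $i_b^{\ren\ast}$, using $b$ basic in $G_{\{b\}}$) to $\Eis_{P!}i_{b_\alpha!}(A)$ thus only sees the contribution of this one stratum together with whatever comes from the closure; the HN-slope bookkeeping shows that $\Bun_P^{b_\alpha}$ is the unique stratum of $\Bun_P^\alpha$ mapping into $\Bun_G^b$ whose image is all of $\Bun_G^b$, so by proper base change along the locally closed piece one gets a clean formula. More precisely, the diagram relating $\Bun_P^{b_\alpha}$, $\Bun_{G_{\{b\}}}^b$ and $\Bun_G^b$ conjugates, via $w_\alpha$, into the ``dominant reduction'' diagram of Corollary~\ref{cor: dominantreductionConstantEisensteinDescription}, with $w_\alpha(M) \subset G_{\{b\}}$ playing the role of the Levi and $w_\alpha(P)\cap G_{\{b\}}$ transferring to the parabolic $Q_{b,w_\alpha}\subset G_b$ of Lemma~\ref{lemma: parabolicstransfer}, whose Levi is $w_\alpha(M)_{w_\alpha(b_\alpha)}$.

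First I would make the dictionary precise: by \cite[Theorem~4.26]{HI} (which the proposition explicitly reinterprets) there is already an isomorphism computing $i_b^{\ast}\Eis_{P!}i_{b_\alpha!}(A)$ in terms of a classical parabolic induction $i_{Q_{b,w_\alpha}}^{G_b}$ applied to $A^{w_\alpha}$, but with an unnormalized or differently-normalized twist; so the content of the first displayed isomorphism is purely to match the modulus/shift twists between the $\IC_{\Bun_P}$-normalization used here and the normalization in \cite{HI}. I would carry this out by tracking the character $\delta_b^{1/2}$ and the shift $[\langle 2\rho_G,\nu_b\rangle]$ through the renormalized functor $i_b^{\ren\ast}(-) = i_b^{\ast}(-\otimes\delta_b^{1/2})[\langle 2\rho_G,\nu_b\rangle]$ of the Definition preceding Corollary~\ref{cor: dominantreductionConstantEisensteinDescription}, and comparing with the twist $\Delta_P^{1/2}$ built into $\IC_{\Bun_P}$ via the cohomological smoothness of $\mathfrak q_P$ (Proposition~\ref{prop: qissmooth}) and Corollary~\ref{cor: reldualcomploverBunM}. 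The point is that the modulus character $\delta_P$ of $P$, restricted along the inner twisting to $G_b$, produces exactly the modulus character normalizing $i_{Q_{b,w_\alpha}}^{G_b}$; this is the same bookkeeping that appears in the proof of Corollary~\ref{cor: Eisparabinduction}, now transported by $w_\alpha$.

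For the second, sharper statement I would argue that when $b_\alpha$ is anti-dominant, the stratum $\Bun_P^{b_\alpha}$ is in fact \emph{closed} in $\Bun_P^\alpha$ (equivalently, it is the unique closed HN-stratum, because anti-dominance of $\ol{b_\alpha}$ makes $b_\alpha$ the minimal element for the relevant order on $B(M)$), so that $i_{b_\alpha!}(A)$ is supported on a closed substack and $\Eis_{P!}i_{b_\alpha!}(A) = \mathfrak p_{P!}$ of something supported on $\Bun_P^{b_\alpha}$. Since $\mathfrak p_P$ restricted to $\Bun_P^{b_\alpha} = [\ast/\underline{P_{b_\alpha}(E)}]$ (times unipotent fuzz) is, after conjugating by $w_\alpha$, precisely the composition $[\ast/\underline{P_{b_\alpha}(E)}] \to \Bun_{G_{\{b\}}}^b \xrightarrow{\sim} \Bun_G^b \hookrightarrow \Bun_G$, and $\Bun_G^b$ is locally closed, $\Eis_{P!}i_{b_\alpha!}(A)$ is $!$-extended from $\Bun_G^b$, i.e.\ lies in the image of $i_{b!}^{\ren}$; combining with the first isomorphism and the anti-dominant case of Corollary~\ref{cor: dominantreductionConstantEisensteinDescription}.(1) gives the stated formula $\Eis_{P!}i_{b_\alpha!}(-) \simeq i_{b!}^{\ren}i_{Q_{b,w_\alpha}}^{G_b}(-)^{w_\alpha}$, with $w_\alpha$ the minimal-length representative of the longest-length element (this is where the ``longest length'' hypothesis on the Weyl element comes in, ensuring $w_\alpha(P)\cap G_{\{b\}} = P^-_{\{b\}}$-type behavior so that Corollary~\ref{cor: dominantreductionConstantEisensteinDescription}.(1) applies rather than item (5)).

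The main obstacle I anticipate is the closedness/support claim in the anti-dominant case: one needs that no other HN-stratum of $\Bun_P^\alpha$ contributes to $\mathfrak p_{P!}$ over $\Bun_G^b$, which requires knowing that the closure of $\Bun_P^{b_\alpha}$ inside $\Bun_P^\alpha$ does not meet any stratum mapping to the open part of $\overline{\Bun_G^b}$ in a way that survives $!$-pushforward. Establishing this cleanly is a matter of the HN-stratification combinatorics of $\Bun_P$ versus that of $\Bun_G$ (the sign subtlety flagged in Remark~\ref{rem: confusingslopes} must be handled with care), together with the fact that $\mathfrak p_P$ is partially proper on each connected component only \emph{after} compactifying to $\wt\Bun_P$ — so in fact the honest argument is to pass to $\wt{\mathfrak p}_P^\alpha$, which is proper by Theorem~\ref{thm:maingeometry}.(2), observe that $\tilde j_{P!}i_{b_\alpha!}(A\otimes\IC)$ is supported on the proper preimage of $\overline{\Bun_G^b}$, and then deduce the $!$-extension statement from properness. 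Everything else is a (tedious but routine) chase of normalization twists.
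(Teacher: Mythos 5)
The paper's entire ``proof'' of this proposition is the sentence preceding it: both displayed isomorphisms are \cite[Theorem~4.26]{HI} rewritten through the renormalized functors $i_b^{\ren\ast}$, $i_{b!}^{\ren}$, so the only work is the bookkeeping of the twists $\delta_b^{1/2}$, $[\langle 2\rho_G,\nu_b\rangle]$ against $\Delta_P^{1/2}[\dim\Bun_P]$. Your treatment of the first isomorphism is therefore exactly the paper's route and is fine. For the second isomorphism, however, your self-contained argument has two problems. First, the claim that $\Bun_P^{b_\alpha}$ is \emph{closed} in $\Bun_P^\alpha$ because $b_\alpha$ is minimal in $B(M)$ is backwards: $b_\alpha$ is basic, so $\Bun_M^{b_\alpha}$ is the semistable, hence \emph{open}, stratum of its connected component, and its preimage under $\mf{q}_P$ is open in $\Bun_P^\alpha$. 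Fortunately no closedness is needed: $i_{b_\alpha!}(A)$ is by definition extended by zero from the single locally closed stratum, so $\Eis_{P!}i_{b_\alpha!}(A)$ is a $!$-pushforward of a sheaf living on $\mf{q}_P^{-1}(\Bun_M^{b_\alpha})$, whose support is contained in the image of that substack with no properness input whatsoever; excision then upgrades ``supported on the locally closed stratum $\Bun_G^b$'' to ``lies in the image of $i_{b!}^{\ren}$''. In particular the appeal to properness of $\wt{\mf{p}}_P^\alpha$ is unnecessary, and also structurally premature (this proposition is used already in the proof of Theorem \ref{thm:Eiscompact}, well before the Drinfeld compactification is studied), even though it is not logically circular.

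Second, and more seriously, the fact that actually carries the anti-dominance hypothesis is asserted rather than proved: you need that the image of $\mf{q}_P^{-1}(\Bun_M^{b_\alpha})$ under $\mf{p}_P$ is contained in the single stratum $\Bun_G^b$, equivalently that every $P$-bundle with Levi part $\mathcal{F}_{b_\alpha}$ is split. This holds precisely because anti-dominance of $b_\alpha$ (after the sign flip of Remark \ref{rem: confusingslopes}) makes all slopes of $\mathcal{F}_{b_\alpha}\times^{M}\mf{u}$ nonnegative, killing the relevant $H^1$ on the Fargues--Fontaine curve; your proposal never isolates this point, and indeed the sentence in your first paragraph asserting that the image of $\Bun_P^{b_\alpha}$ lands in $\Bun_G^b$ is false for general basic $b_\alpha$ (the image typically meets many strata; this is harmless for the stalk formula, which only uses base change to $\Bun_G^b$, but it is exactly what must be proved in the anti-dominant case). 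With that splitness statement supplied, your argument for the second isomorphism goes through and is a reasonable alternative to the paper's citation of \cite[Theorem~4.26]{HI}, which already contains the anti-dominant refinement.
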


Using this, we can also deduce the following description of the functor $\CT_{P*}$.
\begin{corollary}{\label{cor: shriekEisandStarCT}}
With notation as in Proposition \ref{prop: splitcontribution to Eisensteinfunctor}, we consider $\alpha \in \pi_{1}(M)_{\Gamma}$ such that $b_{\alpha}$ is anti-dominant. Then we have the following identification of functors
\[
i_{b_{\alpha}}^{*}\CT_{P*}(-) \simeq (r_{Q_{b,w_{\alpha}}^{-}}^{G}i_{b}^{\ren!}(-))^{w_{\alpha}^{-1}}: D(\Bun_{G},\Lambda) \ra D(M_{b_{\alpha}}(E),\Lambda).
\]
\end{corollary}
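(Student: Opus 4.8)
The plan is to obtain this purely formally from Proposition \ref{prop: splitcontribution to Eisensteinfunctor} by chasing adjunctions; no new geometric input should be needed. The starting observation is that $\CT_{P*}$ is right adjoint to $\Eis_{P!}$ by Lemma \ref{lemma: constanttermEisensteinSeriesAdjunctions}, and that, since $b_\alpha$ is basic, the inclusion $i_{b_\alpha}\colon \Bun_M^{b_\alpha}\hookrightarrow \Bun_M$ is an open immersion — it is the semistable locus of the connected component $\Bun_M^\alpha$ — so that $i_{b_\alpha}^! = i_{b_\alpha}^*$. Fixing $A\in D(\Bun_G,\Lambda)$ and letting $B\in D(M_{b_\alpha}(E),\Lambda)$ vary, these two facts give
\[
\Hom_{D(\Bun_G,\Lambda)}(\Eis_{P!}i_{b_\alpha!}(B),\,A)\;\simeq\;\Hom_{D(M_{b_\alpha}(E),\Lambda)}(B,\,i_{b_\alpha}^*\CT_{P*}(A)),
\]
functorially in $B$, so that by Yoneda it suffices to identify the left-hand side, as a functor of $B$, with $\Hom(B,\,(r_{Q_{b,w_\alpha}^-}^{G}i_b^{\ren!}(A))^{w_\alpha^{-1}})$.

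To do this I would first rewrite $\Eis_{P!}i_{b_\alpha!}(B)$ using the anti-dominant case of Proposition \ref{prop: splitcontribution to Eisensteinfunctor}, which identifies it with $i_{b!}^{\ren}i_{Q_{b,w_\alpha}}^{G_b}(B^{w_\alpha})$; here $Q_{b,w_\alpha}\subset G_b$ is the parabolic furnished by Lemma \ref{lemma: parabolicstransfer}, with Levi $w_\alpha(M)_{w_\alpha(b_\alpha)}$, and $(-)^{w_\alpha}$ is the equivalence $D(M_{b_\alpha}(E),\Lambda)\xrightarrow{\sim}D(w_\alpha(M)_{w_\alpha(b_\alpha)}(E),\Lambda)$. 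Then I peel off the three functors in turn: the adjunction in which $i_{b!}^{\ren}$ is left adjoint to $i_b^{\ren!}$ moves $A$ to $i_b^{\ren!}(A)\in D(G_b(E),\Lambda)$; classical second adjointness (\cite[Corollary~1.3]{DHKM}), applied to the parabolic $Q_{b,w_\alpha}\subset G_b$, moves normalized parabolic induction $i_{Q_{b,w_\alpha}}^{G_b}$ to its right adjoint, the normalized Jacquet module $r_{Q_{b,w_\alpha}^-}^{G_b}$ for the opposite parabolic; and finally, since $(-)^{w_\alpha}$ is an equivalence with inverse $(-)^{w_\alpha^{-1}}$, one has $\Hom(B^{w_\alpha},C)\simeq \Hom(B,C^{w_\alpha^{-1}})$. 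Composing these adjunction isomorphisms gives
\[
\Hom_{D(\Bun_G,\Lambda)}(\Eis_{P!}i_{b_\alpha!}(B),\,A)\;\simeq\;\Hom_{D(M_{b_\alpha}(E),\Lambda)}\bigl(B,\,(r_{Q_{b,w_\alpha}^-}^{G}i_b^{\ren!}(A))^{w_\alpha^{-1}}\bigr)
\]
naturally in $B$ (and in $A$), and Yoneda yields the asserted identification of $i_{b_\alpha}^*\CT_{P*}(A)$.

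I do not expect a real obstacle here; the only delicate points are bookkeeping. One must track that each step is natural in $A$ as well as in $B$, so that the conclusion is an isomorphism of functors and not merely a pointwise one — this is automatic, since every step is an adjunction isomorphism. One should also be careful that, although the superscript on the Jacquet module is written $G$, the parabolic $Q_{b,w_\alpha}$ lives in $G_b$, so $r_{Q_{b,w_\alpha}^-}^{G}$ really denotes $r_{Q_{b,w_\alpha}^-}^{G_b}\colon D(G_b(E),\Lambda)\to D(w_\alpha(M)_{w_\alpha(b_\alpha)}(E),\Lambda)$, and that the Levi on which it lands is exactly the source of $(-)^{w_\alpha}$ — which is precisely what Lemmas \ref{lemma: paramoffibers} and \ref{lemma: parabolicstransfer} guarantee. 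Lastly, one should note that the adjunction chase a priori computes $i_{b_\alpha}^!\CT_{P*}(A)$, and it is the openness of $i_{b_\alpha}$ (a consequence of $b_\alpha$ being basic) that lets us replace this by the stalk $i_{b_\alpha}^*\CT_{P*}(A)$ appearing in the statement.
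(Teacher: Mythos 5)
Your proposal is correct and is essentially the paper's own argument: the paper proves the corollary by passing to right adjoints in Proposition \ref{prop: splitcontribution to Eisensteinfunctor}, using the adjunction $\Eis_{P!}\dashv \CT_{P*}$ of Lemma \ref{lemma: constanttermEisensteinSeriesAdjunctions} together with classical second adjointness from \cite{DHKM}, exactly as you do. Your extra bookkeeping (the identification $i_{b_\alpha}^{!}=i_{b_\alpha}^{*}$ via openness of the basic stratum, and the Yoneda formulation) just makes explicit what the paper leaves implicit.
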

\begin{proof}
This follows from the previous proposition by passing to right adjoints and using Lemma \ref{lemma: constanttermEisensteinSeriesAdjunctions} as well as usual second adjointness \cite[Corollary~1.3]{DHKM}.
\end{proof}

We can also give a more general description of the dominant case.
\begin{corollary}{\label{cor: naturalfunctorscalculation}}
Let $\alpha \in \pi_{1}(M)_{\Gamma}$ be a basic element such that $b_{\alpha} \in B(M)_{\basic}$ is dominant and maps to $b \in B(G)$. Then $w_\alpha=1$ and we have natural isomorphisms
\[ \Eis_{P!}i_{b_{\alpha}!} \simeq i_{b\sharp}^{\ren} \circ i_{Q_{b,1}}^{G_{b}}: D(M_{b_{\alpha}}(E),\Lambda)\ra D(\Bun_{G},\Lambda) \]
and
\[
 i_{b_\alpha}^\ast \CT_{P\ast}\simeq r_{Q_{b,1}^-}^{G_b} i_b^{\ren\ast}: D(\Bun_{G},\Lambda) \ra D(M_{b_{\alpha}}(E),\Lambda).
\]
\end{corollary}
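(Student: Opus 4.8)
The plan is to deduce Corollary~\ref{cor: naturalfunctorscalculation} from Proposition~\ref{prop: splitcontribution to Eisensteinfunctor} and Corollary~\ref{cor: dominantreductionConstantEisensteinDescription} by the same adjunction bookkeeping that gave Corollary~\ref{cor: shriekEisandStarCT}, taking care to check that the combinatorics collapse in the dominant case. First I would observe that if $b_\alpha\in B(M)_{\basic}$ is dominant, then by definition $\kappa_M(b_\alpha)$ lies in $\Mcoinvdom$, so $b_\alpha$ is already dominant as a reduction of $b$ to a standard Levi; hence in Lemma~\ref{lemma: paramoffibers} the unique Weyl element $w_\alpha\in W_{M,b}$ with $w_\alpha(b_\alpha)\in B(w_\alpha(M))_{\basic}$ dominant must be $w_\alpha=1$, since $1\in W_{M,b}$ (as $M\subset G_{\{b\}}$ because $\nu_{b_\alpha}=\nu_b$ is already the dominant representative and $M$ centralizes it, so $M\subset G_{\{b\}}$, and trivially $1^{-1}(G_{\{b\}}\cap B)\subset B$). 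With $w_\alpha=1$, Lemma~\ref{lemma: parabolicstransfer} says $P\cap G_{\{b\}}$ transfers to the parabolic $Q_{b,1}\subset G_b$ with Levi $M_{b_\alpha}$, and the equivalence $(-)^{w_\alpha}$ is the identity.

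Next I would compute $\Eis_{P!}i_{b_\alpha!}$. The point is that when $b_\alpha$ is dominant, the dynamic parabolic $P_{\{b_\alpha\}}$ of the slope cocharacter agrees with $P\cap G_{\{b\}}$ (up to the standard-parabolic normalization), so the basic stratum $\Bun_{P}^{b_\alpha}$ is the chart $\mathcal M_{b}$ and not $\Bun_G^b$ — this is exactly the ``confusing slopes'' phenomenon of Remark~\ref{rem: confusingslopes}, which is why the sharp-pushforward $i_{b\sharp}^{\ren}$ appears rather than $i_{b!}^{\ren}$. Concretely, $\mf p_P$ restricted to $\Bun_P^{b_\alpha}$ factors through the chart $\pi_b:\mathcal M_b\to\Bun_G$, which is cohomologically smooth but not proper, and the base-change computation in Corollary~\ref{cor: dominantreductionConstantEisensteinDescription}.(5) — which identifies $\pi_{b!}q_b^*(-\otimes\delta_b^{1/2})[\langle 2\rho_G,\nu_b\rangle]$ with $i_{b\sharp}^{\ren}$ — upgrades, via Lemma~\ref{lemma: parabolicstransfer} and proper base-change for the proper map $[\ast/\ul{P\cap G_{\{b\}}(E)}]\to[\ast/\ul{G_b(E)}]$, to the claimed formula $\Eis_{P!}i_{b_\alpha!}\simeq i_{b\sharp}^{\ren}\circ i_{Q_{b,1}}^{G_b}$. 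This is genuinely the ``mixed'' case interpolating between Corollary~\ref{cor: Eisparabinduction}.(1) (the case $b_\alpha=1_M$, where $Q_{b,1}=P$ and $i_{b\sharp}^{\ren}=i_{1!}$) and Corollary~\ref{cor: dominantreductionConstantEisensteinDescription}.(5) (the case $M=G_{\{b\}}$, where $Q_{b,1}=G_b$ and $i_{Q_{b,1}}^{G_b}=\mathrm{id}$).

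Finally, the formula for $i_{b_\alpha}^\ast\CT_{P\ast}$ follows by passing to right adjoints. By Lemma~\ref{lemma: constanttermEisensteinSeriesAdjunctions}, $\CT_{P\ast}$ is the right adjoint of $\Eis_{P!}$, and $i_{b_\alpha}^\ast$ is right adjoint to $i_{b_\alpha\sharp}$ while $i_{b_\alpha!}$ is left adjoint to $i_{b_\alpha}^!$; more relevantly, $i_{b_\alpha!}^{M}$ is left adjoint to $i_{b_\alpha}^{\ast,M}$ — so taking right adjoints of the identity $\Eis_{P!}i_{b_\alpha!}^M\simeq i_{b\sharp}^{\ren}i_{Q_{b,1}}^{G_b}$ gives $i_{b_\alpha}^{\ast,M}\CT_{P\ast}\simeq (i_{Q_{b,1}}^{G_b})^{\mathrm{RA}}\circ(i_{b\sharp}^{\ren})^{\mathrm{RA}}$. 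Here $(i_{b\sharp}^{\ren})^{\mathrm{RA}}=i_b^{\ren\ast}$ by the renormalized adjunctions $i_{b\sharp}^{\ren}\vdash i_b^{\ren\ast}\vdash i_{b\ast}^{\ren}$ recorded in the definition, and $(i_{Q_{b,1}}^{G_b})^{\mathrm{RA}}=r_{Q_{b,1}^-}^{G_b}$ by classical second adjointness \cite[Corollary~1.3]{DHKM} — precisely as in the proof of Corollary~\ref{cor: shriekEisandStarCT}, but now with no $w_\alpha$-twist to track. The main obstacle is the first bullet of the second paragraph: carefully matching the dynamic parabolic $P_{\{b_\alpha\}}$ with the standard parabolic $P\cap G_{\{b\}}$ and confirming that the basic stratum $\Bun_P^{b_\alpha}$ is identified with the chart $\mathcal M_b$ (so that $i_{b\sharp}$, not $i_{b!}$, governs the answer); once that sign/slope matching is pinned down, everything else is the formal adjunction calculus already used above.
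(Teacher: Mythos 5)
Your first paragraph ($w_\alpha=1$) and your third paragraph are fine and agree with the paper: the second isomorphism is indeed obtained by passing to right adjoints, using that $i_{b_\alpha!}^M$ is left adjoint to $i_{b_\alpha}^{\ast,M}$ (legitimate here because $b_\alpha$ is basic, so its stratum is open in the connected component), that $i_{b\sharp}^{\ren}\vdash i_b^{\ren\ast}$, and classical second adjointness from \cite{DHKM} to identify the right adjoint of $i_{Q_{b,1}}^{G_b}$ with $r_{Q_{b,1}^-}^{G_b}$. The problem is the step you yourself flag as the ``main obstacle'' in your second paragraph, and as stated it is not just unverified but false. The dynamic parabolic attached to $\nu_{b_\alpha}=\nu_b$ has Levi the full centralizer $G_{\{b\}}$, so it is $P_{\{b\}}$, not $P\cap G_{\{b\}}$ (these agree only when $M=G_{\{b\}}$); likewise the fiber of $\mathfrak{q}_P$ over $\Bun_M^{b_\alpha}$ is \emph{not} the chart $\mathcal{M}_b=\Bun_{P_{\{b\}}}^b$: when $M\subsetneq G_{\{b\}}$ it is a nontrivial fibration (with fibers of negative Banach--Colmez type coming from the part of $\mathfrak{u}_P$ outside $G_{\{b\}}$, and a zero-slope part coming from $\mathfrak{u}_P\cap\mathfrak{g}_{\{b\}}$) which merely maps to $\mathcal{M}_b$. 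Also the proper map you invoke should be $[\ast/\ul{Q_{b,1}(E)}]\to[\ast/\ul{G_b(E)}]$ for the transferred parabolic in the inner form $G_b$, not $[\ast/\ul{(P\cap G_{\{b\}})(E)}]\to[\ast/\ul{G_b(E)}]$; there is no such map of groups before transferring. So the first isomorphism is not actually established by your sketch.

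The clean repair — and what the paper does — is to avoid any direct analysis of $\Bun_P$ over the stratum $b_\alpha$ by using compatibility of Eisenstein functors with composition along $P\subset P_{\{b\}}\subset G$, i.e. $\Eis_{P!}\simeq \Eis_{P_{\{b\}}!}\circ\Eis^{G_{\{b\}}}_{P\cap G_{\{b\}}!}$. Since $\nu_{b_\alpha}$ is already dominant it is central in $G_{\{b\}}$, so the image of $b_\alpha$ in $B(G_{\{b\}})$ is the basic element $b$; hence the inner functor applied to $i_{b_\alpha!}$ is computed by (the twisted form of) Corollary \ref{cor: Eisparabinduction}(1), giving $i_{b!}^{G_{\{b\}}}\circ i_{Q_{b,1}}^{G_b}$, and then Corollary \ref{cor: dominantreductionConstantEisensteinDescription}(5) converts $\Eis_{P_{\{b\}}!}\circ i_{b!}^{G_{\{b\}}}$ into $i_{b\sharp}^{\ren}$. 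This yields $\Eis_{P!}i_{b_\alpha!}\simeq i_{b\sharp}^{\ren}\circ i_{Q_{b,1}}^{G_b}$ with no new geometry, after which your adjunction step goes through verbatim.
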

\begin{proof} Using compatibility of Eisenstein functors with composition, along $P\subset P_{\{b\}}\subset G$, the first statement is a combination of Corollary~\ref{cor: Eisparabinduction}~(1) and Corollary~\ref{cor: dominantreductionConstantEisensteinDescription}~(5). The second statement follows by passing to right adjoints.
\end{proof}

\section{Finiteness conditions and first results}

In this section, we will begin our study of the finiteness properties of the functors $\Eis_{P!}$ and $\Eis_{P*}$. Here, $G$ denotes a reductive group over $E$ with parabolic subgroup $P\subset G$ and Levi quotient $M$. We do not assume that $G$ is quasisplit.

\subsection{Preservation of compact objects}

We recall that the derived category $D(\Bun_{G},\Lambda)$ has two natural full subcategories of sheaves satisfying certain finiteness properties. 
\begin{definition}{\label{defn: compactandULAobjects}}
\begin{enumerate}
\item There is the full subcategory $D(\Bun_{G},\Lambda)^{\omega}$ of compact objects. Equivalently, by \cite[Theorem~I.5.1 (iii)]{FarguesScholze}, these are the objects $A \in D(\Bun_{G},\Lambda)$ which have quasi-compact support, and their restriction to the HN-strata $i_{b}^{*}(A) \in D(\Bun_{G}^{b},\Lambda) \simeq D(G_{b}(E),\Lambda)$ defines a compact object, where this is equivalent to lying in the thick triangulated subcategory generated by objects $\cInd_{K}^{G_{b}(E)}(\Lambda)$ for $K \subset G_{b}(E)$ an open pro-$p$ subgroup.
\item There is the full subcategory of $D^{\mathrm{ULA}}(\Bun_{G},\Lambda)$ of objects which are ULA over $\ast$. Equivalently, by \cite[Theorem~I.5.1 (v)]{FarguesScholze}, these are the objects $A \in D(G_{b}(E),\Lambda)$ whose restriction $i_{b}^{*}(A)$ to the HN-strata $D(\Bun_{G}^{b},\Lambda) \simeq D(G_{b}(E),\Lambda)$ are admissible. In other words, $i_{b}^{*}(A)^{K}$ is a perfect complex for all open pro-$p$ subgroups $K \subset G_{b}(E)$.
\end{enumerate}
\end{definition}

We will now commence our study on how these subcategories interact with the Eisenstein and Constant Term functors. In this section, we prove the following theorem.

\begin{theorem}\label{thm:Eiscompact} The functor $\mathrm{Eis}_{P!}$ preserves compact objects.
\end{theorem}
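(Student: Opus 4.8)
The plan is to reduce the statement to known preservation-of-compactness facts for classical parabolic induction and for the gluing functors $i_{b\sharp}$, via a careful dévissage along the connected components and Harder--Narasimhan strata. Since $\Eis_{P!}^\alpha$ is the composite of $\Eis_{P!}$ with extension by zero from $\Bun_M^\alpha$, and since a compact object of $D(\Bun_M,\Lambda)$ is supported on finitely many components, it suffices to show each $\Eis_{P!}^\alpha$ preserves compact objects. Fix $\alpha \in \pi_1(M)_\Gamma$, equivalently $b_\alpha \in B(M)_{\basic}$. The category $D(\Bun_M^\alpha,\Lambda)^\omega = D(M_{b_\alpha}(E),\Lambda)^\omega$ is generated as a thick triangulated subcategory by the objects $\cInd_K^{M_{b_\alpha}(E)}(\Lambda)$; since $\Eis_{P!}$ is exact and preserves arbitrary colimits (being built from $\mf p_{P!}$, $\mf q_P^*$ and tensoring with the invertible $\IC_{\Bun_P}$), it is enough to show that $\Eis_{P!}^\alpha(i_{b_\alpha!}A)$ is compact for a single compact generator $A \in D(M_{b_\alpha}(E),\Lambda)$, and then by thickness for all compact $A$.

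The key geometric input is that the support of $\Eis_{P!}^\alpha(i_{b_\alpha!}A)$ is quasi-compact: $\mf p_P$ restricted to $\Bun_P^\alpha$ has image meeting only finitely many HN-strata of $\Bun_G$ that are ``close'' to $b_\alpha$ in a suitable partial order (this is the finiteness built into the structure of $\Bun_P$; cf. the quasi-compactness statements underlying \cite[Corollary~IV.1.23, Section~V]{FarguesScholze}). So one reduces to checking that for each relevant $b \in B(G)$, the stalk $i_b^* \Eis_{P!}^\alpha(i_{b_\alpha!}A) \in D(G_b(E),\Lambda)$ is compact. Here I would run an induction: first handle the split/extreme cases using the explicit computations of Section~2. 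When $b_\alpha$ is $P$-dominant, Corollary~\ref{cor: naturalfunctorscalculation} identifies $\Eis_{P!}i_{b_\alpha!}$ with $i_{b\sharp}^{\ren}\circ i_{Q_{b,1}}^{G_b}$, and both classical parabolic induction $i_Q^{G_b}$ and $i_{b\sharp}$ preserve compacts (the latter by \cite[Proposition~VII.7.2]{FarguesScholze} together with \cite{DHKM} for the former); similarly in the anti-dominant case Proposition~\ref{prop: splitcontribution to Eisensteinfunctor} gives $\Eis_{P!}i_{b_\alpha!}\simeq i_{b!}^{\ren}i_{Q_{b,w_\alpha}}^{G_b}(-)^{w_\alpha}$, again a composite of compactness-preserving functors. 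For a general $b_\alpha$, one filters by factoring the Eisenstein functor through an intermediate parabolic $P \subset P_{\{b\}} \subset G$ (using compatibility of Eisenstein functors with composition of parabolics, and excision on $\Bun_{P_{\{b\}}}$ along its HN-type stratification) so that on each stratum the relevant piece is again expressed through $i_{b'\sharp}$, a classical parabolic induction $r$ or $i$ of a reductive group, and a lower-rank Eisenstein functor, to which the inductive hypothesis (on the semisimple rank of $G$, or on the ``distance'' between $b$ and $b_\alpha$) applies.

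The main obstacle I anticipate is organizing this induction cleanly: one must identify the correct partial order / filtration on the strata so that the ``error terms'' appearing when one peels off the split stratum are genuinely governed by Eisenstein functors for proper Levis of smaller rank, and one must verify that the relevant character twists $\delta_b^{1/2}$, $\IC_{\Bun_P}$, and the shifts $[\langle 2\rho_G,\nu_b\rangle]$ all match up so that the renormalized functors $i_{b\sharp}^{\ren}$, $i_{b!}^{\ren}$ that appear are exactly those whose compactness-preservation is asserted. A secondary subtlety is ensuring quasi-compactness of supports is preserved at each stage of the dévissage; this should follow from the fact that $\mf p_P$ is representable in locally spatial diamonds together with the quasi-compactness of $\Bun_M^\alpha$-supported input, but it requires care because $\mf p_P$ is not proper. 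Once these bookkeeping points are settled, the argument is a formal induction bottoming out at the two explicit cases above plus the classical input of \cite{DHKM}.
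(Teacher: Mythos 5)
Your overall blueprint (reduce to strata, use the dominant/anti-dominant seed computations of Corollary \ref{cor: naturalfunctorscalculation} and Proposition \ref{prop: splitcontribution to Eisensteinfunctor}, and close by an induction) points in the right direction, but the inductive step you sketch for a general parabolic does not work as stated, and this is exactly where the real content lies. First, a general (semistandard) parabolic $P$ with Levi $M$ need not be contained in $P_{\{b\}}$, nor in any parabolic with Levi $G_{\{b\}}$ (already for $G=\GL_3$, $M=T$, most Borels containing $T$ lie in neither of the two parabolics with Levi $\GL_2\times\GL_1$), so the proposed factorization $P\subset P_{\{b\}}\subset G$ and excision on $\Bun_{P_{\{b\}}}$ is simply unavailable; one must first perform the Weyl-twist reduction to $M=G_{\{b\}}$ and then still treat \emph{every} parabolic with that Levi. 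Second, the ``error terms'' any such d\'evissage produces are values of Eisenstein functors \emph{for $G$ itself} (from proper Levis) on other strata of $\Bun_G$ — not Eisenstein functors of lower-rank groups — so an induction on semisimple rank, or on a vague ``distance between $b$ and $b_\alpha$'', does not close. The paper's proof closes the loop differently: after reducing to $b$ basic and $M=G_{\{b\}}$, the standard parabolic is the seed ($\Eis_{P!}i_{b!}\simeq i_{b\sharp}^{\ren}\circ i_{Q}^{G_b}$), and one passes from one parabolic with Levi $M$ to any other through a chain of adjacent parabolics inside intermediate Levis $N$, using the triangle $i_{b\sharp}^{\ren,N}\to i_{b!}^{\ren,N}\to C$ whose cone is supported on strata whose image in $B(G)$ is strictly more semistable; applying $\Eis_{Q!}$ and composability, the term $\Eis_{Q!}\circ C$ is handled by a second induction over quasicompact open substacks $U\subset\Bun_G$ (i.e.\ over semistability in $B(G)$), not over rank. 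Without some device of this kind your induction has no base through which to propagate to non-extreme parabolics.

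Two further gaps: the identification $D(\Bun_M^\alpha,\Lambda)^\omega=D(M_{b_\alpha}(E),\Lambda)^\omega$ is false — the connected component $\Bun_M^\alpha$ contains infinitely many HN-strata, and compact objects supported on non-basic strata are not generated by extensions by zero from the semistable locus; one must first reduce non-basic $b\in B(M)$ to basic elements of smaller Levis via Corollary \ref{cor: dominantreductionConstantEisensteinDescription} and composability of Eisenstein functors. Finally, Theorem \ref{thm:Eiscompact} is asserted for arbitrary reductive $G$, while all of your seed computations (and the sets $W[M,G_{\{b\}}]$, dominance, etc.) presuppose $G$ quasisplit with a fixed Borel; a complete argument must include the reduction to the quasisplit case (twisting by a basic element after arranging connected center, then a z-extension argument using $\Bun_G=\Bun_{\tilde G}\times_{\Bun_D}\ast$ and $D(\Bun_D,\Lambda)$-linearity), which your proposal omits entirely.
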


\begin{corollary}\label{cor:CTprelim} The functor $\CT_{P \ast}$ commutes with all limits and colimits, and preserves ULA objects.
\end{corollary}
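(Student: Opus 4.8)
The plan is to derive this corollary purely formally from Theorem \ref{thm:Eiscompact}, via the adjunction $\Eis_{P!} \dashv \CT_{P*}$ established in Lemma \ref{lemma: constanttermEisensteinSeriesAdjunctions}. The key observation is a standard piece of category theory: if $F \dashv G$ is an adjunction of functors between compactly generated stable $\infty$-categories (or presentable stable $\infty$-categories), and $F$ preserves compact objects, then $G$ preserves all colimits. Indeed, $G$ is a right adjoint so it automatically preserves all limits; and since $F$ carries compact generators to compact objects, $G$ commutes with filtered colimits, hence (being exact) with all colimits. So first I would recall that $D(\Bun_M,\Lambda)$ and $D(\Bun_G,\Lambda)$ are compactly generated — this follows from \cite{FarguesScholze} (e.g. the description of compact objects in Definition \ref{defn: compactandULAobjects} and the fact that the $i_{b\sharp}\cInd_K^{G_b(E)}(\Lambda)$ generate) — and that $\CT_{P*}$ is the right adjoint of $\Eis_{P!}$.

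Next I would spell out the argument that $\CT_{P*}$ commutes with colimits. Since $\Eis_{P!}$ preserves compact objects by Theorem \ref{thm:Eiscompact}, for any compact $X \in D(\Bun_M,\Lambda)$ and any filtered system $(A_i)$ in $D(\Bun_G,\Lambda)$ we have
\[
\Hom(X, \CT_{P*}\colim_i A_i) \cong \Hom(\Eis_{P!}X, \colim_i A_i) \cong \colim_i \Hom(\Eis_{P!}X, A_i) \cong \colim_i \Hom(X, \CT_{P*}A_i),
\]
where the middle isomorphism uses compactness of $\Eis_{P!}X$. As the compact objects $X$ generate $D(\Bun_M,\Lambda)$, this shows the canonical map $\colim_i \CT_{P*}A_i \to \CT_{P*}\colim_i A_i$ is an equivalence; combined with exactness this gives preservation of all colimits. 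Preservation of limits is automatic since $\CT_{P*}$ is a right adjoint.

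Finally, for the ULA statement: the cleanest route is to use the duality isomorphism $\mathbb{D}_{\mathrm{Verd}}^M \CT_{P!} \cong \CT_{P*}\mathbb{D}_{\mathrm{Verd}}$ from Theorem \ref{thm:maintheorem}.(4).iii together with the fact (dual to Theorem \ref{thm:Eiscompact}, via $\mathbb{D}_{\mathrm{Verd}}$ and $\mathbb{D}_{\mathrm{BZ}}$) that $\CT_{P!}$ has good finiteness. However, since those results come later, I would instead argue directly: ULA objects over $\ast$ are precisely the dualizable objects of $D(\Bun_G,\Lambda)$ with respect to the natural symmetric monoidal structure (by \cite[Theorem~I.5.1]{FarguesScholze} and the ULA criteria), and a right adjoint to a monoidal-type functor preserving compacts interacts well with dualizability; more concretely, one uses that $A$ is ULA iff $R\mathscr{H}\mathrm{om}(A, -)$ commutes with colimits, i.e. $A$ is "internally compact", and then checks via the projection formula for $\mathfrak{p}_*$ (valid since $A$ ULA makes $\mathfrak{p}^!(A) \otimes \IC_{\Bun_P}^{-1}$ behave well) and the compatibility $\mathfrak{q}_*$ commutes with the relevant colimits that $\CT_{P*}(A)$ remains dualizable. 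The main obstacle is this last point — making the ULA preservation precise requires invoking the characterization of ULA sheaves as dualizable objects and carefully tracking that the functor $\mathfrak{q}_*\mathfrak{p}^!(-) \otimes \IC_{\Bun_P}^{-1}$ preserves dualizability; the preservation of limits and colimits, by contrast, is a soft formal consequence of Theorem \ref{thm:Eiscompact} and the adjunction. I would therefore present the colimit/limit part in full as above, and for the ULA part reduce to showing that $\CT_{P*}$ sends dualizable objects to dualizable objects using that it is right adjoint to the colimit-preserving, compact-preserving functor $\Eis_{P!}$ whose left adjoint property is symmetric monoidal up to the twist $\IC_{\Bun_P}$.
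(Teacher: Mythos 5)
Your treatment of the limit/colimit statement is correct and is exactly the paper's argument: limits because $\CT_{P*}$ is a right adjoint, colimits because its left adjoint $\Eis_{P!}$ preserves compact objects and the categories are compactly generated. No issues there.

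The ULA part, however, has a genuine gap, and it is precisely the point you flag as "the main obstacle." Your proposed route rests on the claim that ULA objects over $\ast$ are the dualizable objects of $D(\Bun_G,\Lambda)$ for the symmetric monoidal structure, and that $\CT_{P*}$ preserves dualizability because it is right adjoint to a compact-preserving functor. Neither claim holds. ULA over the point means that each restriction $i_b^*A$ is admissible, i.e. $(i_b^*A)^K$ is a perfect complex of $\Lambda$-modules for all open pro-$p$ $K$; this is far weaker than $\otimes$-dualizability in $D(\Bun_G,\Lambda)$ (for instance $i_{1!}\pi$ with $\pi$ an infinite-dimensional admissible irreducible representation is ULA but not dualizable for $\otimes$, whose dualizable objects are essentially sheaves with perfect underlying $\Lambda$-complexes). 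Likewise, preservation of dualizable objects is a property of symmetric monoidal functors, not of right adjoints to compact-preserving functors, and $\Eis_{P!}$ is not monoidal in any relevant sense (it is given by a kernel with the twist $\IC_{\Bun_P}$), so the projection-formula sketch does not close the argument. Your alternative suggestion "$A$ is ULA iff $\sHom(A,-)$ commutes with colimits" is also not the ULA condition (that is a compactness-type condition).

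The argument the paper uses is much softer and is the one you should substitute: by the characterization implicit in \cite[Prop.~VII.7.4, Prop.~VII.7.9]{FarguesScholze}, an object $A \in D(\Bun_G,\Lambda)$ is ULA if and only if $R\Hom(X,A) \in D(\Lambda)$ is a perfect complex for every compact $X$ (compact objects are generated by the $i_{b\sharp}\cInd_K^{G_b(E)}(\Lambda)$, and $R\Hom(i_{b\sharp}\cInd_K^{G_b(E)}(\Lambda),A) \cong (i_b^*A)^K$, so perfectness of these complexes is exactly admissibility of all $i_b^*A$). Given this, if $L \dashv R$ is any adjunction between such categories with $L$ preserving compacts, then $R\Hom(X,R(A)) \cong R\Hom(L(X),A)$ is perfect for every compact $X$ whenever $A$ is ULA, so $R$ preserves ULA objects; applying this to $\Eis_{P!} \dashv \CT_{P*}$ together with Theorem \ref{thm:Eiscompact} finishes the proof. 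This avoids any appeal to dualizability or to the later duality isomorphisms, which, as you note, would be circular at this stage.
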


\begin{proof} Since $\CT_{P\ast}$ is a right adjoint, it clearly commutes with limits. Since its left adjoint preserves compact objects by Theorem \ref{thm:Eiscompact}, it also commutes with colimits. For preservation of ULA objects, we observe more generally that if $L:D(\Bun_G,\Lambda) \leftrightarrows D(\Bun_H,\Lambda) : R$ are a pair of adjoint functors for some reductive groups $G,H$ such that $L$ preserves compact objects, then $R$ preserves ULA objects. This follows immediately from the characterization of ULA objects implicit in \cite[Prop. VII.7.4 and Prop. VII.7.9]{FarguesScholze}.
\end{proof}

\begin{proof}[Proof of Theorem \ref{thm:Eiscompact}] Assume first that $G$ is quasisplit, and fix $T\subset B\subset G$ as usual. 
By induction, we can assume that the theorem is true for all proper Levi subgroups of $G$. We will prove the following claim by induction on quasicompact open substacks $U\subset \Bun_G$ (equivalently, finite subsets $|U|\subset |\Bun_G|=B(G)$ stable under generization).

\begin{claim} For any standard Levi $M\subset G$ and any $b\in B(M)$ mapping into $|U|\subset B(G)$, and any (semistandard) parabolic $P$ with Levi $M$, the functor
\[
\mathrm{Eis}_{P!}\circ i_{b!}^{\ren,M}: D(M(E),\Lambda)\to D(\Bun_G,\Lambda)
\]
preserves compact objects.
\end{claim}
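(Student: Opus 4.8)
The plan is to prove the Claim by a double induction: an outer induction on the semisimple rank of $G$ (handled via the assumption that the theorem holds for proper Levis), and an inner induction on quasicompact open substacks $U \subset \Bun_G$, or equivalently on finite generization-stable subsets $|U| \subset B(G)$. The base case $U = \emptyset$ is vacuous. So fix $U$ and assume the Claim holds for all strictly smaller quasicompact opens. Pick a closed point $b_0 \in |U|$, i.e. a $b_0 \in B(G)$ which is maximal for the closure order, so that $U' = U \setminus \{b_0\}$ is again a quasicompact open and $\Bun_G^{b_0} \hookrightarrow U$ is closed. By excision (which we have for the motivic formalism), compactness of an object on $U$ can be checked after restriction to $U'$ and after $i_{b_0}^\ast$; more precisely, $i_{b_0}^\ast$ and $j^\ast: D(U,\Lambda) \to D(U',\Lambda)$ jointly detect compactness, and $i_{b_0!}$ preserves compacts, so it suffices to show that both $j^\ast \mathrm{Eis}_{P!} i_{b!}^{\ren,M}$ and $i_{b_0}^\ast \mathrm{Eis}_{P!} i_{b!}^{\ren,M}$ land in compact objects.

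For the restriction to $U'$, the key point is to replace $\mathrm{Eis}_{P!} i_{b!}^{\ren,M}$ by something already covered by the inductive hypothesis. Here one uses the stratified structure: $i_{b!}^{\ren,M}$ is extension by zero from the HN-stratum $\Bun_M^b$, and, depending on whether $b$ is or is not basic in $M$, one either lands (after the reductions of Section~2.2, in particular Corollary~\ref{cor: Eisparabinduction} and Corollary~\ref{cor: dominantreductionConstantEisensteinDescription}) in honest parabolic induction composed with the gluing functor $i_{c\sharp}^{\ren}$ for some $c \in B(G)$ — both of which preserve compacts, the former by \cite{DHKM} (for classical parabolic induction of the Levi) and the latter by \cite[Prop.~VII.7.2]{FarguesScholze} — or else $b$ has a nontrivial reduction to a proper standard Levi $M' \subsetneq M$, in which case transitivity of Eisenstein series $\mathrm{Eis}_{P!}^G = \mathrm{Eis}_{P!}^G \circ (\text{smaller Eisenstein})$ together with the outer induction on rank finishes the job on all of $\Bun_G$, not just $U'$. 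The genuinely inductive case is when the image $b$ of $b$ in $B(G)$ is \emph{not} in the image of $U'$ but the relevant pieces of $\mathrm{Eis}_{P!} i_{b!}^{\ren,M}$ restricted to $U'$ are controlled by $\mathrm{Eis}$-functors for proper Levis or by smaller opens — one has to chase through the geometry of $\Bun_P$ over the HN-strata to see this, which is exactly the ``complicated but reasonably direct'' bookkeeping promised in the introduction.

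For the stalk $i_{b_0}^\ast \mathrm{Eis}_{P!} i_{b!}^{\ren,M}$, the strategy is to pass to the chart $\pi_{b_0}: \mathcal{M}_{b_0} \to \Bun_G$ and use base change along $\mathfrak{p}_P$. Concretely, $i_{b_0}^\ast \mathfrak{p}_{P!} = \mathfrak{p}_{P,b_0!} (\mathfrak{p}_P')^\ast$ where $\mathfrak{p}_P'$ is the base change of $\mathfrak{p}_P$ to $\Bun_G^{b_0}$, and this fiber product $\Bun_P \times_{\Bun_G} \Bun_G^{b_0}$ stratifies by the $M$-reducibility loci; over each stratum one recognizes, via Proposition~\ref{prop: splitcontribution to Eisensteinfunctor} and Corollary~\ref{cor: naturalfunctorscalculation}, that the contribution is a classical parabolic induction $i_{Q_{b_0,w}}^{G_{b_0}}$ applied to $A$ (twisted by the combinatorial Weyl element $w$), precomposed with $(-)^{w}$ — hence compact by \cite{DHKM}. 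The remaining task is to show that only finitely many strata contribute nontrivially and that the spectral sequence / filtration assembling them from the strata is finite, so that a finite extension of compacts is compact; finiteness of the relevant index set here is where one needs $A$ (or rather the support after applying $i_{b!}^{\ren,M}$) together with the quasicompactness built into $U$, and one invokes Lemma~\ref{lemma: paramoffibers} and Lemma~\ref{lemma: parabolicstransfer} to pin down the index set $W_{M,b_0}$.

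The main obstacle, I expect, is organizing the restriction-to-$U'$ step so that the induction actually closes: one must carefully arrange the semistandard parabolics and the standard Levis so that whenever $b$ fails to be basic in $M$ one genuinely drops to a \emph{proper} Levi (outer induction) and whenever it is basic in $M$ one genuinely drops to a \emph{smaller open} (inner induction), with no circularity — and simultaneously keep track of the renormalizing twists $\delta_b^{1/2}$ and the shifts $\langle 2\rho_G,\nu_b\rangle$ so that the functors in the Claim match up on the nose under the isomorphisms of Corollaries~\ref{cor: Eisparabinduction}--\ref{cor: naturalfunctorscalculation}. The case of general (non-quasisplit) $G$ is then deduced at the end by the reduction of Section~\ref{sec: fixingthecenter} to the simply-connected-derived-group case and by passing to an inner form, using that $\mathrm{Eis}_{P!}$ and the notion of compactness are insensitive to inner twisting in the appropriate sense.
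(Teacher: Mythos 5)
Your overall scaffolding (double induction, use of the special-case formulas of \S 2.2, reduction to quasisplit $G$ at the end) resembles the paper, but the core of your argument has a genuine gap. You propose to verify compactness of $i_{b_0}^{\ast}\Eis_{P!}i_{b!}^{\ren,M}(A)$ at an \emph{arbitrary} point $b_0\in B(G)$ by base-changing $\mathfrak p_P$ to $\Bun_G^{b_0}$, stratifying $\Bun_P\times_{\Bun_G}\Bun_G^{b_0}$, and "recognizing" each stratum's contribution as a classical parabolic induction via Proposition \ref{prop: splitcontribution to Eisensteinfunctor} and Corollary \ref{cor: naturalfunctorscalculation}. Those results do not say this: they compute $i_b^{\ren\ast}\Eis_{P!}i_{b_\alpha!}$ only at the single point $b\in B(G)_M$ to which the \emph{basic} element $b_\alpha$ maps, and they give full-functor statements only when $b_\alpha$ is dominant or antidominant. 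For a general pair $(b,b_0)$ the strata of $\Bun_P\times_{\Bun_G}\Bun_G^{b_0}$ are moduli of $P$-reductions of $\mathcal{F}_{b_0}$ of prescribed Harder--Narasimhan type, fibered in Banach--Colmez-type spaces over quotient stacks, and the compactness of their compactly supported contributions (together with finiteness of the contributing strata) is essentially equivalent to the theorem being proved; the paper never performs such a stalkwise computation, precisely because it is not accessible. Your treatment of the other half (restriction to $U'$) is likewise deferred as "bookkeeping" rather than carried out, and your appeal to the outer induction on rank in the non-basic case is misplaced: reducing a non-basic $b\in B(M)$ to a basic element of a smaller Levi of $M$ via compatibility of Eisenstein functors with composition produces an Eisenstein functor for a smaller \emph{standard Levi of the same group} $G$, not for a proper Levi replacing $G$, so the rank induction buys nothing there.

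The paper's actual engine, absent from your proposal, is a comparison of different parabolics with the \emph{same} Levi. After reducing to $b$ basic and $M=G_{\{b\}}$ (undoing a Weyl twist), the seed case is the standard parabolic, where Corollary \ref{cor: naturalfunctorscalculation} gives $\Eis_{P!}i_{b!}^M\simeq i_{b\sharp}^{\ren}\circ i_{Q_{b,1}}^{G_b}$, manifestly compact-preserving. One then passes from $P$ to any other parabolic $P'$ with Levi $M$ through an intermediate Levi $N$ in which $P\cap N$ and $P'\cap N$ are opposite maximal parabolics; by Corollary \ref{cor: dominantreductionConstantEisensteinDescription} one has $\Eis_{P\cap N!}i_{b!}^M=i_{b\sharp}^{\ren,N}$ and $\Eis_{P'\cap N!}i_{b!}^M=i_{b!}^{\ren,N}$, and the cone $C$ of $i_{b\sharp}^{\ren,N}\to i_{b!}^{\ren,N}$ preserves compacts and is supported on strata of $\Bun_N$ whose image in $B(G)$ is strictly more semistable than that of $b$; applying $\Eis_{Q!}$ and the inner induction to $C$ transfers compact-preservation between $P$ and $P'$. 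Without this triangle (or a genuine substitute for the stalk computations at arbitrary $b_0$), your induction does not close. Finally, the passage to general $G$ is not by "insensitivity to inner twisting": the paper first handles connected center by twisting by a basic $b$ making $G_b$ quasisplit (using $\Bun_{G_b}\cong\Bun_G$), and then uses a z-extension together with the equivalence $D(\Bun_{\tilde G},\Lambda)\otimes_{D(\Bun_D,\Lambda)}D(\Lambda)\simeq D(\Bun_G,\Lambda)$; the construction of \S\ref{sec: fixingthecenter} you cite concerns $\wtBun_P$ and plays no role in Theorem \ref{thm:Eiscompact}.
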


Consider a general pair $(P,b)$ as above. Our goal is to show that $\mathrm{Eis}_{P!}\circ i_{b!}^{M}$ preserves compact objects.
By induction, we can assume that $\mathrm{Eis}_{P'!}\circ i_{b'!}^{M'}$ preserves compact objects for every pair $(P',b')$ such that the image of $b'$ in $B(G)$ is \emph{strictly} more semistable than the image of $b$.

First, by reducing $b$ further to a basic element of a Levi subgroup of $M$, and using compatibility of Eisenstein functors with composition and Corollary~\ref{cor: dominantreductionConstantEisensteinDescription}~(3), we can reduce to the case that $b=b_\alpha\in B(M)$ is basic. Then there is the Levi subgroup $G_{\{b\}}\subset G$ and the element $w_\alpha\in W[M,G_{\{b\}}]$. We have $M\subset w_\alpha^{-1}(G_{\{b\}})$, and the image of $b$ in $B(w_\alpha^{-1}(G_{\{b\}}))$ stays basic. We can then use compatibility of Eisenstein functors with composition again to reduce to the case $M=w_\alpha^{-1}(G_{\{b\}})$. As $P$ was an arbitrary parabolic with Levi $M$, we can undo the Weyl group twist. In other words, we can assume that $M=G_{\{b\}}$. We have to show that for all parabolics $P$ with Levi $M=G_{\{b\}}$, the functor
\[
\mathrm{Eis}_{P!}\circ i_{b!}^M
\]
preserves compact objects. We note that this is true for the standard parabolic, by Corollary~\ref{cor: naturalfunctorscalculation} and the preservation of compact objects under parabolic induction and the functors $i_{b\sharp}$.

We will now show that the result is true for some parabolic $P$ with Levi $M$ if and only if it is true for any other $P'$. Moving from one parabolic with Levi $M$ to any other via a sequence of minimal modifications, we can assume that $P$ and $P'$ are contained in some parabolic $Q$ whose Levi $N\supset M$ has the property that $P\cap N$ and $P'\cap N$ are maximal parabolic subgroups. In particular, one of them is standard and the other is anti-standard in $N$; without loss of generality $P\cap N$ is the standard one. Then
\[
\mathrm{Eis}_{P\cap N!}\circ i_{b!}^M = i_{b\sharp}^{\mathrm{ren},N}
\]
and
\[
\mathrm{Eis}_{P'\cap N!}\circ i_{b!}^M = i_{b!}^{\mathrm{ren},N}
\]
by Corollary~\ref{cor: dominantreductionConstantEisensteinDescription}~(5) resp.~(1). In particular, there is a triangle
\[
i_{b\sharp}^{\mathrm{ren},N}\to i_{b!}^{\mathrm{ren},N}\to C
\]
where the cone $C$ is supported at points of $\Bun_N$ whose image in $\Bun_G$ is strictly more semistable than $b$, using the Lemma below. Moreover, the functor $C$ preserves compact objects.

Applying $\mathrm{Eis}_{Q!}$ to the last displayed triangle, and using the isomorphisms before as well as the composability of Eisenstein functors, we get a triangle
\[
\mathrm{Eis}_{P!}\circ i_{b!}^M\to \mathrm{Eis}_{P'!}\circ i_{b!}^M\to \mathrm{Eis}_{Q!}\circ C.
\]
By induction, $\mathrm{Eis}_{Q!}\circ C$ preserves compact objects. It follows that $\mathrm{Eis}_{P!}\circ i_{b!}^M$ preserves compact objects if and only if $\mathrm{Eis}_{P'!}\circ i_{b!}^M$ does, as desired.

This finishes the case that $G$ is quasisplit. If $G$ has connected center, then so does $M$, and then there is a basic element of $M$ such that $M_b$ is quasisplit, hence so is $G_b$. Twisting everything by $b$ and using $\Bun_{M_b}\cong \Bun_M$ etc., this reduces the case of $G$ to the case of $G_b$.

It remains to reduce the case of a general $G$ to the case of a $G$ with connected center. One can always find a z-extension
\[
1\to G\to \tilde{G}\to D\to 1
\]
where $D$ is a torus and $\tilde{G}$ has connected center (by embedding the center $Z_G$ of $G$ into a torus, and taking a pushout). In that case,
\[
\Bun_G = \Bun_{\tilde{G}}\times_{\Bun_D} \ast
\]
and the corresponding functor
\[
D(\Bun_{\tilde{G}},\Lambda)\otimes_{D(\Bun_D,\Lambda)} D(\Lambda)\to D(\Bun_G,\Lambda)
\]
is an equivalence. Any parabolic $P$ lifts uniquely to a parabolic $\tilde{P}$ of $\tilde{G}$, the functor $\mathrm{Eis}_{\tilde{P}!}$ is $D(\Bun_D,\Lambda)$-linear and as it preserves compact objects by the case already established, its right adjoint is automatically colimit-preserving and also $D(\Bun_D,\Lambda)$-linear. (The $D(\Bun_D,\Lambda)$-linearity can either be verified by hand, or deduced from the second adjunction proved in the next section.) Then the same follows for $\mathrm{Eis}_{P!} = \mathrm{Eis}_{\tilde{P}!}\otimes_{D(\Bun_D,\Lambda)} D(\Lambda)$, as desired.
\end{proof}
In the preceding argument we used the following lemma, which is probably
well-known.
\begin{lemma}
If $M\subset G$ is a standard Levi and $b,b'\in B(M)$ are any elements
such that $b\prec^{M}b'$, then their images in $B(G)$ satisfy $b\prec^{G}b'$. 
\end{lemma}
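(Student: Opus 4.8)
The statement is a comparison of two dominance orders: the one on $B(M)$ (where $b \prec^M b'$ means $b'$ is more semistable, i.e.\ $\kappa_M(b) = \kappa_M(b')$ and $\nu_{b'} - \nu_b$ lies in the appropriate rational cone spanned by positive coroots of $M$), and the one on $B(G)$. The plan is to unwind both orders via the standard combinatorial description of the partial order on $B$ (due to Rapoport--Richartz, Chai, Kottwitz): for a reductive group $H$, one has $x \prec^H y$ iff $\kappa_H(x) = \kappa_H(y)$ and $\nu_y - \nu_x \in \mathbb{Q}_{\geq 0}\langle \text{positive coroots of } H\rangle$ inside $\mathbb{X}_*(T_{\ol E})_{\mathbb Q}^{\Gamma}$ (using the dominant representatives of the slope homomorphisms).

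\textbf{Key steps.} First I would reduce to the quasisplit case, or simply fix an inner form and work with $\mathbb{X}_*(T_{\ol E})_{\mathbb Q}$-valued invariants, since both the Kottwitz map and the slope homomorphism are insensitive to inner twisting and only the ambient based root datum matters. Second, I would recall that since $M$ is a standard Levi of $G$, every positive coroot of $M$ is in particular a positive coroot of $G$; hence the rational cone generated by positive coroots of $M$ is contained in the corresponding cone for $G$. Third, for the Kottwitz invariant: the natural map $\pi_1(M)_\Gamma \to \pi_1(G)_\Gamma$ is compatible with $\kappa_M$ and $\kappa_G$ along $B(M) \to B(G)$, so $\kappa_M(b) = \kappa_M(b')$ forces $\kappa_G(b) = \kappa_G(b')$. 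Fourth, for the slope condition: one must pass from the $M$-dominant representatives $\nu_b^M, \nu_{b'}^M$ of the slopes to the $G$-dominant representatives $\nu_b^G, \nu_{b'}^G$. Here one uses that the $M$-dominant representative of a Newton point, after conjugating into the $G$-dominant chamber, differs by an element of the Weyl group of $G$, and the key point is that $\nu_{b'}^M - \nu_b^M \in \mathbb{Q}_{\geq 0}\langle \text{positive coroots of } M\rangle$ together with the fact that both Newton points have the \emph{same} image under $\kappa_M$ (so they lie in the same coset and are comparable in the $M$-order) implies the difference of the $G$-dominant representatives lies in the $G$-coroot cone. Finally, assemble these into $b \prec^G b'$.

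\textbf{Main obstacle.} The genuinely delicate step is the fourth one: the combinatorial order is defined using the \emph{dominant} representative of the slope, and ``dominant for $M$'' is weaker than ``dominant for $G$,'' so one cannot naively subtract. The clean way around this is the following observation: if $\nu, \nu'$ are two dominant (for $M$) rational cocharacters with $\nu' - \nu \in \mathbb{Q}_{\geq 0}\langle\text{coroots of }M\rangle$, and $w \in W_G$ is such that $w\nu'$ is $G$-dominant, then one shows $w\nu$ is also in the closure of the $G$-dominant chamber's ``$M$-relative'' face and that the $G$-dominant representative $\overline{w\nu}$ of $\nu$ satisfies $w\nu' - \overline{w\nu} \in \mathbb{Q}_{\geq 0}\langle\text{coroots of }G\rangle$; this is a standard fact about the interaction of the convexity order with Weyl chambers (it appears, e.g., in Rapoport--Richartz or in Kottwitz's work on isocrystals, and is essentially the statement that $b \mapsto b$ under $B(M)\to B(G)$ is order-preserving, which is itself well documented). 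Alternatively, and perhaps more cleanly for the write-up, one can avoid choosing dominant representatives altogether by using the characterization of the order in terms of the pairing against all $G$-dominant characters of $\hat T$, i.e.\ $b \preceq^G b'$ iff $\langle \chi, \nu_b^G\rangle \leq \langle \chi, \nu_{b'}^G\rangle$ for all such $\chi$ and $\kappa_G(b) = \kappa_G(b')$; since the $M$-dominant chamber contains the $G$-dominant chamber, any $G$-dominant $\chi$ is in particular $M$-dominant, and the $M$-order hypothesis gives the inequality directly. I would write the proof using this second, pairing-based formulation, as it sidesteps the Weyl-chamber bookkeeping entirely and makes the containment of chambers do all the work.
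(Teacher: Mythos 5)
Your reduction to three ingredients (compatibility of Kottwitz invariants, inclusion of the $M$-coroot cone into the $G$-coroot cone, and the comparison of dominant representatives) is the right skeleton, and your first route is in substance what the paper does: its proof is exactly the assertion that the order descends along $\mathbb{X}_*(T_{\ol{E}})/W_{M_{\ol E}}\to \mathbb{X}_*(T_{\ol E})/W_{G_{\ol E}}$, i.e.\ the convexity fact you defer to Rapoport--Richartz/Kottwitz (note that quoting ``$B(M)\to B(G)$ is order-preserving'' from the literature is quoting the lemma itself, so as written that route is circular unless you actually prove the convexity statement).

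The route you say you would write up, however, has a genuine gap. The pairing criterion you invoke reads $\langle\chi,\nu^G_b\rangle\le\langle\chi,\nu^G_{b'}\rangle$ for all $G$-dominant $\chi$, where $\nu^G_b,\nu^G_{b'}$ are the \emph{$G$-dominant} representatives of the Newton points; so it does not avoid choosing dominant representatives. From $b\prec^M b'$ and the fact that a $G$-dominant $\chi$ is $M$-dominant you only get $\langle\chi,\nu^M_b\rangle\le\langle\chi,\nu^M_{b'}\rangle$ for the $M$-dominant representatives, and this does not give the needed inequality ``directly'': the general estimate is $\langle\chi,\nu^G_b\rangle\ge\langle\chi,\nu^M_b\rangle$ (the dominant element of a Weyl orbit maximizes the pairing with a dominant character), which points the wrong way, and the discrepancy is real --- already for $\mathrm{GL}_2\supset T$ and $\chi=(1,0)$, $\nu=(0,1)$ one has $\langle\chi,\nu^{G\mathrm{-dom}}\rangle=1>0=\langle\chi,\nu\rangle$. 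The fix is precisely the Weyl-chamber bookkeeping you hoped to sidestep: write $\langle\chi,\nu^G_b\rangle=\max_{w\in W_G}\langle w^{-1}\chi,\nu^M_b\rangle$, replace each $w^{-1}\chi$ by its $M$-dominant $W_M$-conjugate $\chi_w$ (this only increases the pairing against the $M$-dominant $\nu^M_b$), apply the hypothesis to get $\langle\chi_w,\nu^M_b\rangle\le\langle\chi_w,\nu^M_{b'}\rangle$, and then bound $\langle\chi_w,\nu^M_{b'}\rangle\le\max_{v\in W_G}\langle\chi,v\nu^M_{b'}\rangle=\langle\chi,\nu^G_{b'}\rangle$. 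Finally, the paper needs the \emph{strict} statement (this is how the lemma is used, and its proof keeps track of the ``non-zero (resp.\ zero)'' dichotomy): one must also rule out that distinct $b\prec^M b'$ have equal image in $B(G)$; e.g.\ if $\nu^M_{b'}-\nu^M_b$ is a nonzero nonnegative combination of positive coroots of $M$ and $\nu^M_b$ is $M$-dominant, then $|\nu^M_{b'}|^2=|\nu^M_b|^2+2(\nu^M_b,\nu^M_{b'}-\nu^M_b)+|\nu^M_{b'}-\nu^M_b|^2>|\nu^M_b|^2$ for a $W_G$-invariant norm, so the two Newton points are not $W_G$-conjugate. Your write-up does not address this point.
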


\begin{proof}
This follows from the fact that the natural map on slope polygons is induced by the natural quotient map $\mathbb{X}_{*}(T_{\ol{E}})/W_{M_{\ol{E}}} \ra \mathbb{X}_{*}(T_{\ol{E}})/W_{G_{\ol{E}}}$, where $W_{G_{\ol{E}}}$ (resp. $W_{M_{\ol{E}}}$) denotes the absolute Weyl group of $G$ (resp. $M$). In particular, if two elements in $\mathbb{X}_{*}(T_{\ol{E}})/W_{M_{\ol{E}}}$ differ by a non-zero (resp. zero) linear combination of positive absolute roots of $M$ then the same is true for their image in $\mathbb{X}_{*}(T_{\ol{E}})/W_{G_{\ol{E}}}$. The claim follows.
\end{proof}

\subsection{Second adjointness}

In this section, we prove the following theorem. We stress that we are not assuming that $G$ is quasisplit (or fixing a choice of Borel for that matter).

\begin{theorem}\label{thm:secondadj} Let $P,P^-\subset G$ be a pair of opposite parabolics with Levi $M=P\cap P^-$.

\begin{enumerate}
    \item There is a natural isomorphism of functors \[\CT_{P^- \ast} \cong \CT_{P!}:D(\Bun_G,\Lambda) \to D(\Bun_M,\Lambda).\]
    \item There is a natural isomorphism of functors \[\mathbb{D}_{\mathrm{BZ}} \Eis_{P!} \cong \Eis_{P^- !} \mathbb{D}_{\mathrm{BZ}}^{M}:D(\Bun_M,\Lambda)^\omega \to D(\Bun_G,\Lambda)^{\omega}.\]
\end{enumerate}    
\end{theorem}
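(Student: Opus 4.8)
The plan is to prove (1) and (2) together by a dévissage that mirrors the proof of Theorem \ref{thm:Eiscompact}, reducing in the end to the classical second adjointness theorem of \cite{DHKM} on the deepest strata and to an analogous ``second adjointness'' statement for the gluing functors $i_{b\sharp}$ on the nilpotent/Newton strata of $\Bun_G$. First I would observe that (1) and (2) are equivalent: since $\Eis_{P!}$ preserves compact objects by Theorem \ref{thm:Eiscompact}, its right adjoint $\CT_{P\ast}$ is colimit-preserving, hence so is its ``dual'' $\mathbb{D}_{\mathrm{Verd}}\CT_{P\ast}\mathbb{D}_{\mathrm{Verd}}^M$; on compact objects Bernstein--Zelevinsky duality intertwines $\Eis_{P!}$ with a functor whose right adjoint is $\mathbb{D}_{\mathrm{BZ}}^M\CT_{P\ast}\mathbb{D}_{\mathrm{BZ}}$, and via Lemma \ref{lemma: constanttermEisensteinSeriesAdjunctions} ($\CT_{P!}\dashv\Eis_{P\ast}$) together with the duality isomorphisms (4).ii--iii of Theorem \ref{thm:maintheorem} one checks that the existence of the isomorphism in (2) on compacts is equivalent to the isomorphism in (1). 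So it suffices to construct a natural map in one of the two forms and check it is an isomorphism.

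The next step is to construct the natural transformation $\CT_{P!}\to\CT_{P^-\ast}$ (equivalently $\mathbb{D}_{\mathrm{BZ}}\Eis_{P!}\to\Eis_{P^-!}\mathbb{D}_{\mathrm{BZ}}^M$) in a way that is manifestly compatible with composition of parabolics and with the various base changes used in the proof of Theorem \ref{thm:Eiscompact}. The cleanest route is via the self-correspondence $\Bun_P\times_{\Bun_G}\Bun_{P^-}$ over $\Bun_M\times\Bun_M$: unit and counit of the relevant adjunctions together with proper/smooth base change along the ``intersection'' produce a canonical map, exactly as in the classical proof of second adjointness (and as in Bernstein's original argument geometrized by the correspondence). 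Here one uses the self-duality of $\IC_{\Bun_P}$ (Theorem \ref{thm: dualcomplexonBunP}) and Corollary \ref{cor: reldualcomploverBunM} to match the normalizations.

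Then I would prove the map is an isomorphism by induction on quasicompact open substacks $U\subset\Bun_G$, exactly as in the proof of Theorem \ref{thm:Eiscompact}: reduce $G$ to the case of connected center via the z-extension argument and then to quasisplit $G_b$; reduce $b$ to a basic element of a Levi $M$, then to the case $M=G_{\{b\}}$ after undoing a Weyl twist; use Corollary \ref{cor: naturalfunctorscalculation} and Corollary \ref{cor: dominantreductionConstantEisensteinDescription} to identify $\Eis_{P!}\circ i_{b!}^M$ with $i_{b\sharp}^{\ren}$ or $i_{b!}^{\ren}$ composed with a classical parabolic induction; and use the triangle $i_{b\sharp}^{\ren,N}\to i_{b!}^{\ren,N}\to C$ (with $C$ supported on strictly more semistable points, where induction applies) to propagate the isomorphism between different parabolics with the same Levi. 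On the base case $b$ the desired isomorphism becomes the statement that $\mathbb{D}_{\mathrm{BZ}}^{G_b}$ intertwines $i_{Q_{b,w}}^{G_b}$ with $i_{Q_{b,w}^-}^{G_b}$ — i.e.\ classical second adjointness \cite[Corollary~1.3]{DHKM} and Bernstein--Zelevinsky duality for $p$-adic groups — together with the known compatibility $\mathbb{D}_{\mathrm{Verd}}i_{b!}^{\ren}\simeq i_{b\ast}^{\ren}\mathbb{D}_{\mathrm{sm}}$ and the dual statement for $i_{b\sharp}^{\ren}$. The main obstacle I expect is bookkeeping rather than conceptual: keeping the constructed natural transformation compatible with all the reduction steps (composition of Eisenstein functors, the Weyl twists, the z-extension base change, and the passage to right adjoints) so that ``being an isomorphism'' is genuinely inherited at each stage, and checking that the map constructed abstractly from the correspondence really matches the maps appearing in Corollaries \ref{cor: dominantreductionConstantEisensteinDescription} and \ref{cor: naturalfunctorscalculation} on the relevant strata.
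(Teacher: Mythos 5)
Your proposal follows essentially the same route as the paper: construct the transformation via the open immersion of the diagonal $\Bun_M$ into the correspondence $\Bun_{P^-}\times_{\Bun_G}\Bun_P$, show (1) and (2) are equivalent, check compatibility with composition of parabolics, and then run the same excision induction as in the proof that $\Eis_{P!}$ preserves compacts, with classical second adjointness from \cite{DHKM} and the Bernstein--Zelevinsky duality computation for $i_{b\sharp}^{\ren}$ versus $i_{b!}^{\ren}$ as seed cases. This matches the paper's argument, including its treatment of the bookkeeping (all maps are induced by explicit kernels) and the final reduction via connected center and z-extensions.
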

Note the restriction in part (2) to compact sheaves, in which case the statement is well-posed by the main result in the previous section.

Both parts of this theorem can be interpreted as geometrizations of Bernstein's famous second adjointness theorem, and, as mentioned in Remark \ref{rem: relationtoclassicalsecondadjointness}, our result specializes to classical second adjointness. However, this is completely circular, as we will use classical second adjointness in the proof, via our dependence on \cite{DHKM}.

In the argument below, we will first show that (1) and (2) are logically equivalent, in a precise sense. In an ideal world, we would then directly prove (1) as an application of hyperbolic localization, as in \cite{DrGaConstTerms}. However, we were unable to make such an argument work.\footnote{Perhaps this is inevitable. In some sense, there cannot be an argument for (1) which is completely formal, e.g. making use only of the fact that our sheaf category underlies a six-functor formalism with excision, since (1) fails with mod-$p$ coefficients.} Instead, we will prove (2) by an excision argument parallel to the proof that $\Eis_{P!}$ preserves compact objects given in the previous section.

We begin by constructing a canonical natural transformation
\[
\CT_{P^-\ast}\to \CT_{P !}.
\]
It is enough to construct a natural transformation
\[
\mathrm{id}\to \CT_{P!} \mathrm{Eis}_{P^-!}
\]
as then the desired transformation arises via precomposition with the functor $\CT_{P^-\ast}$ and postcomposition with the counit map $\mathrm{Eis}_{P^-!}\CT_{P^-\ast}\to \mathrm{id}$. Now $\CT_{P!} \mathrm{Eis}_{P^-!}$ is given by a kernel, namely the fiber product
\[
\Bun_{P^-}\times_{\Bun_G} \Bun_P\to \Bun_M\times \Bun_M,
\]
with the sheaf $\mathrm{IC}_{\Bun_{P^-}}\boxtimes \mathrm{IC}_{\Bun_P}$. This naturally contains the diagonal $\Bun_M$ as an open substack, yielding the desired map $\mathrm{id}\to \CT_{P!} \mathrm{Eis}_{P^-!}$ once observing that the sheaf $\mathrm{IC}_{\Bun_{P^-}}\boxtimes \mathrm{IC}_{\Bun_P}$ restricts to the trivial sheaf on $\Bun_M$.

Let us note some equivalent descriptions of the same map. First, one can also construct a natural transformation $t:\mathbb{D}_{\mathrm{BZ}} \Eis_{P!} \to \Eis_{P^- !} \mathbb{D}_{\mathrm{BZ}}^{M}$. For this we will make reference to the commutative diagram
\[
\xymatrix{\mathrm{Bun}_{M}\ar[dr]^{j}\ar[dddr]\ar[drrr]\\
 & \mathrm{Bun}_{P}\times_{\mathrm{Bun}_{G}}\mathrm{Bun}_{P^-}\ar[r]^{\;\;\;\;f}\ar[d]^{f^{-}} & \mathrm{Bun}_{P^-}\ar[d]^{\mathfrak{p}^{-}}\ar[r]_{\mathfrak{q}^{-}} & \mathrm{Bun}_{M}\ar[dd]^{\pi_{M}}\\
 & \mathrm{Bun}_{P}\ar[d]^{\mathfrak{q}}\ar[r]^{\mathfrak{p}} & \mathrm{Bun}_{G}\ar[dr]^{\pi_G}\\
 & \mathrm{Bun}_{M}\ar[rr]^{\pi_{M}} &  & \ast
}
\]
of Artin v-stacks that was already implicitly used above. Note that the inner square is Cartesian, $j$ is an open immersion, and the unlabelled slanted arrows are the identity on $\Bun_M$.

For compact sheaves $A,B$ on $\Bun_M$, we have a canonical isomorphism
\[R\Hom(\mathbb{D}_{\mathrm{BZ}} \Eis_{P!} A, \Eis_{P^- !}  B) \cong \pi_{G!}(\Eis_{P!} A \otimes \Eis_{P^- !} B) \]
by the isomorphism characterizing Bernstein-Zelevinsky duality on $\Bun_G$ (\cite[Proposition~VII.7.6]{FarguesScholze}, where we are implicitly using \cite[Proposition~1.1]{HI} to identify $\pi_{\natural}$ and $\pi_{!}$). We will now construct a canonical map 
\[ \gamma: \pi_{M!}( A \otimes  B) \to \pi_{G!}(\Eis_{P!} A \otimes \Eis_{P^- !} B)\]
functorial in $A$ and $B$. Using again the isomorphism characterizing Bernstein-Zelevinsky duality on $\Bun_M$, the source identifies canonically with $R\Hom(\mathbb{D}_{\mathrm{BZ}}^{M}A,  B)$,
so composing we get a functorial map
\[R\Hom(\mathbb{D}_{\mathrm{BZ}}^{M}A,  B) \to R\Hom(\mathbb{D}_{\mathrm{BZ}} \Eis_{P!} A, \Eis_{P^- !}  B). \]
When $B=\mathbb{D}_{\mathrm{BZ}}^{M} A$, the source contains the obvious identity map, and its image in ($H^0$ of) the target induces the desired natural transformation $t$.

To construct $\gamma$, the key observation is that, using the K\"unneth formula, there is a canonical isomorphism
\[ \pi_{G!}(\Eis_{P!} A \otimes \Eis_{P^- !}  B) \cong \pi_{X!}(f^{-\ast} \mathfrak{q}^\ast A \otimes f^{-\ast} \mathfrak{q}^{-\ast}  B\otimes C),\] where we set $X=\mathrm{Bun}_{P}\times_{\mathrm{Bun}_{G}}\mathrm{Bun}_{P^-}$ and $C= f^{-\ast} \mathrm{IC}_{\Bun_P} \otimes f^\ast \mathrm{IC}_{\Bun_{P^-}}$ for brevity. Since $j$ is an open immersion, the adjunction $j_! j^\ast \to \mathrm{id}$ induces a canonical map
\[\pi_{X!}j_! j^*(f^{-\ast} \mathfrak{q}^\ast A \otimes f^\ast \mathfrak{q}^{-\ast}  B \otimes C) \to \pi_{X!}(f^{-\ast} \mathfrak{q}^\ast A \otimes f^\ast \mathfrak{q}^{-\ast}  B \otimes C). \]
Using that $\pi_{X!}j_! = \pi_{M!}$, $j^*(f^{-\ast} \mathfrak{q}^\ast A \otimes f^\ast \mathfrak{q}^{-\ast}  B) = A \otimes  B$, and $j^\ast C = \Lambda$, the source identifies with $\pi_{M!}( A \otimes  B)$. This gives the desired map $\gamma$. 

Next we explain how the transformation $t$ canonically induces a natural transformation $s:\CT_{P^- \ast} \to \CT_{P !}$. Given $A \in D(\Bun_M,\Lambda)^\omega$ and $B\in D(\Bun_G,\Lambda)$, we have a natural isomorphism
\[ R\Hom(\Eis_{P^- !} \mathbb{D}_{\mathrm{BZ}}^M A,B) \cong R\Hom( \mathbb{D}_{\mathrm{BZ}}^M A,\CT_{P^- \ast} B)\]
by the evident adjunction between $\Eis_{P^- !}$ and $\CT_{P^- \ast}$. On the other hand, we have canonical isomorphisms
\begin{align*}
R\Hom(\mathbb{D}_{\mathrm{BZ}}\Eis_{P !}A,B) & \cong \pi_!(\Eis_{P !}A \otimes B) \\
& \cong \pi_!(\mathfrak{p}_!(\mathrm{IC}_{\Bun_P} \otimes \mathfrak{q}^\ast A) \otimes B) \\
& \cong (\pi \circ \mathfrak{p})_!(\mathrm{IC}_{\Bun_P} \otimes \mathfrak{q}^\ast A \otimes \mathfrak{p}^\ast B) \\
& \cong (\pi_M \circ \mathfrak{q})_!(\mathrm{IC}_{\Bun_P} \otimes \mathfrak{q}^\ast A \otimes \mathfrak{p}^\ast B) \\
& \cong \pi_{M!} (A \otimes \mathfrak{q}_{!}(\mathrm{IC}_{\Bun_P} \otimes \mathfrak{p}^\ast B)) \\
& \cong \pi_{M!} (A \otimes \CT_{P!}B) \\
& \cong 
R\Hom(\mathbb{D}_{\mathrm{BZ}}^M A, \CT_{P!}B). 
\end{align*}
Here the first and last lines follow from the isomorphisms characterizing Bernstein-Zelevinsky duality, and the remaining lines follow from unwinding definitions and liberally applying the projection formula. Combining these calculations, we see that the transformation $t$ induces a natural map
\begin{align*}
R\Hom( \mathbb{D}_{\mathrm{BZ}}^M A,\CT_{P^- \ast} B) & \cong R\Hom(\Eis_{P^- !} \mathbb{D}_{\mathrm{BZ}}^M A,B) \\
& \to R\Hom(\mathbb{D}_{\mathrm{BZ}}\Eis_{P !}A,B) \\
& \cong R\Hom(\mathbb{D}_{\mathrm{BZ}}^M A, \CT_{P!}B).
\end{align*}
By Yoneda, this induces the desired transformation $s$. One checks that this is equivalent to the map $\CT_{P^-\ast}\to \CT_{P!}$ constructed at the beginning. It is also clear that if $t$ is an equivalence, then $s$ is an equivalence, and conversely. In particular, to prove Theorem \ref{thm:secondadj}, it is enough to show that $t$ is an equivalence. 

We also need some basic compatibility properties of the transformation $t$. Since we will begin varying the parabolic, we write $t_P$ to indicate the dependence on the parabolic $P$ when necessary, and similarly for the map $\gamma$.

\begin{proposition}The transformation $t$ is compatible with composition of parabolics. More precisely, suppose given a parabolic $P_1$ with Levi quotient $M_1$, and let $P_2\subset M_1$ be a parabolic with Levi quotient $M_2$. Let $Q=P_1 \times_{M_1} P_2$ be the evident parabolic in $G$ with Levi quotient $M_2$. Then the composition 
\[\mathbb{D}_{\mathrm{BZ}}\Eis_{Q !} \cong \mathbb{D}_{\mathrm{BZ}}\Eis_{P_1 !} \Eis_{P_2 !} \to \Eis_{P_1 ^- !} \mathbb{D}_{\mathrm{BZ}}^{M_1}\Eis_{P_2 !} \to \Eis_{P_1 ^- !}\Eis_{P_2 ^- !}\mathbb{D}_{\mathrm{BZ}}^{M_2} \cong \Eis_{Q^- !}\mathbb{D}_{\mathrm{BZ}}^{M_2} \]
agrees with $t_Q$, where the first arrow is induced by $t_{P_1}$ and the second arrow is induced by $t_{P_2}$.
\end{proposition}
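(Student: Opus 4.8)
The plan is to reduce the assertion to a compatibility of the auxiliary maps $\gamma$ with composition of parabolics, and then to prove that compatibility by a direct computation on the relevant fibre products, exploiting that the diagonal open immersions compose well under iterated fibre products. For a parabolic $R$ (with opposite $R^-$ and Levi $M_R$) in an ambient reductive group $H$ --- for us $(R,H)\in\{(P_1,G),(P_2,M_1),(Q,G)\}$ --- set $X_R:=\Bun_R\times_{\Bun_H}\Bun_{R^-}$, with projections $f_R^-\colon X_R\to\Bun_R$, $f_R\colon X_R\to\Bun_{R^-}$ and diagonal open immersion $j_R\colon\Bun_{M_R}\hookrightarrow X_R$; write $D_R:=f_R^{-\ast}\mathfrak{q}_R^{\ast}A\otimes f_R^{\ast}\mathfrak{q}_{R^-}^{\ast}B\otimes C_R$ with $C_R:=f_R^{-\ast}\IC_{\Bun_R}\otimes f_R^{\ast}\IC_{\Bun_{R^-}}$. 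Recall that $\gamma_R$ is $\pi_{X_R!}$ of the counit $j_{R!}j_R^{\ast}D_R\to D_R$, read through the identities $j_R^{\ast}D_R=A\otimes B$ and the K\"unneth isomorphism $\pi_{X_R!}D_R\cong\pi_{H!}(\Eis_{R!}A\otimes\Eis_{R^-!}B)$, and that $t_R$ is, via Bernstein--Zelevinsky duality and Yoneda, ``$\gamma_R$ evaluated on the identity class''. Unwinding the composite in the proposition in these terms --- using functoriality of the K\"unneth identifications, naturality of $\gamma_{P_1}$ in its second argument, and preservation of compact objects by the Eisenstein functors involved (Theorem~\ref{thm:Eiscompact}, in particular for $P_2\subset M_1$) --- one sees that it corresponds to $\gamma_{P_1}(\Eis_{P_2!}(-),\Eis_{P_2^-!}(-))\circ\gamma_{P_2}(-,-)$ evaluated on the identity class, where on the target one inserts the canonical composability isomorphisms $\Eis_{Q!}\cong\Eis_{P_1!}\Eis_{P_2!}$ and $\Eis_{Q^-!}\cong\Eis_{P_1^-!}\Eis_{P_2^-!}$. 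So it suffices to prove the identity of natural transformations $\gamma_Q(A,B)=\gamma_{P_1}(\Eis_{P_2!}A,\Eis_{P_2^-!}B)\circ\gamma_{P_2}(A,B)$.

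For the geometric input I would use $\Bun_Q\cong\Bun_{P_1}\times_{\Bun_{M_1}}\Bun_{P_2}$ and $\Bun_{Q^-}\cong\Bun_{P_1^-}\times_{\Bun_{M_1}}\Bun_{P_2^-}$ (a $Q$-bundle is a $P_1$-bundle plus a reduction to $P_2$ of its associated $M_1$-bundle). These give a projection $\rho\colon X_Q\to X_{P_1}$ together with an open immersion $\iota\colon X_{P_2}\hookrightarrow X_Q$ identifying $X_{P_2}$ with the base change $X_Q\times_{X_{P_1},j_{P_1}}\Bun_{M_1}$; writing $p\colon X_{P_2}\to\Bun_{M_1}$ for the resulting map, one has a Cartesian square with corners $X_{P_2},X_Q,\Bun_{M_1},X_{P_1}$ and the factorization $j_Q=\iota\circ j_{P_2}$. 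The two further facts I would establish are $\iota^{\ast}D_Q\cong D_{P_2}$ and $\rho_!D_Q\cong D_{P_1}^{(2)}$, where $D_{P_1}^{(2)}$ is $D_{P_1}$ formed from the sheaves $\Eis_{P_2!}A,\Eis_{P_2^-!}B$ on $\Bun_{M_1}$. The first rests on the factorization $\IC_{\Bun_Q}\cong(\text{pullback of }\IC_{\Bun_{P_1}})\otimes(\text{pullback of }\IC_{\Bun_{P_2}})$ --- additivity of the dimension function and of the modulus-character twist along $P_2\subset P_1$, the same input behind composability of Eisenstein functors --- together with $j_{P_1}^{\ast}C_{P_1}=\Lambda$. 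The second follows from the projection formula and base change along $\Bun_Q\cong\Bun_{P_1}\times_{\Bun_{M_1}}\Bun_{P_2}$: the partial $!$-pushforward along $\Bun_Q\to\Bun_{P_1}$ of the $\IC$-twisted pullback of $A$ becomes precisely $\mathfrak{q}_{P_1}^{\ast}\Eis_{P_2!}A$, and symmetrically on the $P^-$-side; this is the one genuinely new computation.

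To conclude, I would factor the counit $j_{Q!}j_Q^{\ast}D_Q\to D_Q$ through the intermediate $\iota$ and apply $\pi_{X_Q!}$. The ``inner'' part --- $\pi_{X_Q!}$ of $\iota_!$ of the counit for $j_{P_2}$ on $\iota^{\ast}D_Q\cong D_{P_2}$ --- equals $\gamma_{P_2}(A,B)$, using $\pi_{X_Q!}\iota_!=\pi_{X_{P_2}!}$. The ``outer'' part --- $\pi_{X_Q!}$ of the counit for $\iota$ --- is computed via $\pi_{X_Q!}=\pi_{X_{P_1}!}\circ\rho_!$: base change along the Cartesian square turns $\rho_!$ of this counit into the counit $j_{P_1!}j_{P_1}^{\ast}D_{P_1}^{(2)}\to D_{P_1}^{(2)}$, whose $\pi_{X_{P_1}!}$ is $\gamma_{P_1}(\Eis_{P_2!}A,\Eis_{P_2^-!}B)$. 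Composing the two contributions gives the displayed identity. I expect the main obstacle to be not a single step but the compatibility bookkeeping: verifying that the various K\"unneth isomorphisms, the composability isomorphisms for Eisenstein functors, and the base-change isomorphisms --- used once on $X_Q$ and once on $X_{P_1}$ and $X_{P_2}$ --- all match coherently. This is a lengthy but routine diagram chase, which I would carry out but not reproduce in full; once $\iota^{\ast}D_Q\cong D_{P_2}$ and $\rho_!D_Q\cong D_{P_1}^{(2)}$ are established, the remainder is formal.
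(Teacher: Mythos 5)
Your proposal is correct and follows essentially the same route as the paper: you first reduce the statement about $t$ to the identity $\gamma_Q(A,B)=\gamma_{P_1}(\Eis_{P_2!}A,\Eis_{P_2^-!}B)\circ\gamma_{P_2}(A,B)$, and then verify this on kernels using exactly the chain of open immersions $\Bun_{M_2}\subset \Bun_{P_2}\times_{\Bun_{M_1}}\Bun_{P_2^-}\subset \Bun_{Q}\times_{\Bun_G}\Bun_{Q^-}$ that the paper invokes. The only difference is that you spell out in detail (via $\iota^{\ast}D_Q\cong D_{P_2}$, $\rho_!D_Q\cong D_{P_1}^{(2)}$, and base change) what the paper compresses into the remark that all maps are induced by these kernels.
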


\begin{proof} After unwinding the definition of $t$ in terms of the map $\gamma$, this reduces to the claim that for all compact sheaves $A,B$ on $\Bun_{M_2}$, the composition
\begin{align*}
\pi_{M_2 !}(A \otimes B) \overset{\gamma_{P_2}}{\to} \pi_{M_1 !}(\Eis_{P_2 !}A \otimes \Eis_{P_2 ^- !}B) \overset{\gamma_{P_1}}{\to} \pi_{G !}(\Eis_{P_1 !} \Eis_{P_2 !}A \otimes \Eis_{P_1 ^- !}\Eis_{P_2 ^- !}B) \\
 \cong \pi_{G !}(\Eis_{Q !}A \otimes \Eis_{Q ^- !}B)
\end{align*}
agrees with $\gamma_{Q}$. But these maps are all induced by kernels, more precisely the open immersions
\[
\Bun_{M_2}\subset \Bun_{P_2^-}\times_{\Bun_{M_1}} \Bun_{P_2}\subset \Bun_{Q^-}\times_{\Bun_G} \Bun_Q.\qedhere
\]
\end{proof}

We also need that $t_P$ induces an isomorphism in the following seed cases. As all maps are induced by explicit kernels, these compatibilities are easy to check by checking them on kernels.

$\bullet$If $b\in B(M)$ is an element whose image in $B(G)$ is basic, then $\mathrm{Eis}_{P!}\circ i_{b!}^{M}\simeq i_{b!}^{G}\circ i_{P_{b}}^{G_{b}}$ resp.~$\mathrm{Eis}_{P^- !}\circ i_{b!}^{M}\simeq i_{b!}^{G}\circ i_{P_{b}^-}^{G_{b}}$,
and in this case precomposition of $t_P$ with $i_{b!}^M$ reduces to the map
\[\mathbb{D}_{\mathrm{coh}} i_{P_b}^{G_b} \to i_{P_{b}^-}^{G_b}\mathbb{D}_{\mathrm{coh}}^{M}  \]
on compact objects which is an isomorphism as an equivalent form of the second adjointness proved in \cite{DHKM}.

$\bullet$If $b \in B(M)$ is basic and strictly dominant, then $\mathrm{Eis}_{P!}\circ i_{b!}^{M}\simeq i_{b\sharp}^{\mathrm{ren}}$ and $\mathrm{Eis}_{P^- !}\circ i_{b!}^{M}\simeq i_{b!}^{\mathrm{ren}}$ by Corollary \ref{cor: dominantreductionConstantEisensteinDescription}~(5) and (1). In this case precomposition of $t_P$ with $i_{b!}^{M}$ reduces to the known isomorphism $\mathbb{D}_{\mathrm{BZ}}i_{b\sharp}^{\mathrm{ren}} \cong i_{b!}^{\mathrm{ren}}\mathbb{D}_{\mathrm{coh}}^{G_b}$. This follows from the computation of $\mathbb{D}_{\mathrm{BZ}}$ on compact generators performed in the proof of \cite[Theorem V.5.1]{FarguesScholze}.

\begin{proof}[Proof of Theorem~\ref{thm:secondadj}] Again, we assume first that $G$ is quasisplit, and fix $T\subset B\subset G$. We can assume that $M$ contains $T$, and so $P$ and $P^-$ are semistandard. By induction, we can assume that the theorem holds true for all proper Levi subgroups of $G$. We will prove the following claim by induction on quasicompact open substacks $U\subset \Bun_G$ (equivalently, finite subsets $|U|\subset |\Bun_G|=B(G)$ stable under generization).

\begin{claim} For any standard Levi $M\subset G$, any $b\in B(M)$ mapping into $|U|\subset B(G)$, and any pair of opposite parabolics $P$ and $P^-$ with Levi $M$, the natural transformation
\[
\mathbb D_{\mathrm{BZ}}\circ \mathrm{Eis}_{P!} i_{b!}^{\mathrm{ren},M}\to \mathrm{Eis}_{P^- !} \mathbb D_{\mathrm{BZ}}^M  i_{b!}^{\mathrm{ren},M}
\]
is an equivalence on compact objects.
\end{claim}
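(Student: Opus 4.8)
The plan is to run exactly the double induction used in the proof of Theorem~\ref{thm:Eiscompact}: an outer induction on quasicompact open substacks $U\subset\Bun_G$ stable under generization, an inner induction on the semistability of the image of $b$ in $B(G)=|\Bun_G|$, and, assuming inductively that Theorem~\ref{thm:secondadj} holds for all proper Levi subgroups $M\subsetneq G$ (for $M=G$ the claim is vacuous, so we take $M\subsetneq G$). The essential difference with the proof of Theorem~\ref{thm:Eiscompact} is that one must carry the \emph{natural transformation} $t_P$ itself through every reduction. The ingredients for this are the compatibility of $t$ with composition of parabolics established above; the two seed identifications recorded above --- that $t_P\circ i_{b!}^{\ren,M}$ unwinds to the second adjointness isomorphism of \cite{DHKM} when $\bar b\in B(G)$ is basic, and to $\mathbb{D}_{\mathrm{BZ}}i_{b\sharp}^{\ren}\simeq i_{b!}^{\ren}\mathbb{D}_{\mathrm{coh}}^{G_b}$ (from the proof of \cite[Theorem~V.5.1]{FarguesScholze}) when $b$ is basic and strictly dominant --- together with their mirror versions for the opposite parabolic, obtained by conjugating with $\mathbb{D}_{\mathrm{BZ}}$ and $\mathbb{D}_{\mathrm{BZ}}^M$; and the observation, used throughout, that all these coherences (composition-compatibility, Weyl-equivariance, the $P\leftrightarrow P^-$ relation) reduce to statements about explicit kernels.

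First I would reduce to the case that $b$ is basic in $M$. Writing $M_{\{b\}}\subsetneq M$ for the centralizer of $\nu_b$, $R\subset M$ for the dynamic parabolic of $\nu_b$, and $b_0\in B(M_{\{b\}})_{\basic}$ for the basic reduction of $b$, Corollary~\ref{cor: dominantreductionConstantEisensteinDescription}~(1) applied to $M$ gives $i_{b!}^{\ren,M}\simeq\Eis^{M}_{R^-!}\,i_{b_0!}^{\ren,M_{\{b\}}}$; composition-compatibility of $t$ for $M_{\{b\}}\subset M\subset G$ together with Theorem~\ref{thm:secondadj} for the proper Levi $M$ then shows that $t_P\circ i_{b!}^{\ren,M}$ is an equivalence on compact objects if and only if $t_{Q_0}\circ i_{b_0!}^{\ren,M_{\{b\}}}$ is, for the evident parabolic $Q_0$ of $G$ with Levi $M_{\{b\}}$; since $b_0$ and $b$ have the same image in $B(G)$, this stays inside $U$. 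With $b$ basic in $M$, I would then carry out the same manoeuvre as in the proof of Theorem~\ref{thm:Eiscompact}: the Levi $G_{\{\bar b\}}$ and the element $w_\alpha\in W[M,G_{\{\bar b\}}]$ satisfy $M\subset w_\alpha^{-1}(G_{\{\bar b\}})$ with $b$ still basic there, so composition-compatibility of $t$ plus Theorem~\ref{thm:secondadj} for proper Levis reduces to $M=w_\alpha^{-1}(G_{\{\bar b\}})$, after which Weyl-equivariance of $t$ untwists to the case $M=G_{\{\bar b\}}$, i.e. $b$ basic with $G_{\{b\}}=M$. (If at any stage the intermediate Levi equals $G$, then $\bar b$ is basic in $G$ and the first seed case concludes directly.)

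With $M=G_{\{b\}}$ and $b$ basic, it remains to prove that $t_P\circ i_{b!}^{\ren,M}$ is an equivalence for \emph{every} parabolic $P$ with Levi $M$. For the standard parabolic $P=P_{\{b\}}$ this is the second seed case, using $\Eis_{P_{\{b\}}!}i_{b!}^{\ren,M}\simeq i_{b\sharp}^{\ren}$, $\Eis_{P_{\{b\}}^-!}i_{b!}^{\ren,M}\simeq i_{b!}^{\ren}$ (Corollary~\ref{cor: dominantreductionConstantEisensteinDescription}~(5),(1)) and $\mathbb{D}_{\mathrm{BZ}}^M i_{b!}^{\ren,M}\simeq i_{b!}^{\ren,M}\mathbb{D}_{\mathrm{coh}}^{G_b}$ (the first seed case for $M$). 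To pass to an arbitrary $P$, I would connect $P_{\{b\}}$ to $P$ by a chain of minimal modifications; at each step the two adjacent parabolics $P',P''$ sit in a common parabolic $Q$ of $G$ whose Levi $N$ has $M$ as a maximal Levi and satisfies $N_{\{b\}}=M$, with (say) $P'\cap N$ the standard parabolic of $N$ with Levi $M$ and $P''\cap N$ its opposite. By Corollary~\ref{cor: dominantreductionConstantEisensteinDescription}~(5),(1) inside $N$ one has $\Eis^N_{P'\cap N!}i_{b!}^{\ren,M}\simeq i_{b\sharp}^{\ren,N}$ and $\Eis^N_{P''\cap N!}i_{b!}^{\ren,M}\simeq i_{b!}^{\ren,N}$, and --- using composition-compatibility of $t$ for $M\subset N\subset G$ and the seed case for $N$ together with its mirror --- one identifies, up to precomposition with $\Eis_{Q^-!}$ of an equivalence, the transformations $t_{P'}\circ i_{b!}^{\ren,M}$ and $t_{P''}\circ i_{b!}^{\ren,M}$ with $t_Q$ whiskered with $i_{b\sharp}^{\ren,N}$ and with $t_Q$ whiskered with $i_{b!}^{\ren,N}$, respectively. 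There is a canonical triangle $i_{b\sharp}^{\ren,N}\to i_{b!}^{\ren,N}\to C_N$ whose cone $C_N$, by excision (available in the motivic formalism) and the slope-polygon Lemma comparing $M$, $N$ and $G$, is built from the functors $i_{b'!}^{\ren,N}$ with $\bar{b'}$ strictly more semistable than $\bar b$ in $B(G)$; by the inner inductive hypothesis $t_Q\circ i_{b'!}^{\ren,N}$ is an equivalence on compacts for all such $b'$, hence $t_Q$ is an equivalence on $C_N$ applied to compacts. Whiskering the triangle with $t_Q$ thus produces a triangle of transformations two of whose three terms are simultaneously equivalences, so $t_{P'}\circ i_{b!}^{\ren,M}$ is an equivalence if and only if $t_{P''}\circ i_{b!}^{\ren,M}$ is; propagating from the seed $P_{\{b\}}$ settles all $P$. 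Finally, the reduction from quasisplit $G$ to general $G$ --- first to $G$ with connected center via a suitable basic $b\in B(G)$ and $\Bun_{G_b}\cong\Bun_G$, then to arbitrary $G$ via a z-extension $1\to G\to\tilde G\to D\to 1$ and $D(\Bun_D,\Lambda)$-linearity --- goes through verbatim as at the end of the proof of Theorem~\ref{thm:Eiscompact}, $t$ being visibly compatible with both constructions.

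The hard part will be the bookkeeping in the propagation step: verifying that, after threading through the identifications supplied by Corollary~\ref{cor: dominantreductionConstantEisensteinDescription}, the composition-compatibility of $t$, and the involution $P\leftrightarrow P^-$, the transformation $t_P\circ i_{b!}^{\ren,M}$ genuinely occupies the correct slot in the triangle obtained by whiskering $i_{b\sharp}^{\ren,N}\to i_{b!}^{\ren,N}\to C_N$ with $t_Q$ --- only then does the two-out-of-three argument for triangles close. Each individual coherence reduces, as noted above, to a kernel computation, but organizing them consistently while keeping the renormalization twists and the three duality functors $\mathbb{D}_{\mathrm{BZ}}$, $\mathbb{D}_{\mathrm{coh}}^M$, $\mathbb{D}_{\mathrm{coh}}^N$ in their correct places is where the substantive work lies. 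A smaller but genuine point is arranging the inner induction so that $C_N$ provably involves only strata strictly more semistable in $B(G)$, which is precisely where the slope-polygon Lemma and the excision property of the motivic six-functor formalism are invoked.
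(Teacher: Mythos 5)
Your proposal follows the paper's own proof of the Claim essentially step for step: the same outer induction on quasicompact opens $U$ and on proper Levi subgroups, the same reduction to $b$ basic with $M=G_{\{b\}}$ via composition--compatibility of $t$ and the Weyl twist, the same two seed cases, the same propagation between parabolics with Levi $M$ through a chain of minimal modifications using the excision triangle $i_{b\sharp}^{\mathrm{ren},N}\to i_{b!}^{\mathrm{ren},N}\to C$ and the two-out-of-three argument for $t_Q$, and the same endgame (connected center, then z-extensions). So in outline there is nothing to object to.

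The one place where your justification, as written, would not close is the appeal to ``mirror versions'' of the seed cases ``obtained by conjugating with $\mathbb{D}_{\mathrm{BZ}}$ and $\mathbb{D}_{\mathrm{BZ}}^M$.'' Conjugating $t_P$ by the dualities produces \emph{some} transformation with the same source and target as $t_{P^-}$, and it shows the two functors are abstractly isomorphic; it does not show that the canonical transformation $t_{P^-}\circ i_{b!}^{\mathrm{ren},M}$ is an equivalence, which is what you need. In the propagation step this matters in two different ways. When the intermediate Levi $N$ is proper in $G$, you do not need any mirror at all: the required equivalence of the inner transformation $t^N_{P''\cap N}\circ i_{b!}^{\mathrm{ren},M}$ is exactly an instance of Theorem \ref{thm:secondadj} for the proper Levi $N$, available by the Levi induction (this is how the paper phrases it). But when $M$ is a maximal Levi, the only chain joining $P_{\{b\}}$ to its opposite has $Q=N=G$, and then $\Eis_{Q!}=\mathrm{id}$ and $t_Q$ is the identity, so whiskering the triangle with $t_Q$ is vacuous and the two-out-of-three step carries no content; the entire weight of that step falls on the assertion that $t_{P^-}\circ i_{b!}^{\mathrm{ren},M}$ is an equivalence for $b$ dominant basic. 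That assertion is true, but it needs an actual argument: either a kernel-level check, parallel to the stated seed computation via Corollary \ref{cor: dominantreductionConstantEisensteinDescription} and the computation of $\mathbb{D}_{\mathrm{BZ}}$ on compact generators in \cite{FarguesScholze}, that $t_{P^-}\circ i_{b!}^{\mathrm{ren},M}$ coincides with the known isomorphism $\mathbb{D}_{\mathrm{BZ}}i_{b!}^{\mathrm{ren}}\cong i_{b\sharp}^{\mathrm{ren}}\mathbb{D}_{\mathrm{coh}}$, or a proof (again on kernels, using the evident symmetry of $\Bun_{P^-}\times_{\Bun_G}\Bun_P$ under swapping the factors in the construction of $\gamma$) that $t_{P^-}$ is the BZ-transpose of $t_P$. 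Either route is a finite kernel computation of the kind you already invoke elsewhere, but it is a genuine additional coherence, not a formal consequence of conjugation, and it should be stated and checked explicitly in your write-up.
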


We follow the same steps as in the proof of Theorem~\ref{thm:Eiscompact}. By reducing $b$ further to a basic element of a Levi subgroup of $M$, and using compatibility of Eisenstein functors (and the transformation $t$) with composition, we can reduce to the case that $b=b_\alpha\in B(M)$ is basic. Then there is the Levi subgroup $G_{\{b\}}\subset G$ and the element $w_\alpha\in W[M,G_{\{b\}}]$. We have $M\subset w_\alpha^{-1}(G_{\{b\}})$, and the image of $b$ in $B(w_\alpha^{-1}(G_{\{b\}}))$ stays basic. We can then use compatibility of Eisenstein functors (and the transformation $t$) with composition again to reduce to the case $M=w_\alpha^{-1}(G_{\{b\}})$. As $P$ and $P^-$ were arbitrary opposite parabolics with Levi $M$, we can undo the Weyl group twist. In other words, we can assume that $M=G_{\{b\}}$. We have to show that for all parabolics $P$ with Levi $M=G_{\{b\}}$ and opposite parabolic $P^-$, the transformation
\[
\mathbb D_{\mathrm{BZ}}\circ \mathrm{Eis}_{P!}  i_{b!}^M\to \mathrm{Eis}_{P^- !} \mathbb D_{\mathrm{BZ}}^M  i_{b!}^M
\]
is an equivalence on compact objects. By our seed case, we know that this is true when $P$ is the standard parabolic.

We will now show that the result is true for some parabolic $P$ with Levi $M$ if and only if it is true for any other $P'$. Moving from one parabolic with Levi $M$ to any other via a sequence of minimal modifications, we can assume that $P$ and $P'$ are contained in some parabolic $Q$ whose Levi $N\supset M$ has the property that $P\cap N$ and $P'\cap N$ are maximal parabolic subgroups. In particular, one of them is standard and the other is anti-standard in $N$; without loss of generality $P\cap N$ is the standard one. As in the proof of Theorem~\ref{thm:Eiscompact}, we have
\[
\mathrm{Eis}_{P\cap N!} i_{b!}^M = i_{b\sharp}^{\mathrm{ren},N}
\]
and
\[
\mathrm{Eis}_{P'\cap N!} i_{b!}^M = i_{b!}^{\mathrm{ren},N}
\]
and the triangle
\[
i_{b\sharp}^{\mathrm{ren},N}\to i_{b!}^{\mathrm{ren},N}\to C
\]
where the cone $C$ is supported at elements strictly more semistable than $b$. By induction, we know that the natural transformation
\[
\mathbb D_{\mathrm{BZ}} \mathrm{Eis}_{Q!}\to \mathrm{Eis}_{Q^-!} \mathbb D_{\mathrm{BZ}}^N
\]
is an equivalence on $C$. Thus, it is an equivalence on $i_{b!}^{\mathrm{ren},N}$ if and only if it is an equivalence on $i_{b\sharp}^{\mathrm{ren},N}$.

Now this transformation applied to $i_{b\sharp}^{\mathrm{ren},N} = \mathrm{Eis}_{P\cap N!} i_{b!}^M$ yields the transformation
\[
\mathbb D_{\mathrm{BZ}}\mathrm{Eis}_{P!} i_{b!}^M\to \mathrm{Eis}_{P^-!} \mathbb D_{\mathrm{BZ}}^M i_{b!}^M
\]
again by compatibility with composition (and as the result is known for $N$, as $P\cap N$ is standard). Similarly, the transformation applied to $i_{b!}^{\mathrm{ren},N} = \mathrm{Eis}_{P'\cap N!} i_{b!}^M$ yields the transformation
\[
\mathbb D_{\mathrm{BZ}}\mathrm{Eis}_{P'!} i_{b!}^M\to \mathrm{Eis}_{P^{\prime -}!} \mathbb D_{\mathrm{BZ}}^M i_{b!}^M.
\]
Thus, the transformation for $P$ is an isomorphism if and only if the transformation for $P'$ is an isomorphism, as desired.

This finishes the case that $G$ is quasisplit. If $G$ has connected center, then so does $M$, and there is some basic element $b\in B(M)$ such that $M_b$ (and hence $G_b$) is quasisplit. One can then twist $P$ and $P'$ by $b$ to obtain opposite parabolics of $G_b$ with Levi $M_b$, and this reduces the case of $G$ to the case of $G_b$.

It remains to reduce the case of a general $G$ to the case of a $G$ with connected center. Again, we pick a z-extension
\[
1\to G\to \tilde{G}\to D\to 1
\]
where $D$ is a torus and $\tilde{G}$ has connected center, so
\[
\Bun_G = \Bun_{\tilde{G}}\times_{\Bun_D} \ast
\]
and the corresponding functor
\[
D(\Bun_{\tilde{G}},\Lambda)\otimes_{D(\Bun_D,\Lambda)} D(\Lambda)\to D(\Bun_G,\Lambda)
\]
is an equivalence. The parabolics $P$ and $P^-$ lift to parabolics $\tilde{P}$ and $\tilde{P}^-$, and the isomorphism
\[
\CT_{\tilde{P}^-\ast}\to \CT_{\tilde{P}!}
\]
shows that $\CT_{\tilde{P}^-\ast}$ is $D(\Bun_D,\Lambda)$-linear. In particular, $\mathrm{Eis}_{\tilde{P}^-!}$ has a $D(\Bun_D,\Lambda)$-linear right adjoint, which then base changes along $D(\Bun_D,\Lambda)\to D(\Lambda)$ to give the right adjoint of $\mathrm{Eis}_{P^-!} = \mathrm{Eis}_{\tilde{P}^-!}\otimes_{D(\Bun_D,\Lambda)} D(\Lambda)$. This gives the isomorphism $\CT_{P^-\ast}\cong \CT_{P!}$.
\end{proof}

\section{The geometry of $\wtBun_P$}

In the last section we showed that the geometric Eisenstein functor $\Eis_{P!}(-)$ preserves compact objects, establishing the first group of finiteness results claimed in Theorem \ref{thm:maintheorem}. To make further progress, we will now need to address how the functor interacts with the ULA objects. This will be accomplished by using the geometry of a certain relative compactification of the map $\mf{p}_{P}$ known in the classical geometric Langlands program as the Drinfeld compactification. In this section, we will establish some of the fundamental geometric properties of these compactifications.

The definition of the Drinfeld compactifications classically involves certain injections of vector bundles whose cokernel is flat in a suitable sense, where many of its basic properties in turn follow from the classical theory of Quot schemes. We now study the analogue of this in the Fargues-Fontaine setting. Several of the results can also be found in \cite[\S~5.2.3]{HamGeomES}, but we recall the proofs here for convenience.
\subsection{Flat coherent sheaves on relative Fargues-Fontaine curves}

Let $S \in \Perf$, with $X_{S}$ the associated relative curve. If $\mathcal{E}$ is an $\mathcal{O}_{X_S}$-module and $T\to S$ is any map of perfectoid spaces, we write $\calE_T$ for the pullback of $\calE$ along the map of ringed spaces $X_T \to X_S$. The following key notion is due to Ansch\"utz-le Bras \cite[Definition 2.8]{AL1}.

\begin{definition}A (relatively) flat coherent sheaf on $X_S$ is an $\mathcal{O}_{X_S}$-module $\mathcal{E}$ which, locally in the analytic topology on $S$, can be written as the cokernel of a fiberwise-injective map of bundles $\mathcal{E}_1 \to \mathcal{E}_2$.
\end{definition}
For some explanation of the name, see \cite[Remark 2.9]{AL1}. 

We next record some foundational stability properties of this notion. Part (3) of the next proposition is \cite[Theorem 2.11]{AL1}. Part (2) is \cite[Lemma~5.14]{HamGeomES}.

\begin{proposition}\label{prop:flat-coherent-basic-stabilities}
\begin{enumerate}
    \item If $\mathcal{F}$ is a flat coherent sheaf on $X_S$ and $f:\mathcal{E}\twoheadrightarrow \mathcal{F}$ is any surjection from a vector bundle, then $\ker f$ is a vector bundle and the natural map $\ker f \to \mathcal{E}$ is fiberwise injective.

    \item Flat coherent sheaves are stable under base change on $S$, kernels of surjections, cokernels of fiberwise-injective maps, and tensoring by vector bundles.

    \item The functor sending $S$ to the groupoid of flat coherent sheaves on $X_S$ is a stack for the v-topology.
\end{enumerate}
\end{proposition}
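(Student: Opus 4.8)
Parts~(2) and~(3) are exactly the results recalled just above, namely \cite[Lemma~5.14]{HamGeomES} and \cite[Theorem~2.11]{AL1}, so the only thing requiring an argument is~(1). Since that statement is local on $S$, the plan is to begin by using the definition of a flat coherent sheaf to choose, locally on $S$, a presentation $0 \to \mathcal{E}_1 \to \mathcal{E}_2 \to \mathcal{F} \to 0$ with $\mathcal{E}_1,\mathcal{E}_2$ vector bundles and $\mathcal{E}_1 \to \mathcal{E}_2$ fiberwise injective. I would extract two things from this: first, since vector bundles are flat $\mathcal{O}_{X_S}$-modules, $\mathcal{F}$ is a finitely presented $\mathcal{O}_{X_S}$-module of $\mathrm{Tor}$-dimension $\leq 1$; and second, because $\mathcal{E}_1 \to \mathcal{E}_2$ stays injective after restriction to any geometric point $X_{\overline{s}}$, the derived restriction of $\mathcal{F}$ to every such $X_{\overline{s}}$ is concentrated in degree $0$.

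Granting this, set $\mathcal{K} = \ker f$, so that $0 \to \mathcal{K} \to \mathcal{E} \to \mathcal{F} \to 0$ is exact. From the long exact $\mathrm{Tor}$-sequence against an arbitrary $\mathcal{O}_{X_S}$-module, using $\mathrm{Tor}_{\geq 1}(\mathcal{E},-) = 0$ (as $\mathcal{E}$ is a bundle) and $\mathrm{Tor}_{\geq 2}(\mathcal{F},-) = 0$ (from the length-one resolution above), one obtains $\mathrm{Tor}_{\geq 1}(\mathcal{K},-) = 0$, i.e.\ $\mathcal{K}$ is flat over $\mathcal{O}_{X_S}$. It is also finitely presented: either because $\mathcal{O}_{X_S}$ is coherent, or because $\mathcal{F}$ is pseudocoherent and hence the kernel of a surjection onto it from a finitely presented module is again finitely presented. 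A finitely presented flat module is projective (Lazard), so $\mathcal{K}$ is a vector bundle. For the remaining claim, I would pull the sequence $0 \to \mathcal{K} \to \mathcal{E} \to \mathcal{F} \to 0$ back to a geometric point $X_{\overline{s}}$: since the derived restriction of $\mathcal{F}$ there is concentrated in degree $0$, the sequence stays left exact, so $\mathcal{K}_{\overline{s}} \to \mathcal{E}_{\overline{s}}$ is injective, which is the asserted fiberwise injectivity.

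The crux of the matter, and essentially the only non-formal input, is the structural theory of coherent sheaves on the relative Fargues--Fontaine curve: that vector bundles on $X_S$ are precisely the finite projective $\mathcal{O}_{X_S}$-modules, and that finitely presented modules behave well (Kedlaya--Liu), together with the fact that the Ansch\"utz--le Bras notion of flat coherent sheaf is set up so that exactly the $\mathrm{Tor}$-vanishings used above hold; everything else is elementary homological algebra. If one prefers a more geometric packaging of the same steps, I would instead form the fiber product $\mathcal{P} = \mathcal{E} \times_{\mathcal{F}} \mathcal{E}_2$: its projection to $\mathcal{E}$ sits in an exact sequence $0 \to \mathcal{E}_1 \to \mathcal{P} \to \mathcal{E} \to 0$, so $\mathcal{P}$ is an extension of vector bundles and hence a vector bundle, while its projection to $\mathcal{E}_2$ sits in $0 \to \mathcal{K} \to \mathcal{P} \to \mathcal{E}_2 \to 0$, exhibiting $\mathcal{K}$ as the kernel of a surjection of vector bundles, hence a vector bundle; identifying the fiber of $\mathcal{P}$ over $X_{\overline{s}}$ with the corresponding fiber product (using flatness of $\mathcal{F}$) then gives the fiberwise injectivity of $\mathcal{K} \to \mathcal{E}$ directly. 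The one spot to be careful about, in either route, is the exactness of restriction to geometric points, which is controlled precisely by the definition of a flat coherent sheaf.
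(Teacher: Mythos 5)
Your proof of (1) is correct and takes essentially the same route as the paper: both arguments are local and rest on the length-one resolution $0\to\mathcal{E}_1\to\mathcal{E}_2\to\mathcal{F}\to 0$ supplied by the definition, the paper quoting the standard commutative-algebra fact that the kernel of a surjection from a finite free module onto a finitely presented module of projective dimension $\leq 1$ is finite projective (which is exactly your Tor/Schanuel-style computation), while your derived-restriction argument for the fiberwise injectivity of $\ker f\to\mathcal{E}$ spells out a point the paper leaves implicit. The only divergence is in (2): you defer entirely to \cite[Lemma~5.14]{HamGeomES}, which is indeed the source the paper itself names, whereas the paper also records a short self-contained proof deducing stability under kernels of surjections from (1) together with the snake lemma; part (3) is cited from \cite[Theorem~2.11]{AL1} in both.
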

\begin{proof}
(1) is local on $|X_S|$, where it reduces to the following standard fact from commutative algebra: if $f: A^n \twoheadrightarrow M$ is any surjection from a finite free $A$-module onto a finitely presented module of projective dimension $\leq 1$, then $\ker f$ is finitely generated and projective.

For (2), the statements about tensoring by vector bundles is clear. For the statement about base change, use that a map of vector bundles on a relative curve $X_S$ is fiberwise-injective iff it remains injective after any base change on $S$. For stability under kernels of surjections, let $f:\mathcal{F}_1 \twoheadrightarrow \mathcal{F}_2$ be a surjection of flat coherent sheaves. Arguing locally on $S$, we can pick a surjection $s: \mathcal{E} \to \mathcal{F}_1$ from a vector bundle. Let $\mathcal{K}_1$ resp. $\mathcal{K}_2$ denote the kernel of $s$ resp. the kernel of the surjection $f \circ s$. Then $\mathcal{K}_1$ and $\mathcal{K}_2$ are vector bundles, by two applications of (1), and the snake lemma gives a short exact sequence
\[0 \to \mathcal{K}_1 \to \mathcal{K}_2 \to \ker f \to 0. \]
The first arrow here is fiberwise-injective, since it is compatible with the natural fiberwise-injective maps from the $\mathcal{K}_i$'s into $\mathcal{E}$. Therefore $\ker f$ is a flat coherent sheaf. 

The statement about cokernels of fiberwise-injective maps is similar and left to the reader.
\end{proof}

If $\mathcal{E}/X_S$ is a flat coherent sheaf then let $\calH^0(\calE)$ be the functor on perfectoid spaces over $S$ sending $T/S$ to $H^0(X_T,\calE_T)$. Likewise, for $\calF$ a second flat coherent sheaf, let $\calH om(\calE,\calF)$ be the functor sending $T/S$ to $\Hom_{X_T}(\calE_T,\calF_T)$.
\begin{proposition}\label{prop:flat-coherent-homs-nice}The functors $\calH^0(\calE)$ and $\calH om(\calE,\calF)$ are representable by locally spatial diamonds which are partially proper over $S$. Moreover, after deleting the zero section and quotienting by $E^{\times}$, they become proper over $S$.
\end{proposition}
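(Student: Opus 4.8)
The plan is to bootstrap from the known structure of Banach--Colmez spaces of vector bundles and their shifts \cite[Section~II.2]{FarguesScholze}, together with the resolution theory of flat coherent sheaves recalled above. Both $\calH^0(\calE)$ and $\calH om(\calE,\calF)$ are $v$-sheaves on perfectoid spaces over $S$ -- flat coherent sheaves form a $v$-stack (Proposition~\ref{prop:flat-coherent-basic-stabilities}(3)), and global sections and internal $\Hom$ on the relative curve commute with the limits defining $v$-descent -- so every property in question may be checked $v$-locally, in fact analytic-locally, on $S$. First I would reduce $\calH om(\calE,\calF)$ to the case of $\calH^0$: analytic-locally on $S$ write $\calE=\mathrm{coker}(\calE_1\hookrightarrow\calE_2)$ with $\calE_1\to\calE_2$ a fiberwise injective map of bundles; then $\calH om(\calE_i,\calF)\cong\calH^0(\calE_i^{\vee}\otimes\calF)$ with $\calE_i^{\vee}\otimes\calF$ flat coherent by Proposition~\ref{prop:flat-coherent-basic-stabilities}(2), and $\calH om(\calE,\calF)$ is the fiber over the zero section of the restriction map $\calH^0(\calE_2^{\vee}\otimes\calF)\to\calH^0(\calE_1^{\vee}\otimes\calF)$. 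Since a zero section of a separated morphism over $S$ is a closed immersion, this fiber is a closed subspace of $\calH^0(\calE_2^{\vee}\otimes\calF)$, and the class of locally spatial diamonds partially proper over $S$ is stable under closed subspaces and $v$-descent on $S$; so it suffices to treat $\calH^0(\calG)$ for $\calG$ an arbitrary flat coherent sheaf.

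The key step is to choose the resolution well. Locally on $S$ -- after possibly passing to a $v$-cover, which is harmless -- I claim $\calG$ fits in a short exact sequence $0\to\mathcal{K}\to\calE'\to\calG\to0$ with $\mathcal{K},\calE'$ vector bundles \emph{all of whose Harder--Narasimhan slopes are strictly negative}. Indeed, start from any resolution by bundles (which exists analytic-locally since $\calG$ is flat coherent, by Proposition~\ref{prop:flat-coherent-basic-stabilities}(1)); the bundle $\calE'(N)$ has fiberwise strictly positive slopes for $N$ large, which holds on an open $S_N\subseteq S$ by semicontinuity of the Harder--Narasimhan polygon in families \cite{FarguesScholze}, with $\bigcup_N S_N=S$, and over $S_N$ (where $\mathrm{rk}$ and $\deg$ are locally constant) $\calE'(N)$ is generated by finitely many global sections; this gives a surjection $\calO_{X_S}(-N)^{m}\twoheadrightarrow\calE'$, and composing with $\calE'\twoheadrightarrow\calG$ and reapplying Proposition~\ref{prop:flat-coherent-basic-stabilities}(1) produces $0\to\mathcal{K}\to\calO_{X_S}(-N)^{m}\to\calG\to0$ with all slopes of $\mathcal{K}\subseteq\calO_{X_S}(-N)^{m}$ at most $-N$. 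For such a resolution $H^0(X_T,\mathcal{K}_T)=H^0(X_T,\calE'_T)=0$ for all perfectoid $T/S$, so the long exact cohomology sequence on $X_T$ degenerates to
\[ H^0(X_T,\calG_T)\cong\ker\!\big(H^1(X_T,\mathcal{K}_T)\to H^1(X_T,\calE'_T)\big), \]
whence $\calH^0(\calG)\cong\calH^1(\mathcal{K})\times_{\calH^1(\calE')}S$, with $\calH^i(\calV)\colon T\mapsto H^i(X_T,\calV_T)$ and $S\to\calH^1(\calE')$ the zero section. Since $\mathcal{K}$ and $\calE'$ have strictly negative slopes, \cite[Section~II.2]{FarguesScholze} gives that $\calH^1(\mathcal{K})$ and $\calH^1(\calE')$ are locally spatial diamonds, partially proper (indeed cohomologically smooth) over $S$; the zero section is a closed immersion, so $\calH^0(\calG)$ is a closed subdiamond of $\calH^1(\mathcal{K})$, proving the first two assertions after descent.

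For the last sentence, the zero section is a closed sub-$v$-sheaf, so its complement $Y$ (in $\calH^0(\calE)$, resp.\ $\calH om(\calE,\calF)$) is open and carries a free scaling action of $\underline{E^{\times}}$. Partial properness of $Y/\underline{E^{\times}}\to S$ is inherited from $Y\to S$: a section which is nonzero over the curve attached to $\Spa(C,\mathcal{O}_C)$ extends uniquely to the curve attached to $\Spa(C,C^{+})$ and remains nonzero there (being nonzero on a dense open), so $Y\to S$ is itself partially proper, and the free quotient by the partially proper group $\underline{E^{\times}}$ is again partially proper. Quasi-compactness is the substantive point: one must show the $\underline{\pi^{\mathbb Z}}$-part of the quotient is quasi-compact by exhibiting a quasi-compact subspace $Y^{\circ}\subseteq Y$ -- sections satisfying a fixed integrality/boundedness condition -- meeting every $\underline{E^{\times}}$-orbit (any section can be rescaled by a power of $\pi$ into a fixed integral ``annulus of sections''), so that $Y^{\circ}\twoheadrightarrow Y/\underline{E^{\times}}$; alternatively one reduces, via the fiber-product description of the previous paragraph, to the analogous properness statement for Banach--Colmez spaces of bundles, e.g.\ the properness of the spaces $\Div^{1}$. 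I expect the two genuinely substantive points to be this boundedness input and the local construction -- with no quasi-compactness hypothesis on $S$ -- of a resolution with strictly negative slopes; everything else is bookkeeping with the Banach--Colmez formalism of \cite{FarguesScholze} and the stability properties of Proposition~\ref{prop:flat-coherent-basic-stabilities}.
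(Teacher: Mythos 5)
Your argument for representability and partial properness is essentially the paper's: you reduce $\calH om(\calE,\calF)$ to $\calH^0$ of a flat coherent sheaf via a two-term presentation and a fiber product over the zero section (legitimate, since partial properness gives separatedness and hence the zero section is a closed immersion), and you present $\calH^0(\calG)$ as a closed subspace of $\calH^1(\mathcal K)$ using a resolution by bundles of strictly negative slopes, then quote the Fargues--Scholze results on (negative) Banach--Colmez spaces. The only cosmetic difference is that you re-derive the existence of the negative resolution by twisting and global generation, where the paper (after assuming $S$ quasicompact, i.e.\ working analytic-locally) simply cites \cite[Corollary~2.15]{AL1}; your slope estimate for the kernel and the identification $\calH^0(\calG)\cong \calH^1(\mathcal K)\times_{\calH^1(\calE')}S$ are exactly the paper's computation. (One small caveat: descending ``locally spatial diamond'' along a general $v$-cover of $S$ is not formal, but you never actually need it -- your construction, like the paper's, works analytic-locally on $S$, where gluing is unproblematic.)

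The weak point is the final properness claim. Your primary suggestion -- rescale any section by a power of $\pi$ into a fixed ``integral annulus of sections'' to get a quasicompact $Y^\circ$ surjecting onto $Y/\underline{E^\times}$ -- is not an argument in this setting: there is no functorial bounded/integral structure on $H^0(X_T,\calE_T)$ over a general perfectoid $T$ (and even less so when $\calE$ has torsion), and producing such a presentation is essentially equivalent to the Fargues--Scholze properness theorem for projectivized Banach--Colmez spaces that one should instead be citing. Your fallback (``reduce via the fiber-product description to the analogous statement for Banach--Colmez spaces of bundles'') is the right move and is what the paper does, but note that the fiber-product description embeds $\calH^0(\calG)\smallsetminus 0$ closedly into $\calH^1(\mathcal K)\smallsetminus 0$ with $\mathcal K$ of strictly \emph{negative} slopes, so the input you need is properness of the projectivized spaces attached to $\calH^1$ of negative bundles (or else the paper's reduction to the case of $\calH^0$ of a vector bundle, i.e.\ a \cite[Proposition~II.2.16]{FarguesScholze}-type statement), not ``the properness of $\Div^1$'', which is the projectivized $\calH^0$ of $\mathcal O(1)$ and only covers the positive case. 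With the correct citation in place of these two sentences, your proof agrees with the paper's.
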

\begin{proof}We can assume $S$ is quasicompact. Twisting $\calE$ by $\mathcal{O}(n)$ for some large $n$ and using \cite[Corollary 2.15]{AL1}, we can find a short exact sequence $0\to \calE_1 \to \calE_2 \to \calE \to 0$ where $\calE_i$ are vector bundles with strictly negative slopes. This gives a presentation $\calH^0(\calE)\cong \calH^1(\calE_1)\times_{\calH^1(\calE_2),s} S$ where $s: S\to \calH^1(\calE_2)$ is the zero section. This proves the result for $\calH^0$. For $\calH om$, the result immediately follows if $\calE$ is a vector bundle, since in this case $\calF \otimes \calE^\vee$ is flat coherent and $\calH om(\calE,\calF)\cong \calH^0(\calF \otimes \calE^\vee)$. In general, pick any presentation $\calE_1 \to \calE_2 \to \calE \to 0$ with $\calE_i$ vector bundles. Then
\[ \calH om(\calE,\calF)\cong \calH om(\calE_2,\calF)\times_{\calH om(\calE_1,\calF),s}S \]
where again $s$ is the zero section.

The final claim reduces analogously to the case of $\calH^0(\calE)$ with $\calE$ a vector bundle, which is proved in \cite{FarguesScholze}.
\end{proof}

Suppose $S=\Spa(K,K^+)$ is a point with $K$ algebraically closed. If $\calF$ is a flat coherent sheaf on $X_S$, there is a canonical short exact sequence $0 \to \calF^{\mathrm{tors}} \to \calF \to \calF^{\mathrm{vb}} \to 0$, where the first term is a torsion sheaf and the third term is a vector bundle. In particular, the length of $\calF^{\mathrm{tors}}$ is a well-defined invariant of $\calF$. For general $S$, we therefore get a function $\lambda_\calF:|S| \to \mathbf{Z}_{\geq 0}$ sending $s \in |S|$ to $\mathrm{length}(\calF_{\tilde{s}}^{\mathrm{tors}})$, where $\tilde{s}:\Spa(K,K^+)\to S$ is any choice of map with $K$ algebraically closed and such that $|\tilde{s}|$ sends the closed point in $|\Spa(K,K^+)|$ to $s$.

\begin{lemma}\label{lem:torsion-semicontinuous-stuff}{\label{lemma: uppersemicont}} Let $\calF/X_S$ be a flat coherent sheaf.
\begin{enumerate}
    \item The function $\lambda_\calF$ is upper semicontinuous.

    \item If $\lambda_\calF$ is locally constant, there is a unique short exact sequence
of flat coherent sheaves $0\to\mathcal{F}^{\mathrm{tors}}\to\mathcal{F}\to\mathcal{F}^{\mathrm{vb}}\to0$
where $\mathcal{F}^{\mathrm{tors}}$ is a torsion sheaf and $\mathcal{F}^{\mathrm{vb}}$
is a vector bundle.
\end{enumerate}
\end{lemma}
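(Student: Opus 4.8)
The plan is to treat the two parts separately, since they have rather different flavors: (1) is a semicontinuity statement that one reduces to explicit commutative algebra on $Y_S$, while (2) is a construction. For part (1), I would first reduce to a statement about a coherent sheaf presented as a cokernel of a map of vector bundles. Working analytic-locally on $S$, write $\calF=\operatorname{coker}(\calE_1\xrightarrow{\phi}\calE_2)$ with $\phi$ fiberwise injective, and dualize to the two-term complex $[\calE_2^\vee\xrightarrow{\phi^\vee}\calE_1^\vee]$. Since $\phi$ is fiberwise injective, this complex computes $\sHom_{\mathcal{O}_{X_S}}(\calF,\mathcal{O}_{X_S})$ compatibly with all base change, and its $\mathcal{H}^1=\mathcal{Q}:=\operatorname{coker}(\phi^\vee)$ is fiberwise torsion (as $\phi^\vee$ is surjective at the generic point of each integral fiber $X_{\tilde s}$), with $\mathcal{Q}_{\tilde s}\cong \mathcal{E}xt^1_{X_{\tilde s}}(\calF_{\tilde s},\mathcal{O}_{X_{\tilde s}})$. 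On the curve $X_{\tilde s}$, whose local rings at classical points are discrete valuation rings, this $\mathcal{E}xt^1$ is supported on the torsion subsheaf and has length exactly $\lambda_\calF(s)$. So it suffices to show that for a coherent sheaf on $X_S$ which is a cokernel of a map of vector bundles and is fiberwise torsion, the fiberwise length is upper semicontinuous; equivalently, the support of $\mathcal{Q}$ is a closed subspace of $X_S$ which is quasicompact and partially proper, hence proper, over $S$, and the claim is that the degree of this family of fiberwise-finite effective divisors is upper semicontinuous. This is local on $|X_S|$; restricting to an affinoid subspace of $Y_S$, $\mathcal{Q}$ becomes a finitely presented module over a Banach ring all of whose fibers over $S$ have finite length, and I would run the explicit commutative algebra (Weierstrass-type preparation over the punctured Witt rings) making the fiberwise length a visibly upper-semicontinuous count of vanishing orders, following \cite[\S 5.2.3]{HamGeomES}. \emph{This reduction to hands-on commutative algebra on $Y_S$ is where the real work lies}: the rings are not Noetherian, and one must carefully separate the fibers over $S$ from the ambient algebra of $X_S$, exactly as in the proof of Theorem~\ref{thm:properness}.

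For part (2), assume $\lambda_\calF$ is locally constant, equal to $\ell$ on a connected component of $S$, and let $r$ be the fiberwise generic rank of $\calF$. The torsion part will be cut out by the Fitting ideal $\mathcal{I}:=\operatorname{Fitt}_r(\calF)\subseteq\mathcal{O}_{X_S}$, which is intrinsic to $\calF$. Fitting ideals commute with base change, and a fiberwise computation (using that $\calF_{\tilde s}$ splits locally on $X_{\tilde s}$ as torsion plus free) identifies $\mathcal{I}_{\tilde s}$ with $\operatorname{Fitt}_0$ of the torsion subsheaf of $\calF_{\tilde s}$, i.e.\ with the ideal sheaf of an effective Cartier divisor of degree $\ell$ on $X_{\tilde s}$. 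As this fiberwise degree is now locally constant, $\mathcal{I}$ is invertible on $X_S$ — equivalently $\mathcal{O}_{X_S}/\mathcal{I}$ is $S$-flat, which holds because its fiberwise length is constant, as in the theory of $\Div^{\ell}$ — so $D:=V(\mathcal{I})\subset X_S$ is a relative effective Cartier divisor, finite flat of degree $\ell$ over $S$ (fiberwise finite and proper over $S$, hence finite, and flat since its fiberwise length is constant). I would then set $\calF^{\mathrm{tors}}$ to be the subsheaf of $\calF$ annihilated by $\mathcal{I}^{\ell}$ — a coherent sheaf supported on $D$ — and $\calF^{\mathrm{vb}}:=\calF/\calF^{\mathrm{tors}}$, and verify, using the flatness of $\mathcal{O}_D$ over $S$ together with the fiberwise criteria for flat coherent sheaves in Proposition~\ref{prop:flat-coherent-basic-stabilities} (and \cite[\S 2]{AL1}), that $\calF^{\mathrm{tors}}$ is a torsion flat coherent sheaf, that $\calF^{\mathrm{vb}}$ is a vector bundle, and that the sequence remains exact on every geometric fiber, where it recovers the canonical torsion/vector-bundle decomposition.

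For uniqueness, suppose $0\to\mathcal{T}\to\calF\to\mathcal{V}\to0$ is any short exact sequence of flat coherent sheaves with $\mathcal{T}$ fiberwise torsion and $\mathcal{V}$ a vector bundle. Base change to any geometric point stays exact, since $\mathcal{V}$ is locally free, so $\mathcal{T}_{\tilde s}$ is the torsion subsheaf of $\calF_{\tilde s}$ for every $\tilde s$. Any map from $\mathcal{T}$ to the vector bundle $\calF^{\mathrm{vb}}$ vanishes — locally on $X_S$ the sheaf $\mathcal{T}$ is killed by a non-zero-divisor (it is supported on a relative Cartier divisor), which acts injectively on $\calF^{\mathrm{vb}}$ — so $\mathcal{T}\hookrightarrow\calF$ factors through $\calF^{\mathrm{tors}}$, and the induced map $\mathcal{T}\to\calF^{\mathrm{tors}}$ is an isomorphism on all geometric fibers, hence an isomorphism (a fiberwise-isomorphic map of flat coherent sheaves is an isomorphism, by reduction to determinants of vector bundles, cf.\ \cite{AL1}). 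The induced identification of quotients is then automatic, so the decomposition is unique. The main obstacle throughout is part (1); everything in part (2) is a formal consequence of the geometry of Fitting divisors on $X_S$ once the semicontinuity in (1) is available.
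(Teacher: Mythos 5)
Your strategy diverges from the paper's, and at the points where it diverges the load-bearing steps are asserted rather than proved. For part (1), your duality/Ext reduction is harmless but does not reduce the difficulty: you have merely traded the torsion of $\calF$ for a fiberwise-torsion cokernel $\mathcal{Q}$ of another map of vector bundles, and the entire content of the lemma is then the claim that the fiberwise length of such a $\mathcal{Q}$ is upper semicontinuous. You yourself flag this as ``where the real work lies'' and then appeal to unspecified Weierstrass-type preparation over the (non-Noetherian) rings of $Y_S$, deferring to the literature this lemma is meant to supply; no actual argument is given, and in this setting that semicontinuity is not a routine computation. The paper proves (1) with no commutative algebra on $Y_S$ at all: it uses the moduli space $\mathcal{S}\mathrm{imp}(\calF)$ of fiberwise-nonzero simple sections (Lemma \ref{lem:nonzero-simple-lemma}), whose projectivization is proper over $S$ by Proposition \ref{prop:flat-coherent-homs-nice} and properness of $\Div^1$, so that its image in $S$ is closed; this image is exactly $\{\lambda_\calF>0\}$, and one inducts by passing to a v-cover of that closed locus, choosing a simple section $s$, and noting $\{\lambda_\calF>1\}=\{\lambda_{\calF'}>0\}$ for $\calF'=\calF/\mathcal{O}_{X_S}\cdot s$.

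Part (2) has analogous gaps. The pivotal claim that constancy of the fiberwise degree of $V(\mathrm{Fitt}_r(\calF))$ forces $\mathcal{O}_{X_S}/\mathcal{I}$ to be $S$-flat (so that $D$ is a relative Cartier divisor) is justified only ``as in the theory of $\Div^{\ell}$''; the Noetherian slogan that constant fibre length over a reduced base implies flatness does not transport for free to this adic, non-Noetherian situation, and that implication is essentially of the same nature as the statement being proved. Likewise, Proposition \ref{prop:flat-coherent-basic-stabilities} contains no ``fiberwise criterion'' for flat coherence, so the verification that the $\mathcal{I}^{\ell}$-torsion subsheaf is flat coherent, torsion, with vector bundle quotient is also left without proof. (Your uniqueness argument is fine; the paper dismisses uniqueness as clear.) The paper instead obtains existence in (2) by the same induction as in (1): using v-descent of flat coherent sheaves (Proposition \ref{prop:flat-coherent-basic-stabilities}(3)), pass to a v-cover, split off a simple section, and induct on the constant value of $\lambda_\calF$. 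To salvage your route you would have to genuinely prove the semicontinuity of fiberwise length for finitely presented fiberwise-torsion quotients and the ``constant length implies relative Cartier'' statement — exactly the work the simple-section machinery was built to avoid.
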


For the proof, we will need the following notion.

\begin{definition}
Let $\mathcal{E}/X_{S}$ be a flat coherent sheaf. A section
$s\in H^0(X_S,\mathcal{E})$ is \emph{simple} if it is killed by $\mathcal{I}_{D}$,
where $\mathcal{I}_{D}\subset\mathcal{O}_{X_{S}}$ is the ideal sheaf of some degree one relative
Cartier divisor $D\subset X_{S}$.
\end{definition}

Note that giving a fiberwise-nonzero simple section $s \in \mathcal{E}$
is the same as giving a pair $(D,f)$ where $D\subset X_{S}$ is a
degree one relative Cartier divisor and $f:\mathcal{O}_{X_S}/\mathcal{I}_{D}\to\mathcal{E}$
is a fiberwise-nonzero (and hence injective) map: one sends the map $f$ to the section $s=f(1)$. In particular, if
$s\in\mathcal{E}$ is a fiberwise-nonzero simple section, then $\mathcal{E}/(\mathcal{O}_{X_S}\cdot s)$ is also a flat coherent sheaf, because it is the cokernel of a fiberwise-injective map of flat coherent sheaves.

\begin{lemma}\label{lem:nonzero-simple-lemma}
The moduli space of fiberwise-nonzero simple sections $\mathcal{S}\mathrm{imp}(\mathcal{E})$
is a locally spatial diamond, partially proper over $S$. Moreover,
its projectivization $\mathcal{S}\mathrm{imp}(\mathcal{E})/E^{\times}\to S$
is proper and representable in spatial diamonds. In particular, $\mathcal{S}\mathrm{imp}(\mathcal{E})$
is a v-cover of its (closed and generalizing) image in $S$.    
\end{lemma}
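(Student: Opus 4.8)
The plan is to build $\mathcal{S}\mathrm{imp}(\mathcal{E})$ explicitly as a space fibered over the moduli of degree-one relative Cartier divisors, which is a familiar object. Recall from the discussion preceding the lemma that giving a fiberwise-nonzero simple section of $\mathcal{E}$ over $T/S$ is the same as giving a pair $(D,f)$ with $D\subset X_T$ a degree-one relative Cartier divisor and $f:\mathcal{O}_{X_T}/\mathcal{I}_D\to \mathcal{E}_T$ a fiberwise-nonzero map. The space of degree-one relative Cartier divisors on $X_S$ is $\Div^1_S := S\times\Div^1$, which is a nice spatial diamond over $S$ (even $\ell$-cohomologically smooth), partially proper over $S$; after forgetting the datum $f$ we get a map $\mathcal{S}\mathrm{imp}(\mathcal{E})\to \Div^1_S$. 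So first I would show this map is representable by a locally spatial diamond, partially proper. Over $\Div^1_S$ there is a universal divisor $\mathcal{D}\subset X_{\Div^1_S}$ with ideal sheaf $\mathcal{I}_{\mathcal{D}}$, and the fiber of $\mathcal{S}\mathrm{imp}(\mathcal{E})\to\Div^1_S$ is the open subspace of $\calH om(\mathcal{O}/\mathcal{I}_{\mathcal{D}},\mathcal{E}_{\Div^1_S})$ where the map is fiberwise nonzero. Here $\mathcal{O}/\mathcal{I}_{\mathcal{D}}$ is flat coherent (it is a torsion flat coherent sheaf, being the cokernel of the fiberwise-injective inclusion $\mathcal{I}_{\mathcal{D}}\hookrightarrow \mathcal{O}$, where $\mathcal{I}_{\mathcal{D}}\cong\mathcal{O}(-\mathcal{D})$ is a line bundle), so Proposition~\ref{prop:flat-coherent-homs-nice} applies: $\calH om(\mathcal{O}/\mathcal{I}_{\mathcal{D}},\mathcal{E}_{\Div^1_S})$ is representable by a locally spatial diamond, partially proper over $\Div^1_S$. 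The fiberwise-nonzero locus is open in this (its complement is the zero section, which is closed), hence also a locally spatial diamond partially proper over $\Div^1_S$. Composing with $\Div^1_S\to S$, which is partially proper, $\mathcal{S}\mathrm{imp}(\mathcal{E})\to S$ is a locally spatial diamond, partially proper over $S$.

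For the projectivization claim: the $E^\times$-action rescales the section $f$ (equivalently the section $s=f(1)$) and fixes $D$, so $\mathcal{S}\mathrm{imp}(\mathcal{E})/E^\times \to \Div^1_S$ is the projectivization of $\calH om(\mathcal{O}/\mathcal{I}_{\mathcal{D}},\mathcal{E}_{\Div^1_S})$ after deleting the zero section and quotienting by $E^\times$. By the last sentence of Proposition~\ref{prop:flat-coherent-homs-nice}, this is proper and representable in spatial diamonds over $\Div^1_S$. Since $\Div^1_S\to S$ is proper and representable in spatial diamonds (it is $S\times\Div^1$ with $\Div^1=\Spd\breve E/\mathrm{Frob}^{\mathbb Z}$ a spatial diamond, proper over the point), composing gives that $\mathcal{S}\mathrm{imp}(\mathcal{E})/E^\times\to S$ is proper and representable in spatial diamonds.

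Finally, the image of $\mathcal{S}\mathrm{imp}(\mathcal{E})\to S$ equals the image of the proper map $\mathcal{S}\mathrm{imp}(\mathcal{E})/E^\times\to S$ (the quotient map is surjective on points), hence is closed; and properness of a map to a point-like target, or rather the fact that images of proper maps of spatial diamonds are closed and generalizing, gives that this image is closed and generalizing in $S$. Since $\mathcal{S}\mathrm{imp}(\mathcal{E})\to S$ is surjective onto its image and (after the quotient) proper, in particular it is a v-cover of its image: a proper surjection is a v-cover, and passing from $\mathcal{S}\mathrm{imp}(\mathcal{E})/E^\times$ to $\mathcal{S}\mathrm{imp}(\mathcal{E})$ only composes with the $E^\times$-torsor quotient, which is itself a v-cover. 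I expect the only mild subtlety is bookkeeping the base change of Proposition~\ref{prop:flat-coherent-homs-nice} from $S$ to $\Div^1_S$ and checking that the relevant auxiliary sheaf $\mathcal{O}/\mathcal{I}_{\mathcal{D}}$ genuinely is flat coherent there, which is immediate from Proposition~\ref{prop:flat-coherent-basic-stabilities}; the rest is assembling partial properness and properness along the tower $\mathcal{S}\mathrm{imp}(\mathcal{E})\to\Div^1_S\to S$.
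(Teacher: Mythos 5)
Your proof is correct and follows essentially the same route as the paper's: factor $\mathcal{S}\mathrm{imp}(\mathcal{E})\to \Div^1\times S\to S$, identify the fibers of the first map with $\mathcal{H}\mathrm{om}(\mathcal{O}/\mathcal{I}_D,\mathcal{E})\smallsetminus\{0\}$, and conclude from Proposition \ref{prop:flat-coherent-homs-nice} together with the properness of $\Div^1\times S\to S$ (cited from Fargues--Scholze). The paper compresses exactly this argument into two sentences, so your version is just a more detailed write-up of the same proof.
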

\begin{proof}
The map $\mathcal{S}\mathrm{imp}(\mathcal{E})\to S$ factors as $\mathcal{S}\mathrm{imp}(\mathcal{E})\to\mathrm{Div}^{1}\times S\to S$,
where the first arrow is a fibration in spaces of the form $\mathcal{H}\mathrm{om}(\mathcal{O}/\mathcal{I}_{D},\mathcal{E})\smallsetminus\{0\}$.
Since the second arrow is proper, surjective, and representable in
spatial diamonds by \cite[Proposition~II.1.21]{FarguesScholze}, all claims now follow from Proposition \ref{prop:flat-coherent-homs-nice}.    
\end{proof}

\begin{proof}[Proof of Lemma \ref{lem:torsion-semicontinuous-stuff}]

For (1), the claim is v-local on $S$. It is clear that
the locus where $\lambda_\calF>0$
coincides with the image of $|\mathcal{S}\mathrm{imp}(\mathcal{F})|\to S$,
and this image is closed by the previous lemma. Replacing $S$ by
a v-cover of this image, we can pick a fiberwise-nonzero simple section
$s\in\mathcal{F}$ by the previous lemma. Then $\mathcal{F}'=\mathcal{F}/\mathcal{O}_{X_{S}}\cdot s$
is flat coherent, and the locus where $\lambda_\calF >1$
coincides with the locus where $\lambda_{\calF'}>0$. Now continue by induction.

For (2), we can clearly assume $S$ is quasicompact and
$\lambda_\calF$
is constant. Uniqueness is clear, so for existence we can work v-locally
on $S$ by Proposition \ref{prop:flat-coherent-basic-stabilities}.(3). Now we
argue by induction on the constant $\lambda_\calF$
as in the proof of (1), by passing to a v-cover on $S$, picking a
fiberwise-nonzero simple section $s$ and replacing $\mathcal{F}$
by the quotient by the subsheaf generated by $s$.
\end{proof}

Let $\mathcal{S}\mathrm{ub}_{\mathcal{E}}^{n,d}$
be the functor sending $T/S$ to the set of subsheaves $\mathcal{F}\subset\mathcal{E}_{T}$
with $\mathcal{F}/X_{T}$ flat coherent of (constant) fiberwise generic
rank $n$ and degree $d$, and such that $\mathcal{F}\to\mathcal{E}_{T}$
remains injective after any base change on $T$. Note that if $\mathcal{E}$
is a vector bundle, then any such $\mathcal{F}$ is also a vector
bundle.
\begin{theorem}{\label{thm: Subislocallyspatial}}
The functor $\mathcal{S}\mathrm{ub}_{\mathcal{E}}^{n,d}$ is a locally
spatial diamond, and $\mathcal{S}\mathrm{ub}_{\mathcal{E}}^{n,d}\to S$
is partially proper of locally finite dim.trg.
\end{theorem}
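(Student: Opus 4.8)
The plan is to set up a dévissage resting on Lemma~\ref{lem:nonzero-simple-lemma} (niceness of $\mathcal{S}\mathrm{imp}$) and the semicontinuity Lemma~\ref{lem:torsion-semicontinuous-stuff}, reducing everything to the case of a saturated sub-line-bundle of a vector bundle, which is then handled directly via Proposition~\ref{prop:flat-coherent-homs-nice}. Since the three properties at issue — locally spatial diamond; partially proper over $S$; locally finite $\dim.\mathrm{trg}$ over $S$ — are all $v$-local on $S$, the first move is to pass to a $v$-cover on which $\calE$ is quasicompact and admits a presentation $0\to\mathcal K\to\mathcal V\to\calE\to 0$ with $\mathcal K,\mathcal V$ vector bundles, as in Proposition~\ref{prop:flat-coherent-basic-stabilities}.(1). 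Sending $\calF\subset\calE_T$ to its preimage $\tilde\calF:=\ker(\mathcal V_T\twoheadrightarrow\calE_T/\calF)$ identifies $\mathcal{S}\mathrm{ub}^{n,d}_{\calE}$ with the subfunctor of $\mathcal{S}\mathrm{ub}^{n+\mathrm{rk}\mathcal K,\,d+\deg\mathcal K}_{\mathcal V}$ of those $\tilde\calF$ containing $\mathcal K$; this condition is the vanishing of the tautological map $\mathcal K\to\mathcal V_T/\tilde\calF$ into the universal (flat coherent) quotient, which is a closed condition, being an equalizer inside the separated relative $\mathcal{H}om$-space of Proposition~\ref{prop:flat-coherent-homs-nice}. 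Since a closed subspace of a locally spatial diamond $X$ with $X\to S$ partially proper of locally finite $\dim.\mathrm{trg}$ again has all three properties over $S$ (closed immersions being proper), this reduces the theorem to the case $\calE$ a vector bundle, where every $\calF$ occurring is automatically a sub-vector-bundle of rank $n$.

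Next I would reduce to the ``saturated'' case. With $\calE$ a vector bundle, the function $\ell=\lambda_{\calE/\calF}$ makes $\{\ell\le k\}$ an open subfunctor of $\mathcal{S}\mathrm{ub}^{n,d}_{\calE}$ by Lemma~\ref{lem:torsion-semicontinuous-stuff}.(1), and $\mathcal{S}\mathrm{ub}^{n,d}_{\calE}=\bigcup_k\{\ell\le k\}$. On the locally closed stratum $\{\ell=k\}$, Lemma~\ref{lem:torsion-semicontinuous-stuff}.(2) furnishes the saturation $\calF^{\mathrm{sat}}=\ker(\calE\twoheadrightarrow(\calE/\calF)^{\mathrm{vb}})$ — a sub-vector-bundle of rank $n$, degree $d+k$, with vector-bundle quotient — which recovers $\calF$ together with the length-$k$ torsion quotient $\calF^{\mathrm{sat}}/\calF$. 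Writing $\mathcal{S}\mathrm{ub}^{n,d,\flat}_{\calE}\subset\mathcal{S}\mathrm{ub}^{n,d}_{\calE}$ for the open locus where $\calE/\calF$ is a vector bundle, this exhibits $\{\ell=k\}$ as the relative moduli of length-$k$ torsion quotients of the universal subbundle over $\mathcal{S}\mathrm{ub}^{n,d+k,\flat}_{\calE}$, which — built up one simple quotient at a time — is a locally spatial diamond, partially proper of locally finite $\dim.\mathrm{trg}$ over its base, by iterating Lemma~\ref{lem:nonzero-simple-lemma}. Since assembling the opens $\{\ell\le k\}$ from their finitely many strata by open/closed gluing, and then taking the increasing union over $k$, preserves all three properties over $S$, it remains to prove the theorem for $\mathcal{S}\mathrm{ub}^{n,d,\flat}_{\calE}$ with $\calE$ a vector bundle.

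For the saturated case I would induct on $n$. When $n=1$: $v$-locally on $S$, a saturated sub-line-bundle $\calL\subset\calE$ of degree $d$ is a fiberwise-nonzero map $\mathcal{O}(d)\to\calE$ with vector-bundle cokernel, taken modulo $\Aut(\mathcal{O}(d))=\ul{E^\times}$; the space $\mathcal{H}om(\mathcal{O}(d),\calE)=\mathcal{H}^0(\calE(-d))$ of all such maps is a locally spatial diamond, partially proper of locally finite $\dim.\mathrm{trg}$ over $S$, becoming proper over $S$ after deleting the zero section and dividing by $E^\times$ (Proposition~\ref{prop:flat-coherent-homs-nice}); within it the fiberwise-nonzero locus is open and still partially proper over $S$ (as in the proof of Lemma~\ref{lem:nonzero-simple-lemma}), the condition that the cokernel be a vector bundle is open by Lemma~\ref{lem:torsion-semicontinuous-stuff}.(1) applied to the universal cokernel, and the free quotient by $\ul{E^\times}$ preserves all three properties. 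When $n>1$: a saturated rank-$n$ subbundle $\calF\subset\calE$ contains a saturated sub-line-bundle $\calL$ (say a maximal destabilizing one), and $\calF/\calL\subset\calE/\calL$ is saturated of rank $n-1$; the moduli of pairs $(\calL\subset\calF)$ fibers over $\bigsqcup_{d'}\mathcal{S}\mathrm{ub}^{1,d',\flat}_{\calE}$ via $\calF\mapsto\calF/\calL$ with fiber $\mathcal{S}\mathrm{ub}^{n-1,\flat}_{\calE/\calL^{\mathrm{univ}}}$ (the inductive hypothesis), hence is a locally spatial diamond, partially proper of locally finite $\dim.\mathrm{trg}$ over $S$, and it surjects onto $\mathcal{S}\mathrm{ub}^{n,d,\flat}_{\calE}$ with fibers and fibre products the relative moduli of saturated sub-line-bundles of universal subbundles (again locally spatial diamonds, by the case $n=1$). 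Descending along this partially proper cover then gives $\mathcal{S}\mathrm{ub}^{n,d,\flat}_{\calE}$ all three properties, completing the induction.

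The hard part will be the package of semicontinuity/openness statements feeding the dévissage: that, in a family, ``$\calA\to\calB$ is fiberwise-injective'', ``$\mathrm{coker}(\calA\to\calB)$ is fiberwise a vector bundle'', and ``$\mathrm{im}(\calA\to\calB)$ has constant generic rank and degree'' cut out open, resp.\ locally closed, loci. These refine Lemma~\ref{lem:torsion-semicontinuous-stuff} and, I expect, are proved the same way, by $v$-descent and induction on torsion length via $\mathcal{S}\mathrm{imp}$. A second point requiring care is that the $v$-quotients occurring — the free $\ul{E^\times}$-quotients, and, in a Grothendieck--Quot-style variant in which one instead embeds $\mathcal{S}\mathrm{ub}^{n,d,\flat}_{\calE}$ (after a uniform twist) as a closed subspace of a Banach--Colmez Grassmannian, the quotients by automorphism groups of bundles in the style of \cite[\S V.3]{FarguesScholze} — remain locally spatial diamonds with partial properness and $\dim.\mathrm{trg}$ under control. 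The single subtlest issue is partial properness, since open subspaces of partially proper spaces need not be partially proper: one must either verify directly that the fiberwise-nonzero locus inside $\mathcal{H}^0$ is partially proper over $S$ (parallel to $\mathcal{S}\mathrm{imp}$ in Lemma~\ref{lem:nonzero-simple-lemma}), or route around it via the Banach--Colmez Grassmannian, which is partially proper as a ``projective'' object — exactly the sort of input underlying the reduction to vector bundles in Theorem~\ref{thm:properness}.
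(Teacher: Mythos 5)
Your first two reductions (embedding $\mathcal{S}\mathrm{ub}_{\calE}^{n,d}$ as a closed subfunctor of $\mathcal{S}\mathrm{ub}$ of a vector bundle via a presentation, using the zero section of a relative $\mathcal{H}\mathrm{om}$-space from Proposition \ref{prop:flat-coherent-homs-nice}; then stratifying by the torsion length of $\calE/\calF$ and saturating via Lemma \ref{lem:torsion-semicontinuous-stuff}) match the paper's argument. But the two places where the actual work happens are not carried out correctly. First, for the stratum $\{\ell=k\}$ you propose to treat the space of length-$k$ modifications $\calF\subset\calF^{\mathrm{sat}}$ by ``building it up one simple quotient at a time, iterating Lemma \ref{lem:nonzero-simple-lemma}.'' Iterating that lemma produces the space of \emph{flags} of elementary modifications, which surjects onto the modification space but is not equal to it; concluding that the stratum itself is a locally spatial diamond of the stated kind requires descending along that surjection, and no descent principle justifies this (properness of the cover does not descend diamond-ness). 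The paper instead shows that the projection $\pi_t$ to the saturated locus is a fibration in closed subspaces of a twisted Beilinson--Drinfeld Grassmannian $\Gr_{\GL_n,\Div^t}$, which is proper and representable in spatial diamonds by Chapter VI of \cite{FarguesScholze}. Second, and more seriously, your treatment of the saturated locus by induction on $n$ ends with ``descending along this partially proper cover'' by the moduli of pairs $(\calL\subset\calF)$. Again, being a locally spatial diamond, partially proper of locally finite $\mathrm{dim.trg}$, does not descend along an arbitrary partially proper surjection; moreover the proposed cover is not even well behaved (the degrees of saturated line subbundles are unbounded and there is no canonical choice, so ``maximal destabilizing'' does not give a section or a reasonable fibration). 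The paper sidesteps all of this: the saturated locus is the moduli space of sections of the proper family $\mathrm{Gr}(n,\calE)\to X_S$, hence a locally spatial diamond, partially proper of locally finite $\mathrm{dim.trg}$ over $S$, directly by \cite[Example IV.4.7]{FarguesScholze} --- this also resolves the partial-properness subtlety you flag for your $n=1$ step.

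There is also a missing prerequisite in your ``assemble the opens from their strata'' step. The paper first proves that $\mathcal{S}\mathrm{ub}_{\calE}^{n,d}\to S$ is separated (cutting out the diagonal with the zero section of a $\mathcal{H}\mathrm{om}$-space) and that $\mathcal{S}\mathrm{ub}_{\calE}^{n,d}$ is a locally spatial \emph{v-sheaf}, by exhibiting the forgetful map to $\Bun_n\times S$ as a fibration in open subspaces of Banach--Colmez spaces and pulling back a cohomologically smooth surjection from a locally spatial diamond; only with these in hand does the Artin criterion \cite[Theorem 12.18]{Ecod} let one conclude diamond-ness from the stratification. Your claim that ``open/closed gluing of strata preserves all three properties'' is exactly this nontrivial criterion, and its hypotheses (separatedness, locally spatial v-sheaf) are never verified in your outline. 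So the overall skeleton is right, but the stratum analysis, the saturated case, and the gluing step each need the paper's additional inputs (BD Grassmannians, moduli of sections, and the Artin criterion with its prerequisites) rather than the descent arguments you propose.
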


\begin{proof}
First we show that $\mathcal{S}\mathrm{ub}_{\mathcal{E}}^{n,d}\to S$
is separated. Let $T\to\mathcal{S}\mathrm{ub}_{\mathcal{E}}^{n,d}\times_{S}\mathcal{S}\mathrm{ub}_{\mathcal{E}}^{n,d}$
be any map, corresponding to a pair $\mathcal{F}_{i}\subset\mathcal{E}_{T}$
for $i=1,2$. We want to see that the fiber product $W=T\times_{\mathcal{S}\mathrm{ub}_{\mathcal{E}}^{n,d}\times_{S}\mathcal{S}\mathrm{ub}_{\mathcal{E}}^{n,d},\Delta}\mathcal{S}\mathrm{ub}_{\mathcal{E}}^{n,d}\subset T$
is closed. From the given datum, we get maps $\mathcal{F}_{1}\to\mathcal{E}_{T}/\mathcal{F}_{2}$
and $\mathcal{F}_{2}\to\mathcal{E}_{T}/\mathcal{F}_{1}$, and we want
to cut out the locus where these maps are both fiberwise identically
zero. We can interpret these maps as a morphism $s:T\to\mathcal{H}\mathrm{om}(\mathcal{F}_{1}\oplus\mathcal{F}_{2},\mathcal{E}_{T}/\mathcal{F}_{2}\oplus\mathcal{E}_{T}/\mathcal{F}_{1})$,
which is a section of the structure map $\mathcal{H}\mathrm{om}(\mathcal{F}_{1}\oplus\mathcal{F}_{2},\mathcal{E}_{T}/\mathcal{F}_{2}\oplus\mathcal{E}_{T}/\mathcal{F}_{1})\to T$.
The fiber product we care about can be presented as
\[
T\times_{s,\mathcal{H}\mathrm{om}(\mathcal{F}_{1}\oplus\mathcal{F}_{2},\mathcal{E}_{T}/\mathcal{F}_{2}\oplus\mathcal{E}_{T}/\mathcal{F}_{1}),e}T,
\]
where $e:T\to\mathcal{H}\mathrm{om}(\mathcal{F}_{1}\oplus\mathcal{F}_{2},\mathcal{E}_{T}/\mathcal{F}_{2}\oplus\mathcal{E}_{T}/\mathcal{F}_{1})$
is the zero section. By Proposition \ref{prop:flat-coherent-basic-stabilities} (2) above, $\mathcal{E}_{T}/\mathcal{F}_{2}\oplus\mathcal{E}_{T}/\mathcal{F}_{1}$
is a flat coherent sheaf, so by the partial properness result of Proposition \ref{prop:flat-coherent-homs-nice} we get that $e$ is a closed immersion. Thus
$W\subset T$ is a closed immersion.

Next, we reduce the claim that $\mathcal{S}\mathrm{ub}_{\mathcal{E}}^{n,d}$
is a locally spatial diamond to the case where $\mathcal{E}$ is a
vector bundle. For this, pick a presentation
\[
0\to\mathcal{E}_{1}\overset{i}{\to}\mathcal{E}_{2}\to\mathcal{E}\to0
\]
where $i:\mathcal{E}_{1}\to\mathcal{E}_{2}$ is a fiberwise-injective
map of vector bundles. We claim that $\mathcal{S}\mathrm{ub}_{\mathcal{E}}^{n,d}$
is naturally a closed subfunctor of $\mathcal{S}\mathrm{ub}_{\mathcal{E}_{2}}^{n+\mathrm{rk}(\mathcal{E}_{1}),d+\mathrm{deg}(\mathcal{E}_{1})}$.
It is clear that it is a subfunctor, since by pullback along $\mathcal{E}_{2}\twoheadrightarrow\mathcal{E}$,
flat coherent subsheaves $\mathcal{F}\subset\mathcal{E}$ correspond to
flat coherent subsheaves $\mathcal{F}'\subset\mathcal{E}_{2}$ containing
$\mathcal{E}_{1}$, and it is easy to see that the ranks and degrees
work as indicated. For closedness, let $X$ be the functor parametrizing
$\mathcal{F}'\subset\mathcal{E}_{2}$ of the indicated rank and degree
together with an \emph{arbitrary} map $\mathcal{E}_{1}\to\mathcal{E}_{2}/\mathcal{F}'$.
Forgetting the second piece of data gives a map
\[
X\to\mathcal{S}\mathrm{ub}_{\mathcal{E}_{2}}^{n+\mathrm{rk}(\mathcal{E}_{1}),d+\mathrm{deg}(\mathcal{E}_{1})},
\]
which is a fibration in spaces of the form $\mathcal{H}^{0}(\mathcal{E}_{1}^{\vee}\otimes(\mathcal{E}_{2}/\mathcal{F}'))$.
In particular, there is a natural zero section $e:\mathcal{S}\mathrm{ub}_{\mathcal{E}_{2}}^{n+\mathrm{rk}(\mathcal{E}_{1}),d+\mathrm{deg}(\mathcal{E}_{1})}\to X$,
which is a closed immersion. On the other hand, given any $\mathcal{F}'\subset\mathcal{E}_{2}$,
composing $i$ with the quotient map $\mathcal{E}_{2}\to\mathcal{E}_{2}/\mathcal{F}'$
gives rise to a map $\mathcal{E}_{1}\to\mathcal{E}_{2}/\mathcal{F}'$.
In other words, the additional datum of $i$ induces a second section
$s:\mathcal{S}\mathrm{ub}_{\mathcal{E}_{2}}^{n+\mathrm{rk}(\mathcal{E}_{1}),d+\mathrm{deg}(\mathcal{E}_{1})}\to X$.
It is now clear from this discussion that the square
\[
\xymatrix{\mathcal{S}\mathrm{ub}_{\mathcal{E}}^{n,d}\ar[d]\ar[r] & \mathcal{S}\mathrm{ub}_{\mathcal{E}_{2}}^{n+\mathrm{rk}(\mathcal{E}_{1}),d+\mathrm{deg}(\mathcal{E}_{1})}\ar[d]^{e}\\
\mathcal{S}\mathrm{ub}_{\mathcal{E}_{2}}^{n+\mathrm{rk}(\mathcal{E}_{1}),d+\mathrm{deg}(\mathcal{E}_{1})}\ar[r]^{s} & X
}
\]
is Cartesian, and since $e$ is a closed immersion we get the desired
result.

Thus, in proving that $\mathcal{S}\mathrm{ub}_{\mathcal{E}}^{n,d}$
is a locally spatial diamond, we may assume that $\mathcal{E}$ is
a vector bundle. We impose this assumption from now on. Next we check
that $\mathcal{S}\mathrm{ub}_{\mathcal{E}}^{n,d}$ is a locally spatial
v-sheaf. Let
\[
\mathcal{S}\mathrm{ub}_{\mathcal{E}}^{n,d}\to\mathrm{Bun}_{n}\times S
\]
be the map sending $\mathcal{F}\subset\mathcal{E}$ to $\mathcal{F}$.
This map is a fibration in open subspaces of the Banach-Colmez space $\mathcal{H}^{0}(\mathcal{F}^{\vee}\otimes\mathcal{E})$, 
so is representable in locally spatial diamonds. Now let $U\to\mathrm{Bun}_{n}\times S$
be a surjective cohomologically smooth map from a locally spatial
diamond. We then get that
\[
V=U\times_{\mathrm{Bun}_{n}\times S}\mathcal{S}\mathrm{ub}_{\mathcal{E}}^{n,d}
\]
is a locally spatial diamond, and the induced map $V\to\mathcal{S}\mathrm{ub}_{\mathcal{E}}^{n,d}$
is surjective and cohomologically smooth. This implies that $\mathcal{S}\mathrm{ub}_{\mathcal{E}}^{n,d}$
is a locally spatial v-sheaf, using also that $\mathcal{S}\mathrm{ub}_{\mathcal{E}}^{n,d}$
is separated over $S$ (which we already proved).

It remains to see that $\mathcal{S}\mathrm{ub}_{\mathcal{E}}^{n,d}$ is actually a locally spatial diamond. By the Artin criterion \cite[Theorem 12.18]{Ecod}, it suffices to stratify $\mathcal{S}\mathrm{ub}_{\mathcal{E}}^{n,d}$
into locally closed subfunctors, each of which is a locally spatial
diamond. We do this as follows. For any integer $t\geq0$, let $\mathcal{S}\mathrm{ub}_{\mathcal{E}}^{n,d;t}$
be the subfunctor where, after pullback to any geometric point, the
torsion subsheaf of $\mathcal{E}/\mathcal{F}$ has length exactly
$t$. By Lemma \ref{lem:torsion-semicontinuous-stuff}.(1), each of these is a locally closed subfunctor. By
Lemma \ref{lem:torsion-semicontinuous-stuff}.(2), on each of these subfunctors we can globally saturate
the bundle $\mathcal{F}$ to a subbundle $\mathcal{F}'\subset\mathcal{E}$
such that $\mathcal{E}/\mathcal{F}'$ is also a vector bundle. Said
differently, a point of $\mathcal{S}\mathrm{ub}_{\mathcal{E}}^{n,d;t}$
corresponds to a saturated subbundle $\mathcal{F}'\subset\mathcal{E}$
of rank $n$ and degree $d+t$, together with a length $t$ modification
$\mathcal{F}\subset\mathcal{F}'$. Let $\mathcal{S}\mathrm{ub}_{\mathcal{E},+}^{n,d+t}\subset\mathcal{S}\mathrm{ub}_{\mathcal{E}}^{n,d+t}$
be the open subfunctor parametrizing \emph{saturated} subbundles $\mathcal{F}'\subset\mathcal{E}$
of the indicated rank and degree (here openness again follows from Lemma \ref{lem:torsion-semicontinuous-stuff}.(1)). Then
sending $\{\mathcal{F}\subset\mathcal{E}\}\in\mathcal{S}\mathrm{ub}_{\mathcal{E}}^{n,d;t}$
to $\mathcal{F}'\subset\mathcal{E}$, where $\mathcal{F}'$ is the
saturation of $\mathcal{F}$ in $\mathcal{E}$, we get a surjective
map
\[
\pi_t:\mathcal{S}\mathrm{ub}_{\mathcal{E}}^{n,d;t}\to\mathcal{S}\mathrm{ub}_{\mathcal{E},+}^{n,d+t}
\]
whose fiber over a given saturated $\mathcal{F}'$ is the space of
length $t$ modifications $\mathcal{F}\subset\mathcal{F}'$. It is
straightforward to check that $\pi_t$ is proper and representable in
spatial diamonds, by adapting some arguments from Chapter VI of \cite{FarguesScholze}. More precisely, one proves that it is a fibration in closed subspaces of a twisted form of the Grassmannian $\mathrm{Gr}_{\mathrm{GL}_{n},\mathrm{Div}_{X}^{t}}$.
This reduces us to showing that $\mathcal{S}\mathrm{ub}_{\mathcal{E},+}^{n,d+t}$
is a locally spatial diamond, which follows immediately from \cite[Example
IV.4.7]{FarguesScholze}.
\end{proof}

The following result is the technical key in proving that (any connected component of) the Drinfeld compactification $\wtBun_P$ is proper over $\Bun_G$. 

\begin{theorem}{\label{thm: Subisquasicompact}}
The map $\mathcal{S}\mathrm{ub}_{\mathcal{E}}^{n,d}\to S$ is quasicompact.
\end{theorem}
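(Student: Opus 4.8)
The plan is to follow the determinant strategy indicated in the introduction. First, note that quasicompactness of the map $\mathcal{S}\mathrm{ub}_{\mathcal{E}}^{n,d}\to S$ may be checked after base change to an affinoid perfectoid over $S$, and that $\mathcal{S}\mathrm{ub}_{\mathcal{E}}^{n,d}$ commutes with base change on $S$; so I may assume $S$ is affinoid perfectoid and must show that the total space $\mathcal{S}\mathrm{ub}_{\mathcal{E}}^{n,d}$ is quasicompact. Next I would reduce to the case that $\mathcal{E}$ is a vector bundle, exactly as in the proof of Theorem~\ref{thm: Subislocallyspatial}: a presentation $0\to\mathcal{E}_{1}\to\mathcal{E}_{2}\to\mathcal{E}\to 0$ by vector bundles exhibits $\mathcal{S}\mathrm{ub}_{\mathcal{E}}^{n,d}$ as a \emph{closed} subfunctor of $\mathcal{S}\mathrm{ub}_{\mathcal{E}_{2}}^{n+\mathrm{rk}(\mathcal{E}_{1}),\,d+\deg(\mathcal{E}_{1})}$, and a closed subspace of a quasicompact space is quasicompact — so it suffices to treat the vector bundle case for all $(n,d)$. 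From now on $\mathcal{E}$ is a vector bundle, and every flat coherent $\mathcal{F}\subset\mathcal{E}_{T}$ occurring is a vector bundle.

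The key point is then to use the morphism obtained by taking $n$-th exterior powers,
\[ \bigwedge\nolimits^{n}\colon\ \mathcal{S}\mathrm{ub}_{\mathcal{E}}^{n,d}\ \longrightarrow\ \mathcal{S}\mathrm{ub}_{\bigwedge^{n}\mathcal{E}}^{1,d},\qquad (\mathcal{F}\subset\mathcal{E}_{T})\ \longmapsto\ \bigl(\bigwedge\nolimits^{n}\mathcal{F}\subset\bigwedge\nolimits^{n}\mathcal{E}_{T}\bigr), \]
which is well defined since the top exterior power of a fiberwise-injective map of vector bundles is fiberwise injective and since $\bigwedge^{n}$ preserves generic rank ($=1$) and degree ($=d$). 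It then suffices to show: (i) the target $\mathcal{S}\mathrm{ub}_{\bigwedge^{n}\mathcal{E}}^{1,d}\to S$ is quasicompact, and (ii) the map $\bigwedge^{n}$ is quasicompact; the composite is then quasicompact. Since $\bigwedge^{n}\mathcal{E}$ is a vector bundle, (i) is precisely the $n=1$ case of the theorem, which follows easily from \cite{FarguesScholze}: one stratifies $\mathcal{S}\mathrm{ub}_{\mathcal{V}}^{1,d}$ by the torsion length of $\mathcal{V}/\mathcal{L}$ as in the proof of Theorem~\ref{thm: Subislocallyspatial}, notes that only finitely many lengths occur because the degree of a line subbundle of the fixed bundle $\mathcal{V}$ is bounded above on the quasicompact $S$ (an upper-semicontinuous integer-valued function on a spectral space is bounded), and invokes the quasicompactness over $S$ of the spaces of saturated line subbundles and of bounded line modifications provided by \cite{FarguesScholze}.

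Step (ii) is where I expect the main work — and the only genuine obstacle — to lie. Fiberwise the picture should be as follows: after stratifying $\mathcal{S}\mathrm{ub}_{\bigwedge^{n}\mathcal{E}}^{1,d}$ by the torsion length $t$ of $\bigwedge^{n}\mathcal{E}/\mathcal{L}$, so that the saturation $\mathcal{L}'\subset\bigwedge^{n}\mathcal{E}$ is defined, the condition that $\mathcal{L}'$ be \emph{decomposable} — i.e. locally on $X_{S}$ generated by a pure wedge $v_{1}\wedge\dots\wedge v_{n}$ — is cut out by the Plücker relations and so is a closed subfunctor; over it, the Plücker reconstruction $\mathcal{F}'=\{\,x\in\mathcal{E} : x\wedge\omega=0\,\}$, for $\omega$ a local generator of $\mathcal{L}'$, recovers a saturated rank-$n$ subbundle $\mathcal{F}'\subset\mathcal{E}$, and the fiber of $\bigwedge^{n}$ over a point $(\mathcal{L}\subset\bigwedge^{n}\mathcal{E}_{T})$ lying over this locus identifies with the space of length-$t$ modifications $\mathcal{F}\subset\mathcal{F}'$ whose determinant-quotient $\det(\mathcal{F}'/\mathcal{F})$ equals the prescribed effective divisor $\mathcal{L}'/\mathcal{L}$ of degree $t$. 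This last space is a closed subfunctor of a fixed bounded piece — depending only on $t$ — of a twisted form of the Beilinson--Drinfeld Grassmannian $\mathrm{Gr}_{\mathrm{GL}_{n},\mathrm{Div}_{X}^{t}}$, hence quasicompact (indeed proper, as in the proof of Theorem~\ref{thm: Subislocallyspatial}), and the set of relevant $t$ is finite, again by the degree bound above applied to $\bigwedge^{n}\mathcal{E}$. So the whole matter reduces to making this fiberwise description work in families, which is exactly the point flagged in the introduction: one localizes from $X_{S}$ to its cover $Y_{S}$, or to a quasicompact affinoid subspace thereof, where recovering $\mathcal{F}\subset\mathcal{E}$ from $\mathcal{L}\subset\bigwedge^{n}\mathcal{E}$ — the Plücker reconstruction together with the reconstruction of the modification from its determinant — becomes a matter of explicit commutative algebra over the relevant rings, and where the global constancy of rank and degree, which has played no visible role up to this point, finally gets used. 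Combined with Theorem~\ref{thm: Subislocallyspatial}, the quasicompactness obtained this way also yields the properness statement of Theorem~\ref{thm:properness}.
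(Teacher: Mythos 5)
Your reductions — base change to affinoid $S$, reduction to the vector bundle case via the closed embedding into $\mathcal{S}\mathrm{ub}_{\mathcal{E}_2}^{n+\mathrm{rk}(\mathcal{E}_1),d+\deg(\mathcal{E}_1)}$, the determinant map $\bigwedge^n$ to the rank-one case, and the easy quasicompactness of $\mathcal{S}\mathrm{ub}_{\bigwedge^n\mathcal{E}}^{1,d}$ — all match the paper's proof (for the rank-one case the paper argues more directly, presenting it as $(\mathcal{H}^0(\mathcal{E}(-d))\smallsetminus 0)/\underline{E^\times}$ and citing \cite[Proposition~II.2.16]{FarguesScholze}, rather than via your stratification-by-torsion argument, but that is a minor difference). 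The problem is your step (ii): you correctly identify quasicompactness of $\bigwedge^n$ as the only real obstacle, but you then only give a fiberwise picture (saturation, decomposability via Pl\"ucker relations, reconstruction of $\mathcal{F}'$, fibers being bounded modification spaces) and defer the family statement to ``explicit commutative algebra over $Y_S$.'' That deferred statement \emph{is} the theorem: fiberwise quasicompact (even proper) fibers do not imply quasicompactness of a map, and the whole difficulty — flagged in the introduction as the interplay between a curve-local construction and the global constancy of rank and degree — is to produce the rank-$n$ subsheaf uniformly over a base from the given line, not point by point.

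Concretely, the paper's mechanism for this step has two ingredients that are absent from your proposal. First, a criterion (Lemma \ref{lem:quasicompactnesscriterion}) reducing quasicompactness of a map to a lifting statement over ``products of points'' $\Spa(A,A^+)$ with $A^+=\prod_i K_i^+$: one must show that a family of lifts over the individual fields $K_i$ glues to a lift over $A$. Second, the explicit Lemma \ref{lem:properness} over $\mathcal{R}_{A,[1,q]}$, whose proof is not routine: one bounds the torsion uniformly using the existence of the global line $L$, inverts a function $f$ cutting out finitely many untilts, and then uses the Zariski \emph{density} of $\bigcup_i \mathrm{Spec}(\mathcal{R}_{K_i,[1,q]}[\tfrac 1f])$ in $\mathrm{Spec}(\mathcal{R}_{A,[1,q]}[\tfrac 1f])$ together with the closedness of the Pl\"ucker embedding to force $L[\tfrac 1f]$ into the image of the Grassmannian, recovering a rank-$n$ module $M'$ away from $f=0$; one then finds the lattice $M\subset M'$ with the prescribed specializations using ind-properness of the Beilinson--Drinfeld Grassmannian, with boundedness on one side from $M'\cap \mathcal{R}_{A,[1,q]}^m$ and on the other from the existence of $L$. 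Your sketch of ``cutting out the decomposable locus by Pl\"ucker relations in families and reconstructing $\mathcal{F}'$'' is a reasonable heuristic for why such a statement should hold, but it does not substitute for this argument: over a product of points there is no usable stratification by torsion length, the saturation and the decomposability locus are not under control, and it is precisely the density-plus-properness argument above that converts the pointwise Pl\"ucker reconstruction into a statement over the family. So the proposal has the right skeleton (the one already announced in the introduction) but is missing the proof of its central step.
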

\begin{proof}Arguing as in the previous proof, we can assume $\mathcal{E}$ is a vector bundle, so $\mathcal{S}\mathrm{ub}_{\mathcal{E}}^{n,d}$ parametrizes fiberwise-injective maps $\mathcal{F} \to \mathcal{E}$ where $\mathcal{F}$ is a vector bundle of (constant) rank $n$ and degree $d$. Sending such a map to the induced map $\wedge^n \calF \to \wedge^n \calE$ defines a map
\[\alpha: \mathcal{S}\mathrm{ub}_{\mathcal{E}}^{n,d} \to \mathcal{S}\mathrm{ub}_{\wedge^n \mathcal{E}}^{1,d}. \]
In general, it is easy to see that $\mathcal{S}\mathrm{ub}_{\mathcal{E}}^{1,d}$ is quasicompact over $S$, by presenting it as $(\calH^0(\calE(-d)) \smallsetminus 0)/\underline{E^\times}$ and invoking \cite[Proposition~II.2.16]{FarguesScholze}. This reduces us to showing that $\alpha$ is quasicompact.

It suffices to see that for any map $\Spa(A,A^+)\to \mathcal{S}\mathrm{ub}_{\wedge^n\mathcal E}^{1,d}$ with a lift to $\mathcal{S}\mathrm{ub}_{\mathcal{E}}^{n,d}$ at each point, we get a (unique) lift to $\mathcal{S}\mathrm{ub}_{\mathcal{E}}^{n,d}$, cf.~Lemma~\ref{lem:quasicompactnesscriterion} below.

This statement can be checked on the cover of the Fargues--Fontaine curve by $Y_{\Spa(A,A^{+}),[1,q]}$ with notation as in \cite[Section~II.2.1]{FarguesScholze}. Arguing locally on $\Spa(A,A^+)$, we can then assume that $\mathcal E|_{Y_{\Spa(A,A^{+}),[1,q]}}$ is free. By v-descent, we are reduced to Lemma~\ref{lem:properness} below.
\end{proof}

\begin{lemma}\label{lem:quasicompactnesscriterion} Let $f: X\to Y$ be a map of small v-stacks. Assume that for some set $I$ we are given a collection $(K_i,K_i^+)$ of algebraically closed affinoid perfectoid fields of characteristic $p$ with pseudo-uniformizers $\varpi_i\in K_i^+$; then let $A^+=\prod_i K_i^+$ be the product with pseudo-uniformizer $\varpi=(\varpi_i)_i$ and $A=A^+[\tfrac 1{\varpi}]$.

Assume that for all $I$ and $(K_i,K_i^+)$ as above, the map
\[
X(A,A^+)\to Y(A,A^+)\times_{\prod_i Y(K_i,K_i^+)} \prod_i X(K_i,K_i^+)
\]
is an isomorphism. Then $f$ is quasicompact and quasiseparated.

If $f$ is representable in locally spatial diamonds, then the converse holds true: In other words, if $f$ is representable in spatial diamonds, then for all $I$ and $(K_i,K_i^+)$ as above, the map
\[
X(A,A^+)\to Y(A,A^+)\times_{\prod_i Y(K_i,K_i^+)} \prod_i X(K_i,K_i^+)
\]
is an isomorphism.
\end{lemma}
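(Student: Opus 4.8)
The plan is to prove the two implications separately, after isolating two elementary facts. \emph{Fact 1:} every quasicompact perfectoid space $T_{0}$ admits a v-cover by a ``product of points''; indeed, choosing for each $t\in|T_{0}|$ a geometric point $\Spa(K_{t},K_{t}^{+})\to T_{0}$ over $t$, the v-cover $\bigsqcup_{t}\Spa(K_{t},K_{t}^{+})\to T_{0}$ factors through the tautological map $\Spa(A,A^{+})\to T_{0}$ with $A^{+}=\prod_{t}K_{t}^{+}$ and $A=A^{+}[\tfrac1\varpi]$, so the latter is a v-cover too. \emph{Fact 2:} the displayed condition is stable under base change on $Y$; rewritten fibrewise over a point $\phi\in Y(A,A^{+})$ it says that $X(A,A^{+})_{\phi}\xrightarrow{\sim}\prod_{i}X(K_{i},K_{i}^{+})_{\phi_{i}}$, where $\phi_{i}$ denotes the restriction of $\phi$ along $\Spa(K_{i},K_{i}^{+})\to\Spa(A,A^{+})$, and this is evidently inherited by $X\times_{Y}Y'\to Y'$ for any $Y'\to Y$. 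Only the forward implication is needed for Theorem~\ref{thm: Subisquasicompact}.

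For the forward implication, assume the displayed isomorphism holds for all products of points. By Fact 1, to prove $f$ quasicompact it is enough to show that $X_{T}$ is quasicompact for every product of points $T\to Y$ (a v-cover $T\to T_{0}$ pushes quasicompactness of $X_{T}$ down to $X_{T_{0}}$). Replacing $(X,Y)$ by $(X_{T},T)$ using Fact 2, we must show: if $W\to T$ is a map to a product of points $T=\Spa(A,A^{+})$ satisfying the hypothesis, then $W$ is quasicompact. Choose a v-cover of $W$ by a perfectoid space, and, passing to geometric points, a v-cover $\bigsqcup_{z}\Spa(K_{z},K_{z}^{+})\to W$ indexed by a set. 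Composing with $W\to T$ and amalgamating the source produces a product of points $\widetilde T=\Spa\big((\prod_{z}K_{z}^{+})[\tfrac1\varpi],\,\prod_{z}K_{z}^{+}\big)$ together with a map $\widetilde T\to T$ and, for each $z$, a map $\Spa(K_{z},K_{z}^{+})\to W$ lifting the composite $\Spa(K_{z},K_{z}^{+})\to\widetilde T\to T$. By the surjectivity half of the hypothesis for $W\to T$ (valid by Fact 2), these pointwise lifts come from a single map $\widetilde T\to W$ over $T$, whose restriction along $\bigsqcup_{z}\Spa(K_{z},K_{z}^{+})\to\widetilde T$ is by construction the original v-cover; hence $\widetilde T\to W$ is a v-cover, and since $\widetilde T$ is quasicompact so is $W$. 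For quasiseparatedness, recall that $f$ is quasiseparated iff $\Delta_{f}\colon X\to X\times_{Y}X$ is quasicompact, and note that $\Delta_{f}$ automatically satisfies the hypothesis: a map $\Spa(A,A^{+})\to X\times_{Y}X$ is a pair $(x_{1},x_{2})$ of $A^{+}$-points of $X$ with equal image in $Y$, a lift along $\Delta_{f}$ exists (and is then unique) exactly when $x_{1}=x_{2}$, and the required equivalence ``$x_{1}=x_{2}$ $\iff$ $x_{1,i}=x_{2,i}$ for all $i$'' is precisely the injectivity half of the hypothesis for $f$. Applying the quasicompactness statement just proved to $\Delta_{f}$ shows that $f$ is quasiseparated. (For general v-stacks, read $X(A,A^{+})$ as a groupoid and ``glue/lift'' as ``glue/lift up to isomorphism''; the argument is unchanged.)

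For the converse, suppose $f$ is representable in spatial diamonds. After base change (Fact 2) I may take $Y=T=\Spa(A,A^{+})$ to be a product of points, so $W:=X_{T}\to T$ is a spatial diamond, and the claim becomes that the restriction map $\Gamma(T,W)\to\prod_{i}\Gamma(T_{i},W_{i})$ is a bijection, where $T_{i}=\Spa(K_{i},K_{i}^{+})$ is the clopen fibre of $T\to\pi_{0}(T)$ over the isolated point $i$ of $\pi_{0}(T)$ (which is the space of ultrafilters on $I$, with $I$ sitting inside as the principal ones) and $W_{i}=W\times_{T}T_{i}$. I would prove this using the presentation of $T$ as the cofiltered limit, over finite clopen partitions $I=\bigsqcup_{k}J_{k}$, of the disjoint unions $\bigsqcup_{k}\Spa\big((\prod_{i\in J_{k}}K_{i}^{+})[\tfrac1\varpi]\big)$: since $W\to T$ is quasicompact and quasiseparated, $\Gamma(T,W)$ is the corresponding filtered colimit, and one checks — spreading a section over $T_{i}$ out to a section over a clopen neighbourhood using quasicompactness of $W\to T$, and using the spatiality of $|W|$ for the uniqueness of such an extension — that in the limit this recovers exactly $\prod_{i}\Gamma(T_{i},W_{i})$. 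The delicate point, which I expect to be the main obstacle, is precisely this uniqueness/injectivity step: since $W\to T$ need not be separated, one must genuinely exploit that $W$ is a spatial diamond — so that $|W|$ is spectral and cannot be ``doubled'' over a point lying over a non-principal ultrafilter on $I$ — to rule out two sections of $W\to T$ that agree on the dense locus $\bigsqcup_{i}T_{i}$ but differ elsewhere. As this direction is not used in the sequel, I will keep the argument brief.
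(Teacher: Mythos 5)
Your preliminary reductions (stability of the hypothesis under base change, reducing quasicompactness of $f$ to quasicompactness of $X_T$ for products of points $T\to Y$, and the observation that the hypothesis for $\Delta_f$ amounts to the injectivity half of the hypothesis for $f$) are fine and parallel the paper. But the forward implication — the one actually needed for Theorem \ref{thm: Subisquasicompact} — has a genuine gap at the step ``passing to geometric points, a v-cover $\bigsqcup_z \Spa(K_z,K_z^+)\to W$''. A disjoint union of geometric points chosen over all points of a perfectoid space (or of a perfectoid v-cover of $W$) is surjective on topological spaces, but it is essentially never a v-cover: a v-covering family must cover every quasicompact open of the target by the images of finitely many quasicompact opens of its members, and finitely many geometric points only hit finitely many chains of specializations. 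Consequently your deduction that $\widetilde T\to W$ is a v-cover ``because it factors the original v-cover'' only yields that $\widetilde T\to W$ is surjective on topological spaces. Upgrading topological surjectivity of a map from the affinoid $\widetilde T$ to surjectivity of v-sheaves/v-stacks requires that map to be quasicompact (this is \cite[Lemma 12.11]{Ecod}), i.e. it requires $W$ to be quasiseparated — which is part of what is being proved, and which in your proposal is only deduced \emph{afterwards} from the quasicompactness statement. The paper breaks exactly this circularity: it first treats the case where $X$ is $0$-truncated and quasiseparated, where \cite[Lemma 12.11]{Ecod} applies, and then bootstraps through the double diagonal ($\Delta_{\Delta_f}$ is automatically $0$-truncated and quasiseparated), obtaining in order: $\Delta_{\Delta_f}$ quasicompact, hence $\Delta_f$ quasiseparated, hence $\Delta_f$ quasicompact, hence $f$ quasiseparated, and only then $f$ quasicompact. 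Your order — quasicompactness first, quasiseparatedness as a corollary — cannot be carried out as written.

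On the converse (not used in the sequel, as you note), your sketch is admittedly incomplete and also structurally off: each finite clopen partition of $I$ merely rewrites $\Spa(A,A^+)$ as a finite clopen union, so the cofiltered limit you propose is $\Spa(A,A^+)$ itself and does not localize sections at the individual $\Spa(K_i,K_i^+)$; moreover the injectivity problem you flag is handled in the paper not by spatiality of $|W|$ but by first passing to diagonals (so only surjectivity remains) and then lifting the geometric points along a pro-\'etale surjection $X'\to X$ from an affinoid perfectoid space, where they assemble to a map $\Spa(A,A^+)\to X'\to X$.
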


\begin{proof} The assumption is stable under base change. We can thus assume that $Y$ is affinoid perfectoid. Assume first that $X$ is $0$-truncated and quasiseparated (which is enough for our application). Choose some set $I$ and points $\Spa(K_i,K_i^+)\to X$ covering the isomorphism classes of all geometric points of $X$; endow them with a choice of pseudouniformizers arising from the composite $\Spa(K_i,K_i^+)\to X\to Y$ and a choice of pseudouniformizer on $Y$. Then the maps $\Spa(K_i,K_i^+)\to Y$ uniquely assemble to a map $\Spa(A,A^+)\to Y$ (as $Y$ is affinoid perfectoid). By assumption, they lift to a unique map $\Spa(A,A^+)\to X$. Thus, there is some map $\Spa(A,A^+)\to X$ that is surjective on topological spaces. But as $X$ is quasiseparated, the map $\Spa(A,A^+)\to X$ is quasicompact and so this implies that $\Spa(A,A^+)\to X$ is surjective as a map of v-sheaves by \cite[Lemma 12.11]{Ecod}.

In general, the assumptions of the lemma are stable under passing to the diagonal map. Now $\Delta_f$ is always $0$-truncated and $\Delta_{\Delta_f}$ is $0$-truncated and quasiseparated. Thus, the above implies that $\Delta_{\Delta_f}$ is quasicompact, and hence $\Delta_f$ is $0$-truncated and quasiseparated. Then the above implies that $\Delta_f$ is $0$-truncated and quasicompact. Thus, $f$ is quasiseparated, and then the argument above applies again to show that $f$ is also quasicompact.

For the converse, we can also first pass to diagonals. Note that if the result is true for the diagonal, then the map
\[
X(A,A^+)\to Y(A,A^+)\times_{\prod_i Y(K_i,K_i^+)} \prod_i X(K_i,K_i^+)
\]
is always an injection, so one only has to prove surjectivity. Given any element of the right-hand side, we can replace $Y$ by $\Spa(A,A^+)$. Now $X$ is a spatial diamond, so admits a pro-\'etale surjection from an affinoid perfectoid space $X'\to X$. All maps $\Spa(K_i,K_i^+)\to X$ then lift to $X'$, and then assemble to a map $\Spa(A,A^+)\to X'\to X$ lifting the given points; this shows surjectivity.
\end{proof}

\begin{lemma}\label{lem:properness} Let $I$ be a set and for $i\in I$ let $(K_i,K_i^+)$ be an algebraically closed affinoid perfectoid field of characteristic $p$ with pseudo-uniformizer $\varpi_i\in K_i^+$. Let $A^+=\prod_i K_i^+$ be the product with pseudo-uniformizer $\varpi=(\varpi_i)_i$ and $A=A^+[\tfrac 1{\varpi}]$. Let $\mathcal R_{A,[1,q]} = \mathcal O(Y_{\Spa(A,A^+),[1,q]})$ and $\mathcal R_{K_i,[1,q]}$ be defined analogously. Fix integers $m\geq n$.

For each $i$, let $M_i\subset \mathcal R_{K_i,[1,q]}^m$ be a finite projective submodule of rank $n$, inducing a line $L_i=\wedge^n M_i\subset \wedge^n(\mathcal R_{K_i,[1,q]}^m)$. Assume that there is a line $L\subset \wedge^n(\mathcal R_{A,[1,q]}^m)$ restricting to all $L_i$ under the projections $A\to K_i$. Then there is a finite projective submodule $M\subset \mathcal R_{A,[1,q]}^m$ of rank $n$ restricting to all $M_i$ under the projections $A\to K_i$.
\end{lemma}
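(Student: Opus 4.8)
The plan is to recover, from the line $L$ alone, a rank-$n$ direct summand $\tilde M\subseteq\mathcal R_{A,[1,q]}^m$ restricting to the \emph{saturations} of the $M_i$ (reducing to the case $m=n$), and then to glue $M$ inside $\tilde M$ from the given local data, the key being that over the \emph{compact} annulus $[1,q]$ the modifications $\tilde M_i/M_i$ are uniformly bounded. Write $\mathcal R=\mathcal R_{A,[1,q]}$ and $\mathcal R_i=\mathcal R_{K_i,[1,q]}$; each $\mathcal R_i$ is a principal ideal domain, and $\mathcal R$ injects into $\prod_i\mathcal R_i$ as the submodule of \emph{bounded} families, with each projection $\mathcal R\to\mathcal R_i$ surjective.

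\textbf{Step 1: reduction to $m=n$.} I would set
\[
\tilde M\ :=\ \ker\!\Big(\mathcal R^m\ \longrightarrow\ \mathcal{H}\mathrm{om}_{\mathcal R}\big(L,\ \textstyle\bigwedge^{n+1}(\mathcal R^m)\big)\Big),\qquad v\mapsto(\ell\mapsto v\wedge\ell).
\]
Over $\mathcal R_i$, choosing a generator $\omega_i$ of $L\otimes_AK_i=\bigwedge^nM_i$ and putting $M_i$ in Smith normal form $M_i=\bigoplus_j\mathcal R_i d_j g_j$ with $\omega_i=(\prod_j d_j)g_1\wedge\cdots\wedge g_n$, one computes $\{v:v\wedge\omega_i=0\}=\bigoplus_j\mathcal R_i g_j$, which is exactly the saturation $\tilde M_i$ of $M_i$ in $\mathcal R_i^m$; one also checks that the image sheaf of the displayed map is a flat coherent sheaf of fibrewise-constant rank $m-n$. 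By the stability properties of flat coherent sheaves (Proposition~\ref{prop:flat-coherent-basic-stabilities}) and the vanishing of the relevant $\mathrm{Tor}$'s, $\tilde M$ is then a rank-$n$ vector bundle whose formation commutes with the base changes $A\to K_i$, so $\tilde M\otimes_AK_i=\tilde M_i\supseteq M_i$. Its $n$-th exterior power $\tilde L:=\bigwedge^n\tilde M$ is a line bundle fibrewise-injectively mapped into $\bigwedge^n(\mathcal R^m)$ and containing $L$, and $\mathcal I_D:=L\otimes\tilde L^{-1}\subseteq\mathcal O$ is an invertible ideal cutting out an effective relative Cartier divisor $D$ on the curve, with $D\otimes_AK_i=\mathrm{div}(\tilde M_i/M_i)$; after decomposing $\Spa(A,A^+)$ into the clopen pieces (indexed by subsets of $I$) on which $\deg D$ is constant, we may assume $\deg D=\delta$. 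Replacing $\mathcal R^m$ by $\tilde M$ we are reduced to $m=n$.

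\textbf{Step 2: gluing and the uniform bound.} The estimate that makes this work is that the $\tilde M_i/M_i$ are uniformly bounded: $\bigwedge^n(\mathcal R^m)/L$ is a \emph{single} flat coherent sheaf, so its torsion length $\lambda$ is upper semicontinuous (Lemma~\ref{lem:torsion-semicontinuous-stuff}) on the quasicompact space $|\Spa(A,A^+)|$, hence bounded by some integer $T$; one identifies $\mathrm{length}(\tilde M_i/M_i)=\lambda(K_i)\le T$, and since a finite-length module over a DVR of length $\le m_x$ is killed by the $m_x$-th power of its maximal ideal, $\tilde M_i/M_i$ is killed by $\mathcal I_{D}\otimes_AK_i$. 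Thus $\mathcal I_D\tilde M\otimes_AK_i\subseteq M_i\subseteq\tilde M\otimes_AK_i$ for all $i$. Using that $\mathcal R\hookrightarrow\prod_i\mathcal R_i$ has image the bounded families and that each $\mathcal R\to\mathcal R_i$ is surjective (so a single local section can be spread out by zero using the idempotents of $A^+$), I would set $M:=\tilde M\cap\prod_iM_i\subseteq\prod_i\tilde M_i$. Then $\mathcal I_D\tilde M\subseteq M\subseteq\tilde M$, and $M$ surjects onto each $M_i$ after base change. One checks $M$ is a finite projective $\mathcal R$-module: being trapped between $\tilde M$ and $\mathcal I_D\tilde M$, it is governed by the submodule $M/\mathcal I_D\tilde M$ of $\tilde M\otimes_{\mathcal R}\mathcal O_D$, an $\mathcal O_D$-module (here $\mathcal O_D=\mathcal R/\mathcal I_D$ is finite locally free of rank $\delta$), and the uniform length bound forces this submodule — whose fibres have length $\le n\delta$ — to be finitely generated, whence $M$ is too; its fibrewise rank is the constant $n$, so it is projective of rank $n$. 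Finally, a surjection $M\otimes_AK_i\twoheadrightarrow M_i$ of finite projective modules of the same rank $n$ over the PID $\mathcal R_i$ is an isomorphism, which is exactly the assertion that $M$ restricts to all the $M_i$.

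\textbf{Main obstacle.} The heart of the argument is the finiteness of $M$ in Step 2: $\mathcal R$ (and $A$) are not Noetherian — not even absolutely flat — so "submodule of a finitely generated module is finitely generated" genuinely fails, and one must use the uniform bounds by hand. This is precisely where the hypotheses that look auxiliary enter. The constancy of rank and degree is what makes $\bigwedge^n(\mathcal R^m)/L$ one flat coherent sheaf, so that upper semicontinuity together with quasicompactness of $\Spa(A,A^+)$ yields the global bound $T$, hence a single divisor $D$ trapping all the $\tilde M_i/M_i$; without it, nothing controls how far apart the $M_i$ spread as $i$ varies and $\tilde M\cap\prod_iM_i$ is an uncontrollable non-coherent module. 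And the restriction to the \emph{compact} annulus $[1,q]$ is what makes $\mathcal R$ the ring of \emph{bounded} functions, so the uniform estimate on lengths is an estimate at the level of the Banach ring $\mathcal R$; on a non-quasicompact base this mechanism fails, consistent with the determinant map not being proper there. Everything else — Smith normal form over $\mathcal R_i$, the base-change computation for the kernel defining $\tilde M$ via the flat coherent formalism, and the fact that an epimorphism of equal-rank finite projectives over a PID is an isomorphism — is routine. (Alternatively, once Step 1 is done one can finish by noting that the functor of down-modifications of $\tilde M$ with determinant $L$ is a closed subspace of a Beilinson--Drinfeld affine Grassmannian $\Gr_{\GL_n,\Div^{(\delta)}}$, hence proper over $\Spa(A,A^+)$ by \cite{FarguesScholze}, and then applying the converse direction of Lemma~\ref{lem:quasicompactnesscriterion} to produce $M$ from the compatible system $(M_i)$.)
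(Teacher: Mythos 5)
There is a genuine gap, and it sits exactly at the crux of the lemma: the unproved projectivity claims over the non-Noetherian ring $\mathcal R_{A,[1,q]}$. In Step 1 you assert that $\tilde M=\ker\bigl(\mathcal R^m\to \mathcal Hom(L,\bigwedge^{n+1}\mathcal R^m)\bigr)$ is a rank-$n$ vector bundle whose formation commutes with the base changes $A\to K_i$, "by the stability properties of flat coherent sheaves and the vanishing of the relevant Tor's". But those stability results apply only once you know the image (or cokernel) of that map is a flat coherent sheaf, and nothing you wrote establishes this: your fibrewise computation (Smith normal form, decomposability of $\omega_i$) takes place only over the points $\Spa(K_i,K_i^+)$, which are far from all of $|\Spa(A,A^+)|$, and constancy of fibrewise ranks at those points does not give flatness over this base. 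Over $\mathcal R_{A,[1,q]}$ the kernel of a map of finite free modules need not even be finitely generated, so "the saturations assemble into a subbundle" is not a formal consequence of anything in Proposition \ref{prop:flat-coherent-basic-stabilities}; it is essentially a statement of the same depth as the lemma itself. The same problem recurs in Step 2: the assertion that $M=\tilde M\cap\prod_i M_i$ is finite projective because "the uniform length bound forces this submodule to be finitely generated" is not an argument — bounded fibrewise length of a submodule of a finite module over a non-Noetherian ring does not imply finite generation; this is exactly the boundedness-inside-an-ind-proper-Grassmannian step, which needs a geometric properness input, not an abstract finiteness claim. A smaller but real error: the level sets of the torsion length are only locally closed (upper semicontinuity makes $\{\lambda\geq c\}$ closed), so you cannot decompose $\Spa(A,A^+)$ into clopen pieces on which $\deg D$ is constant; only a uniform bound is available.

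Your global intuitions do match the actual mechanism — the uniform bound on the torsion of $\bigwedge^n(\mathcal R^m)/L$ coming from the single line $L$, and the final gluing being a boundedness statement in a Beilinson--Drinfeld Grassmannian — but the paper's route to the rank-$n$ module is designed precisely to avoid the flatness issue you elide. It uses the uniform bound to choose finitely many untilts supporting all the torsion and inverts a function $f$ vanishing there; over $\mathcal R_{A,[1,q]}[\tfrac1f]$ the Pl\"ucker relations, which hold on the Zariski-dense union of the $\mathrm{Spec}\,\mathcal R_{K_i,[1,q]}[\tfrac1f]$, force $L[\tfrac1f]$ to factor through the closed subscheme $\mathrm{Gr}(n,m)\subset\mathbb P^{\binom mn-1}$, so the rank-$n$ submodule $M'$ is obtained by pulling back the universal subbundle — representability of the Grassmannian does the work that your Step 1 wants flatness of an image sheaf to do. The lattice $M\subset M'$ is then produced via Beauville--Laszlo gluing and the ind-properness of the Beilinson--Drinfeld Grassmannian, with the two-sided bound ($M\subset f^{-C}M_0$ on one side, the determinant bound from $L$ on the other) replacing your finite-generation claim. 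Your parenthetical alternative ending is indeed this last step, but it still presupposes Step 1, so the gap remains.
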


\begin{proof} For each $i$, the quotient
\[
\overline{L}_i=\wedge^n(\mathcal R_{A,[1,q]}^m)/L_i
\]
is a finitely generated module over the principal ideal domain $\mathcal R_{K_i,[1,q]}$. The degree of the torsion part is bounded uniformly, by the existence of $L$. Thus, we may choose some finite $t$ and for each $i$, choose $t$ untilts $K_{i,1}^\sharp,\ldots,K_{i,t}^\sharp$ of $K_i$ such that the torsion part $\overline{L}_i$ is concentrated at the quotients $\mathcal R_{K_i,[1,q]}\to K_{i,j}^\sharp$. (There may be no torsion, in which case we just choose any untilts.) Taking products, this defines untilts $A_1^\sharp,\ldots,A_t^\sharp$. Let $f\in \mathcal R_{A,[1,q]}$ be a function whose vanishing locus is given by those $t$ untilts.

Then $L[\tfrac 1f]$ defines a point of projective space $\mathbb P^{\binom{m}{n}-1}$ over $\mathcal R_{A,[1,q]}[\frac 1f]$. The Pl\"ucker embedding defines a Zariski closed immersion of the classical Grassmannian $\mathrm{Gr}(n,m)$ into $\mathbb P^{\binom{m}{n}-1}$. As $L[\tfrac 1f]$ defines a point in the image on the Zariski dense subspace
\[
\bigcup_i \mathrm{Spec}(\mathcal R_{K_i,[1,q]}[\tfrac 1f])\subset \mathrm{Spec}(\mathcal R_{A,[1,q]}[\tfrac 1f]),
\]
it follows that $L[\tfrac 1f]$ itself sits in the image, so that there is a unique finite projective $\mathcal R_{A,[1,q]}[\frac 1f]$-module
\[ M'\subset \mathcal R_{A,[1,q]}[\tfrac 1f]^m \]
of rank $n$ with
\[ \wedge^n(M') = L[\tfrac 1f]\subset \wedge^n(\mathcal R_{A,[1,q]}[\tfrac 1f]^m).\]

In fact, we can assume that $M'$ is free; more precisely, that it lies in one piece of the standard affine cover of $\mathrm{Gr}(n,m)$. Indeed, it suffices to arrange the same property for $L[\frac 1f]$. But given any point $x\in \Spa(A,A^+)$, we can look at the standard affine piece of $\mathbb P^{\binom{m}{n}-1}$ that $L$ belongs to at the generic point of the Fargues--Fontaine curve over $x$. Passing to a clopen cover of $\Spa(A,A^+)$, we can then arrange everything above (enlargening $t$ if necessary) so that also $L[\tfrac 1f]$ lies in this standard affine piece.

Thus, fix a finite free $\mathcal R_{A,[1,q]}$-model $M_0$ of $M'$. We are now looking for a finite projective $\mathcal R_{A,[1,q]}$-module $M$ with $M[\frac 1f]=M'$ and inducing the given $M_i$ after projection $A\to K_i$. The moduli space of finite projective $\mathcal R_{A,[1,q]}$-lattices $M\subset M'$ is, by Beauville--Laszlo gluing, a Beilinson--Drinfeld affine Grassmannian and therefore ind-proper. It remains to see that the $M_i$ lie in a bounded part. But in one direction, we get a bound as $M$ must necessarily lie in the intersection
\[ M'\cap \mathcal R_{A,[1,q]}^n\]
which is contained in $f^{-C} M_0$ for some sufficiently large $C$. In the other direction, we get a bound because the highest exterior power stays bounded by assumption on the existence of $L$.
\end{proof}
Combining Theorem \ref{thm: Subislocallyspatial} and Theorem \ref{thm: Subisquasicompact}, we obtain the following key Corollary.
\begin{corollary}\label{cor: properness}
The functor $\mathcal{S}\mathrm{ub}_{\mathcal{E}}^{n,d}$ is a locally
spatial diamond, and the map $\mathcal{S}\mathrm{ub}_{\mathcal{E}}^{n,d}\to S$
is proper of locally finite dim.trg.
\end{corollary}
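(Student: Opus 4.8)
The plan is to assemble the two preceding theorems; there is essentially nothing new to prove. Theorem~\ref{thm: Subislocallyspatial} already yields that $\mathcal{S}\mathrm{ub}_{\mathcal{E}}^{n,d}$ is a locally spatial diamond and that the structure map $\mathcal{S}\mathrm{ub}_{\mathcal{E}}^{n,d}\to S$ is partially proper of locally finite $\mathrm{dim.trg}$. Thus the diamond assertion and the $\mathrm{dim.trg}$ bound require no further argument, and the only remaining point is to strengthen ``partially proper'' to ``proper''.

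For this I would invoke Theorem~\ref{thm: Subisquasicompact}, which asserts that the same map $\mathcal{S}\mathrm{ub}_{\mathcal{E}}^{n,d}\to S$ is quasicompact, and then use the general fact that a morphism of small v-stacks is proper if and only if it is partially proper and quasicompact (see \cite{Ecod}); note that partial properness already supplies separatedness and hence quasiseparatedness, so there is no gap. Combining this with the $\mathrm{dim.trg}$ bound from Theorem~\ref{thm: Subislocallyspatial} gives the statement of the corollary verbatim.

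There is no genuine obstacle in this step: all the real content has been carried out in Theorems~\ref{thm: Subislocallyspatial} and~\ref{thm: Subisquasicompact}, in particular in the ``product of points'' commutative-algebra argument of Lemma~\ref{lem:properness} that underlies quasicompactness. The only mild subtlety is bookkeeping — confirming that the notion of ``proper'' in the statement decomposes as ``partially proper $+$ quasicompact'' with no separatedness issue — which is immediate from the definitions, so I would simply record the two-line combination.
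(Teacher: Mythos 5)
Your proposal is correct and matches the paper exactly: the corollary is stated with no separate proof, being obtained precisely by combining the partial properness and locally finite $\mathrm{dim.trg}$ of Theorem~\ref{thm: Subislocallyspatial} with the quasicompactness of Theorem~\ref{thm: Subisquasicompact}, since proper means partially proper and quasicompact in this setting. Your bookkeeping remark about separatedness is fine and consistent with how the paper's proof of Theorem~\ref{thm: Subislocallyspatial} establishes separatedness first.
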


\subsection{The geometry of $\wtBun_P$}

We now wish to study the Drinfeld compactifications. When working with these objects it is often convenient to assume that the derived group of $G$ is simply connected, and we will do this until the end of \S 7. However, in \S 7.3 we will define a variant of the Drinfeld compactification used to propagate our main results to the general case. We note that the assumption that the derived group of $G$ is simply connected implies that for all Levi subgroups $M \subset G$, the derived group of $M$ is simply connected. In particular, the map 
\[ \ol{(-)}: \pi_{1}(M)_{\Gamma} \simeq^{\kappa_{M}^{-1}} B(M)_{\basic} \xrightarrow{\simeq} B(M^{\mathrm{ab}})_{\basic} = B(M^{\mathrm{ab}}) \simeq^{\kappa_{M^{\mathrm{ab}}}} \mathbb{X}_{*}(M^{\mathrm{ab}}_{\ol{E}})_{\Gamma} \]
considered in \S \ref{section: defnofEisFunctors} is an isomorphism, and we will implicitly identify $\pi_{1}(M)_{\Gamma}$ with $\mathbb{X}_{*}(M^{\mathrm{ab}}_{\ol{E}})_{\Gamma}$ in this case.

\subsubsection{Basic definitions}
In order to understand how Verdier duality interacts with the Eisenstein functors, it will be natural to consider compactifications of the map
\[ \mathfrak{p}_P: \Bun_P \rightarrow \Bun_G, \]
referred to as the Drinfeld Compactifications in usual geometric Langlands \cite{BG}. 

To understand this, we first note that, for $S \in \Perf$, the moduli stack $\Bun_P$ parametrizes
\begin{enumerate}
\item a $G$-bundle $\mathcal{F}_G$ on $X_{S}$,
\item a $M$-bundle $\mathcal{F}_M$ on $X_{S}$,
\item and a $G$-equivariant map $\kappa: \mathcal{F}_G \rightarrow G/U \times^M \mathcal{F}_M$.
\end{enumerate}
By the Tannakian formalism, item (3) can in turn be described by injective maps
\[
\kappa^{\mathcal{V}}_P: (\mathcal{V}^{U})_{\mathcal{F}_{M}} \rightarrow \mathcal{V}_{\mathcal{F}_{G}},
\]
functorial in a $G$-module $\mathcal{V}$, such that the cokernel of $\kappa_P^{\mathcal V}$ is a vector bundle, and satisfying the following Pl\"ucker relations:
\begin{enumerate}
\item For the trivial representation $\mathcal{V}$, $\kappa_P^{\mathcal{V}}$ must be the identity map $\mathcal{O}_{X_S} \rightarrow \mathcal{O}_{X_S}$.
\item For a $G$-module map $\mathcal{V}^1 \rightarrow \mathcal{V}^2$, the induced square
\[\begin{tikzcd}
&   ((\mathcal{V}^1)^U) _{\mathcal{F}_M}\arrow[d] \arrow[r, "\kappa_P^{\mathcal{V}^1}"] & \mathcal{V}^1_{\mathcal{F}_G}  \arrow[d] & \\
& ((\mathcal{V}^2)^U)_{\mathcal{F}_M}  \arrow[r, "\kappa_P^{\mathcal{V}^2}"] & \mathcal{V}^2_{\mathcal{F}_G} &
\end{tikzcd}
\]
commutes.
\item For two $G$-modules $\mathcal{V}^1$ and $\mathcal{V}^2$, the diagram
\[\begin{tikzcd}
&   ((\mathcal{V}^1)^U \otimes (\mathcal{V}^2)^U)_{\mathcal{F}_M} \arrow[d] \arrow[r,"\kappa_P^{\mathcal{V}^1} \otimes \kappa_P^{\mathcal{V}^2}"] & \mathcal{V}^1_{\mathcal{F}_G} \otimes \mathcal{V}^2_{\mathcal{F}_G} \arrow[d,"\mathrm{id}"] & \\
& ((\mathcal{V}^1 \otimes \mathcal{V}^2)^U)_{\mathcal{F}_M} \arrow[r, "\kappa_P^{\mathcal{V}^1 \otimes \mathcal{V}^2}"] & \mathcal{V}^1_{\mathcal{F}_G} \otimes \mathcal{V}^2_{\mathcal{F}_G} &
\end{tikzcd}\]
commutes.
\end{enumerate}

In order to compactify the map $\mf{p}_{P}$ it is natural to consider "enhanced" $P$-structures in the sense that the maps $\kappa^{\mathcal{V}}_P$ can have torsion in their cokernel. In particular, we have the following definition for $\widetilde{\Bun}_{P}$:
\begin{definition}{\label{def: tildecompactification}}
We define $\widetilde{\Bun}_{P}$ to be the $v$-stack parametrizing, for $S \in \Perf$, triples
\[
(\mathcal{F}_G,\mathcal{F}_M, \tilde{\kappa}^{\mathcal{V}}_P)
\]
of a $G$-bundle $\mathcal F_G$, an $M$-bundle $\mathcal F_M$ and for every $G$-module $\mathcal{V}$ a map
\[ \tilde{\kappa}^{\mathcal{V}}_P: (\mathcal{V}^U)_{\mathcal{F}_M} \rightarrow \mathcal{V}_{\mathcal{F}_G} \]
that is a fiberwise (over $S$) injective map of vector bundles on $X_S$, satisfying the Pl\"ucker relations in the following sense:
\begin{enumerate}
\item For the trivial representation $\mathcal{V}$, $\tilde{\kappa}_P^{\mathcal{V}}$ is the identity map $\mathcal{O} \rightarrow \mathcal{O}$.
\item For a $G$-module map $\mathcal{V}^1 \rightarrow \mathcal{V}^2$, the induced square
\[\begin{tikzcd}
&   ((\mathcal{V}^1)^U) _{\mathcal{F}_M}\arrow[d] \arrow[r, "\tilde{\kappa}_P^{\mathcal{V}^1}"] & \mathcal{V}^1_{\mathcal{F}_G}  \arrow[d] & \\
& ((\mathcal{V}^2)^U)_{\mathcal{F}_M}  \arrow[r, "\tilde{\kappa}_P^{\mathcal{V}^2}"] & \mathcal{V}^2_{\mathcal{F}_G} &
\end{tikzcd}
\]
commutes.
\item For two $G$-modules $\mathcal{V}^1$ and $\mathcal{V}^2$, the diagram
\[\begin{tikzcd}
&   ((\mathcal{V}^1)^U \otimes (\mathcal{V}^2)^U)_{\mathcal{F}_M} \arrow[d] \arrow[r,"\tilde{\kappa}_P^{\mathcal{V}^1} \otimes \tilde{\kappa}_P^{\mathcal{V}^2}"] & \mathcal{V}^1_{\mathcal{F}_G} \otimes \mathcal{V}^2_{\mathcal{F}_G} \arrow[d,"\mathrm{id}"] & \\
& ((\mathcal{V}^1 \otimes \mathcal{V}^2)^U)_{\mathcal{F}_M} \arrow[r, "\tilde{\kappa}_P^{\mathcal{V}^1 \otimes \mathcal{V}^2}"] & \mathcal{V}^1_{\mathcal{F}_G} \otimes \mathcal{V}^2_{\mathcal{F}_G} &
\end{tikzcd}\]
commutes.
\end{enumerate}
\end{definition}

We will often omit the subscript $P$ in $\tilde{\kappa}_P^{\mathcal{V}}$ when it is clear from context.

We have a natural map $\tilde{j}_{P}: \Bun_{P} \hookrightarrow \wt{\Bun}_{P}$ mapping to the locus where all the maps $\tilde{\kappa}^{\mathcal{V}}$ have vector bundle cokernel. By forgetting all the data except $\mathcal{F}_{G}$ (resp. $\mathcal{F}_{M}$), we get morphisms $\tilde{\mathfrak{p}}_P: \widetilde{\Bun}_P \rightarrow \Bun_G$ (resp. $\tilde{\mathfrak{q}}_P: \widetilde{\Bun}_P \rightarrow \Bun_M$) which extend the maps $\mf{p}_{P}$ (resp. $\mf{q}_{P}$) along $\tilde{j}_{P}$. The space $\wt{\Bun}_{P}$ will be of foremost interest to us; however, its geometry is very complicated and to aid in its study it helps to consider a simpler variant of this space.  

To understand this variant, we recall that a map of bundles whose cokernel is also a bundle is uniquely determined by its top exterior power. In analogous fashion, we note that yet another way of understanding the moduli description of $\Bun_P$ is as parametrizing triples:
\[ (\mathcal{F}_G, \mathcal{F}_{M^{\mathrm{ab}}}, \overline{\kappa}_P: \mathcal{F}_G \rightarrow G/[P,P] \times^{M^{\mathrm{ab}}} \mathcal{F}_{M^{\mathrm{ab}}}), \]
where $\mathcal{F}_{G}$ is a $G$-bundle, $\mathcal{F}_{M^{\mathrm{ab}}}$ is an $M^{\mathrm{ab}}$-bundle, and $\kappa_P: \mathcal{F}_G \rightarrow G/[P,P] \times^{M^{\mathrm{ab}}} \mathcal{F}_{M^{\mathrm{ab}}}$ is a $G$-equivariant map. The Tannakian Formalism again provides us with a similar Pl\"ucker description of the map $\overline{\kappa}_{P}$. For every $G$-module $\mathcal{V}$, one obtains an induced map
\[ \overline{\kappa}_P^{\mathcal{V}}: (\mathcal{V}^{[P,P]})_{\mathcal{F}_{M^{\mathrm{ab}}}} \rightarrow \mathcal{V}_{\mathcal{F}_{G}} \]
of vector bundles on $X_{S}$ with vector bundle cokernel, satisfying the following:
\begin{enumerate}
\item For the trivial representation $\mathcal{V}$, $\overline{\kappa}_P^{\mathcal{V}}$ is the identity map $\mathcal{O} \rightarrow \mathcal{O}$.
\item For a $G$-module map $\mathcal{V}^1 \rightarrow \mathcal{V}^2$, the induced square
\[\begin{tikzcd}
&   ((\mathcal{V}^1)^{[P,P]})_{\mathcal{F}_{M^{\mathrm{ab}}}}\arrow[d] \arrow[r, "\overline{\kappa}_P^{\mathcal{V}^1}"] & \mathcal{V}^1_{\mathcal{F}_G}  \arrow[d] & \\
& ((\mathcal{V}^2)^{[P,P]})_{\mathcal{F}_{M^{\mathrm{ab}}}}  \arrow[r, "\overline{\kappa}_P^{\mathcal{V}^2}"] & \mathcal{V}^2_{\mathcal{F}_G} &
\end{tikzcd}
\]
commutes.
\item For two $G$-modules $\mathcal{V}^1$ and $\mathcal{V}^2$, the diagram
\[\begin{tikzcd}
&   ((\mathcal{V}^1)^{[P,P]} \otimes (\mathcal{V}^2)^{[P,P]})_{\mathcal{F}_{M^{\mathrm{ab}}}} \arrow[d] \arrow[r,"\overline{\kappa}_P^{\mathcal{V}^1} \otimes \overline{\kappa}_P^{\mathcal{V}^2}"] & \mathcal{V}^1_{\mathcal{F}_G} \otimes \mathcal{V}^2_{\mathcal{F}_G} \arrow[d,"\mathrm{id}"] & \\
& ((\mathcal{V}^1 \otimes \mathcal{V}^2)^{[P,P]})_{\mathcal{F}_{M^{\mathrm{ab}}}} \arrow[r, "\overline{\kappa}_P^{\mathcal{V}^1 \otimes \mathcal{V}^2}"] & \mathcal{V}^1_{\mathcal{F}_G} \otimes \mathcal{V}^2_{\mathcal{F}_G} &
\end{tikzcd}\]
commutes.
\end{enumerate}
This allows us to make the following definition. 
\begin{definition}
We define $\overline{\Bun}_{P}$ to be the $v$-stack parametrizing, for $S \in \Perf$, triples $(\mathcal{F}_G,\mathcal{F}_M, \overline{\kappa}^{\mathcal{V}}_P)$, where for every $G$-module $\mathcal V$ the map
\[
\overline{\kappa}^{\mathcal{V}}_P: (\mathcal{V}^{[P,P]})_{\mathcal{F}_{M^{\mathrm{ab}}}} \rightarrow \mathcal{V}_{\mathcal{F}_G} \]
is a fiberwise (on $S$) injective map of vector bundles on $X_S$, satisfying the Pl\"ucker relations as described above.
\end{definition}

Again, we will omit the subscript $P$ when it is obvious from context.

We have a natural map $\ol{j}_{P}: \Bun_{P} \ra \ol{\Bun}_{P}$. By forgetting all the data except $\mathcal{F}_{G}$ (resp. $\mathcal{F}_{M^{\mathrm{ab}}}$), we obtain maps $\ol{\mf{p}}_{P}$ (resp. $\ol{\mf{q}}^{\dagger}_{P}$) extending $\mf{p}_{P}$ (resp. $\mf{q}_{P}^{\dagger}$) along $\ol{j}_{P}$. However, unless $P$ is a Borel so that $M=M^{\mathrm{ab}}$, the stack $\ol{\Bun}_P$ no longer has a map to $\Bun_M$. There is also a natural map 
\[ \mf{t}_{P}: \wt{\Bun}_{P} \ra \ol{\Bun}_{P}, \]
\[ (\mathcal{F}_{G},\mathcal{F}_{M},\tilde{\kappa}^{\mathcal{V}}) \mapsto (\mathcal{F}_{G},\mathcal{F}_{M} \times^{M} M^{\mathrm{ab}} ,\overline{\kappa}^{\mathcal{V}}), \]
where $\overline{\kappa}^{\mathcal{V}}$ is the precomposition of $\tilde{\kappa}^{\mathcal{V}}$ with the embedding
\[
(\mathcal{V}^{[P,P]})_{\mathcal{F}_{M} \times^{M} M^{\mathrm{ab}}}\to (\mathcal V^U)_{\mathcal F_M}.
\]

We have the following basic structural result on these compactifications.
\begin{proposition}{\label{prop: openimmersion}}
The stacks $\ol{\Bun}_{P}$ and $\wt{\Bun}_{P}$ are Artin $v$-stacks and the maps $\ol{j}_{P}$ (resp. $\tilde{j}_{P}$) are open immersions.  
\end{proposition}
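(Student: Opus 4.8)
The plan is to study the forgetful maps
\[ \tilde{\mf r}\colon \wt{\Bun}_{P}\longrightarrow \Bun_{G}\times\Bun_{M},\qquad \ol{\mf r}\colon \ol{\Bun}_{P}\longrightarrow \Bun_{G}\times\Bun_{M^{\mathrm{ab}}} \]
recording the underlying pair of bundles, and to show that each of these is \emph{representable in locally spatial diamonds and separated}. Granting this, both stacks will be Artin $v$-stacks: the targets are Artin $v$-stacks, being products of the Artin $v$-stacks $\Bun_{G}$, $\Bun_{M}$, $\Bun_{M^{\mathrm{ab}}}$ of \cite{FarguesScholze}; pulling a cohomologically smooth surjective chart (by a locally spatial diamond) of $\Bun_{G}\times\Bun_{M}$ back along $\tilde{\mf r}$ produces such a chart of $\wt{\Bun}_{P}$; and the diagonal of $\wt{\Bun}_{P}$ factors as
\[ \wt{\Bun}_{P}\xrightarrow{\Delta_{\tilde{\mf r}}}\wt{\Bun}_{P}\times_{\Bun_{G}\times\Bun_{M}}\wt{\Bun}_{P}\longrightarrow \wt{\Bun}_{P}\times\wt{\Bun}_{P}, \]
in which the first arrow is a closed immersion by separatedness of $\tilde{\mf r}$ and the second is a base change of the diagonal of $\Bun_{G}\times\Bun_{M}$ (representable in locally spatial diamonds since the target is Artin); hence the composite is representable in locally spatial diamonds, so $\wt{\Bun}_{P}$ is an Artin $v$-stack, and likewise $\ol{\Bun}_{P}$.

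\textbf{Reduction to finitely much data.} To verify the relative statement for $\tilde{\mf r}$ it is enough to work over a chart, so fix $S\in\Perf$ together with a $G$-bundle $\mathcal{F}_{G}$ and an $M$-bundle $\mathcal{F}_{M}$ on $X_{S}$; I must then show that the functor sending $T/S$ to the set of Pl\"ucker data $(\tilde{\kappa}^{\mathcal V})_{\mathcal V}$ for the pulled-back bundles is a locally spatial diamond, separated over $S$. The crux --- and the step I expect to be the main obstacle --- is a Tannakian reduction: fixing a faithful representation $\mathcal V_{0}$ and taking $\mathcal V_{1},\dots,\mathcal V_{k}$ to be $\mathcal V_{0}\oplus\mathcal V_{0}^{\vee}$, its tensor squares, and enough of their subquotients, the datum of a full compatible system $(\tilde{\kappa}^{\mathcal V})_{\mathcal V}$ as in Definition \ref{def: tildecompactification} should be equivalent to the finite collection of fiberwise-injective maps $\tilde{\kappa}^{\mathcal V_{i}}\colon (\mathcal V_{i}^{U})_{\mathcal{F}_{M}}\to (\mathcal V_{i})_{\mathcal{F}_{G}}$ subject to only \emph{finitely many} instances of the Pl\"ucker relations (1)--(3). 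This is the exact analogue of the classical reduction to fundamental weights for the Drinfeld compactifications; I would transcribe the argument from \cite{BG} (see also \cite[\S~5.2.3]{HamGeomES}), the only inputs being that $\mathrm{Rep}(G)$ is generated under direct sums, tensor products, duals and subquotients by $\mathcal V_{0}$, and that all of the maps involved are maps between a fixed finite list of bundles on the relative curves $X_{T}$.

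\textbf{The formal core.} With this reduction in hand the remaining steps are formal. For a single representation $\mathcal V$, the bundles $(\mathcal V^{U})_{\mathcal{F}_{M}}$ and $(\mathcal V)_{\mathcal{F}_{G}}$ are in particular relatively flat coherent sheaves on $X_{S}$, so by Proposition \ref{prop:flat-coherent-homs-nice} the space $\calH om((\mathcal V^{U})_{\mathcal{F}_{M}},(\mathcal V)_{\mathcal{F}_{G}})$ is a locally spatial diamond which is partially proper --- in particular separated --- over $S$; its subfunctor of fiberwise-injective maps is open, by the upper semicontinuity of the fiberwise generic rank of the cokernel. The functor of interest is then the intersection, inside the finite fiber product over $S$ of these open subspaces for $\mathcal V=\mathcal V_{1},\dots,\mathcal V_{k}$, of the loci where the finitely many chosen Pl\"ucker relations hold; each such locus is the equalizer of a pair of morphisms into a $\calH om$-space of the above type, hence is cut out by a closed immersion since that target is separated over $S$. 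Thus the functor is a locally closed subfunctor of a locally spatial diamond separated over $S$, so is itself one, which proves the claim for $\tilde{\mf r}$; the case of $\ol{\mf r}$ is identical, with $\mathcal V^{U}$ replaced by $\mathcal V^{[P,P]}$ and $\mathcal{F}_{M}$ by $\mathcal{F}_{M^{\mathrm{ab}}}$.

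\textbf{The open immersions.} It remains to see that $\tilde{j}_{P}$ (resp. $\ol{j}_{P}$) is an open immersion. By construction $\tilde{j}_{P}$ identifies $\Bun_{P}$ with the sublocus of $\wt{\Bun}_{P}$ on which each of the finitely many maps $\tilde{\kappa}^{\mathcal V_{i}}$ has torsion-free --- equivalently, fiberwise vector-bundle --- cokernel: this sublocus is open by Lemma \ref{lem:torsion-semicontinuous-stuff}.(1), and it coincides with $\Bun_{P}$ by the same Tannakian reconstruction of genuine $P$-reductions from non-degenerate Pl\"ucker data (as in \cite{BG}). The argument for $\ol{j}_{P}$ is the same. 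In summary, the whole proposition reduces, modulo the reduction to finitely many representations and relations, to Proposition \ref{prop:flat-coherent-homs-nice} together with the semicontinuity statements of Lemma \ref{lem:torsion-semicontinuous-stuff}.
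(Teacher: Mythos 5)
Your overall architecture (prove that the forgetful map $\wt{\Bun}_P\to \Bun_G\times\Bun_M$ is representable in locally spatial diamonds and separated using the $\mathcal{H}\mathrm{om}$-diamonds of Proposition~\ref{prop:flat-coherent-homs-nice}, then get the Artin property and the openness of $\Bun_P$ by formal arguments plus semicontinuity of torsion) is reasonable and genuinely different in packaging from the paper, which instead interprets a full Pl\"ucker system as a section of $[G\backslash\overline{G/U}/M]$ over the curve and quotes the Artin criterion for such section spaces together with openness of fiberwise injectivity from \cite{HamJacCrit}, and Grosshans' strong quasi-affineness of $G/U\subset\overline{G/U}$ plus the argument of \cite[Proposition~IV.4.22]{FarguesScholze} for the open immersion. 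But the step you yourself flag as the crux is a genuine gap, and your description of what it takes to fill it is not correct. The reduction of the full compatible system $(\tilde{\kappa}^{\mathcal V})_{\mathcal V}$ to finitely many maps and finitely many Pl\"ucker relations is \emph{not} a consequence of the fact that $\mathrm{Rep}(G)$ is tensor-generated by a faithful representation: what one actually needs is that the invariant algebra $E[G]^{U}$ is a finitely generated $E$-algebra (a theorem of Grosshans/Hadziev, equivalent to $\overline{G/U}$ being of finite type), so that the whole system is determined by its values on finitely many subrepresentations with a finitely generated ideal of relations. Without this, your ``formal core'' only handles finitely many representations at a time, and the genuine functor is an infinite limit of those loci, which is neither obviously a locally spatial diamond nor, for the last step, an open locus (an infinite intersection of opens need not be open).

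The same issue recurs at the open-immersion step: you need that non-degeneracy (vector-bundle cokernel) for the finitely many chosen representations already forces non-degeneracy for all $\mathcal V$ and cuts out exactly $\Bun_P$, and this is precisely where the second nontrivial input enters, namely Grosshans' theorem that $G/U\hookrightarrow\overline{G/U}$ is an open immersion of the strongly quasi-affine variety into its affine closure. So the ``transcription from \cite{BG}'' you defer to is not a routine Tannakian exercise; once you carry it out honestly (finite generation of $E[G]^U$, strong quasi-affineness, and the passage from the variety-level statement to moduli of sections over $X_S$), you have essentially reproduced the paper's proof via the affine closure. To repair your write-up, you should either import these two theorems explicitly and prove the finite-reduction and reconstruction statements from them, or switch at the crux to the section-space interpretation as in the paper.
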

\begin{proof}
It suffices to show the claim after base-change to an algebraically closed perfectoid field $\Spa(F,\mathcal{O}_{F})$. We write $X$ for the associated Fargues-Fontaine curve. Recall that, given a scheme $Y$, one defines the affine closure to be $\overline{Y} = \Spec{(\Gamma(Y,\mathcal{O}_{Y}))}$. We let $\overline{G/U}$ be the affine closure of $G/U$. Viewing this as a constant scheme over $X$, we consider the stack
\[ Z := [G\backslash(\overline{G/U})/M] \rightarrow X. \]
Now, it follows by \cite[Theorem 1.12]{BG}, that, for $S \in \Perf$, a section
\[
\begin{tikzcd}
&  & Z \arrow[d] \\
& X_{S} \arrow[ur,"s",dotted] \arrow[r] & X
\end{tikzcd}
\]
is equivalent to the datum of a $M$-bundle (resp. $G$-bundle) $\mathcal{F}_{M}$ (resp. $\mathcal{F}_{G}$) on $X_{S}$ together with a family of maps $\tilde{\kappa}^{\mathcal{V}}$ of $\mathcal{O}_{X_{S}}$-modules satisfying the Pl\"ucker conditions. Therefore, if we consider $\mathcal{M}_{Z}$, the moduli stack parametrizing such sections, then $\wt{\Bun}_{P}$ is the sub-functor corresponding to the locus where these maps are injective after pulling back to a geometric point. By \cite[Remark~3.3]{HamJacCrit}, this is an open subfunctor. By \cite[Theorem~1.7]{HamJacCrit}, $\mathcal{M}_{Z}$ is an Artin $v$-stack; therefore, the same is true for $\wt{\Bun}_{P}$. It remains to see that $\Bun_{P}$ is an open sub-functor. Now, by the work of \cite{Gro}, it follows that $\overline{G/U}$ is strongly quasi-affine in the sense that $G/U \hookrightarrow \overline{G/U}$ is an open immersion. This induces an open immersion of stacks 
\[ [G\backslash(G/U)/M] = [X/P] \hookrightarrow Z \]
which, after passing to moduli stacks of sections, gives a natural map $\Bun_{P} \rightarrow \mathcal{M}_{Z}$ factoring through the open immersion $\wt{\Bun}_{P} \hookrightarrow \mathcal{M}_{Z}$. The map $\Bun_{P} \ra \mathcal{M}_{Z}$ is an open immersion, since $[X/P] \hookrightarrow Z$ is an open immersion, by arguing as in \cite[Proposition~IV.4.22]{FarguesScholze}. The case of $\ol{\Bun}_{P}$ follows in the exact same way, where we replace $G/U$ with $G/[P,P]$.
\end{proof}
For $\alpha \in \Mcoinv=\pi_1(M)_\Gamma$, we write $\wt{\Bun}_{P}^{\alpha}$ (resp. $\ol{\Bun}_{P}^{\alpha}$) for the pullback of $\wt{\Bun}_{P}$ (resp. $\ol{\Bun}_{P}$) to the connected component $\Bun_{M^{\mathrm{ab}}}^{\alpha}$ of $\Bun_{M^{\mathrm{ab}}}$. More generally, we will use the superscript $(-)^{\alpha}$ to denote the corresponding base-change of all relevant maps. We now have our first big theorem, which says that our Drinfeld compactifications are indeed compactifications in this setting. 
\begin{theorem}{\label{thm: drinfeldisrelativecompactification}}
For all $\alpha \in \Mcoinv$, the natural maps
\[ \ol{\mf{p}}_{P}^{\alpha}: \ol{\Bun}_{P}^{\alpha} \ra \Bun_{G} \]
and 
\[ \wt{\mf{p}}_{P}^{\alpha}: \wt{\Bun}_{P}^{\alpha} \ra \Bun_{G} \]
are both proper and representable in spatial diamonds of finite dim.trg.
\end{theorem}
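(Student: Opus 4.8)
The plan is to reduce, after base change along a cover of $\Bun_G$ by perfectoid spaces, to an assertion about the spaces $\mathcal{S}\mathrm{ub}^{n,d}_{\mathcal E}$ studied above, and then quote Corollary~\ref{cor: properness}. First I would note that properness, representability in spatial diamonds, and local finiteness of $\mathrm{dim.trg}$ are all v-local on the target; since $\Bun_G$ is an Artin v-stack, hence v-covered by affinoid perfectoid spaces, it suffices to show: for every affinoid perfectoid $S$ with a map $S\to\Bun_G$ classifying a $G$-bundle $\mathcal E$ on $X_S$, the fibre products $\ol{\Bun}_P^{\alpha}\times_{\Bun_G}S$ and $\wt{\Bun}_P^{\alpha}\times_{\Bun_G}S$ are spatial diamonds, proper and of finite $\mathrm{dim.trg}$ over $S$. (As elsewhere in this section I would assume the derived group of $G$ simply connected, so a component $\alpha$ is recorded by an element of $\Mcoinv$.)

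Fix such an $S$ and $\mathcal E$; I will describe the argument for $\wt{\Bun}_P$, the case of $\ol{\Bun}_P$ being parallel and simpler. Choose finitely many $G$-modules $\mathcal V_1,\dots,\mathcal V_r$ such that the maps $\tilde\kappa^{\mathcal V_j}$ determine all $\tilde\kappa^{\mathcal V}$ and the $M$-bundle $\mathcal F_M$ via the Pl\"ucker relations (possible by the Tannakian formalism, exactly as in the moduli description of $\Bun_P$ recalled before Definition~\ref{def: tildecompactification}; concretely, take the $\mathcal V_j$ to be highest-weight modules whose weights include a generating set of the monoid of $P$-dominant weights, which spans $\mathbb X^*(M^{\mathrm{ab}})$ since it is the set of lattice points of a full-dimensional rational cone). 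For a point $(\mathcal E_T,\mathcal F_M,\tilde\kappa)$, the map $\tilde\kappa^{\mathcal V_j}$ exhibits $(\mathcal V_j^U)_{\mathcal F_M}$ --- a vector bundle of rank $n_j:=\dim\mathcal V_j^U$ and degree $d_j$ depending only on $\alpha$ --- as a fibrewise-injective subsheaf of $(\mathcal V_j)_{\mathcal E_T}$. Remembering this tuple of subsheaves gives a morphism
\[ \wt{\Bun}_P^{\alpha}\times_{\Bun_G}S \;\longrightarrow\; \prod_{j=1}^{r}\mathcal{S}\mathrm{ub}^{n_j,d_j}_{(\mathcal V_j)_{\mathcal E}}. \]

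The main point is that this morphism is a closed immersion. Since $(\mathcal V_j)_{\mathcal E}$ is a vector bundle, a point of the target records fibrewise-injective maps of vector bundles $\mathcal F_j\hookrightarrow(\mathcal V_j)_{\mathcal E_T}$; imposing the finitely many Pl\"ucker relations among the $\mathcal F_j$ reconstructs, by the Tannakian formalism, a unique $M$-bundle $\mathcal F_M$ with $\mathcal F_j=(\mathcal V_j^U)_{\mathcal F_M}$ and a unique compatible system $\tilde\kappa$, so the tuple comes from a unique point of $\wt{\Bun}_P\times_{\Bun_G}S$, which lies in the component $\alpha$ because the prescribed degrees $d_j$ pin $\alpha$ down. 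Each Pl\"ucker relation says that two maps between a fixed pair of flat coherent sheaves on $X_{(-)}$ agree, i.e. that a section of an $\mathcal{H}\mathrm{om}$-sheaf vanishes; by the partial properness in Proposition~\ref{prop:flat-coherent-homs-nice} the zero section of such a space is a closed immersion, so this is a closed condition. Hence $\wt{\Bun}_P^{\alpha}\times_{\Bun_G}S$ is a closed subfunctor of $\prod_j\mathcal{S}\mathrm{ub}^{n_j,d_j}_{(\mathcal V_j)_{\mathcal E}}$. For $\ol{\Bun}_P$ I would argue identically with $\mathcal V_j^{[P,P]}$ (a line) in place of $\mathcal V_j^U$, landing in a product of rank-one spaces $\mathcal{S}\mathrm{ub}^{1,d_j}$ and reconstructing the torus bundle $\mathcal F_{M^{\mathrm{ab}}}$ from the sub-line-bundles and the multiplicativity part of the Pl\"ucker relations.

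Finally, by Corollary~\ref{cor: properness} each $\mathcal{S}\mathrm{ub}^{n_j,d_j}_{(\mathcal V_j)_{\mathcal E}}\to S$ is a locally spatial diamond, proper and of locally finite $\mathrm{dim.trg}$; as $S$ is quasicompact it is a spatial diamond, proper and of finite $\mathrm{dim.trg}$ over $S$. A finite fibre product over $S$ of such maps, and any closed subfunctor of one, again have these properties (a closed immersion being proper, representable in spatial diamonds, and of $\mathrm{dim.trg}\le 0$), so the required properties hold over $S$ and descend along $S\to\Bun_G$. The genuinely hard input here is Corollary~\ref{cor: properness} --- that is, the quasicompactness Theorem~\ref{thm: Subisquasicompact}, which rests on the determinant trick and the commutative algebra of Lemma~\ref{lem:properness}; within the present argument the only delicate points are of a bookkeeping nature: that finitely many $\mathcal V_j$ suffice and that fixing their degrees is equivalent to fixing $\alpha$ (a combinatorial statement about $P$-dominant weights and the fact that the lattice points of a full-dimensional rational cone generate the ambient lattice), and that the Pl\"ucker relations define a \emph{closed} --- not merely locally closed --- subfunctor, which is precisely what the partial properness of the $\mathcal{H}\mathrm{om}$-spaces from Proposition~\ref{prop:flat-coherent-homs-nice} provides.
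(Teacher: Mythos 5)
Your overall reduction is the same as the paper's: both arguments come down to embedding the compactification, Tannakian-style, into spaces of fiberwise-injective subsheaves of vector bundles associated to representations of $G$, and then quoting the properness of $\mathcal{S}\mathrm{ub}^{n,d}_{\mathcal{E}}\to S$ (Corollary~\ref{cor: properness}); working after base change to a perfectoid chart of $\Bun_G$ rather than with the stack $\Bun_{k,n}^d$ over $\Bun_{\GL_n}$ is a cosmetic difference. The genuine gap is in your closed-immersion step, specifically the sentence ``imposing the finitely many Pl\"ucker relations among the $\mathcal{F}_j$ reconstructs, by the Tannakian formalism, a unique $M$-bundle $\mathcal{F}_M$ \dots and a unique compatible system $\tilde\kappa$.'' The Tannakian formalism reconstructs an $M$-torsor from an exact tensor functor on \emph{all} of $\mathrm{Rep}(M)$; having finitely many vector bundles $\mathcal{F}_j\subset(\mathcal{V}_j)_{\mathcal{E}_T}$ does not give such a functor, and the Pl\"ucker relations as defined in the paper are conditions on a pre-given pair $(\mathcal{F}_M,\tilde\kappa^{\mathcal{V}})$ ranging over all $\mathcal{V}$ -- they refer to $(\mathcal{V}^U)_{\mathcal{F}_M}$, including for tensor products $\mathcal{V}_i\otimes\mathcal{V}_j$ not among your chosen modules -- so it is not even clear what finite list of relations among the $\mathcal{F}_j$ alone you are imposing, let alone that it characterizes the image and that every tuple satisfying it arises from a unique $(\mathcal{F}_M,\tilde\kappa)$. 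This kind of finite Pl\"ucker presentation is classical for $\ol{\Bun}_B$ (torus quotient, line subsheaves indexed by generators of the dominant character lattice), and your sketch for $\ol{\Bun}_P$ is correspondingly plausible; but for $\wt{\Bun}_P$ with nonabelian $M$, recovering an $M$-torsor from finitely many of its associated vector bundles, with no torsor structure remembered, is precisely the nontrivial content, and your choice of the $\mathcal{V}_j$ (governed by $P$-dominant weights spanning $\mathbb{X}^*(M^{\mathrm{ab}})$) only controls the $M^{\mathrm{ab}}$-part.

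The paper avoids this issue: it takes a single sufficiently large $G$-module $\mathcal{V}$ (containing all fundamental representations over $\ol{E}$ as summands), so that the matrix coefficients of $(\mathcal{V}^U)^\vee\otimes\mathcal{V}$ generate $E[G]^U$; the algebraic Peter--Weyl theorem then gives that the map to $\Bun_G\times_{\Bun_{\GL_n}}\Bun_{k,n}^d$ remembering only $(\mathcal{V}^U_{\mathcal{F}_M},\mathcal{V}_{\mathcal{F}_G},\tilde\kappa^{\mathcal{V}})$ is \emph{injective}, and closedness of the image is then argued separately (the fiber over an $S$-point of the target is cut out by vanishing of $\mathcal{O}_{X_S}$-linear maps, closed by separatedness of Banach--Colmez spaces), with no claim that finitely many Pl\"ucker relations present the functor. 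To repair your argument you would either need to prove the finite reconstruction statement for $\wt{\Bun}_P$ (a real lemma, not a formal Tannakian consequence), or replace that step by an injectivity-plus-closedness argument of the paper's type; your appeal to Proposition~\ref{prop:flat-coherent-homs-nice} only shows that whatever relations you write down cut out a closed subfunctor, not that this subfunctor coincides with the image.
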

\begin{proof}
We just explain the proof in the case of the compactification $\widetilde{\Bun}_{P}$ with the case of $\ol{\Bun}_{P}$ being strictly easier (it follows from essentially the same argument as \cite[Proposition~5.9]{HamGeomES}). For integers $1 \leq k \leq n$ and $d$, we let $\Bun_{k,n}^{d}$ denote the moduli stack parametrizing pairs $(\mathcal{F},\mathcal{E},i: \mathcal{F} \hookrightarrow \mathcal{E})$, where $\mathcal{E}$ is a vector bundle of rank $n$, $\mathcal{F}$ is a vector bundle of rank $k$ and degree $d$, and $i: \mathcal{F} \hookrightarrow \mathcal{E}$ is a map of $\mathcal{O}_{X_{S}}$-modules such that, after pulling back to any geometric point of $S$, it is an injection. We note that the map
\[ \Bun_{k,n}^{d} \ra \Bun_{\mathrm{GL}_{n}}, \]
remembering the bundle $\mathcal{E}$ is proper and representable in spatial diamonds of finite dim.trg., as follows immediately from Corollary \ref{cor: properness}.

Now we consider a $G$-module $\mathcal{V}$, and choose $d,k,n$ such that the map 
\[ i_{\mathcal{V}}: \wt{\Bun}^{\alpha}_{P} \rightarrow \Bun_G\times_{\Bun_{\mathrm{GL}_n}} \Bun_{k,n}^{d} \]
remembering only $\mathcal F_G$ and the tuple $(\mathcal{V}^{U}_{\mathcal{F}_{M}},\mathcal{V}_{\mathcal{F}_{G}},\tilde{\kappa}^{\mathcal{V}})$ is well-defined. If we choose $\mathcal{V}$ such that the image of $(\mathcal{V}^{U})^{\vee} \otimes \mathcal{V}$ inside the group ring $E[G]$ generates $E[G]^{U}$ as an $E$-algebra then $i_{\mathcal{V}}$ will be an injective map by the algebraic Peter-Weyl theorem, where $U$ acts via the left action on $E[G]$. More specifically, we can choose $\mathcal{V}$ so that $\mathcal V\otimes_E \ol{E}$ admits the direct sum of all fundamental representations of $G_{\ol{E}}$ as a direct summand. This generates $E[G] \otimes \ol{E} = \ol{E}[G_{\ol{E}}]$ as an $\ol{E}$-algebra by the algebraic Peter-Weyl theorem over the algebraic closure (in the form appearing in \cite[Proposition~4.20]{JanRepresentationsofAlgebraicGroups}), and hence the same is already true over $E$. Moreover, it is easy to see that in this case $i_{\mathcal{V}}$ defines a closed embedding; in particular, by taking the fiber over an $S$-point of the image, we can see that it maps to the subspace of Banach-Colmez spaces defined by the vanishing of certain $\mathcal{O}_{X_{S}}$-linear maps, which is closed since Banach-Colmez spaces are separated.
\end{proof}
We now turn our attention to studying a locally closed stratification of these Drinfeld compactifications. 
\subsubsection{Stratifications}{\label{sec: stratifications}}
To stratify the spaces $\wt{\Bun}_{P}$ and $\ol{\Bun}_{P}$, it is useful to consider a locally closed stratification of the space $\ol{\Bun}_{P}$ and then define a locally closed stratification of $\wt{\Bun}_{P}$ by taking the preimage along the natural map
\[ \mf{t}_{P}: \wt{\Bun}_{P} \ra \ol{\Bun}_{P} \]
considered in the previous section.

Recall that the compactification $\ol{\Bun}_P$ was based on the affinization $\overline{G/[P,P]}$ of $G/[P,P]$. Let $D=G^{\mathrm{ab}}$ be the torus quotient of $G$. Then
\[
G/[P,P]\to G/P\times D
\]
is a torsor under the torus $T_P:=\mathrm{ker}(P\to D)/[P,P]\cong \mathrm{ker}(M^{\mathrm{ab}}\to D)$ over the flag variety $G/P$. As the derived group of $G$ is simply connected, this torus admits a canonical trivialization after base change to $\ol{E}$,
\[
T_{P,\ol{E}}\cong \prod_{i\in \tilde{\mathcal J}_{G,P}} \mathbb G_{m,\ol{E}},
\]
where the individual $\mathbb G_m$-factors come from the coroots $\tilde{\mathcal J}_{G,P}$ corresponding to the parabolic $P_{\ol{E}}\subset G_{\ol{E}}$. The isomorphism is equivariant under the absolute Galois group $\Gamma$. We pick representatives of $\mathcal J_{G,P} = \tilde{\mathcal J}_{G,P}/\Gamma$, which gives an isomorphism
\[
T_P\cong \prod_{i\in \mathcal J_{G,P}} \mathrm{Res}_{E_i/E}(\mathbb G_{m,E_i})
\]
where $E_i/E$ is the finite separable extension fixing the lift of $i$ in $\tilde{\mathcal J}_{G,P}$.

In particular, we get a canonical isomorphism $B(T_P)\cong \mathbb Z^{\mathcal J_{G,P}}$; we write $\alpha_i\in B(T_P)$ for the basis element indexed by $i\in \mathcal J_{G,P}$. Moreover, we let $B(T_P)^{\mathrm{pos}}\subset B(T_P)$ be the subset of nonnegative elements. This is canonically isomorphic to the monoid $\Lambda_{G,P}^{\mathrm{pos}}$ defined in the notation section.

\begin{definition}
For $\theta \in B(T_P)^{\mathrm{pos}}$ of the form $\theta = \sum_{i \in \mathcal{J}_{G,P}} n_{i}\alpha_{i}$ with $n_{i} \geq 0$, we consider the partially symmetrized curve 
\[ \Div^{(\theta)} := \prod_{i \in \mathcal{J}_{G,P}} \Div^{(n_{i})}_{E_{i}}, \]
where $E_{i}$ is as above.
\end{definition}

\begin{remark}{\label{rem: symmcurveismodifications}}
For $S \in \Perf$, we note that we can interpret $\Div^{(\theta)}$ as the space parametrizing a $T_P$-bundle $\mathcal F_{T_P}$ with Kottwitz element $\theta$ together with a modification of $T_P$-bundles
\[
\beta_{T_P}: \mathcal F_{T_P}\dashrightarrow \mathcal F^0_{T_P}
\]
to the trivial $T_P$-bundle, with everywhere nonnegative meromorphy in $\mathbb X_\ast(T_{P,\ol{E}})\cong \mathbb Z^{\tilde{\mathcal J}_{G,P}}$. Using pushout along $T_P\to M^{\mathrm{ab}}$ (whose quotient $D$ is still a torus), this can also be described as the space of $M^{\mathrm{ab}}$-bundles $\mathcal F_{M^{\mathrm{ab}}}$ on $X_{S}$ with Kottwitz element equal to (the image of) $\theta$ together with a modification of $M^{\mathrm{ab}}$-bundles 
\[ \beta_{M^{\mathrm{ab}}}: \mathcal{F}_{M^{\mathrm{ab}}} \dashrightarrow \mathcal{F}^{0}_{M^{\mathrm{ab}}} \]
to the trivial $M^{\mathrm{ab}}$-bundle with everywhere meromorphy in the subspace of $\mathbb X_\ast(M^{\mathrm{ab}}_{\ol{E}})$ given by nonnegative elements of $\mathbb X_\ast(T_{P,\ol{E}})$.
\end{remark}

We have a natural map of Artin $v$-stacks
\[ j_{\theta}: \Div^{(\theta)} \times \Bun_{P} \ra \ol{\Bun}_{P} \]
which modifies the $\mathcal F_{M^{\mathrm{ab}}}$-bundle by the twist of the preceding remark. The condition on the meromorphy of this modification ensures that for all representations $\mathcal V$ of $G$, one gets well-defined maps $\overline{\kappa}^{\mathcal V}$. Indeed $\mathcal V^{[P,P]}$ is (over $\ol{E}$) a direct sum of cocharacters of $M^{\mathrm{ab}}$ each of which pairs nonnegatively with the positive coroots $\tilde{\mathcal J}_{G,P}$ and hence with the nonnegative elements of $\mathbb X_\ast(T_{P,\ol{E}})$.

We now make the following definition. Here, for a torus $T$ over $E$, we use the pairing
\[
\langle -,-\rangle: K_0(T)\times \mathbb X_\ast(T)_\Gamma\to \mathbb Z
\]
between the Grothendieck group $K_0(T)$ of representations of $T$ to $\mathbb Z$ and $X_\ast(T)_\Gamma$. This is defined via Galois descent, and the definition reduces to $T=\mathbb G_m$ and the pairing with its tautological cocharacter. Here it is the map
\[
K_0(\mathbb G_m)\cong \bigoplus_{i\in \mathbb Z}\mathbb Z\cdot \chi_i\to \mathbb Z: \chi_i\mapsto i,
\]
where $\chi_i: \mathbb G_m\to \mathbb G_m$ is the weight $i$ character $z\mapsto z^i$.

\begin{definition}{\label{defstrata}}
For $\theta \in \Lambda_{G,P}^{\mathrm{pos}}=B(T_P)^{\mathrm{pos}}$, we define the $v$-stack $\phantom{}_{\theta}\ol{\Bun}_{P}$ (resp. $_{\leq \theta}\ol{\Bun}_{P}$) to be the locus where, for all representations $\mathcal V$ of $G$, the cokernels of $\overline{\kappa}^{\mathcal V}$ have torsion of length equal to (resp. less than or equal to) the pairing $\langle\mathcal V^{[P,P]},\theta\rangle$.
\end{definition}

By Proposition \ref{lemma: uppersemicont} (i), $_{\theta}\ol{\Bun}_{P}$ is a locally closed substack of $\ol{\Bun}_{P}$, and $_{\leq \theta}\ol{\Bun}_{P}$ an open substack.

To work with these strata, we will need the following. 
\begin{proposition}{\label{prop: strata well-behaved}}
For $\theta \in \Lambda_{G,P}^{\mathrm{pos}}$, the map $j_{\theta}$ induces an isomorphism $\Bun_{P} \times \Div^{(\theta)} \simeq \phantom{}_{\theta}\ol{\Bun}_{P}$. In particular, $j_{\theta}$ is a locally closed embedding. 
\end{proposition}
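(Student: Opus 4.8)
The plan is to imitate the classical geometric Langlands proof (cf.\ \cite{BG}), constructing an explicit inverse to $j_\theta$ by ``saturating'' the tautological data; the input from Quot-scheme theory used classically will be replaced by the study of flat coherent sheaves on relative Fargues--Fontaine curves carried out in \S 4.1, principally Proposition~\ref{prop:flat-coherent-basic-stabilities} and Lemma~\ref{lemma: uppersemicont}. The first, easy, point is that $j_\theta$ does land in $_{\theta}\ol{\Bun}_P$: for a point $(\mathcal F_P,\beta)$ of $\Bun_P\times\Div^{(\theta)}$, the associated $\overline\kappa^{\mathcal V}$ is the composite of the modification $(\mathcal V^{[P,P]})_{\mathcal F_{M^{\mathrm{ab}}}}\hookrightarrow (\mathcal V^{[P,P]})_{\mathcal F^0_{M^{\mathrm{ab}}}}$ cut out by $\beta$ (with $\mathcal F^0_{M^{\mathrm{ab}}}=\mathcal F_P\times^P M^{\mathrm{ab}}$) followed by the subbundle inclusion $(\mathcal V^{[P,P]})_{\mathcal F^0_{M^{\mathrm{ab}}}}\hookrightarrow \mathcal V_{\mathcal F_G}$ coming from the $P$-structure, so by Remark~\ref{rem: symmcurveismodifications} its cokernel is an extension of a vector bundle by a torsion sheaf of length $\langle \mathcal V^{[P,P]},\theta\rangle$, as required.

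\textbf{The inverse and the main obstacle.} Given $S\in\Perf$ and a map $S\to {}_{\theta}\ol{\Bun}_P$, i.e.\ data $(\mathcal F_G,\mathcal F_{M^{\mathrm{ab}}},\{\overline\kappa^{\mathcal V}\})$, each $\mathrm{coker}(\overline\kappa^{\mathcal V})$ is a flat coherent sheaf (Proposition~\ref{prop:flat-coherent-basic-stabilities}) whose torsion length is the \emph{constant} $\langle \mathcal V^{[P,P]},\theta\rangle$, so Lemma~\ref{lemma: uppersemicont}.(2) provides a canonical torsion/vector-bundle decomposition; I would let $\mathcal W^{\mathcal V}\subset \mathcal V_{\mathcal F_G}$ be the preimage of the torsion part, which by Proposition~\ref{prop:flat-coherent-basic-stabilities}.(1) is a sub-vector-bundle with vector bundle cokernel, through which $\overline\kappa^{\mathcal V}$ factors as a fiberwise-injective modification $(\mathcal V^{[P,P]})_{\mathcal F_{M^{\mathrm{ab}}}}\to\mathcal W^{\mathcal V}$ (torsion cokernel of length $\langle\mathcal V^{[P,P]},\theta\rangle$) followed by $\mathcal W^{\mathcal V}\hookrightarrow\mathcal V_{\mathcal F_G}$. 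The crucial claim---and the step I expect to be the main obstacle---is that the saturated system $\{\mathcal W^{\mathcal V}\}$ again satisfies the Pl\"ucker relations, so that by the Tannakian presentation of $\Bun_P$ as the open locus of $\ol{\Bun}_P$ used in the proof of Proposition~\ref{prop: openimmersion} it defines a genuine reduction $x_P\in\Bun_P(S)$ of $\mathcal F_G$. I expect this to follow from a density argument: each Pl\"ucker relation is an identity between maps out of a vector bundle and into some $\mathcal V_{\mathcal F_G}$; it holds over the complement $V$ of the (relative Cartier) supports of the torsion cokernels, where $\mathcal W^{\mathcal V}=\mathrm{im}(\overline\kappa^{\mathcal V})$ and the relations for $\{\overline\kappa^{\mathcal V}\}$ apply; and since $V$ is fiberwise dense in $X_S$ and $\mathcal V_{\mathcal F_G}$ is torsion free, the identity propagates to all of $X_S$. (The genuinely new ingredient here, compared to \cite{BG}, is that even forming the $\mathcal W^{\mathcal V}$ requires the constancy of the torsion length and Lemma~\ref{lemma: uppersemicont}.(2); everything else is then formal.)

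\textbf{Conclusion.} Writing $\mathcal F^0_{M^{\mathrm{ab}}}$ for the $M^{\mathrm{ab}}$-bundle induced by $x_P$, so that $\mathcal W^{\mathcal V}=(\mathcal V^{[P,P]})_{\mathcal F^0_{M^{\mathrm{ab}}}}$, the factorizations above give fiberwise-injective maps $(\mathcal V^{[P,P]})_{\mathcal F_{M^{\mathrm{ab}}}}\to (\mathcal V^{[P,P]})_{\mathcal F^0_{M^{\mathrm{ab}}}}$; over $V$ the original data lies in $\Bun_P$ and both $M^{\mathrm{ab}}$-bundles there agree with that of the generic $P$-structure, so these glue to a modification $\beta:\mathcal F_{M^{\mathrm{ab}}}\dashrightarrow\mathcal F^0_{M^{\mathrm{ab}}}$, whose meromorphy lies in the nonnegative submonoid of $\mathbb{X}_{*}(T_{P,\ol{E}})$ (forced by the Pl\"ucker data for the relevant $\mathcal V^{[P,P]}$ and for characters of $G^{\mathrm{ab}}$, cf.\ the discussion before Definition~\ref{defstrata}) with total meromorphy $\theta$. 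By Remark~\ref{rem: symmcurveismodifications}---together with the fact that for the torus $M^{\mathrm{ab}}$ (equivalently $T_P$) such bounded modifications are independent of the target bundle, so can be transported to the trivial one---$\beta$ is exactly the datum of a point $x_{\Div}\in\Div^{(\theta)}(S)$. I would then check that $x\mapsto (x_P,x_{\Div})$ is functorial in $S$, hence defines a morphism $_{\theta}\ol{\Bun}_P\to\Bun_P\times\Div^{(\theta)}$, and that it is two-sided inverse to $j_\theta$ by unwinding the two constructions. This gives the isomorphism; and since $_{\theta}\ol{\Bun}_P\hookrightarrow\ol{\Bun}_P$ is locally closed (Lemma~\ref{lemma: uppersemicont}.(1), as noted after Definition~\ref{defstrata}), $j_\theta$ is a locally closed embedding.
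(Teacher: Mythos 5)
Your proposal is correct and follows essentially the same route as the paper: the inverse to $j_{\theta}$ is constructed by saturating the maps $\ol{\kappa}^{\mathcal V}$ using the constancy of the torsion length together with Lemma \ref{lemma: uppersemicont}.(2) and Proposition \ref{prop:flat-coherent-basic-stabilities}.(1), and the point of $\Div^{(\theta)}$ is read off from the torsion supports, i.e.\ from the induced modification of $M^{\mathrm{ab}}$-bundles. The only organizational difference is that the paper performs the saturation isotypic piece by isotypic piece for the $M^{\mathrm{ab}}$-action (identifying the saturated subsheaf explicitly as the divisor twist $\mathcal{V}_{1}(D)\simeq\widetilde{\mathcal{V}}_{1}$, which makes the Pl\"ucker relations and the recovery of the new $M^{\mathrm{ab}}$-bundle automatic), whereas you saturate the whole Tannakian system at once and verify the Pl\"ucker relations by a density/torsion-freeness argument.
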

\begin{proof} It is clear that $j_{\theta}$ induces a map into the locally closed stratum $_{\theta}\ol{\Bun}_{P}$. We need to exhibit an inverse of this map. We first have the following lemma.
\begin{lemma}{\cite[Lemma~5.13]{HamGeomES}}
Let $\mathrm{Coh}$ be the $v$-stack parametrizing, for $S \in \Perf$, flat coherent sheaves on $X_{S}$, as in \cite[Theorem~2.11]{AL1}. For $k,n \in \mathbb{N}_{\geq 0}$, we set $\Coh^{k}_{n}$ to be the locally closed (by Proposition \ref{lemma: uppersemicont} (i)) substack parametrizing flat coherent sheaves whose torsion length (resp. vector bundle rank) is equal to $k$ (resp. $n$) after pulling back to a geometric point. There is a well-defined map
\[ \Coh_{n}^{k} \ra \Div^{(k)} \]
of $v$-stacks, sending a $S$-flat coherent sheaf $\mathcal{F}$ with attached short exact sequence $0 \ra \mathcal{F}^{\mathrm{tor}} \ra \mathcal{F} \ra \mathcal{F}^{\mathrm{vb}} \ra 0$, as in Proposition \ref{lemma: uppersemicont} (ii), to the support of $\mathcal{F}^{\mathrm{tor}}$. 
\end{lemma}
We need to exhibit an inverse to the natural map $\Bun_{P} \times \Div^{(\theta)} \ra \phantom{}_{\theta}\ol{\Bun}_{P}$. To do this, for $S \in \Perf$, consider a short exact sequence \[ 0 \ra \mathcal{V}_{1} \ra \mathcal{V}_{n + 1} \ra \mathcal{V}_{n} \ra 0  \]
of $\mathcal{O}_{X_{S}}$-modules, where $\mathcal{V}_{1}$ (resp. $\mathcal{V}_{n + 1}$) is a line bundle (resp. rank $n + 1$ vector bundle), and the first map is a fiberwise-injective map, so that $\mathcal{V}_{n}$ is $S$-flat. Assume that $\mathcal{V}_{n}$ defines a point in $\Coh_{n}^{k}$ for some $k$, and let $D$ be the degree $k$ Cartier divisor in $X_{S}$ defined by the previous Lemma. It follows by an application of Proposition \ref{lemma: uppersemicont} (ii) that we have a short exact sequence 
\[ 0 \ra \mathcal{V}_{n}^{\mathrm{tors}} \ra \mathcal{V}_{n} \ra \mathcal{V}_{n}^{\mathrm{vb}} \ra 0  \]
where $\mathcal{V}_{n}^{\mathrm{tors}}$ will define a point in $\Coh^{k}_{0}$ and $\mathcal{V}_{n}^{\mathrm{vb}}$ is a rank $n$ vector bundle. Let $\widetilde{\mathcal{V}}_{1}$ denote the preimage of $\mathcal{V}_{n}^{\mathrm{tors}}$ in $\mathcal{V}_{n + 1}$. It is then easy to see that $\mathcal{V}_{1} \ra \mathcal{V}_{n + 1}$ gives rise to an isomorphism $\mathcal{V}_{1}(D) \simeq \widetilde{\mathcal{V}}_{1}$.

A similar story works also with line bundles replaced by $M^{\mathrm{ab}}$-torsors; for example, one can do Galois descent in $E$ to reduce to the split case. Now, given a $S$-point of $_{\theta}\ol{\Bun}_{P}$, we can construct the desired inverse by applying the above argument to the short exact sequences coming from the embeddings $\ol{\kappa}^{\mathcal{V}}$, for $\mathcal{V}$ varying over finite-dimensional subrepresentations of $E[G]^{[P,P]}$ (or rather the isotypic pieces for the commuting $M^{\mathrm{ab}}$-action).
\end{proof}

\begin{remark} A priori it is not clear that
\[
\ol{\Bun}_{P} = \bigcup_{\theta} \phantom{}_{\theta}\ol{\Bun}_{P}.
\]
But this is true. To see this, note that this claim would be clear if in the definition we would only stratify according to the order of the torsion of the cokernel $\overline{\kappa}^{\mathcal V}$ for one fixed representation $\mathcal V$. But the previous proof shows that it is sufficient to take any sufficiently large finite-dimensional subrepresentation of $E[G]^{[P,P]}$ as already that stratum will be isomorphic to $\Bun_P\times \Div^{(\theta)}$.
\end{remark}

For $\theta \in \Lambda_{G,P}^{\mathrm{pos}}$, we write $\phantom{}_{\theta}\wt{\Bun}_{P}$ for the pullback of the locally closed strata $\phantom{}_{\theta}\ol{\Bun}_{P}$ along the map $\mf{t}_{P}$. We now want to describe the locally closed strata $\phantom{}_{\theta}\wt{\Bun}_{P}$. We will do this using the following space.
\begin{definition}{\label{def: ModM+}}
For $\theta \in \Lambda_{G,P}^{\mathrm{pos}}$, we define $\mathrm{Mod}^{+,\theta}_{\Bun_{M}}$ to be the $v$-sheaf parametrizing, for $S \in \Perf$, triples $(\mathcal{F}_{M}^{2},\mathcal{F}_{M}^{1},\beta_{M})$ where
\begin{enumerate}
    \item $\mathcal{F}_{M}^{i}$ is an $M$-bundle on $X_{S}$ for $i = 1,2$,
    \item $\beta_{M}: \mathcal F_M^2\dashrightarrow \mathcal F_M^1$ is a modification of $M$-bundles such that, for every $G$-module $\mathcal{V}$, the induced map 
    \[ \beta_{M}^{\mathcal{V}^U}: (\mathcal{V}^{U})_{\mathcal{F}_{M}^{2}} \ra (\mathcal{V}^{U})_{\mathcal{F}_{M}^{1}} \]
    is a well-defined map of vector bundles on $X_S$,
    \item The induced modification 
    \[ \beta_{M^{\mathrm{ab}}}: \mathcal{F}_{M}^{2} \times^{M} M^{\mathrm{ab}} \dashrightarrow \mathcal{F}_{M}^{1} \times^{M} M^{\mathrm{ab}} \]
    has total meromorphy $\theta$.
\end{enumerate}
\end{definition}

We note that the last condition is a purely numerical condition on the connected components of $\Bun_M$ that $\mathcal F_M^1$ and $\mathcal F_M^2$ lie in; in fact, it simply means $\kappa(\mathcal F_M^2)-\kappa(\mathcal F_M^1)=\theta$ after pulling back to each geometric point of $S$.

We have a natural map 
\[ \pi_{M}: \mathrm{Mod}^{+,\theta}_{\Bun_{M}} \ra \Div^{(\theta)}, \]
which records the locus of meromorphy of the modification $\beta_{M^{\mathrm{ab}}}$, as in Remark \ref{rem: symmcurveismodifications}. (The nonnegativity of the modification $\beta_{M^{\mathrm{ab}}}$ results from condition (2).) We also have a projection 
\[ h_{M}^{\ra}: \mathrm{Mod}^{+,\theta}_{\Bun_{M}} \ra \Bun_{M} \]
which remembers the bundle $\mathcal{F}_{M}^{1}$. Using this map, we form the fiber product $\mathrm{Mod}^{+,\theta}_{\Bun_{M}} \times_{\Bun_{M}} \Bun_{P}$, and consider the natural map 
\[ \tilde{j}_{\theta}: \mathrm{Mod}^{+,\theta}_{\Bun_{M}} \times_{\Bun_{M}} \Bun_{P} \hookrightarrow \wt{\Bun}_{P}\]
\[(\mathcal{F}_{M}^{2},\mathcal{F}_{P}^{1},\beta_{M}) \mapsto  (\mathcal{F}_{G},\mathcal{F}_{M}^{2},\tilde{\kappa}^{\mathcal{V}}).\]
Here $\mathcal{F}_{G} = \mathcal{F}_{P}^{1} \times^{P} G$ and $\tilde{\kappa}^{\mathcal{V}}$ is given by the composition
\[ (\mathcal{V}^{\mathcal{U}})_{\mathcal{F}_{M}^{2}} \xrightarrow{\beta_{M}^{\mathcal{V}}} (\mathcal{V}^{\mathcal{U}})_{\mathcal{F}_{M}^{1}} \xrightarrow{\kappa^{\mathcal{V}}} \mathcal{V}_{\mathcal{F}_{G}}, \]
where the last map is the embedding of vector bundles determined by the $P$-bundle $\mathcal{F}_{P}^{1}$. We now have the following.
\begin{proposition}{\label{prop: tildestratadesc}}
For $\theta \in \Lambda_{G,P}^{\mathrm{pos}}$, the map $\tilde{j}_{\theta}$ defines an isomorphism onto $\phantom{}_{\theta}\wt{\Bun}_{P}$. In particular, it is a locally closed embedding. Moreover, we have a Cartesian diagram:
\[ \begin{tikzcd} & \mathrm{Mod}^{+,\theta}_{\Bun_{M}} \times_{\Bun_{M}} \Bun_{P} \arrow[r,"\tilde{j}_{\theta}"] \arrow[d,"\pi_{M} \times \mathrm{id}"] &  \wt{\Bun}_{P} \arrow[d,"\mf{t}_{P}"] \\ & \Div^{(\theta)} \times \Bun_{P} \arrow[r,"\ol{j}_{\theta}"] & \ol{\Bun}_{P}. \end{tikzcd}\]
\end{proposition}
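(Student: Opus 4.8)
The plan is to deduce both assertions — that $\tilde{j}_{\theta}$ is an isomorphism onto ${}_{\theta}\wt{\Bun}_{P}$ and that the displayed square is Cartesian — from a single $S$-point computation. First I would note that the square commutes, which is immediate from the definitions of $\tilde{j}_{\theta}$, $\ol{j}_{\theta}$, $\pi_{M}$ and $\mf{t}_{P}$ (and the inclusion $U\subset[P,P]$, so $\mathcal V^{[P,P]}\subset\mathcal V^{U}$), and that by construction ${}_{\theta}\wt{\Bun}_{P}=\mf{t}_{P}^{-1}({}_{\theta}\ol{\Bun}_{P})$, which by Proposition~\ref{prop: strata well-behaved} is $\wt{\Bun}_{P}\times_{\ol{\Bun}_{P}}(\Div^{(\theta)}\times\Bun_{P})$. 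So it suffices to prove the square Cartesian: the ``locally closed embedding'' claim then follows, since ${}_{\theta}\ol{\Bun}_{P}\hookrightarrow\ol{\Bun}_{P}$ is a locally closed embedding by Definition~\ref{defstrata} and Lemma~\ref{lemma: uppersemicont}~(i), and ${}_{\theta}\wt{\Bun}_{P}\hookrightarrow\wt{\Bun}_{P}$ is its base change along $\mf{t}_{P}$.

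Next I would spell out a point of the putative fiber product. Fix $S\in\Perf$; the data amount to: a point of $\wt{\Bun}_{P}$, i.e.\ $(\mathcal F_{G},\mathcal F_{M},\tilde{\kappa}^{\mathcal V})$ with fiberwise-injective Plücker maps $\tilde{\kappa}^{\mathcal V}\colon(\mathcal V^{U})_{\mathcal F_{M}}\hookrightarrow\mathcal V_{\mathcal F_{G}}$; a point of $\Bun_{P}$, i.e.\ a $P$-bundle $\mathcal F_{P}^{1}$ with $\mathcal F_{P}^{1}\times^{P}G\cong\mathcal F_{G}$, inducing the honest (vector-bundle-cokernel) Plücker maps $\kappa^{\mathcal V}\colon(\mathcal V^{U})_{\mathcal F_{M}^{1}}\hookrightarrow\mathcal V_{\mathcal F_{G}}$ with $\mathcal F_{M}^{1}=\mathcal F_{P}^{1}\times^{P}M$; a modification of $M^{\mathrm{ab}}$-bundles $\beta_{M^{\mathrm{ab}}}\colon\mathcal F_{M}\times^{M}M^{\mathrm{ab}}\dashrightarrow\mathcal F_{P}^{1}\times^{P}M^{\mathrm{ab}}$ of total meromorphy $\theta$ (as in Remark~\ref{rem: symmcurveismodifications}); and the compatibility $\tilde{\kappa}^{\mathcal V}|_{(\mathcal V^{[P,P]})_{\mathcal F_{M}\times^{M}M^{\mathrm{ab}}}}=\kappa^{\mathcal V}\circ\beta_{M^{\mathrm{ab}}}^{\mathcal V^{[P,P]}}$ for all $\mathcal V$, expressing equality of the images in $\ol{\Bun}_{P}$. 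By Definition~\ref{def: ModM+}, what must be produced from this is a \emph{unique} modification of $M$-bundles $\beta_{M}\colon\mathcal F_{M}\dashrightarrow\mathcal F_{M}^{1}$ with $\beta_{M}^{\mathcal V^{U}}$ well-defined for all $\mathcal V$, inducing $\beta_{M^{\mathrm{ab}}}$ on abelianizations, and with $\tilde{\kappa}^{\mathcal V}=\kappa^{\mathcal V}\circ\beta_{M}^{\mathcal V^{U}}$; uniqueness is automatic, since a map into a subbundle factors through it in at most one way.

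The construction of $\beta_{M}$ is the technical heart, and I would carry it out exactly as in the proof of Proposition~\ref{prop: strata well-behaved}, now one representation at a time. Fixing $\mathcal V$, the point is that $(\mathcal V^{U})_{\mathcal F_{M}^{1}}$ is a \emph{saturated} subbundle of $\mathcal V_{\mathcal F_{G}}$, while $\mathrm{coker}(\tilde{\kappa}^{\mathcal V})$ is merely flat coherent with torsion whose length, at each geometric point of $S$, is pinned down by $\theta$ — this is what it means to lie over the stratum ${}_{\theta}\ol{\Bun}_{P}$, the relevant number being read off from the extremal ($\mathcal V^{[P,P]}$-) weights, using that over $\ol{E}$ all $T_{P}$-weights occurring in $\mathcal V^{U}$ are nonnegative (cf.\ Definition~\ref{defstrata} and Remark~\ref{rem: symmcurveismodifications}). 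Comparing $\tilde{\kappa}^{\mathcal V}$ with $\kappa^{\mathcal V}$ through top exterior powers and applying Lemma~\ref{lemma: uppersemicont}~(ii) together with \cite[Lemma~5.13]{HamGeomES} to $\mathrm{coker}(\tilde{\kappa}^{\mathcal V})$, one globally splits off its torsion and — using precisely the already-available identity $\ol{\kappa}^{\mathcal V}=\kappa^{\mathcal V}\circ\beta_{M^{\mathrm{ab}}}^{\mathcal V^{[P,P]}}$ on the abelianized pieces — obtains a canonical factorization $\tilde{\kappa}^{\mathcal V}=\kappa^{\mathcal V}\circ\beta^{\mathcal V}$ with $\beta^{\mathcal V}\colon(\mathcal V^{U})_{\mathcal F_{M}}\to(\mathcal V^{U})_{\mathcal F_{M}^{1}}$ fiberwise injective, its cokernel supported on the divisor prescribed by the $\Div^{(\theta)}$-datum. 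Functoriality in $\mathcal V$ and compatibility with the tensor (Plücker) relations are then formal from uniqueness of factorizations through subbundles; running this over the isotypic components of $E[G]^{U}$ for the commuting $M$-action and invoking the Tannakian formalism as in Definition~\ref{def: tildecompactification}, the family $(\beta^{\mathcal V})_{\mathcal V}$ is exactly the datum of a modification of $M$-bundles $\beta_{M}\colon\mathcal F_{M}\dashrightarrow\mathcal F_{M}^{1}$; its pushout along $M\to M^{\mathrm{ab}}$ is $\beta_{M^{\mathrm{ab}}}$, so it has total meromorphy $\theta$, and the required diagrams commute. This produces the desired $S$-point and, being functorial in $S$ and unique, proves the square Cartesian.

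The main obstacle is exactly the factorization in the last paragraph: a priori the enhanced maps $\tilde{\kappa}^{\mathcal V}$ only take values in $\mathcal V_{\mathcal F_{G}}$, and one has to show that on the stratum ${}_{\theta}\wt{\Bun}_{P}$ their images already lie in the honest Plücker subbundles $(\mathcal V^{U})_{\mathcal F_{M}^{1}}$ of the given $P$-structure — equivalently, that the ``defect'' of the $M$-structure is a genuine Hecke modification of $M$-bundles lifting the one recorded over $\ol{\Bun}_{P}$, not something wilder. This is where one genuinely uses the flat-coherent-sheaf theory of the previous subsection (Lemma~\ref{lemma: uppersemicont}) and the stratum condition; once it is in hand, everything else — commutativity of the square, the reduction to $S$-points, the bookkeeping with the Plücker relations, and the Tannakian reconstruction of $\beta_{M}$ from the $\beta^{\mathcal V}$ — is routine and already present in the proof of Proposition~\ref{prop: strata well-behaved}, which is why it suffices to test on a sufficiently large subrepresentation of $E[G]^{[P,P]}$ (here, $E[G]^{U}$).
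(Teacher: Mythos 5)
Your proposal is correct and follows essentially the same route as the paper: reduce everything to an $S$-point computation on the stratum via Proposition \ref{prop: strata well-behaved}, and produce the $M$-bundle modification $\beta_M$ by factoring the enhanced Pl\"ucker maps $\tilde{\kappa}^{\mathcal V}$ through the saturated subbundles $\kappa^{\mathcal V}$ of the genuine $P$-structure, with the torsion bookkeeping supplied by Lemma \ref{lemma: uppersemicont} and the flat-coherent-sheaf theory. Your exterior-power comparison just makes explicit the step the paper compresses into ``by construction, $\mathcal F_M^1$ and $\mathcal F_M^2$ are identified outside the Cartier divisors,'' so it is a slightly more detailed rendering of the same argument rather than a different one.
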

\begin{proof}
It is clear that the image of $\tilde{j}_{\theta}$ lies in $\phantom{}_{\theta}\wt{\Bun}_{P}$ and that the above diagram is commutative and Cartesian once we show that $\tilde{j}_{\theta}$ is an isomorphism. We need to exhibit an inverse to $\tilde{j}_{\theta}$. 

To exhibit an inverse, let $(\mathcal{F}_G,\mathcal{F}^{2}_M, \tilde{\kappa}^{\mathcal{V}})$ be an $S$-point of $\phantom{}_{\theta}\wt{\Bun}_{P}$. We let $(\mathcal{F}_P^{1},(D_{i})_{i \in \mathcal{J}_{G,P}})$ be the corresponding $S$-point of $\Bun_P \times \Div^{(\theta)} \simeq \phantom{}_{\theta}\ol{\Bun}_{P}$ by applying $\mf{t}_{P}$ and Proposition \ref{prop: strata well-behaved}. We write $\mathcal{F}_M^{1} := \mathcal{F}_{P}^{1} \times^{P} M$, and $\mathcal{F}_{G} := \mathcal{F}_{P}^{1} \times^{P} G$. By construction, $\mathcal{F}_M^{1}$ and $\mathcal{F}^{2}_M$ are identified outside of the Cartier divisors  $(D_{i})_{i \in \mc{J}_{G,P}}$ and thus we get a modification 
\[ \beta_{M}: \mathcal{F}_{M}^{2} \dashrightarrow \mathcal{F}_{M}^{1} \]
with meromorphy along this locus.  Moreover, for each $G$-module $\mathcal{V}$, we have a map
\[ \tilde{\kappa}^{\mathcal{V}}: (\mathcal{V}^U)_{\mathcal{F}^{2}_M} \hookrightarrow \mathcal{V}_{\mathcal{F}_G},  \]
as well as the subbundle supplied by the $P$-bundle $\mathcal{F}_P$:
\[ \kappa^{\mathcal{V}}: (\mathcal{V}^U)_{\mathcal{F}_M^{1}} \hookrightarrow \mathcal{V}_{\mathcal{F}_G} \]
where these two maps are connected by the modification $\beta_{M}$. Hence, the a priori meromorphic map $(\mathcal{V}^U)_{\mathcal{F}^{2}_M} \rightarrow (\mathcal{V}^U)_{\mathcal{F}^{1}_M}$ is actually regular along the $(D_{i})_{i \in \mc{J}_{G,P}}$ and from here we see that $(\mc{F}_{P}^{1},\mc{F}_{M}^{2},\beta_{M})$ gives us the desired point inside $\mathrm{Mod}^{+,\theta}_{\Bun_{M}} \times_{\Bun_{M}} \Bun_{P}$.
\end{proof}

\section{Zastava spaces}

In order to show that the sheaf $\tilde{j}_{P!}(\IC_{\Bun_{P}})$ is ULA over $\Bun_{M}$, we will need some nice local models which are related to the compactification $\wt{\Bun}_{P}$ by a cohomologically smooth map. Ideally, these local models will be related to the affine Grasmannian and in turn understandable through their very explicit geometry. Indeed, this can be accomplished through considering Zastava spaces, as was already done in the global function field setting in \cite{BFGM}. 
\subsection{Definition and relationship to Drinfeld compactifications}{\label{sec: Zastava uniformizes Drinfeld}}
So far, $M$ was merely the Levi quotient of $P$, but now we fix an embedding back into $P$. Let $P^{-}$ denote the corresponding opposite parabolic to $P$. We define the following.
\begin{definition}{\label{def: Zastavaspace}}
For $\theta \in \Lambda_{G,P}^{\mathrm{pos}}$, we define $\tilde{Z}^{\theta}_{\Bun_{M}}$ to be the $v$-stack parametrizing, for $S \in \Perf$, a quintuple $(\mathcal{F}_{G},\mathcal{F}_{M}^{2},\mathcal{F}_{M}^{1},\beta_{M},\beta)$, where
\begin{enumerate}
\item $\mathcal{F}_{G}$ is a $G$-bundle on $X_{S}$,
\item $(\mathcal{F}_{M}^{2},\mathcal{F}_{M}^{1},\beta_{M})$ is an $S$-point of $\mathrm{Mod}^{+,\theta}_{\Bun_{M}}$ as in Definition \ref{def: ModM+},
\item $\beta: \mathcal{F}_{G} \dashrightarrow \mathcal{F}_{G}^{1}:= \mathcal{F}_{M}^{1} \times^{M} G$ is a modification of $G$-bundles away from the Cartier divisors defined by the previous item and the map $\pi_{M}: \mathrm{Mod}^{+,\theta}_{\Bun_{M}} \rightarrow \Div^{(\theta)}$.
\item For every $G$-module $\mc{V}$, the meromorphic map defined by $\beta_{M}$ and $\beta$
\[ (\mc{V}^{U})_{\mathcal{F}_{M}^{2}} \xrightarrow{\beta_{M}^{\mc{V}^U}} (\mc{V}^{U})_{\mathcal{F}_{M}^{1}} \hookrightarrow \mathcal{V}_{\mathcal{F}_{G}^{1}} \xdashrightarrow{(\beta^{\mc{V}})^{-1}} \mathcal{V}_{\mathcal{F}_{G}} \]
is a well-defined map of vector bundles on $X_S$, where the second map is defined by the split $P$-structure on $\mathcal{F}_{G}^{1}$.
\item The meromorphic map 
\[ \mathcal{V}_{\mathcal{F}_{G}} \xdashrightarrow{\beta^{\mc{V}}} \mathcal{V}_{\mathcal{F}_{G}^{1}} \rightarrow (\mathcal{V}_{U^{-}})_{\mathcal{F}_{M}^{1}} \]
is a well-defined surjective map of vector bundles on $X_S$, where the second map is defined by the split $P^{-}$-structure on $\mathcal{F}_{G}^{1}$.
\end{enumerate}
We write $\tilde{\pi}_{P}: \tilde{Z}^{\theta}_{\Bun_{M}} \ra \mathrm{Mod}^{+}_{\Bun_{M}}$, $h_{M}^{\ra}: \tilde{Z}^{\theta}_{\Bun_{M}} \ra \Bun_{M}$, and $h_{M}^{\leftarrow}: \tilde{Z}^{\theta}_{\Bun_{M}} \ra \Bun_{M}$ for the natural maps remembering the modification $(\mathcal{F}_{M}^{2},\mathcal{F}_{M}^{1},\beta_{M})$, the bundle $\mathcal{F}_{M}^{1}$, and the bundle $\mathcal{F}_{M}^{2}$, respectively. Given any $v$-stack $S \ra \Bun_{M}$, we write $\tilde{Z}^{\theta}_{S}$ for the base-change to $S$ along $h_M^{\ra}$. If $\ast \ra \Bun_{M}$ is the point defined by the trivial $M$-bundle, we simply write $\tilde{Z}^{\theta}$ for the corresponding $v$-sheaf. We let $j_{Z}: Z^{\theta}_{\Bun_{M}} \hookrightarrow \tilde{Z}^{\theta}_{\Bun_{M}}$ denote the open (by Lemma \ref{lemma: uppersemicont}) subspace for which the maps 
\[ (\mathcal{V}^{U})_{\mathcal{F}_{M}^{2}} \hookrightarrow \mathcal{V}_{\mathcal{F}_{G}} \]
of vector bundles on $X_S$ have torsion-free cokernel.
\end{definition}

This definition seems quite daunting; however, it actually occurs quite naturally. In particular, we can consider the fiber product  
\[ \begin{tikzcd} 
& \wt{\Bun}_{P} \times_{\Bun_{G}} \Bun_{P^{-}} \arrow[r] \arrow[d] & \wt{\Bun}_{P} \arrow[d,"\wt{\mf{p}}_{P}"] \\
& \Bun_{P^{-}} \arrow[r,"\mf{p}_{P^{-}}"] &  \Bun_{G}
\end{tikzcd}, \]
and an $S$-point $(\mathcal{F}_{G},\mathcal{F}_{M}^{2},\mathcal{F}_{M}^{1},\tilde{\kappa}^{\mathcal{V}},\kappa^{-,\mathcal{V}})$ of $\wt{\Bun}_{P} \times_{\Bun_{G}} \Bun_{P^{-}}$, where $\tilde{\kappa}^{\mathcal{V}}: (\mathcal{V}^{U})_{\mathcal{F}_{M}^{2}} \hookrightarrow \mathcal{V}_{\mathcal{F}_{G}}$ defines a point of $\wt{\Bun}_{P}$ and $\kappa^{-,\mathcal{V}}: \mathcal{V}_{\mathcal{F}_{G}} \ra (\mathcal{V}_{U^{-}})_{\mathcal{F}_{M}^{1}}$ defines a point of $\Bun_{P^{-}}$. We restrict to the subspace
\[
(\wt{\Bun}_{P} \times_{\Bun_{G}} \Bun_{P^{-}})_{0} \subset \wt{\Bun}_{P} \times_{\Bun_{G}} \Bun_{P^{-}} 
\]
where the $P$ and $P^-$-reductions are generically transverse.

To describe this in Tannakian terms, we recall that for any $M$-module $\mathcal{W}$, we can form a $G$-equivariant vector bundle still denoted $\mathcal W$ on $G/P^-$, and then form the $G$-module $\mathcal V=H^0(G/P^-,\mathcal W)$. Via restriction to $P^-/P^-\subset G/P^-$, this admits a natural map $\mathcal V_{U^-}\to \mathcal W$. More precisely, the functor $\mathcal V\mapsto \mathcal V_{U^-}$ is left adjoint to $\mathcal W\mapsto H^0(G/P^-,\mathcal W)$. On the other hand, via restriction to $U\cong (U\times P^-)/P^-\subset G/P^-$, there is a natural injective map $\mathcal V^U\to \mathcal W$ that is an isomorphism whenever all weights of the center of $M$ on $\mathcal W$ are nonnegative. In particular, this happens whenever $\mathcal W=\mathcal V'^U$ for some representation $\mathcal V'$ of $G$.

Now we can give the following Tannakian description of the desired open substack.

\begin{definition} 
We define the substack $(\wt{\Bun}_{P} \times_{\Bun_{G}} \Bun_{P^{-}})_{0} \subset \wt{\Bun}_{P} \times_{\Bun_{G}} \Bun_{P^{-}}$ by the condition that for all $M$-modules $\mathcal W$ such that all weights of the center of $M$ on $\mathcal W$ are nonnegative, so that the induced $G$-module $\mathcal V=H^0(G/P^-,\mathcal W)$ satisfies $\mathcal V^U\cong \mathcal W$, the map
\[ \mathcal W_{\mathcal F_M^2}\cong (\mathcal{V}^{U})_{\mathcal{F}_{M}^{2}} \xrightarrow{\tilde{\kappa}^{\mathcal{V}}} \mathcal{V}_{\mathcal{F}_{G}} \xrightarrow{\kappa^{-,\mathcal{V}}} (\mathcal{V}_{U^{-}})_{\mathcal{F}_{M}^{1}}\to \mathcal W_{\mathcal F_M^1}  \]
of vector bundles on $X_S$ is fiberwise (on $S$) injective.
\end{definition}

We have the following.
\begin{lemma}{\label{lemma: transverseisanopencondition}}
The inclusion
\[ (\wt{\Bun}_{P} \times_{\Bun_{G}} \Bun_{P^{-}})_{0} \subset \wt{\Bun}_{P} \times_{\Bun_{G}} \Bun_{P^{-}} \]
is an open immersion.
\end{lemma}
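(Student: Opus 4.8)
The plan is to realize $(\wt{\Bun}_{P} \times_{\Bun_{G}} \Bun_{P^{-}})_{0}$ as the locus on which a \emph{finite} collection of maps of vector bundles on the relative curve is fiberwise injective, and then to invoke the openness of such injectivity loci. By definition the substack is cut out by requiring, for every $M$-module $\mathcal{W}$ whose central weights are nonnegative, that the composite
\[ \mathcal{W}_{\mathcal{F}_{M}^{2}}\cong (\mathcal{V}^{U})_{\mathcal{F}_{M}^{2}} \xrightarrow{\tilde{\kappa}^{\mathcal{V}}} \mathcal{V}_{\mathcal{F}_{G}} \xrightarrow{\kappa^{-,\mathcal{V}}} (\mathcal{V}_{U^{-}})_{\mathcal{F}_{M}^{1}}\to \mathcal{W}_{\mathcal{F}_{M}^{1}}, \qquad \mathcal{V}=H^{0}(G/P^{-},\mathcal{W}), \]
be a fiberwise-injective map of vector bundles on $X_{S}$. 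For a \emph{single} such $\mathcal{W}$ this condition is open on $S$, by the same semicontinuity of the fiberwise kernel rank that was used (via \cite[Remark~3.3]{HamJacCrit}) in the proof of Proposition~\ref{prop: openimmersion}; alternatively one writes this map of Banach--Colmez spaces out explicitly and sees, as in the proof of Theorem~\ref{thm: drinfeldisrelativecompactification}, that the injectivity locus is cut out by open conditions using separatedness of Banach--Colmez spaces.

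The only substantive point is then to reduce the intersection over the infinitely many $\mathcal{W}$ to a finite subcollection, which is genuinely necessary since $S$ need not be quasicompact. For this I would first observe that over a geometric point $s=\Spa(C,C^{+})$ a map of vector bundles on $X_{s}$ that is injective at the generic point $\eta$ is injective everywhere, its kernel being a saturated subsheaf hence a sub-vector-bundle which is generically zero. Consequently the condition for a fixed $\mathcal{W}$ depends only on the germ at $\eta$, where it says precisely that the generic $P$-reduction of $\mathcal{F}_{G}$ coming from $\tilde{\kappa}$ and the $P^{-}$-reduction coming from $\kappa^{-}$ are in the open (transverse) relative position over $\eta$. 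This is the condition that a section of the associated $(G/P^{-})$-bundle lands in the big cell $U\subset G/P^{-}$, which is detected by non-vanishing of the pullback of the Pl\"ucker section cutting out the complementary Schubert divisor; that section lives in $V_{\lambda}=H^{0}(G/P^{-},\mathcal{L}_{\lambda})$ for $\lambda$ regular dominant relative to $G/P^{-}$ (equivalently, for $V_{\lambda}$ admitting the relevant fundamental representations as summands, by the algebraic Peter--Weyl theorem over $\ol{E}$ as in the proof of Theorem~\ref{thm: drinfeldisrelativecompactification} and \cite[Proposition~4.20]{JanRepresentationsofAlgebraicGroups}). Taking $\mathcal{W}_{0}=V_{\lambda}^{U}$ — a line, with nonnegative central weights — one checks that fiberwise injectivity of the composite attached to $\mathcal{W}_{0}$ is equivalent to the full transversality condition: in the transverse case the common $M$-reduction over $\eta$ makes every admissible composite an isomorphism over $\eta$, hence injective on all of $X_{s}$; in the non-transverse case the $\mathcal{W}_{0}$-composite already vanishes at $\eta$. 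Hence $(\wt{\Bun}_{P} \times_{\Bun_{G}} \Bun_{P^{-}})_{0}$ coincides with the locus where this single map of line bundles is fiberwise injective, which is open; one may equally use a finite set $\{\mathcal{W}_{i}\}$ indexed by the fundamental weights outside $M$ and intersect the corresponding open substacks.

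The main obstacle is exactly this last reduction: identifying the transversality condition with a Pl\"ucker non-vanishing and verifying that injectivity of the $\mathcal{W}_{0}$-composite forces injectivity of \emph{all} admissible composites. This needs some care with the representation-theoretic bookkeeping — the isomorphism $\mathcal{V}^{U}\cong \mathcal{W}$ under the nonnegativity hypothesis, and the behaviour of $H^{0}(G/P^{-},-)$ and the functor $(-)_{U^{-}}$ — but it is a purely algebraic, characteristic-independent statement about the finite-dimensional representation theory of $G$ and $M$ over $E$, entirely parallel to the Peter--Weyl arguments already used for $\wt{\Bun}_{P}$ in this section; no issues of coefficients or sheaf theory enter, since the assertion is about a morphism of Artin $v$-stacks.
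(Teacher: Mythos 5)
Your proposal is correct, and its core mechanism is the same as the paper's: the paper's entire proof is the one-line observation that the defining conditions are fiberwise-injectivity conditions on maps of vector bundles, whose locus is open by \cite[Remark~3.3]{HamJacCrit} (exactly as in the proof of Proposition~\ref{prop: openimmersion}). What you add, and what the paper leaves implicit in that citation, is the reduction of the a priori infinite intersection over all admissible $M$-modules $\mathcal{W}$ to finitely many conditions: you do this by passing to the generic point of $X_s$ at a geometric point, identifying the condition with transversality of the generic $P$- and $P^{-}$-reductions, and detecting that by non-vanishing of Pl\"ucker-type pairings for (Galois orbits of) fundamental representations not trivial on $U$. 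That analysis is sound — note it implicitly uses that fiberwise injectivity of all $\tilde{\kappa}^{\mathcal{V}}$ forces an honest $P$-reduction at the generic point, which the paper only establishes later via Proposition~\ref{prop: tildestratadesc}, and that $V_{\lambda}^{U}$ is a line only after base change to $\ol{E}$ (your finite-set variant handles the non-split case, as in the Peter--Weyl step of Theorem~\ref{thm: drinfeldisrelativecompactification}). A slightly softer alternative reduction, closer in spirit to how such statements are usually absorbed: the composite condition is additive in $\mathcal{W}$, compatible with tensor products by the Pl\"ucker relations, and passes to direct summands, and fiberwise injectivity on a curve is preserved under tensor products and detected on summands; since every admissible $\mathcal{W}$ is a summand of tensor constructions on a finite generating set, injectivity for that finite set already implies it for all $\mathcal{W}$, again giving a finite (hence open) intersection without invoking the big-cell geometry.
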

\begin{proof}
This follows from the definition and \cite[Remark~3.3]{HamJacCrit}.
\end{proof}
For $\alpha,\alpha' \in \pi_1(M)_\Gamma=\Lambda_{G,P}$, we write 
\[ (\wt{\Bun}^{\alpha}_{P} \times_{\Bun_{G}} \Bun_{P^{-}}^{\alpha'})_{0} \subset \wt{\Bun}_{P}^{\alpha} \times_{\Bun_{G}} \Bun^{\alpha'}_{P^{-}} \]
for the open subspace defined by restricting to the relevant open and closed subspaces. Given an $S$-point of $(\wt{\Bun}^{\alpha}_{P} \times_{\Bun_{G}} \Bun_{P^{-}}^{\alpha'})_{0}$, by definition we get a fibrewise injective map of vector bundles
\[
\beta_M^{\mathcal W}: \mathcal W_{\mathcal F_M^2}\cong (\mathcal{V}^{U})_{\mathcal{F}_{M}^{2}} \xrightarrow{\tilde{\kappa}^{\mathcal{V}}} \mathcal{V}_{\mathcal{F}_{G}} \xrightarrow{\kappa^{-,\mathcal{V}}} (\mathcal{V}_{U^{-}})_{\mathcal{F}_{M}^{1}}\to \mathcal W_{\mathcal F_M^1}
\]
for all representations $\mathcal W$ of $M$ with induced $G$-representation $\mathcal V$ satisfying $\mathcal V^U\cong \mathcal W$. This extends uniquely to meromorphic maps for all representations $\mathcal W$ of $M$, as we can always twist by central characters of $M$ to achieve the required condition (and then untwist by the map for this character of $M$). In particular, we get an $S$-point $(\mathcal{F}_{M}^{2},\mathcal{F}_{M}^{1},\beta_{M})$ of $\mathrm{Mod}^{+,\alpha - \alpha'}_{\Bun_{M}^{\alpha'}}$, where we note that it must necessarily be the case that $\alpha - \alpha'$ lies in $\Lambda_{G,P}^{\mathrm{pos}}$ in order for $\mathrm{Mod}^{+,\alpha - \alpha'}_{\Bun_{M}^{\alpha'}}$ to be non-empty. We now have the following. 
\begin{lemma}{\label{lemma: relationofZastavatoPstructures}}
For every $\theta \in \Lambda_{G,P}^{\mathrm{pos}}$ and $\alpha \in \Lambda_{G,P}=\pi_1(M)_\Gamma$, there is an isomorphism between $\tilde{Z}^{\theta}_{\Bun_{M}^{\alpha}}$ and $(\wt{\Bun}^{\theta + \alpha}_{P} \times_{\Bun_{G}} \Bun_{P^{-}}^{\alpha})_{0}$. Under this isomorphism, the open subset $Z^\theta_{\Bun_M^{\alpha}}$ is the preimage of $\Bun_P\subset \wt{\Bun}_P$. 
\end{lemma}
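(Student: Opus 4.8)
The plan is to produce a pair of mutually inverse maps between $\tilde{Z}^{\theta}_{\Bun_{M}^{\alpha}}$ and $(\wt{\Bun}^{\theta+\alpha}_{P}\times_{\Bun_{G}}\Bun_{P^{-}}^{\alpha})_{0}$, realizing the evident Tannakian dictionary, and then to read off the last assertion by inspection of the open loci. Throughout one may work $v$-locally on $S$ and check everything on a faithful family of $G$-modules.

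\textbf{From Zastava to the fibre product.} Given an $S$-point $(\mathcal{F}_{G},\mathcal{F}_{M}^{2},\mathcal{F}_{M}^{1},\beta_{M},\beta)$ of $\tilde{Z}^{\theta}_{\Bun_{M}}$, set $\mathcal{F}_{G}^{1}=\mathcal{F}_{M}^{1}\times^{M}G$, let $\iota_{P}^{\mathcal V}\colon (\mathcal V^{U})_{\mathcal{F}_{M}^{1}}\hookrightarrow \mathcal V_{\mathcal{F}_{G}^{1}}$ be the split $P$-structure and $p_{U^{-}}^{\mathcal V}\colon \mathcal V_{\mathcal{F}_{G}^{1}}\twoheadrightarrow (\mathcal V_{U^{-}})_{\mathcal{F}_{M}^{1}}$ the split $P^{-}$-structure, and define $\tilde{\kappa}^{\mathcal V}:=(\beta^{\mathcal V})^{-1}\circ\iota_{P}^{\mathcal V}\circ\beta_{M}^{\mathcal V^{U}}$ and $\kappa^{-,\mathcal V}:=p_{U^{-}}^{\mathcal V}\circ\beta^{\mathcal V}$. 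Conditions (4) and (5) of Definition~\ref{def: Zastavaspace} say exactly that these are honest (regular) maps of vector bundles, with $\kappa^{-,\mathcal V}$ surjective; and $\tilde{\kappa}^{\mathcal V}$ is fibrewise injective because it is regular and restricts to an isomorphism onto a subbundle over the generic point of each $X_{\bar s}$ (where $\beta$ is an isomorphism), so its kernel over a geometric point would be torsion-free and supported at finitely many points, hence zero. The Pl\"ucker relations for $\tilde{\kappa}$ and $\kappa^{-}$ are inherited from those of the split $P$- and $P^{-}$-structures on $\mathcal{F}_{G}^{1}$ together with the multiplicativity of $\beta$ and $\beta_{M}$, so $(\mathcal{F}_{G},\mathcal{F}_{M}^{2},\tilde{\kappa}^{\mathcal V})$ is an $S$-point of $\wt{\Bun}_{P}$ and $(\mathcal{F}_{G},\mathcal{F}_{M}^{1},\kappa^{-,\mathcal V})$ one of $\Bun_{P^{-}}$, agreeing on $\mathcal{F}_{G}$. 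Finally $\kappa^{-,\mathcal V}\circ\tilde{\kappa}^{\mathcal V}=p_{U^{-}}^{\mathcal V}\circ\iota_{P}^{\mathcal V}\circ\beta_{M}^{\mathcal V^{U}}$, and post-composing with $\mathcal V_{U^{-}}\to \mathcal W$ for $\mathcal V=H^{0}(G/P^{-},\mathcal W)$ with $\mathcal W$ having nonnegative central weights (so $\mathcal V^{U}\cong \mathcal W$) yields the twist by $\mathcal{F}_{M}^{1}$ of $\phi_{\mathcal W}\circ\beta_{M}^{\mathcal V^{U}}$, where $\phi_{\mathcal W}\colon \mathcal W\cong\mathcal V^{U}\to\mathcal V_{U^{-}}\to\mathcal W$ is the injective map discussed before the Lemma and $\beta_{M}^{\mathcal V^{U}}$ is fibrewise injective by Definition~\ref{def: ModM+}; hence the composite is fibrewise injective and the point lies in $(\cdot)_{0}$. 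Tracking Kottwitz invariants ($\mathcal{F}_{M}^{1}$ lies over $\Bun_{M}^{\alpha}$ by definition of $\tilde{Z}^{\theta}_{\Bun_{M}^{\alpha}}$, and $\kappa(\mathcal{F}_{M}^{2})-\kappa(\mathcal{F}_{M}^{1})=\theta$) lands us in the claimed components.

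\textbf{The inverse.} Given an $S$-point of $(\wt{\Bun}^{\theta+\alpha}_{P}\times_{\Bun_{G}}\Bun_{P^{-}}^{\alpha})_{0}$, the $(\cdot)_{0}$-condition makes $\beta_{M}^{\mathcal W}\colon \mathcal W_{\mathcal{F}_{M}^{2}}\to\mathcal W_{\mathcal{F}_{M}^{1}}$ a fibrewise-injective map of bundles of equal rank, hence a modification, for every $\mathcal W$ with nonnegative central weights; as explained before the Lemma these patch to a modification $\beta_{M}\colon\mathcal{F}_{M}^{2}\dashrightarrow\mathcal{F}_{M}^{1}$ of total meromorphy $\theta$, i.e.\ an $S$-point of $\mathrm{Mod}^{+,\theta}_{\Bun_{M}}$. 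Put $\mathcal{F}_{G}^{1}=\mathcal{F}_{M}^{1}\times^{M}G$. To recover $\beta$: over the generic point $\eta$ of $X_{\bar s}$ for each geometric point $\bar s\to S$, the maps $\tilde{\kappa}^{\mathcal V}|_{\eta}$ and $\kappa^{-,\mathcal V}|_{\eta}$ define a $P$-reduction of $\mathcal{F}_{G}|_{\eta}$ with Levi $\mathcal{F}_{M}^{2}|_{\eta}$ and a $P^{-}$-reduction with Levi $\mathcal{F}_{M}^{1}|_{\eta}$, and the $(\cdot)_{0}$-condition is precisely that these are opposite; their intersection is an $M$-reduction of $\mathcal{F}_{G}|_{\eta}$, giving canonical isomorphisms $\mathcal{F}_{M}^{2}|_{\eta}\cong\mathcal{F}_{M}^{1}|_{\eta}$ (recovering $\beta_{M}|_{\eta}$) and $\mathcal{F}_{G}|_{\eta}\cong\mathcal{F}_{M}^{1}|_{\eta}\times^{M}G=\mathcal{F}_{G}^{1}|_{\eta}$. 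This last generic isomorphism extends to a modification $\beta\colon\mathcal{F}_{G}\dashrightarrow\mathcal{F}_{G}^{1}$ of $G$-bundles on $X_{S}$; conditions (4) and (5) of Definition~\ref{def: Zastavaspace} then hold by construction, since $\beta$ matches the two $P$-reductions and the two $P^{-}$-reductions, so that $\beta^{\mathcal V}\circ\tilde{\kappa}^{\mathcal V}=\iota_{P}^{\mathcal V}\circ\beta_{M}^{\mathcal V^{U}}$ and $p_{U^{-}}^{\mathcal V}\circ\beta^{\mathcal V}=\kappa^{-,\mathcal V}$ generically, hence identically; and these regularity constraints force the meromorphy of $\beta$ onto the divisors prescribed by $\pi_{M}$ (condition (3)). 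The two constructions are inverse to one another by unwinding the definitions, each step being manifestly reversible. For the last assertion, $Z^{\theta}_{\Bun_{M}^{\alpha}}\subset\tilde{Z}^{\theta}_{\Bun_{M}^{\alpha}}$ is the locus where $(\mathcal V^{U})_{\mathcal{F}_{M}^{2}}\hookrightarrow\mathcal V_{\mathcal{F}_{G}}$ has torsion-free cokernel for all $\mathcal V$; under the isomorphism this map is $\tilde{\kappa}^{\mathcal V}$, so this is exactly the condition that the associated point of $\wt{\Bun}_{P}$ lies in the open substack $\Bun_{P}$, i.e.\ $Z^{\theta}_{\Bun_{M}^{\alpha}}$ is the preimage of $\Bun_{P}$.

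\textbf{Main obstacle.} The substantive point is the reconstruction of the modification $\beta$: one must check, in the Fargues--Fontaine setting, that a pair of generically transverse $P$- and $P^{-}$-reductions of $\mathcal{F}_{G}|_{\eta}$ produces a generic $M$-reduction, hence a generic isomorphism $\mathcal{F}_{G}|_{\eta}\cong\mathcal{F}_{G}^{1}|_{\eta}$ that spreads out to an honest modification on $X_{S}$ with meromorphy controlled by the divisors of $\pi_{M}$, and that this matches the five separate conditions cutting out $\tilde{Z}^{\theta}$ against the single transversality condition cutting out $(\cdot)_{0}$. The purely representation-theoretic input needed (injectivity of $\phi_{\mathcal W}$, and its being an isomorphism under the nonnegativity hypothesis) is already recorded in the discussion preceding the Lemma.
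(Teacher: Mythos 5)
Your proposal is correct and follows essentially the same route as the paper: the forward map is obtained by reading conditions (4) and (5) of the Zastava definition as the data $\tilde{\kappa}^{\mathcal V}$ and $\kappa^{-,\mathcal V}$, and the inverse recovers $\beta_M$ from the $(\cdot)_0$-condition via the discussion preceding the Lemma and then rebuilds $\beta$ from the generic transversality of the $P$- and $P^-$-reductions, exactly as in the paper (your write-up just spells out the fibrewise-injectivity, Pl\"ucker, and meromorphy checks that the paper leaves implicit). The identification of $Z^\theta_{\Bun_M^\alpha}$ with the preimage of $\Bun_P$ is likewise the same inspection of the torsion-free-cokernel condition.
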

\begin{proof} Given an $S$-point of $(\mathcal{F}_{G},\mathcal{F}_{M}^{2},\mathcal{F}_{M}^{1},\beta_{M},\beta)$ of $\tilde{Z}^{\theta}_{\Bun_{M}^{\alpha}}$ it is clear that the maps 
\[ (\mc{V}^{U})_{\mathcal{F}_{M}^{2}} \xrightarrow{\beta_{M}^{\mc{V}^U}} (\mc{V}^{U})_{\mathcal{F}_{M}^{1}} \hookrightarrow \mathcal{V}_{\mathcal{F}_{G}^{1}} \xdashrightarrow{(\beta^{\mc{V}})^{-1}} \mathcal{V}_{\mathcal{F}_{G}} \]
and 
\[ \mathcal{V}_{\mathcal{F}_{G}} \xdashrightarrow{\beta^{\mc{V}}} \mathcal{V}_{\mathcal{F}_{G}^{1}} \rightarrow (\mathcal{V}_{U^{-}})_{\mathcal{F}_{M}^{1}} \]
appearing in the definition of $\tilde{Z}^{\theta}_{\Bun_{M}^{\alpha}}$ define maps $\tilde{\kappa}^{\mathcal{V}}$ and $\kappa^{-,\mathcal{V}}$, respectively, which will give rise to an $S$-point of $(\wt{\Bun}^{\theta + \alpha}_{P} \times_{\Bun_{G}} \Bun_{P^{-}}^{\alpha})_{0}$.

Conversely, given an $S$-point
\[
(\mathcal{F}_{G},\mathcal{F}_{M}^{2},\mathcal{F}_{M}^{1},\tilde{\kappa}^{\mathcal{V}},\kappa^{-,\mathcal{V}})\in (\wt{\Bun}^{\theta + \alpha}_{P} \times_{\Bun_{G}} \Bun_{P^{-}}^{\alpha})_{0}(S),
\]
we let  $(\mathcal{F}_{M}^{2},\mathcal{F}_{M}^{1},\beta_{M})$ be the associated $S$-point of $\mathrm{Mod}^{+,\theta}_{\Bun_{M}^{\alpha}}$ described above. If we consider the effective Cartier divisor $D$ in $X_{S}$ determined by the image of this $S$-point under the map $\pi_{M}: \mathrm{Mod}^{+,\theta}_{\Bun_{M}} \ra \Div^{(\theta)}$, then the $P$- and $P^-$-reductions of $\mathcal F_G$ are both defined and transverse away from $D$, and together define a meromorphic $M$-reduction $\beta$ of $\mathcal F_G$ to $\mathcal F_G^1 = \mathcal F_M^1\times^M G$. The triple $(\mathcal{F}_{G},\mathcal{F}_{M}^{2},\mathcal{F}_{M}^{1},\beta_{M},\beta)$ in turn defines a point of $\tilde{Z}^{\theta}_{\Bun_{M}^{\alpha}}$. 
\end{proof}

In summary, for every $\theta \in \Lambda_{G,P}^{\mathrm{pos}}$ and $\alpha \in \Lambda_{G,P}$, we have a commutative diagram of the form 
\[ \begin{tikzcd} 
\tilde{Z}^{\theta}_{\Bun_{M}^{\alpha}} \arrow[hookrightarrow]{r} \arrow[d] & \wt{\Bun}^{\theta + \alpha}_{P} \times_{\Bun_{G}} \Bun^{\alpha}_{P^{-}} \arrow[r] \arrow[d] & \wt{\Bun}^{\theta + \alpha}_{P} \arrow[d,"\wt{\mf{p}}^{\theta + \alpha}_{P}"] \\
\mathrm{Mod}^{+,\theta}_{\Bun_{M}^{\alpha}} \arrow[dr,"h_{M}^{\ra}"] & \Bun_{P^{-}}^{\alpha} \arrow[r,"\mf{p}^{\alpha}_{P^{-}}"] \arrow[d,"\mf{q}^{\alpha}_{P^{-}}"] &  \Bun_{G} \\
& \Bun_{M}^{\alpha} & 
\end{tikzcd}, \]
If we fix $\theta \in \Lambda_{G,P}^{\mathrm{pos}}$ and let $\alpha$ vary over all $\Lambda_{G,P}$ then these diagrams patch the bigger diagram
\[ \begin{tikzcd} 
\tilde{Z}^{\theta}_{\Bun_{M}} \arrow[hookrightarrow]{r} \arrow[d] & \wt{\Bun}_{P} \times_{\Bun_{G}} \Bun_{P^{-}} \arrow[r] \arrow[d] & \wt{\Bun}_{P} \arrow[d,"\wt{\mf{p}}_{P}"] \\
\mathrm{Mod}^{+,\theta}_{\Bun_{M}} \arrow[dr,"h_{M}^{\ra}"] & \Bun_{P^{-}} \arrow[r,"\mf{p}_{P^{-}}"] \arrow[d,"\mf{q}_{P^{-}}"] &  \Bun_{G} \\
& \Bun_{M} & 
\end{tikzcd} \]
together. As we will see later, the $v$-stack $\tilde{Z}^{\theta}_{\Bun_{M}}$ will be understandable in terms of the affine Grassmannian. We would like to say that it is related to $\wt{\Bun}_{P}$ by a cohomologically smooth map, so that we might reduce to questions concerning Verdier duality on $\wt{\Bun}_{P}$ to the analogous questions on $\tilde{Z}^{\theta}_{\Bun_{M}}$, where they will be more approachable. 

We know that the map $\tilde{Z}^{\theta}_{\Bun_{M}} \hookrightarrow \wt{\Bun}_{P} \times_{\Bun_{G}} \Bun_{P^{-}}$ is an open immersion, by combining the previous two Lemmas. Therefore, we would like to say that the map 
\[ \wt{\Bun}_{P} \times_{\Bun_{G}} \Bun_{P^{-}} \ra \wt{\Bun}_{P} \]
is cohomologically smooth. However, this is not true because the map $\mf{p}_{P^{-}}$ is not cohomologically smooth. Fortunately, we do have good control over the smoothness properties of $\mf{p}_{P^{-}}$ by invoking the Jacobian criterion of Fargues-Scholze. In particular, we define the following.
\begin{definition}
Consider the Lie algebra $\mf{u}$ of $U$ viewed as an $M$-module. We define $\Bun_{M}^{\mathrm{sm}}$ to be the locus parametrizing, for $S \in \Perf$, $M$-bundles $\mathcal{F}_{M}$ on $X_{S}$ such that the vector bundle $\mathcal{F}_{M} \times^{M,\mathrm{Ad}} \mf{u}$ has strictly positive slopes, where $\mf{u}$ is the Lie algebra of $U$. We let $\Bun_{P^{-}}^{\mathrm{sm}}$ denote the preimage of $\Bun_{M}^{\mathrm{sm}}$ under the map $\mf{q}_{P^{-}}$.
\end{definition}
We note that $\Bun_{P^{-}}^{\mathrm{sm}}$ (resp. $\Bun_{M}^{\mathrm{sm}}$) are open substacks of $\Bun_{P^{-}}$ (resp. $\Bun_{M}$) using the upper semi-continuity of the slope polygon. We have the following result.  
\begin{proposition}{\label{prop: DrinfeldSimpson}}
The natural map $\Bun_{P^{-}}^{\mathrm{sm}} \ra \Bun_{G}$ is a cohomologically smooth map of Artin $v$-stacks, and its topological image is all of $\Bun_{G}$. 
\end{proposition}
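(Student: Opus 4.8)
The plan is to obtain the cohomological smoothness from the Jacobian criterion of Fargues--Scholze (\cite[Section~V.3]{FarguesScholze}), and the surjectivity on topological spaces from a Drinfeld--Simpson type statement about reductions of $G$-bundles on the Fargues--Fontaine curve. First, recall that $\mf{p}_{P^{-}}\colon \Bun_{P^{-}}\to \Bun_G$ exhibits $\Bun_{P^{-}}$, over $\Bun_G$, as the moduli of sections of the flag bundle $Z:=\mathcal{F}_G^{\mathrm{univ}}\times^{G}(G/P^{-})\to X_{\Bun_G}$, which is smooth and projective over $X_{\Bun_G}$. Since $\Bun_G$ is an Artin $v$-stack, choose a cohomologically smooth surjection $S\to \Bun_G$ from a locally spatial diamond; as cohomological smoothness can be checked after such a base change, it suffices to treat $\Bun_{P^{-}}^{\mathrm{sm}}\times_{\Bun_G}S\to S$. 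Here $\Bun_{P^{-}}\times_{\Bun_G}S=\mathcal{M}_{Z_S}$ is the moduli of sections of $Z_S\to X_S$, and the Jacobian criterion says $\mathcal{M}_{Z_S}\to S$ is cohomologically smooth at a section $s$, of $\ell$-dimension computed from $s^{*}T_{Z_S/X_S}$, provided $H^{1}(X_{\overline{s}}, s^{*}T_{Z_S/X_S}|_{X_{\overline{s}}})=0$ at every geometric point $\overline{s}$.

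Next I would identify the relative tangent bundle: if $s$ corresponds to a $P^{-}$-reduction $\mathcal{F}_{P^{-}}$ with underlying $M$-bundle $\mathcal{F}_M$, then $s^{*}T_{Z_S/X_S}\cong \mathcal{F}_{P^{-}}\times^{P^{-}}(\mf{g}/\mf{p}^{-})$. The $P^{-}$-module $\mf{g}/\mf{p}^{-}$ carries a $P^{-}$-stable filtration (coming from the grading of $\mf{g}$ attached to the cocharacter defining $P$) whose associated graded is $\mf{u}=\mathrm{Lie}(U)$ with its $M$-action, so $\mathcal{F}_{P^{-}}\times^{P^{-}}(\mf{g}/\mf{p}^{-})$ has a filtration with associated graded $\mathcal{F}_M\times^{M}\mf{u}$. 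Over $\Bun_M^{\mathrm{sm}}$ the latter has all Harder--Narasimhan slopes $>0$, and an extension of bundles with strictly positive slopes again has strictly positive slopes; hence $\mathcal{F}_{P^{-}}\times^{P^{-}}(\mf{g}/\mf{p}^{-})$ has strictly positive slopes and its $H^{1}$ over any $X_{\Spa(C,C^{+})}$ vanishes. (This is the reason $\Bun_M^{\mathrm{sm}}$ is defined with \emph{strictly} positive slopes: that locus is open by upper semicontinuity of the slope polygon, whereas the bare $H^1$-vanishing locus, where slopes are only $\ge 0$, is closed.) The Jacobian criterion then gives that $\mathcal{M}_{Z_S}^{\mathrm{sm}}\to S$ is cohomologically smooth, and descending along $S\to \Bun_G$ yields the first assertion.

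For the surjectivity, note that $\mf{p}_{P^{-}}|_{\Bun_{P^{-}}^{\mathrm{sm}}}$ is now cohomologically smooth, hence has open topological image, so it is enough to see that every geometric point of $\Bun_G$ lies in it: that every $G$-bundle $\mathcal{F}_G$ on $X_{\Spa(C,C^{+})}$ with $C$ algebraically closed admits a reduction to $P^{-}$ whose $M$-bundle $\mathcal{F}_M$ satisfies that $\mathcal{F}_M\times^{M}\mf{u}$ has strictly positive slopes. This is exactly the content of the Drinfeld--Simpson theorem in this setting. Starting from the known existence of a reduction of $\mathcal{F}_G$ to a Borel $B^{-}\subset P^{-}$, one reduces to producing a reduction whose induced $T$-bundle has type in a sufficiently deep dominant chamber, so that the resulting $\mf{u}$-bundle $\bigoplus_{\beta\in R(U)}\mathcal{O}(\langle\beta,\lambda\rangle)$ has strictly positive slopes; the existence of such reductions is arranged by a modification argument along finitely many degree-one points, using the non-emptiness of the relevant spaces of sub-bundles of prescribed rank and degree (such as $\mathcal{S}\mathrm{ub}^{n,d}_{\mathcal{E}}$ and associated Banach--Colmez spaces) established earlier.

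I expect the surjectivity — and specifically the step of producing a reduction that is \emph{positive enough}, rather than merely one that exists — to be the main obstacle, since this is the substantive part of Drinfeld--Simpson and requires genuine input about modifications of bundles on $X_S$ rather than soft arguments. The smoothness, by contrast, is a fairly mechanical consequence of the Jacobian criterion once the descent along a chart $S\to\Bun_G$ is set up and the relative tangent bundle is identified with the $\mf{u}$-bundle of the chosen $M$-reduction.
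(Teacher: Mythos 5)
Your smoothness argument is essentially the paper's: pull back to a chart of $\Bun_G$, apply the Jacobian criterion to the space of sections of $\mathcal{F}_G/P^-\to X_S$, identify $s^*T_{Z/X_S}$ with $\mathcal{F}_{P^-}\times^{P^-}\mathfrak{g}/\mathfrak{p}^-$, and use the $P^-$-stable filtration whose graded pieces are the pieces of $\mathfrak{u}$ as an $M$-module, so that strict positivity of the slopes of $\mathcal{F}_M\times^M\mathfrak{u}$ propagates to the tangent bundle. That part is fine. One small correction: your parenthetical explanation of why the definition uses \emph{strictly} positive slopes is wrong. The locus where all slopes are $\geq 0$ is not closed; it is also open (its complement is where the Harder--Narasimhan polygon rises above a fixed threshold, which is closed by semicontinuity). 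The real reason for strictness is that the Jacobian criterion of Fargues--Scholze requires $s^*T_{Z/X_S}$ to have everywhere strictly positive slopes, not merely fiberwise $H^1$-vanishing; since you in fact verify strict positivity, this does not affect your argument, but the stated justification should be discarded.

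The genuine gap is in the surjectivity half. You reduce it to a Drinfeld--Simpson-type statement -- every $G$-bundle over a geometric point admits a $B^-$-reduction whose $T$-bundle is arbitrarily deep in a chamber -- but you do not prove it: you yourself flag the production of a ``positive enough'' reduction as the main obstacle, and the results on $\mathcal{S}\mathrm{ub}^{n,d}_{\mathcal{E}}$ established earlier (representability, properness) do not give the required non-emptiness in prescribed degrees, which is exactly the substantive content. Your route also presupposes a Borel $B^-\subset P^-$ defined over $E$, i.e.\ that $G$ is quasisplit, which is not assumed in this section (only that the derived group is simply connected), whereas the paper's proof needs no such hypothesis. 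The paper avoids the explicit modification construction altogether: it proves the stronger claim that for every $N$ there is a $P^-$-reduction with $-d_i(b_M)>N$, reduces to the trivial $G$-bundle via a modification $\mathcal{F}_G\dashrightarrow\mathcal{F}_G^0$ (which changes the invariants $d_i$ only by a bounded amount, absorbed by $N$), passes to a simply connected cover so that $\Bun_G$ is connected with a unique basic element, and then argues softly: $\Bun_{P^-}^{\mathrm{sm},>N}$ is nonempty (take the split reduction of a sufficiently $G$-antidominant basic $M$-bundle) and maps cohomologically smoothly, hence universally openly, to $\Bun_G$; an open nonempty image must contain an open HN stratum, open strata are basic, and for simply connected $G$ the only basic stratum is that of the trivial bundle. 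To complete your proposal you would either have to carry out the Drinfeld--Simpson induction on the Fargues--Fontaine curve (and handle non-quasisplit $G$), or replace that step by this openness-of-the-image argument, which is where the actual work is done in the paper.
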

\begin{proof}
The smoothness of the map $\Bun_{P^{-}}^{\mathrm{sm}} \ra \Bun_{G}$ follows from the Jacobian criterion of Fargues and Scholze. In particular, if we are given an $S$-point of $\Bun_{G}$ corresponding to a $G$-bundle $\mathcal{F}_{G}$ then the fibers of $\mf{p}_{P^{-}}$ parametrize $P^{-}$-structures on the $G$-bundle $\mathcal{F}_{G}$. If we consider the smooth projective variety $Z = \mathcal{F}_{G}/P^{-} \ra X_{S}$ and write $\mathcal{M}_{Z}$ for the moduli space parametrizing, for $T \in \Perf_{S}$, sections $s: X_{T} \ra \mathcal{F}_{G}/P^{-}$ over $X_{S}$ then $\mathcal{M}_{Z} \ra S$ is precisely the fiber of $\mf{p}_{P^{-}}$ over $S \ra \Bun_{G}$.  

If $T_{Z/X_{S}}$ denotes the relative tangent bundle then the Jacobian criterion \cite[Theorem~IV.4.2]{FarguesScholze} tells us that the open subspace $\mathcal{M}_{Z}^{\mathrm{sm}} \subset \mathcal{M}_{Z}$ parametrizing sections $s$ such that $s^{*}(T_{Z/X_{S}})$ has strictly positive slopes is a cohomologically smooth locally spatial diamond. We now recall that that if $s$ corresponds to a $P^{-}$-structure $\mathcal{F}_{P^{-}}$ on $\mathcal{F}_{G}$, we have an identification
\[ s^{*}(T_{Z/X_{S}}) \simeq \mathcal{F}_{P^{-}} \times^{P^{-}} \mf{g}/\mf{p}^{-}. \] 
The Jordan-H\"older factors of $\mf{g}/\mf{p}^{-}$ as a $P^{-}$-modules all have trivial $U^{-}$-action and therefore are $M$-modules. Moreover, these coincide with the Jordan-H\"older factors of $\mf{u}$. This establishes the smoothness claim.

In order to establish surjectivity of the map $\Bun_{P^{-}}^{\mathrm{sm}} \ra \Bun_{G}$, we will argue as in Drinfeld-Simpson \cite{DS}. Under the isomorphism, $B(M) \simeq |\Bun_{M}|$, we can describe the subset $B(M)^{\mathrm{sm}}$ corresponding to $\Bun_{M}^{\mathrm{sm}}$ more concretely. In particular, $b_{M} \in B(M)^{\mathrm{sm}}$ if and only if the numbers $d_{i}(b_{M}) := \langle \hat{\alpha}_{i},\nu_{b_{M}} \rangle$ are strictly less than $0$ for $i \in \mathcal{J}_{G,P}$ (Recall the minus sign when passing between isocrystals and $G$-bundles, as in Remark \ref{rem: confusingslopes}). This follows from considering the action of the protorus $\mathbb{D}$ on $\mathcal{F}_{b_{M}} \times^{M} \mf{u}$. We need to show that, for any $G$-bundle $\mathcal{F}_{G}$ on $X$, there exists a $P^{-}$-structure $\mathcal{F}_{P^{-}}$ on $\mathcal{F}_{G}$ such that Kottwitz element $b_{M}$ attached to $\mathcal{F}_{P^{-}} \times^{P^{-}} M$ satisfies $-d_{i}(b_{M}) > 0$. We prove the stronger claim that in fact for any $N \geq 0$ one can assume one has that $-d_{i}(b_{M}) > N$.   

Given a fixed $\mathcal{F}_{G}$, by \cite[Theorem~7.1]{GtorseurFarg} (See also \cite[Theorem~6.5]{GroupSchemesoverFFAns}), there exists a modification 
\[ \mathcal{F}_{G} \dashrightarrow \mathcal{F}_{G}^{0}, \]
to the trivial $G$-bundle. It follows from the fact that $G/P^{-}$ is proper that one has a bijection between $P^{-}$-structures on $\mathcal{F}_{G}^{0}$ and $P^{-}$-structures on $\mathcal{F}_{G}$. Moreover, this will only change the invariants $d_{i}(b_{M})$ up to a bounded amount determined by the meromorphy of the modification. Therefore, we can assume that $\mathcal{F}_{G}$ is the trivial $G$-bundle. By choosing a covering $\tilde{G} \ra G$ where $\tilde{G}$ is simply connected, we can reduce to the case where $\Bun_{G}$ is connected, using \cite[Corollary~IV.1.23]{FarguesScholze}. Given a fixed $N \in \mathbb{N}_{\geq 0}$, we can define $\Bun_{P^{-}}^{\mathrm{sm},> N}$ to be the preimage of the open subset in $\Bun_{M}^{\mathrm{sm},> N}$ defined by the elements $b_{M} \in B(M)_{\mathrm{sm}}$ such that $-d_{i}(b_{M}) > N$. We note that this subset is always non-empty, as is exhibited by choosing a sufficiently $G$-antidominant element inside $\Mcoinv \simeq B(M)_{\mathrm{basic}}$. The natural map
\[ \Bun_{P^{-}}^{\mathrm{sm},> N} \ra \Bun_{G} \]
is cohomologically smooth by the above discussion, and therefore universally open by \cite[Proposition~23.1]{Ecod}. In particular, its image contains an open stratum. These are precisely the basic strata by \cite[proof of Corollary IV.1.23]{FarguesScholze} (or the more precise result \cite{Vi}), and as $G$ is simply connected this is only the stratum of the trivial $G$-bundle.
\end{proof}

Recall that $Z^\theta_{\Bun_M}\subset \tilde{Z}^\theta_{\Bun_M}$ is the preimage of $\Bun_P\subset \wt{\Bun}_P$. We deduce the following corollary.

\begin{corollary}{\label{cor: ZthetaBunMissmooth}}
For all $\theta \in \Lambda_{G,P}^{\mathrm{pos}}$, the moduli space $Z_{\Bun_{M}^{\mathrm{sm}}}^{\theta}$ is cohomologically smoooth. 
\end{corollary}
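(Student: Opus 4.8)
The plan is to unwind the definitions of the previous section so as to recognize $Z^{\theta}_{\Bun_M^{\mathrm{sm}}}$ as an open subspace of an explicit fiber product over $\Bun_G$, and then to deduce cohomological smoothness from Proposition~\ref{prop: DrinfeldSimpson} together with the cohomological smoothness of $\Bun_P$ recorded after Corollary~\ref{cor: BunPconncomp}.

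First I would identify $Z^{\theta}_{\Bun_M^{\mathrm{sm}}}$ concretely. By Definition~\ref{def: Zastavaspace} it is the base change of the open subspace $Z^{\theta}_{\Bun_M}\subset\tilde Z^{\theta}_{\Bun_M}$ along $h_M^{\ra}\colon\tilde Z^{\theta}_{\Bun_M}\to\Bun_M$ to the open substack $\Bun_M^{\mathrm{sm}}\subset\Bun_M$. Under the open immersion $\tilde Z^{\theta}_{\Bun_M}\hookrightarrow\wt\Bun_P\times_{\Bun_G}\Bun_{P^-}$ furnished by Lemmas~\ref{lemma: transverseisanopencondition} and~\ref{lemma: relationofZastavatoPstructures}, the map $h_M^{\ra}$ --- which remembers the $M$-bundle $\mathcal F_M^1$ --- is the composite $\tilde Z^{\theta}_{\Bun_M}\hookrightarrow\wt\Bun_P\times_{\Bun_G}\Bun_{P^-}\to\Bun_{P^-}\xrightarrow{\mf{q}_{P^-}}\Bun_M$, and $Z^{\theta}_{\Bun_M}$ is the preimage of $\Bun_P\subset\wt\Bun_P$. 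Since $\Bun_{P^-}^{\mathrm{sm}}=\mf{q}_{P^-}^{-1}(\Bun_M^{\mathrm{sm}})$ by definition, this exhibits $Z^{\theta}_{\Bun_M^{\mathrm{sm}}}$ as an open subspace of $(\Bun_P\times_{\Bun_G}\Bun_{P^-}^{\mathrm{sm}})_0$, and in particular of $\Bun_P\times_{\Bun_G}\Bun_{P^-}^{\mathrm{sm}}$.

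Then the conclusion is a formal chain of smoothness statements. The projection $\Bun_P\times_{\Bun_G}\Bun_{P^-}^{\mathrm{sm}}\to\Bun_P$ is the base change along $\mf{p}_P$ of the map $\Bun_{P^-}^{\mathrm{sm}}\to\Bun_G$, hence cohomologically smooth by Proposition~\ref{prop: DrinfeldSimpson} and stability of cohomological smoothness under base change. Composing with the open immersion $Z^{\theta}_{\Bun_M^{\mathrm{sm}}}\hookrightarrow\Bun_P\times_{\Bun_G}\Bun_{P^-}^{\mathrm{sm}}$ shows $Z^{\theta}_{\Bun_M^{\mathrm{sm}}}\to\Bun_P$ is cohomologically smooth, and since $\Bun_P$ is itself cohomologically smooth over $\ast=\Spd\ol{\mathbb F}_q$ (Corollary~\ref{cor: BunPconncomp}, Proposition~\ref{prop: qissmooth}), the composite $Z^{\theta}_{\Bun_M^{\mathrm{sm}}}\to\ast$ is cohomologically smooth, as claimed.

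The real content here is entirely contained in Proposition~\ref{prop: DrinfeldSimpson}: the map $\mf{p}_{P^-}$ is \emph{not} cohomologically smooth globally over $\Bun_G$, but becomes so once restricted to $\Bun_{P^-}^{\mathrm{sm}}$, and this restriction is exactly what is built into the subscript in $Z^{\theta}_{\Bun_M^{\mathrm{sm}}}$. So there is no substantive obstacle; the one point requiring care is bookkeeping --- checking that $h_M^{\ra}$ records the target $M$-bundle $\mathcal F_M^1$ of the modification (so that base change along it to $\Bun_M^{\mathrm{sm}}$ really cuts out $\Bun_{P^-}^{\mathrm{sm}}$ on the $P^-$-factor, rather than imposing a slope condition on $\mathcal F_M^2$), and keeping the $\theta$-graded pieces of $\wt\Bun_P$ matched correctly with the connected components of $\Bun_{P^-}$ under the identification of Lemma~\ref{lemma: relationofZastavatoPstructures}.
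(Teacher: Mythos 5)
Your argument is correct and is exactly the paper's (largely implicit) one: the corollary is deduced from Proposition~\ref{prop: DrinfeldSimpson} by viewing $Z^{\theta}_{\Bun_M^{\mathrm{sm}}}$, via Lemmas~\ref{lemma: transverseisanopencondition} and~\ref{lemma: relationofZastavatoPstructures}, as an open substack of $\Bun_P\times_{\Bun_G}\Bun_{P^-}^{\mathrm{sm}}$, whose projection to the cohomologically smooth stack $\Bun_P$ is a base change of $\Bun_{P^-}^{\mathrm{sm}}\to\Bun_G$. Your bookkeeping point about $h_M^{\ra}$ recording $\mathcal F_M^1$ (the $M$-bundle of the $P^-$-reduction) is also the right thing to check, and it matches the paper's identification.
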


The map $\wt{\Bun}_{P} \times_{\Bun_{G}} \Bun^{\mathrm{sm}}_{P^{-}} \ra \wt{\Bun}_{P}$ has the correct shape for an atlas that would allow us to study $\wt{\Bun}_{P}$. However, it is only the open subset $\tilde{Z}^{\theta}_{\Bun_{M}^{\mathrm{sm}}} \subset \wt{\Bun}_{P} \times_{\Bun_{G}} \Bun^{\mathrm{sm}}_{P^{-}}$ that will admit an interpretation in terms of the affine Grassmannian. Fortunately, it is still large enough. For the moment, we will establish this only on the open stratum $Z_{\Bun_{M}^{\mathrm{sm}}}^{\theta}$, but later a similar result will be proved that also includes the boundary of the Drinfeld compactification.

\begin{proposition}{\label{prop: Zastavauniformizes}}
The map
\[
\bigsqcup_\theta Z^{\theta}_{\Bun_{M}^{\mathrm{sm}}} \ra \Bun_{P}
\]
is cohomologically smooth and surjective.
\end{proposition}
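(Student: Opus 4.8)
The plan is to first translate the statement, via the dictionary built up in this section, into a concrete assertion about opposite parabolic reductions, and then to separate a formal cohomological smoothness part from a more substantial surjectivity part. For the identification, I would combine Lemma~\ref{lemma: relationofZastavatoPstructures} (applied for all $\theta\in\Lambda_{G,P}^{\mathrm{pos}}$ and $\alpha\in\Lambda_{G,P}$) with Proposition~\ref{prop: openimmersion} and the observation that $\theta = \kappa(\mathcal{F}_M^2)-\kappa(\mathcal{F}_M^1)$ is locally constant on the base, so that the isomorphisms of that lemma glue to an isomorphism
\[
\bigsqcup_\theta Z^{\theta}_{\Bun_M}\;\cong\;(\Bun_P\times_{\Bun_G}\Bun_{P^-})_0,
\]
the open substack of pairs of a $P$- and a $P^-$-reduction of a common $G$-bundle in generic relative position (the condition $\theta\in\Lambda_{G,P}^{\mathrm{pos}}$ being automatic from transversality, as already noted after Lemma~\ref{lemma: relationofZastavatoPstructures}). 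Under this identification, $h_M^{\ra}$ becomes the projection remembering the $P^-$-reduction followed by $\mf{q}_{P^-}$, and the map to $\Bun_P$ occurring in the proposition (induced by $\wt{\Bun}_P\times_{\Bun_G}\Bun_{P^-}\to\wt{\Bun}_P$) becomes the projection $\mathrm{pr}_1$ remembering the $P$-reduction. Restricting along $\Bun_M^{\mathrm{sm}}\hookrightarrow\Bun_M$, the proposition therefore reduces to the claim that $\mathrm{pr}_1\colon(\Bun_P\times_{\Bun_G}\Bun_{P^-}^{\mathrm{sm}})_0\to\Bun_P$ is cohomologically smooth and surjective.

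Cohomological smoothness is then formal. By Proposition~\ref{prop: DrinfeldSimpson} the map $\Bun_{P^-}^{\mathrm{sm}}\to\Bun_G$ is cohomologically smooth, hence so is its base change $\Bun_P\times_{\Bun_G}\Bun_{P^-}^{\mathrm{sm}}\to\Bun_P$ along $\mf{p}_P$. By Lemma~\ref{lemma: transverseisanopencondition} (together with Proposition~\ref{prop: openimmersion}, which guarantees that $\Bun_P\subset\wt{\Bun}_P$ is open, so the transversality locus remains open after these base changes) the subspace $(\Bun_P\times_{\Bun_G}\Bun_{P^-}^{\mathrm{sm}})_0$ is an open substack; composing the corresponding open immersion with the previous map yields the cohomological smoothness of $\mathrm{pr}_1$.

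It remains to prove surjectivity, which I expect to be the main obstacle. Since cohomologically smooth maps are universally open, it suffices to lift every geometric point of $\Bun_P$: given an algebraically closed perfectoid field $C$ and a $P$-bundle $\mathcal{F}_P$ on $X_C$ with underlying $G$-bundle $\mathcal{F}_G$, I must produce a $P^-$-reduction of $\mathcal{F}_G$ that is generically transverse to $\mathcal{F}_P$ and has Levi quotient in $\Bun_M^{\mathrm{sm}}$. Ignoring the smoothness condition this is easy: $\mathcal{F}_P$ admits an $M$-reduction over a dense affine open of $X_C$ (the unipotent obstruction, an iterated $\mathbb{G}_a$-torsor, being trivial there), and pushing this reduction into $P^-$ and extending over the complementary divisor by properness of $G/P^-$ gives a $P^-$-reduction of $\mathcal{F}_G$ automatically generically transverse to $\mathcal{F}_P$. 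To control the Levi I would then run the Drinfeld--Simpson argument used in the proof of Proposition~\ref{prop: DrinfeldSimpson}: a modification of $\mathcal{F}_G$ at finitely many points (via \cite{GtorseurFarg}) reduces to the case $\mathcal{F}_G$ trivial --- this changes the invariants $d_i$ of the Levi only boundedly and, since generic transversality is a condition on a dense open of $X_C$, does not disturb it for the correspondingly extended $\mathcal{F}_P$ --- and then, exactly as there, one obtains $P^-$-reductions with Levi as antidominant as desired, hence in $\Bun_M^{\mathrm{sm}}$. The delicate point is that one must arrange this antidominance \emph{simultaneously} with generic transversality to the fixed $\mathcal{F}_P$; here I would argue that the (nonempty, open) locus of $P^-$-reductions with sufficiently antidominant Levi is not contained in the closed locus of reductions landing generically in the complement of the open cell relative to $\mathcal{F}_P$, the latter having strictly smaller $\mathrm{dim.trg}$. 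Combined with cohomological smoothness this shows $\mathrm{pr}_1$ is surjective as a map of v-stacks, exactly as in the conclusion of Proposition~\ref{prop: DrinfeldSimpson}.
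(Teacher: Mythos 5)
Your identification of $\bigsqcup_\theta Z^{\theta}_{\Bun_M^{\mathrm{sm}}}$ with the generically transverse locus $(\Bun_P\times_{\Bun_G}\Bun_{P^-}^{\mathrm{sm}})_0$, and your smoothness argument (base change of $\Bun_{P^-}^{\mathrm{sm}}\to\Bun_G$ along $\mathfrak{p}_P$, then openness of the transversality condition), coincide with the paper's proof, as does the reduction of surjectivity to geometric points via universal openness of cohomologically smooth maps.

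The gap is at the crux you yourself flag: arranging antidominance and generic transversality simultaneously. Your argument for this is that the locus of $P^-$-reductions of $\mathcal{F}_G=\mathcal{F}_P\times^P G$ with sufficiently antidominant Levi cannot lie inside the closed locus of reductions that are generically non-transverse to $\mathcal{F}_P$, ``the latter having strictly smaller $\mathrm{dim.trg}$.'' That dimension claim is not justified and is exactly the nontrivial content of the step: the non-transverse locus is a space of sections of $\mathcal{F}_G/P^-$ landing generically in the boundary of the big cell determined by $\mathcal{F}_P$, and comparing its $\mathrm{dim.trg}$ (pointwise, within each fixed Levi-degree component, where the Jacobian criterion only controls the smooth locus) to that of the ambient section space is not a formal matter; nothing in the preceding sections gives it to you. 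The mechanism the paper uses is different and makes the dimension count unnecessary: once the Levi degree is sufficiently instable (all $-d_i(b_M)>N$ with $N$ depending on $\mathcal{F}_P$), a $P^-$-reduction is \emph{automatically} generically transverse to the fixed $P$-structure. Indeed, if the generic relative position were a non-open cell, the Tannakian/Pl\"ucker maps attached to the pair of reductions produce nonzero maps of vector bundles built from $\mathcal{F}_P$ and the $P^-$-reduction whose existence bounds the invariants $d_i(b_M)$ in terms of $\mathcal{F}_P$ alone (in the $\mathrm{GL}_2$/Borel case this is just the statement that a non-transverse line subbundle must equal the fixed one, hence has fixed degree). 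With that observation, Proposition \ref{prop: DrinfeldSimpson} — which supplies $P^-$-reductions with $-d_i(b_M)>N$ for any $N$ — already finishes the proof, and your preliminary construction of some transverse reduction and the subsequent modification-to-the-trivial-bundle detour are not needed. As written, your proof is incomplete until the strict $\mathrm{dim.trg}$ inequality (or the automatic-transversality statement replacing it) is actually proved.
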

\begin{proof}
As discussed above, since the map $\tilde{Z}^{\theta}_{\Bun_{M}} \subset \wt{\Bun}_{P} \times_{\Bun_{G}} \Bun_{P^{-}}$ is an open immersion, the cohomological smoothness follows from the previous Proposition. For the surjectivity, it suffices to check this at the level of points since cohomologically smooth maps are universally open. Concretely, for any geometric point $\Spa(C,C^+)$ and any $P$-bundle $\mathcal F_P$ on $X_C$, we have to show that the associated $G$-bundle $\mathcal F_P\times^P G$ admits a $P^-$-structure $\mathcal F_{P^-}$ that is generically transverse to the $P$-structure (inducing an $M$-structure), and so that the $M$-bundle $\mathcal F_{P^-}\times^{P^-} M$ defines a point of $\Bun_M^{\mathrm{sm}}$.

But in the previous proposition, we proved that any $G$-bundle admits arbitarily instable $P^-$-reductions. Any such $P^-$-reduction will do, as it will automatically be generically transverse to any given $P$-structure once it is sufficiently instable.
\end{proof}

We end this section with a few remarks about the boundary strata of $\tilde{Z}^\theta_{\Bun_M}$. For an element $\theta' \in \Lambda_{G,P}^{\mathrm{pos}}$, we write $\phantom{}_{\theta'}\tilde{Z}^{\theta}_{\Bun_{M}}$ for the preimage of the locally closed substack $\phantom{}_{\theta'}\wt{\Bun}_{P}$ under the map $\tilde{Z}^{\theta}_{\Bun_{M}} \ra \wt{\Bun}_{P}$. We write $\pi_{P}: \tilde{Z}^{\theta}_{\Bun_{M}} \ra \mathrm{Mod}^{+,\theta}_{\Bun_{M}}$ for the natural forgetful map. This map has a section
\[ \mf{s}^{\theta}: \mathrm{Mod}^{+,\theta}_{\Bun_{M}} \ra \tilde{Z}^{\theta}_{\Bun_{M}}, \]
which sends $(\mathcal{F}_{M}^{2},\mathcal{F}_{M}^{1},\beta_{M})$ to $(\mathcal{F}_{M}^{1} \times^{M} G,\mathcal{F}_{M}^{2},\mathcal{F}_{M}^{1},\beta_{M},\beta^{0})$, where $\beta^{0}$ is the modification induced by the identity map.
\begin{lemma}
The stack $\phantom{}_{\theta'}\tilde{Z}^{\theta}_{\Bun_{M}}$ is empty unless $\theta'\leq \theta$. If $\theta = \theta'$ the map $\mf{s}^{\theta}: \mathrm{Mod}^{+,\theta}_{\Bun_{M}} \ra \tilde{Z}^{\theta}_{\Bun_{M}}$ induces an isomorphism onto the closed substack $\phantom{}_{\theta}\tilde{Z}^{\theta}_{\Bun_{M}}$. 
\end{lemma}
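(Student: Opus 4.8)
The plan is to read off, on a geometric point, the combinatorial meaning of the two parameters: the total meromorphy $\theta$ of the Zastava datum splits as the ``$P$-defect'' $\theta'$ (the stratum of $\wt{\Bun}_P$ the point hits) plus a complementary effective piece measuring the failure of the induced $P$- and $P^{-}$-reductions of $\mathcal{F}_G$ to be globally transverse. Concretely, for the first assertion I would reduce emptiness to geometric points and unpack a $\Spa(C,C^{+})$-point of $\phantom{}_{\theta'}\tilde{Z}^{\theta}_{\Bun_{M}}$ into three pieces of data: the datum $(\mathcal{F}_{M}^{2},\mathcal{F}_{M}^{1},\beta_{M})\in\mathrm{Mod}^{+,\theta}_{\Bun_{M}}$, so that $\kappa(\mathcal{F}_{M}^{2})-\kappa(\mathcal{F}_{M}^{1})=\theta$; the genuine $P^{-}$-reduction $\mathcal{F}_{P^{-}}$ of $\mathcal{F}_{G}$ with Levi $\mathcal{F}_{M}^{1}$ encoded by condition~(5) of Definition~\ref{def: Zastavaspace}; and, applying Proposition~\ref{prop: tildestratadesc} to the image of the point in $\phantom{}_{\theta'}\wt{\Bun}_{P}$, a genuine $P$-reduction $\mathcal{F}_{P}^{1'}$ with Levi $\mathcal{F}_{M}^{1'}$, a modification $\beta_{M}':\mathcal{F}_{M}^{2}\dashrightarrow\mathcal{F}_{M}^{1'}$ of total meromorphy $\theta'$, and a factorization $\tilde{\kappa}^{\mathcal{V}}=\kappa^{\mathcal{V}}_{\mathcal{F}_{P}^{1'}}\circ\beta_{M}'$ for each $G$-module $\mathcal{V}$. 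In particular $\kappa(\mathcal{F}_{M}^{1'})-\kappa(\mathcal{F}_{M}^{1})=\theta-\theta'$, so it suffices to prove $\theta-\theta'\in\Lambda_{G,P}^{\mathrm{pos}}$.

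The key step will be the numerical inequality $\langle\lambda,\theta-\theta'\rangle\geq0$ for suitable characters $\lambda$ of $M^{\mathrm{ab}}$, which I would extract directly from the transversality condition cutting out $(\wt{\Bun}_{P}\times_{\Bun_{G}}\Bun_{P^{-}})_{0}$. For an $M$-module $\mathcal{W}$ with nonnegative central weights and $\mathcal{V}=H^{0}(G/P^{-},\mathcal{W})$, that condition says the composite
\[
\mathcal{W}_{\mathcal{F}_{M}^{2}}\cong(\mathcal{V}^{U})_{\mathcal{F}_{M}^{2}}\xrightarrow{\tilde{\kappa}^{\mathcal{V}}}\mathcal{V}_{\mathcal{F}_{G}}\xrightarrow{\kappa^{-,\mathcal{V}}}(\mathcal{V}_{U^{-}})_{\mathcal{F}_{M}^{1}}\to\mathcal{W}_{\mathcal{F}_{M}^{1}}
\]
is fiberwise injective; substituting the factorization of $\tilde{\kappa}^{\mathcal{V}}$ rewrites it as $g_{\mathcal{W}}\circ\beta_{M}'$ with $g_{\mathcal{W}}:\mathcal{W}_{\mathcal{F}_{M}^{1'}}\to\mathcal{W}_{\mathcal{F}_{M}^{1}}$, and since $\beta_{M}'$ is generically an isomorphism (it is fiberwise injective with torsion cokernel) and $\mathcal{W}_{\mathcal{F}_{M}^{1'}}$ is a vector bundle, $g_{\mathcal{W}}$ is injective, giving $\deg\mathcal{W}_{\mathcal{F}_{M}^{1}}\geq\deg\mathcal{W}_{\mathcal{F}_{M}^{1'}}$. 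Letting $\mathcal{W}$ run over one-dimensional representations whose characters are the fundamental weights dual to the $\alpha_{i}$ (suitably scaled to be genuine characters of $M^{\mathrm{ab}}$; these have nonnegative central weights), and recalling $\theta-\theta'\in\bigoplus_{i}\mathbb{Z}\alpha_{i}$ since $\theta,\theta'\in\Lambda_{G,P}^{\mathrm{pos}}$, one reads off — with the sign conventions for $\kappa$ fixed in the Notation section — that every coefficient of $\theta-\theta'$ is $\geq0$, i.e.\ $\theta'\leq\theta$. Closedness of $\phantom{}_{\theta}\tilde{Z}^{\theta}_{\Bun_{M}}$ is then formal: by what was just shown its complement is $\bigsqcup_{\theta'\lneq\theta}\phantom{}_{\theta'}\tilde{Z}^{\theta}_{\Bun_{M}}=\bigcup_{i}\phantom{}_{\leq\theta-\alpha_{i}}\tilde{Z}^{\theta}_{\Bun_{M}}$, a union of open substacks pulled back from the open loci $\phantom{}_{\leq\theta-\alpha_{i}}\ol{\Bun}_{P}\subset\ol{\Bun}_{P}$ of Definition~\ref{defstrata}.

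For the final isomorphism I would first note that $\mathfrak{s}^{\theta}$ lands in $\phantom{}_{\theta}\tilde{Z}^{\theta}_{\Bun_{M}}$ — the split $\wt{\Bun}_{P}$-point built from $(\mathcal{F}_{M}^{2},\mathcal{F}_{M}^{1},\beta_{M})$ has defect exactly $\theta$ by Proposition~\ref{prop: strata well-behaved} — and then exhibit $\pi_{P}$ as its inverse. Since $\pi_{P}\circ\mathfrak{s}^{\theta}=\mathrm{id}$, it is enough to check that on $\phantom{}_{\theta}\tilde{Z}^{\theta}_{\Bun_{M}}$ one has $\mathcal{F}_{G}=\mathcal{F}_{M}^{1}\times^{M}G$ and $\beta=\beta^{0}$. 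Now $\theta'=\theta$ forces the degree inequalities above to be equalities, so each $g_{\mathcal{W}}$ is an isomorphism; as the $g_{\mathcal{W}}$ are the realizations of the single meromorphic isomorphism $\mathcal{F}_{M}^{1'}\dashrightarrow\mathcal{F}_{M}^{1}$ coming from the generic transversality of $\mathcal{F}_{P}^{1'}$ and $\mathcal{F}_{P^{-}}$, that isomorphism is genuine; equivalently $\mathcal{F}_{P}^{1'}$ and $\mathcal{F}_{P^{-}}$ are everywhere transverse, so their intersection is a genuine $M$-reduction $\mathcal{F}_{M}$ of $\mathcal{F}_{G}$ with $\mathcal{F}_{M}\cong\mathcal{F}_{M}^{1}$, whence $\mathcal{F}_{G}\cong\mathcal{F}_{M}^{1}\times^{M}G$. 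Under this identification, conditions~(4)--(5) of Definition~\ref{def: Zastavaspace} pin $\beta$ down to the tautological modification $\beta^{0}$, so the point equals $\mathfrak{s}^{\theta}(\mathcal{F}_{M}^{2},\mathcal{F}_{M}^{1},\beta_{M})$, as desired.

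The main obstacle I anticipate is the clean extraction of $\langle\lambda,\theta-\theta'\rangle\geq0$: one has to pass through Proposition~\ref{prop: tildestratadesc} merely to get hold of the genuine Levi $\mathcal{F}_{M}^{1'}$, be careful that ``fiberwise injective'' over a geometric point means injective as a map of sheaves (so that generic injectivity of $g_{\mathcal{W}}$ is enough), and correctly convert the resulting degree inequality into the monoid statement $\theta'\leq\theta$. Everything else — the closedness and the inverse construction — is bookkeeping of the same sort already performed in Propositions~\ref{prop: strata well-behaved} and~\ref{prop: tildestratadesc}.
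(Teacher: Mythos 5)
Your proposal is correct and takes essentially the same route as the paper's proof: both arguments compare the defect $\theta'$ with the total meromorphy $\theta$ by factoring the injective map $\mathcal W_{\mathcal F_M^2}\to\mathcal W_{\mathcal F_M^1}$ (coming from the split $P$- and $P^-$-structures in the Zastava datum) through the defect-$\theta'$ data and reading off a length/degree inequality, and in the case $\theta'=\theta$ both deduce that the two reductions are everywhere transverse, hence $\mathcal F_G\cong\mathcal F_G^1$ and the point lies in the image of $\mathfrak{s}^\theta$. The only difference is bookkeeping: the paper stays on $\ol{\Bun}_P$ and pairs $\theta,\theta'$ against $\mathcal V^{[P,P]}$ for arbitrary $G$-modules $\mathcal V$ (so it never invokes Proposition \ref{prop: tildestratadesc} or fundamental-weight characters), whereas you pass through the stratum description of $\wt{\Bun}_P$ and one-dimensional characters of $M^{\mathrm{ab}}$; your brevity at the final step (why $\beta$ is forced to be $\beta^0$) matches the paper's own.
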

\begin{proof}
Suppose we are given an $S$-point $(\mathcal{F}_{G},\mathcal{F}_{M}^{2},\mathcal{F}_{M}^{1},\beta_{M},\beta)$ of $\phantom{}_{\theta'}\tilde{Z}^{\theta}_{\Bun_{M}}$. Recall that the map $\tilde{Z}^{\theta}_{\Bun_{M}} \ra \wt{\Bun}_{P} \xrightarrow{\mf{t}_{P}} \ol{\Bun}_{P} $ is remembering the composition 
\[
\ol{\kappa}^{\mc{V}}: (\mc{V}^{[P,P]})_{\mathcal{F}_{M^{\mathrm{ab}}}^{2}} \xrightarrow{\beta^{\mc{V}^{[P,P]}}_{M^{\mathrm{ab}}}} (\mc{V}^{[P,P]})_{\mathcal{F}_{M^{\mathrm{ab}}}^{1}} \hookrightarrow \mathcal{V}_{\mathcal{F}_{G}^{1}} \xdashrightarrow{(\beta^{\mc{V}})^{-1}} \mathcal{V}_{\mathcal{F}_{G}} 
\]
for all $G$-modules $\mathcal V$.  Moreover, since $\phantom{}_{\theta'}\tilde{Z}^{\theta}_{\Bun_{M}}$ maps to $\phantom{}_{\theta'}\ol{\Bun}_{P}$, by definition, we have that the cokernel of the map $\ol{\kappa}^{\mc{V}}$ has torsion of length equal to $\langle \mathcal V^{[P,P]}, \theta' \rangle$.

On the other hand, by point (5) in the definition of $\tilde{Z}^\theta_{\Bun_M}$, we can compose $\ol{\kappa}^{\mc{V}}$ with the well-defined surjective map
\[
\mathcal{V}_{\mathcal{F}_{G}}\xrightarrow{\beta^{\mc{V}}} \mathcal{V}_{\mathcal F_G^1}\xdashrightarrow{\kappa^{-,\mathcal V}} (\mathcal V_{U^-})_{\mathcal F_M^1}.
\]
If we start with the $M^{\mathrm{ab}}$-module $\mathcal W=\mathcal V^{[P,P]}$ and set $\mathcal V=H^0(G/P^-,\mathcal W)$, there is a natural map $\mathcal V_{U^-}\to \mathcal W$ and we get the composite map
\[
\mc{W}_{\mathcal F_{M^{\mathrm{ab}}}^2}\xrightarrow{\beta^{\mc{W}}_{M^{\mathrm{ab}}}} \mc{W}_{\mathcal{F}_{M^{\mathrm{ab}}}^{1}}\to (\mc{V}_{U^-})_{\mathcal F_M^1}\to \mc{W}_{\mathcal F_M^1}
\]
which is really just the injective map $\beta^{\mc{W}}_{M^{\mathrm{ab}}}$, and has cokernel given by a torsion bundle of length $\langle \mathcal W,\theta\rangle$. The torsion part of the cokernel of $\ol{\kappa}^{\mc{V}}$ injects into the cokernel of this composite map. Thus,
\[
\langle \mathcal V^{[P,P]},\theta'\rangle \leq \langle \mathcal W,\theta\rangle = \langle \mathcal V^{[P,P]},\theta\rangle
\]
for all $G$-modules $\mathcal V$, which implies $\theta'\leq \theta$.

If $\theta = \theta'$, we can saturate $\ol{\kappa}^{\mc{V}}$ to get a well-defined map
\[
(\mathcal V^{[P,P]})_{\mathcal F_{M^{\mathrm{ab}}}^1}\to \mathcal V_{\mathcal F_G}
\]
whose cokernel is a vector bundle, yielding a $P$-reduction of $\mathcal F_G$ that is transverse to the $P^-$-reduction; thus $\mathcal F_G=\mathcal F_G^1$.
\end{proof}

It would be very nice to say that $\tilde{Z}^{\theta}_{\Bun_{M}}$ surjects onto the locus $\phantom{}_{\leq \theta}\wt{\Bun}_{P}$. However, this is not true. In particular, it only uniformizes the locus of $\phantom{}_{\theta}\wt{\Bun}_{P}$ coming from split reductions, as one easily verifies from the construction.
\begin{lemma}{\label{lemma: imageofthesection}}
The composition 
\[ \mf{s}^{\theta}: \mathrm{Mod}^{+,\theta}_{\Bun_{M}} \xrightarrow{\simeq } \phantom{}_{\theta}\tilde{Z}^{\theta}_{\Bun_{M}} \ra \phantom{}_{\theta}\wt{\Bun}_{P} \simeq \Bun_{P} \times_{\Bun_{M}} \mathrm{Mod}^{+,\theta}_{\Bun_{M}} \]
is given by the map $\alpha \times \mathrm{id}$, where
\[ \alpha: \mathrm{Mod}^{+,\theta}_{\Bun_{M}} \xrightarrow{h_{M}^{\ra}} \Bun_{M} \ra \Bun_{P}, \]
and the last map is the section of $\mf{q}_{P}$ given by sending $\mathcal{F}_{M}$ to $\mathcal{F}_{M} \times^{M} P$.
\end{lemma}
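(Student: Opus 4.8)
The plan is to prove this by directly unwinding the three ingredients that make up the composition: the section $\mf{s}^{\theta}$ of Definition~\ref{def: Zastavaspace}, the forgetful map $\tilde{Z}^{\theta}_{\Bun_{M}}\to\wt{\Bun}_{P}$ coming from the big diagram of \S\ref{sec: Zastava uniformizes Drinfeld}, and the isomorphism $\tilde{j}_{\theta}\colon \mathrm{Mod}^{+,\theta}_{\Bun_{M}}\times_{\Bun_{M}}\Bun_{P}\xrightarrow{\simeq}\phantom{}_{\theta}\wt{\Bun}_{P}$ of Proposition~\ref{prop: tildestratadesc}. Everything in sight is a morphism of $v$-stacks described functorially on $S$-points, so it suffices to check the asserted equality of maps on an arbitrary $S$-point $(\mathcal{F}_{M}^{2},\mathcal{F}_{M}^{1},\beta_{M})$ of $\mathrm{Mod}^{+,\theta}_{\Bun_{M}}$.

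First I would recall that $\mf{s}^{\theta}$ sends this point to the quintuple $(\mathcal{F}_{M}^{1}\times^{M}G,\mathcal{F}_{M}^{2},\mathcal{F}_{M}^{1},\beta_{M},\beta^{0})$, where $\beta^{0}$ is the identity modification of $\mathcal{F}_{G}:=\mathcal{F}_{M}^{1}\times^{M}G$ onto $\mathcal{F}_{G}^{1}:=\mathcal{F}_{M}^{1}\times^{M}G$; by the previous lemma this lands in $\phantom{}_{\theta}\tilde{Z}^{\theta}_{\Bun_{M}}$. The forgetful map $\tilde{Z}^{\theta}_{\Bun_{M}}\to\wt{\Bun}_{P}$ remembers the triple $(\mathcal{F}_{G},\mathcal{F}_{M}^{2},\tilde{\kappa}^{\mathcal{V}})$ with $\tilde{\kappa}^{\mathcal{V}}$ the composite
\[ (\mc{V}^{U})_{\mathcal{F}_{M}^{2}} \xrightarrow{\beta_{M}^{\mc{V}^U}} (\mc{V}^{U})_{\mathcal{F}_{M}^{1}} \hookrightarrow \mathcal{V}_{\mathcal{F}_{G}^{1}} \xdashrightarrow{(\beta^{\mc{V}})^{-1}} \mathcal{V}_{\mathcal{F}_{G}}. \]
The point is that, since $\beta=\beta^{0}$ is the identity and $\mathcal{F}_{G}=\mathcal{F}_{G}^{1}$, the last arrow is the identity, so $\tilde{\kappa}^{\mathcal{V}}$ is nothing but the composition of $\beta_{M}^{\mc{V}^U}$ with the tautological embedding $\kappa^{\mathcal{V}}\colon (\mc{V}^{U})_{\mathcal{F}_{M}^{1}}\hookrightarrow \mathcal{V}_{\mathcal{F}_{G}}$ determined by the split $P$-structure $\mathcal{F}_{M}^{1}\times^{M}P$ on $\mathcal{F}_{G}$.

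Finally I would compare this with $\tilde{j}_{\theta}$: in Proposition~\ref{prop: tildestratadesc} the point $(\mathcal{F}_{M}^{2},\mathcal{F}_{P}^{1},\beta_{M})$ maps to $(\mathcal{F}_{P}^{1}\times^{P}G,\mathcal{F}_{M}^{2},\kappa^{\mathcal{V}}\circ\beta_{M}^{\mc{V}^U})$ with $\kappa^{\mathcal{V}}$ attached to $\mathcal{F}_{P}^{1}$. Matching this against the previous paragraph shows that the $\phantom{}_{\theta}\wt{\Bun}_{P}$-point produced by $\mf{s}^{\theta}$ corresponds under $\tilde{j}_{\theta}^{-1}$ to the triple $(\mathcal{F}_{M}^{2},\mathcal{F}_{M}^{1}\times^{M}P,\beta_{M})$; the $\Bun_{P}$-component $\mathcal{F}_{M}^{1}\times^{M}P$ is precisely $h_{M}^{\ra}(\mathcal{F}_{M}^{2},\mathcal{F}_{M}^{1},\beta_{M})=\mathcal{F}_{M}^{1}$ pushed forward along the canonical section $\mathcal{F}_{M}\mapsto\mathcal{F}_{M}\times^{M}P$ of $\mf{q}_{P}$, i.e. $\alpha$ applied to our point, while the $\mathrm{Mod}^{+,\theta}_{\Bun_{M}}$-component is unchanged. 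This is exactly the map $\alpha\times\mathrm{id}$, so the lemma follows. I do not expect a genuine obstacle here: the only thing that needs care is tracking which $P$-structure on the associated $G$-bundle is in play, and the vanishing of the modification $\beta^{0}$ is exactly what pins it down to the tautological one $\mathcal{F}_{M}^{1}\times^{M}P$.
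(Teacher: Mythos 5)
Your unwinding is correct and is exactly the verification the paper intends: the paper states this lemma without proof (remarking that it is immediate from the construction), and the intended argument is precisely your point-by-point comparison of $\mf{s}^{\theta}$, the map $\tilde{Z}^{\theta}_{\Bun_{M}}\to\wt{\Bun}_{P}$ from Lemma \ref{lemma: relationofZastavatoPstructures}, and the isomorphism $\tilde{j}_{\theta}$ of Proposition \ref{prop: tildestratadesc}, using that $\beta^{0}$ is the identity so the induced $\tilde{\kappa}^{\mathcal{V}}$ factors through the split $P$-structure $\mathcal{F}_{M}^{1}\times^{M}P$. No gaps.
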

Fortunately, after trivializing the bundle $\mathcal{F}_{M}^{1}$, we can arrange cohomologically smooth locally that $\tilde{Z}^{\theta}_{\Bun_{M}}$ uniformizes $\phantom{}_{\leq \theta}\wt{\Bun}_{P}$. To see this, we will need to use the factorization structure on the Zastava spaces.

\subsection{Factorization}

In this section, we will stick to studying the Zastava space $\tilde{Z}^{\theta}$, which we recall is the space studied in the previous section, but the bundle $\mathcal{F}_{M}^{1}$ is equal to the trivial $M$-bundle. This space maps naturally to
\[
\mathrm{Mod}^{+,\theta}_M := \mathrm{Mod}^{+,\theta}_{\Bun_M}\times_{\Bun_M} \ast.
\]
These spaces exhibit a nice factorization structure.

In particular, let $\theta = \theta_{1} + \theta_{2}$ with $\theta_{i} \in \Lambda_{G,P}^{\mathrm{pos}}$. We write
\[ (\Div^{(\theta_{1})} \times \Div^{(\theta_{2})})_{\disj} \subset \Div^{(\theta_{1})} \times \Div^{(\theta_{2})}  \]
for the open subset where the corresponding Cartier divisors are disjoint. We note that there is a natural \'etale map 
\[ (\Div^{(\theta_{1})} \times \Div^{(\theta_{2})})_{\disj} \ra \Div^{(\theta)} \] 
given by taking the union of Cartier divisors. We have the following factorization property.
\begin{proposition}{\label{prop: factorizationproperty}}
With notation as above, there is a natural isomorphism 
\[ (\Div^{(\theta_{1})} \times \Div^{(\theta_{2})})_{\disj} \times_{\Div^{(\theta)}} \tilde{Z}^{\theta} \simeq (\Div^{(\theta_{1})} \times \Div^{(\theta_{2})})_{\disj} \times_{\Div^{(\theta_{1})} \times \Div^{(\theta_{2})}} (\tilde{Z}^{\theta_{1}} \times \tilde{Z}^{\theta_{2}}) \]
lying over a natural isomorphism 
\[ (\Div^{(\theta_{1})} \times \Div^{(\theta_{2})})_{\disj} \times_{\Div^{(\theta)}} \mathrm{Mod}_{M}^{+,\theta} \simeq (\Div^{(\theta_{1})} \times \Div^{(\theta_{2})})_{\disj} \times_{\Div^{(\theta_{1})} \times \Div^{(\theta_{2})}} (\mathrm{Mod}_{M}^{+,\theta_{1}} \times \mathrm{Mod}_{M}^{+,\theta_{2}}). \]
such that the sections $\mf{s}^{\theta}$ and $\mf{s}^{\theta_{1}} \times \mf{s}^{\theta_{2}}$ agree after base-changing and applying these identifications.
\end{proposition}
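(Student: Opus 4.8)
The plan is to reduce the statement to a Beauville--Laszlo style gluing assertion for modifications of bundles along disjoint relative Cartier divisors, and then to observe that all the conditions cutting out the Zastava space from the space of such modifications are local along the curve, hence decompose accordingly. Everything will be manifestly natural in $S$, so naturality of the resulting isomorphisms is automatic.

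First I would treat the identity over $\mathrm{Mod}^{+,\theta}_M$. Recall that here $\mathcal{F}_M^1$ is the trivial $M$-bundle, so a point of $\mathrm{Mod}^{+,\theta}_M$ over $S$ whose image in $\Div^{(\theta)}$ comes from a point of $(\Div^{(\theta_1)}\times\Div^{(\theta_2)})_{\disj}$, i.e.\ from a pair of disjoint effective relative Cartier divisors $D_1,D_2\subset X_S$ with $D=D_1\sqcup D_2$, amounts to an $M$-bundle $\mathcal{F}_M^2$ together with a modification $\beta_M:\mathcal{F}_M^2\dashrightarrow \mathcal{F}_M^0$ to the trivial bundle which is an isomorphism away from $D$ and of total meromorphy $\theta$. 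Since $D_1$ and $D_2$ have disjoint support over $S$, the Beauville--Laszlo theorem for the relative Fargues--Fontaine curve (in the form used for instance in \cite{FarguesScholze} and \cite{AL1}) produces $\mathcal{F}_M^2$ with $\beta_M$ by gluing the trivial bundle on $X_S\smallsetminus D$ with modifications of the trivial bundle over the formal completions along $D_1$ and along $D_2$ separately; and the meromorphy bound is checked componentwise in $\mathbb{X}_\ast(M^{\mathrm{ab}}_{\ol{E}})$, so the decomposition $\theta=\theta_1+\theta_2$ corresponds precisely to imposing meromorphy $\theta_1$ at $D_1$ and $\theta_2$ at $D_2$. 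This gives the displayed isomorphism for $\mathrm{Mod}^{+,\theta}_M$; allowing $\mathcal{F}_M^1$ to vary over $\Bun_M$ gives the analogous statement for $\mathrm{Mod}^{+,\theta}_{\Bun_M}$.

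Next I would upgrade this to $\tilde{Z}^{\theta}$. Over a point of $\mathrm{Mod}^{+,\theta}_M$ lying over the disjoint locus, the extra data consists of a $G$-bundle $\mathcal{F}_G$ with a modification $\beta:\mathcal{F}_G\dashrightarrow\mathcal{F}_G^1$ which is an isomorphism away from $D$, subject to the regularity condition (4) and the surjectivity condition (5) of Definition \ref{def: Zastavaspace}. Again by Beauville--Laszlo the pair $(\mathcal{F}_G,\beta)$ decomposes uniquely as a pair of modifications of $\mathcal{F}_G^1$, one supported at $D_1$ and one at $D_2$ (equivalently, one applies the gluing to the associated exact tensor functor out of $\Rep(G)$). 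Conditions (4) and (5) assert fiberwise injectivity, resp.\ surjectivity, of certain maps of vector bundles on $X_S$; such a condition holds on all of $X_S$ if and only if it holds on each of the two open subsets $X_S\smallsetminus D_2$ and $X_S\smallsetminus D_1$, whose union is $X_S$, and on $X_S\smallsetminus D_2$ (resp.\ $X_S\smallsetminus D_1$) the relevant map only involves the modification supported at $D_1$ (resp.\ at $D_2$). Hence a point of $\tilde{Z}^{\theta}$ over the disjoint locus is the same datum as a pair of points of $\tilde{Z}^{\theta_1}$ and $\tilde{Z}^{\theta_2}$ compatible over $\mathrm{Mod}$, which is exactly the asserted isomorphism lying over the one for $\mathrm{Mod}^{+,\theta}_M$. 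Finally, the section $\mf{s}^{\theta}$ records the modification $\beta^0$ induced by the identity; under the gluing this decomposes as the identity modification at $D_1$ and at $D_2$, so $\mf{s}^{\theta}$ matches $\mf{s}^{\theta_1}\times\mf{s}^{\theta_2}$ — this last comparison is immediate.

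The step requiring the most care is the geometric input in the first two paragraphs: checking that the appropriate form of Beauville--Laszlo gluing is available on the relative Fargues--Fontaine curve for flat coherent sheaves and $G$-bundles along a pair of disjoint relative Cartier divisors, and that it is compatible with the full Tannakian datum and with the meromorphy bookkeeping (so that $\theta=\theta_1+\theta_2$ really corresponds to the splitting $D=D_1\sqcup D_2$). Once this is granted, the componentwise verification of conditions (4) and (5) and the comparison of sections are formal.
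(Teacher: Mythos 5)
Your proposal is correct and follows essentially the same route as the paper: both split the data over the disjoint locus via Beauville--Laszlo gluing along the formal completions of the two disjoint Cartier divisors, obtaining modifications supported at $D_1$ and $D_2$ separately, with the meromorphy and the section comparison handled componentwise. Your extra remarks that conditions (4) and (5) of Definition \ref{def: Zastavaspace} can be checked on the open cover $X_S\smallsetminus D_2$, $X_S\smallsetminus D_1$ merely spell out a point the paper leaves implicit, so there is no substantive difference.
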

\begin{proof}
For $S \in \Perf$, consider an $S$-point of $(\mathcal{F}_{G},\mathcal{F}_{M},\beta_{M},\beta)$ of $\tilde{Z}^{\theta} \times_{\Div^{(\theta)}} (\Div^{(\theta_{1})} \times \Div^{(\theta_{2})})$ lying over the open subspace $(\Div^{(\theta_{1})} \times \Div^{(\theta_{2})})_{\disj}$, and let $D_{S}^{1}$ and $D_{S}^{2}$ be the effective Cartier divisors in $X_{S}$ corresponding to $\theta_{1}$ and $\theta_{2}$, respectively. We let $X_{S} \setminus D_{S}^{i} =: X_{S}^{i}$ for $i = 1,2$, and set $X_{S}^{0} := X_{S} \setminus (\bigcup_{i = 1}^{2} D_{S}^{i})$. For $i = 1,2$, by using Beauville-Laszlo gluing \cite[Lemma~5.2.9]{SW} on $X_{S}$ we can define $G$-bundles $\mathcal{F}_{G}^{i}$ by modifying the trivial $G$-bundle via $\beta$ at just the formal completions of the $D_{S}^{i}$. By construction, we have modifications $\beta^{i}: \mathcal{F}_{G}^{i} \dashrightarrow \mathcal{F}_{G}^{0}$ which are isomorphisms away from the $D_{i}$. Similarly, from the $S$-point  $(\mathcal{F}_{M},\beta_{M})$ in $\mathrm{Mod}^{+,\theta}_{M} \times_{\Div^{(\theta)}} (\Div^{(\theta_{1})} \times \Div^{(\theta_{2})})_{\disj}$, we obtain $S$-points $(\mathcal{F}_{M}^{i},\beta_{M}^{i}) \in \mathrm{Mod}^{+,\theta_{i}}_{M}$ for $i =1,2$, and in turn points $(\mathcal{F}_{G}^{i},\mathcal{F}_{M}^{i},\beta_{M}^{i},\beta^{i}) \in \tilde{Z}^{\theta_{i}}$, as desired. The converse direction proceeds analogously. 
\end{proof}
To simplify the notation, if we are given a stack $Z$ living over $\Div^{(\theta_{1})} \times \Div^{(\theta_{2})}$, we write $Z_{\disj}$ for its pullback to the open subspace $(\Div^{(\theta_{1})} \times \Div^{(\theta_{2})})_{\disj}$ so that the factorization property reads $(\Div^{(\theta_{1})} \times \Div^{(\theta_{2})})_{\disj} \times_{\Div^{(\theta)}} \tilde{Z}^{\theta} \simeq (Z^{\tilde{\theta}_{1}} \times Z^{\tilde{\theta}_{2}})_{\disj}$.

Fix an element $\theta' \in \Lambda_{G,P}^{\mathrm{pos}}$. As in the previous section, the space $\tilde{Z}^{\theta'}$ has a stratification given by $\phantom{}_{\theta}\tilde{Z}^{\theta'}$ for $\theta \leq \theta'$ given by fixing the torsion cokernel of the embeddings
\[ (\mc{V}^{U})_{\mathcal{F}_{M}} \xrightarrow{\beta^{\mc{V}^U}_{M}} (\mc{V}^{U})_{\mathcal{F}_{M}^{0}} \hookrightarrow \mathcal{V}^{\mc{V}}_{\mathcal{F}_{G}^{0}} \xdashrightarrow{(\beta^{\mc{V}})^{-1}} \mathcal{V}^{\mc{V}}_{\mathcal{F}_{G}}, \]
where the open stratum corresponding to $\theta = 0$ is precisely $Z^{\theta'}$. 

Similarly, we write $\phantom{}_{\leq \theta}\tilde{Z}^{\theta'}$ for the corresponding open subspaces given by bounding the torsion by $\theta$. If we are given a partition $\theta_{1}' + \theta_{2}' = \theta'$ then we write $(\tilde{Z}^{\theta_{1}'} \times \tilde{Z}^{\theta_{2}'})_{\disj, \leq \theta}  \subset (\tilde{Z}^{\theta_{1}'} \times \tilde{Z}^{\theta_{2}'})_{\disj}$ for the open subspace corresponding to $(\Div^{(\theta_{1})} \times \Div^{(\theta_{2})})_{\disj} \times_{\Div^{(\theta)}} \phantom{}_{\leq \theta}\tilde{Z}^{\theta'}$ under the factorization isomorphism. One easily sees the following Corollary.
\begin{corollary}
For $\theta, \theta' \in \Lambda_{G,P}^{\mathrm{pos}}$ with $\theta' \geq \theta$, and a partition $\theta' = \theta'_{1} + \theta'_{2}$, the subspace $(\tilde{Z}^{\theta_{1}'} \times \tilde{Z}^{\theta_{2}'})_{\disj, \leq \theta} \subset (\tilde{Z}^{\theta_{1}'} \times \tilde{Z}^{\theta_{2}'})_{\disj}$ is stratified by 
\[ (\phantom{}_{\theta_{1}}\tilde{Z}^{\theta_{1}'} \times \phantom{}_{\theta_{2}}\tilde{Z}^{\theta_{2}'})_{\disj}  \]
for $\theta_{1} \leq \theta_{1}'$ and $\theta_{2} \leq \theta_{2}'$ such that $\theta_{1} + \theta_{2} = \theta$.
\end{corollary}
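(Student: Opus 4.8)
The plan is to deduce the statement from the factorization isomorphism of Proposition~\ref{prop: factorizationproperty} together with the additivity of torsion lengths over disjoint supports. Recall that for a single $\theta'\in\Lambda_{G,P}^{\mathrm{pos}}$ and $\theta\le\theta'$ the stratum $\phantom{}_{\theta}\tilde{Z}^{\theta'}\subset\tilde{Z}^{\theta'}$ is the locus on which, for every $G$-module $\mathcal{V}$, the cokernel of the meromorphic composite
\[
(\mathcal{V}^{U})_{\mathcal{F}_M}\xrightarrow{\beta_M^{\mathcal{V}^U}}(\mathcal{V}^U)_{\mathcal{F}_M^0}\hookrightarrow \mathcal{V}_{\mathcal{F}_G^0}\xdashrightarrow{(\beta^{\mathcal{V}})^{-1}}\mathcal{V}_{\mathcal{F}_G}
\]
has torsion part of length exactly $\langle\mathcal{V}^{[P,P]},\theta\rangle$, while $\phantom{}_{\le\theta}\tilde{Z}^{\theta'}$ is the open locus where these lengths are $\le\langle\mathcal{V}^{[P,P]},\theta\rangle$ for all $\mathcal{V}$; these are locally closed resp.\ open by the upper semicontinuity of Lemma~\ref{lemma: uppersemicont}, and (arguing as in Proposition~\ref{prop: strata well-behaved}) the family of these lengths over all $\mathcal{V}$ is governed by a single torsion invariant in $\Lambda_{G,P}^{\mathrm{pos}}=B(T_P)^{\mathrm{pos}}$ which for a point of $\tilde{Z}^{\theta'}$ is bounded above by $\theta'$. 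Thus the $\phantom{}_{\theta}\tilde{Z}^{\theta'}$ with $\theta\le\theta'$ stratify $\tilde{Z}^{\theta'}$, and those with $\theta\le\theta''$ stratify $\phantom{}_{\le\theta''}\tilde{Z}^{\theta'}$.

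Next I would trace this invariant through the factorization isomorphism for a partition $\theta'=\theta_1'+\theta_2'$. Given an $S$-point of $(\Div^{(\theta_1')}\times\Div^{(\theta_2')})_{\disj}\times_{\Div^{(\theta')}}\tilde{Z}^{\theta'}$, with disjoint effective divisors $D_S^1,D_S^2\subset X_S$, the Beauville--Laszlo construction used in Proposition~\ref{prop: factorizationproperty} reconstructs its datum $(\mathcal{F}_G,\mathcal{F}_M,\beta_M,\beta)$ by gluing $S$-points of $\tilde{Z}^{\theta_i'}$ at the formal completions along $D_S^i$, where $\beta_M$ and $\beta$ are isomorphisms away from $D_S^1\sqcup D_S^2$ and agree near $D_S^i$ with the datum of the $i$-th factor. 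Consequently, for each $\mathcal{V}$ the coherent cokernel sheaf above has torsion supported in $D_S^1\sqcup D_S^2$ and is canonically the direct sum of the two torsion sheaves coming from the factors, compatibly in $\mathcal{V}$. Hence the $\Lambda_{G,P}^{\mathrm{pos}}$-valued torsion invariant is additive: a point of $(\tilde{Z}^{\theta_1'}\times\tilde{Z}^{\theta_2'})_{\disj}$ has invariant $\theta_1+\theta_2$, with $\theta_i\le\theta_i'$ the invariant of its $i$-th factor.

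Combining the two steps, pulling back the decomposition $\phantom{}_{\le\theta}\tilde{Z}^{\theta'}=\bigsqcup_{\theta''\le\theta}\phantom{}_{\theta''}\tilde{Z}^{\theta'}$ along the \'etale map $(\Div^{(\theta_1')}\times\Div^{(\theta_2')})_{\disj}\to\Div^{(\theta')}$ and transporting it through the factorization isomorphism, the pullback of each $\phantom{}_{\theta''}\tilde{Z}^{\theta'}$ is precisely the locus of $(\tilde{Z}^{\theta_1'}\times\tilde{Z}^{\theta_2'})_{\disj}$ on which the factorwise invariants satisfy $\theta_1+\theta_2=\theta''$, namely $\bigsqcup_{\theta_1+\theta_2=\theta'',\,\theta_i\le\theta_i'}(\phantom{}_{\theta_1}\tilde{Z}^{\theta_1'}\times\phantom{}_{\theta_2}\tilde{Z}^{\theta_2'})_{\disj}$, each summand being locally closed as a product of locally closed substacks intersected with the open disjointness locus. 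Running $\theta''$ over $\theta''\le\theta$ then yields the asserted stratification of $(\tilde{Z}^{\theta_1'}\times\tilde{Z}^{\theta_2'})_{\disj,\le\theta}$. The one place requiring genuine care is the additivity step: one must check cleanly that the torsion cokernel splits as a direct sum over the two disjoint divisors uniformly in $\mathcal{V}$, and that this direct-sum decomposition matches the identification of the torsion invariant with an element of $B(T_P)^{\mathrm{pos}}$ in Definition~\ref{defstrata}; this amounts to a direct unwinding of the Beauville--Laszlo gluing of Proposition~\ref{prop: factorizationproperty}, and everything else is formal.
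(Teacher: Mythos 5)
Your argument is correct and takes essentially the same route the paper intends: the paper states this corollary without proof ("one easily sees"), as an immediate consequence of the factorization isomorphism of Proposition \ref{prop: factorizationproperty} together with the additivity, over the two disjoint divisors, of the torsion invariant in $\Lambda_{G,P}^{\mathrm{pos}}$ that defines the stratification. Your unwinding of the Beauville--Laszlo gluing to verify this additivity uniformly in $\mathcal{V}$ is exactly the content the paper leaves implicit.
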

We will make use of the following related result.
\begin{corollary}{\label{cor: facotrizationwithstratifications}}
For $\theta, \theta' \in \Lambda_{G,P}^{\mathrm{pos}}$ with $\theta' \geq \theta$, we have an open immersion 
\[  (\tilde{Z}^{\theta} \times Z^{\theta' - \theta})_{\disj} \hookrightarrow (\Div^{(\theta)} \times \Div^{(\theta' - \theta)})_{\disj} \times_{\Div^{(\theta')}} \phantom{}_{\leq \theta}\tilde{Z}^{\theta'} \]
\end{corollary}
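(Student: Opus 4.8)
The plan is to read the statement off from the factorization isomorphism of Proposition~\ref{prop: factorizationproperty}, applied to the partition $\theta' = \theta + (\theta' - \theta)$, together with the open-subspace description recorded just before the statement.

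First I would specialize Proposition~\ref{prop: factorizationproperty} to $\theta_1' = \theta$ and $\theta_2' = \theta' - \theta$. This yields a natural isomorphism
\[
(\Div^{(\theta)} \times \Div^{(\theta' - \theta)})_{\disj} \times_{\Div^{(\theta')}} \tilde{Z}^{\theta'} \;\simeq\; (\tilde{Z}^{\theta} \times \tilde{Z}^{\theta' - \theta})_{\disj},
\]
under which, by the definition given just before the statement, the open subspace $(\Div^{(\theta)} \times \Div^{(\theta' - \theta)})_{\disj} \times_{\Div^{(\theta')}} \phantom{}_{\leq \theta}\tilde{Z}^{\theta'}$ corresponds to $(\tilde{Z}^{\theta} \times \tilde{Z}^{\theta' - \theta})_{\disj, \leq \theta}$. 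So it suffices to produce an open immersion
\[
(\tilde{Z}^{\theta} \times Z^{\theta' - \theta})_{\disj} \hookrightarrow (\tilde{Z}^{\theta} \times \tilde{Z}^{\theta' - \theta})_{\disj, \leq \theta},
\]
and then compose with the factorization isomorphism.

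Second, recall that $Z^{\theta'-\theta} = \phantom{}_{0}\tilde{Z}^{\theta'-\theta}$ is the open stratum of $\tilde{Z}^{\theta'-\theta}$, so $(\tilde{Z}^{\theta} \times Z^{\theta'-\theta})_{\disj}$ is automatically an open subspace of $(\tilde{Z}^{\theta} \times \tilde{Z}^{\theta'-\theta})_{\disj}$; it remains only to check it is contained in the $\leq\theta$ locus. For this I would use the Beauville--Laszlo gluing description from the proof of Proposition~\ref{prop: factorizationproperty}: a point of $(\tilde{Z}^{\theta} \times Z^{\theta'-\theta})_{\disj}$, viewed as a point of $\tilde{Z}^{\theta'}$, is glued from a modification supported near the $\theta$-divisor $D^1$ and one supported near the $(\theta'-\theta)$-divisor $D^2$, and since the $D^2$-component lies in $Z^{\theta'-\theta}$ the torsion cokernel of its Pl\"ucker maps vanishes near $D^2$; hence the torsion cokernel of the Pl\"ucker maps of the glued point is supported near $D^1$, where it is bounded by $\theta$ because $\tilde{Z}^{\theta}$ is stratified by the $\phantom{}_{\theta_1}\tilde{Z}^{\theta}$ with $\theta_1 \leq \theta$ (the emptiness lemma for boundary strata of $\tilde Z^\theta$ proved above). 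Therefore this point lies over $\phantom{}_{\leq\theta}\tilde{Z}^{\theta'}$, which gives the desired containment; equivalently, one can simply quote the stratification description of $(\tilde{Z}^{\theta}\times\tilde{Z}^{\theta'-\theta})_{\disj,\leq\theta}$ from the Corollary preceding the statement.

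There is no serious obstacle here: the statement is essentially bookkeeping once Proposition~\ref{prop: factorizationproperty} and the stratifications of the Zastava spaces are in hand. The only point requiring a word is the containment in the $\leq\theta$ locus, and that is immediate from the emptiness of the strata $\phantom{}_{\theta_1}\tilde{Z}^{\theta}$ for $\theta_1 \not\leq \theta$ together with the fact that the factorization isomorphism localizes the torsion cokernel of the Pl\"ucker maps along the two disjoint divisors.
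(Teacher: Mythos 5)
Your argument is correct and is exactly the bookkeeping the paper has in mind: it states this corollary without proof as an immediate consequence of Proposition~\ref{prop: factorizationproperty} and the stratification recorded in the preceding corollary, which is precisely how you proceed (openness of $Z^{\theta'-\theta}\subset\tilde{Z}^{\theta'-\theta}$ plus the bound $\theta_1\leq\theta$ on the torsion coming from the $\tilde{Z}^{\theta}$-factor). No gaps.
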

The factorization property is an expression of the fact that the Zastava space $\tilde{Z}^{\theta}$ is really determined by local information around the corresponding point in $\Div^{\theta}$. One consequence of this "locality" is a $\mathbb{G}_{m}$-action on the Zastava space, studied in the next section.

\subsection{Hyperbolic localization on Zastava spaces}

In \S \ref{sec: Zastava uniformizes Drinfeld}, we saw that the map
\[ h_{M}^{\ra}: \tilde{Z}^{\theta} \ra \mathrm{Mod}^{+,\theta}_{M} \]
admits a section $\mf{s}^{\theta}$. In this section, we construct (\'etale locally on $\Div^{(\theta)}$) a $\mathbb{G}_{m}$-action on $\tilde{Z}^{\theta}$ which contracts to this section. 
Consider $\Gr_{G,\Div^{(\theta)}} \ra \Div^{(\theta)}$, the $B_{dR}^{+}$ Beilinson-Drinfeld Grassmannian over $\Div^{(\theta)}$, as in \cite[Definition~VI.1.8]{FarguesScholze} defined using the loop groups $L^{+}_{\Div^{(\theta)}}G$ and $L_{\Div^{(\theta)}}G$. For $d \geq 0$, we will also consider the mirror curves
\[ \Div^{(d)}_{\mathcal{Y}} := \Spd(\Breve{E})^{d}/S_{d}, \]
which, for $S \in \Perf$, parametrizes Cartier divisors in the open adic Fargues-Fontaine curve $\mathcal{Y}_{S}$, by \cite{FarguesScholze}.\footnote{Our notation here differs from that in \cite{FarguesScholze}. In particular, what we denote as $\mathcal{Y}_{S}$ is denoted there as $Y_{S}$; we emphasize that we do not include the locus where $\pi = 0$.} Similarly, for a finite extension $E'/E$, we write $\Div^{(d)}_{\mathcal{Y},E'}$ for the corresponding object constructed from $E'$.

We recall that we have an \'etale covering (in fact a local isomorphism) $\mathcal{Y}_{S} \ra \mathcal{X}_{S}$ which in turn induces a map
\[ \Div^{(d)}_{\mathcal{Y}} \ra \Div^{(d)} \]
of mirror curves that is \'etale locally split. More precisely, restricting to the open subspace $\Div^{(d),\prime}$ of $\Div^{(d)}_{\mathcal Y}$ where no point is a proper Frobenius translate of another point, the map to $\Div^{(d)}$ is \'etale.

For $\theta \in \Lambda_{G,P}^{\mathrm{pos}}$ with $\theta = \sum_{i \in \mathcal{J}_{G,P}} n_{i}\alpha_{i}$, we set 
\[\Div^{(\theta)}_{\mathcal Y} := \prod_{i \in \mathcal{J}_{G,P}} \Div^{(n_{i}),\prime}_{\mathcal Y,E_{i}}\supset \Div^{(\theta),\prime} := \prod_{i \in \mathcal{J}_{G,P}} \Div^{(n_{i}),\prime}_{E_{i}},\] where $E_{i}$ denotes the reflex field of the Galois orbit of simple coroots corresponding to $i \in \mathcal{J}_{G,P}$. Thus, $\Div^{(\theta),\prime}\subset \Div^{(\theta)}_{\mathcal Y}$ is an open subset with an \'etale surjective map to $\Div^{(\theta)}$.

We write $\Gr_{G,\Div^{(\theta)}_{\mathcal{Y}}} \ra \Div^{(\theta)}_{\mathcal{Y}}$ for the associated Beilinson-Drinfeld Grassmannian given as the \'etale sheafification of the quotient: $L_{\Div^{(\theta)}_{\mathcal{Y}}}G/L^{+}_{\Div^{(\theta)}_{\mathcal{Y}}}G$.

We choose a cocharacter $\lambda: \mathbb{G}_{m} \ra G$ such that the associated parabolic $P_{\lambda}$ under the dynamical method is $P^{-}$. We get an action of $\mathbb{G}_{m}$ on $\Gr_{G,\Div^{(\theta)}_{\mathcal{Y}}}$ by considering the composition
\[ \mathbb{G}_{m} \ra L^{+}_{\Div^{(\theta)}_{\mathcal{Y}}}\mathbb{G}_{m} \xrightarrow{L^{+}\lambda} L^{+}_{\Div^{(\theta)}_{\mathcal{Y}}}G \]
and using the left action of $L^{+}_{\Div^{(\theta)}_{\mathcal{Y}}}$. Here the first map is given by the Teichm\"uller character, where we note that this is only well-defined on the curve $\mathcal Y_S$ and not on the curve $X_S$, since on the latter we quotient out the relevant Witt vector ring by Frobenius.

We let $\Gr_{U^{-},\Div^{(\theta)}_{\mathcal{Y}}} \ra \Div^{(\theta)}_{\mathcal{Y}}$ (resp. $\Gr_{U^{-},\Div^{(\theta)}} \ra \Div^{(\theta)}$) denote the Beilinson-Drinfeld affine Grassmannian attached to $U^{-}$ over $\Div^{(\theta)}_{\mathcal{Y}}$ (resp. $\Div^{(\theta)}$). We recall that the projection map has a unit section corresponding to the trivial $L^{+}_{\Div^{(\theta)}}U^{-}$ (resp . $L^{+}_{\Div^{(\theta)}_{\mathcal{Y}}}U^{-}$) orbit.
\begin{proposition}
The natural inclusion $\Gr_{U^{-},\Div^{(\theta)}_{\mathcal{Y}}} \hookrightarrow \Gr_{G,\Div^{(\theta)}_{\mathcal{Y}}}$ is a locally closed immersion. For $\lambda$ as above, the $\mathbb{G}_{m}$-action on $\Gr_{G,\Div^{(\theta)}_{\mathcal{Y}}}$ preserves the locally closed subspace $\Gr_{U^{-},\Div^{(\theta)}_{\mathcal{Y}}}$ and extends to an action of the multiplicative monoid $\mathbb{A}^{1}$. The $\mathbb{G}_{m}$ fixed points of this action is the closed image of the unit section $\Div^{(\theta)}_{\mathcal{Y}} \ra \Gr_{U^{-},\Div^{(\theta)}_{\mathcal{Y}}}$.
\end{proposition}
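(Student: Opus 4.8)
The plan is to verify all three assertions after étale base change along $\Div^{(\theta),\prime}\to \Div^{(\theta)}$ and, refining, over a geometric point of $\Div^{(\theta)}_{\mathcal{Y}}$; there the Beilinson--Drinfeld Grassmannians in question become products of $B_{\mathrm{dR}}^{+}$-affine Grassmannians attached to a finite collection of (possibly infinitesimally thickened) points of $\mathcal{Y}_{C}$, to which the structure theory of \cite[\S VI.1--VI.2]{FarguesScholze} applies. In this local picture $\Gr_{U^{-}}:=\Gr_{U^{-},\Div^{(\theta)}_{\mathcal{Y}}}$ is the quotient $LU^{-}/L^{+}U^{-}$, and choosing a central series of $U^{-}$ with graded pieces $\mathbb{G}_{a}$ exhibits it as an increasing union of bounded pieces $\Gr_{U^{-}}^{\leq n}$, each an iterated extension of vector groups over the base, on which the $\mathbb{G}_{m}$-action of $\lambda$ acts on the layer attached to a root $\alpha$ of $U^{-}$ through the character $t\mapsto t^{\langle \alpha,\lambda\rangle}$, with $\langle\alpha,\lambda\rangle>0$ since $P_{\lambda}=P^{-}$.

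For the locally closed immersion, first I would observe that $\Gr_{U^{-}}\to \Gr_{G}:=\Gr_{G,\Div^{(\theta)}_{\mathcal{Y}}}$ is a monomorphism, because $LU^{-}\cap L^{+}G=L^{+}U^{-}$ as $U^{-}\subset G$ is a closed subgroup scheme; the same identity shows $\Gr_{P^{-}}\to \Gr_{G}$ is a monomorphism. Factoring through $\Gr_{U^{-}}\hookrightarrow \Gr_{P^{-}}\to \Gr_{G}$, the map $\Gr_{P^{-}}\to \Gr_{M}$ induced by $P^{-}\to M$ is a fibration in (twisted) affine spaces whose base change along the unit section $\Div^{(\theta)}_{\mathcal{Y}}\hookrightarrow \Gr_{M}$ is $\Gr_{U^{-}}$; since this unit section is a closed immersion (being a section of a separated map), $\Gr_{U^{-}}\hookrightarrow \Gr_{P^{-}}^{0}$ (the identity component, i.e.\ the preimage of $\Gr_{M}^{0}$) is a closed immersion. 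It then remains to see that $\Gr_{P^{-}}^{0}\hookrightarrow \Gr_{G}$ is a locally closed immersion. Here I would use that $P_{\lambda}=P^{-}$ forces $Z_{G}(\lambda)=M$, so the $\mathbb{G}_{m}$-fixed locus of $\Gr_{G}$ is $\Gr_{M}$ and $\Gr_{P^{-}}^{0}$ is precisely the locus of $\Gr_{G}$ attracted under $\lambda$ to the identity component $\Gr_{M}^{0}$ of the fixed locus. This is locally closed by the attractor--repeller formalism applied to the $\mathbb{G}_{m}$-stable Schubert cells $\Gr_{G,\leq\mu}$ --- these are spatial and proper over the base, so that the orbit maps, whose image lies in a single such cell, extend over $0$ --- followed by passage to the colimit over $\mu$; equivalently this is the local closedness of the parabolic semi-infinite orbit, as in \cite{BG} classically.

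For the remaining assertions I would argue as follows. The $\lambda$-action on $\Gr_{G}$ preserves the subspace $\Gr_{U^{-}}$, and on it is given by conjugation: for $u\in LU^{-}$ one has $\lambda(t)\cdot (uL^{+}G)=\big(\lambda(t)u\lambda(t)^{-1}\big)\,\lambda(t)L^{+}G=\big(\lambda(t)u\lambda(t)^{-1}\big)L^{+}G$, since $\lambda(t)$ is a constant section of $G$ and hence lies in $L^{+}G$; conjugation preserves $LU^{-}$, $L^{+}U^{-}$, and each $\Gr_{U^{-}}^{\leq n}$. By the weight computation above, each $\Gr_{U^{-}}^{\leq n}$ is an iterated extension of vector groups on which $\mathbb{G}_{m}$ acts with strictly positive weights, so the action extends to an action of the monoid $\mathbb{A}^{1}$ contracting $\Gr_{U^{-}}^{\leq n}$ onto the unit section; these extensions are compatible in $n$ and glue to an $\mathbb{A}^{1}$-action on $\Gr_{U^{-}}$ whose value at $0$ is the projection to the unit section. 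In particular a $\mathbb{G}_{m}$-fixed point of $\Gr_{U^{-}}$ lies in some $\Gr_{U^{-}}^{\leq n}$ with all layer-coordinates (which carry nontrivial characters) vanishing, hence lies in the unit section, and the unit section is closed as a section of the separated map $\Gr_{U^{-}}\to \Div^{(\theta)}_{\mathcal{Y}}$. The one step that genuinely requires work is the local closedness of $\Gr_{P^{-}}^{0}\hookrightarrow \Gr_{G}$ above: once the attractor--repeller package (equivalently, the semi-infinite Bruhat stratification) is in place for the $B_{\mathrm{dR}}^{+}$-affine Grassmannian, the rest is formal from the affine-space-bundle structure of $\Gr_{U^{-}}$.
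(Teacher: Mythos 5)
Your reduction of everything to the local closedness of $\Gr_{P^{-},\Div^{(\theta)}_{\mathcal Y}}^{0}\hookrightarrow \Gr_{G,\Div^{(\theta)}_{\mathcal Y}}$ is where the proposal has a genuine gap: the justification you give for that step --- an ``attractor--repeller formalism'' applied to the $\mathbb{G}_m$-stable Schubert cells, with orbit maps extending over $0$ by properness, plus an appeal to \cite{BG} ``classically'' --- is not available in this setting. Knowing that every point of a proper, $\mathbb{G}_m$-stable Schubert cell admits a limit at $t=0$ only describes the attractor pointwise; it does not produce a locally closed subspace representing it, and there is no off-the-shelf Bia{\l}ynicki--Birula-type theorem for spatial diamonds or v-sheaves that you can quote (the reference \cite{BG} lives in the schematic setting of classical geometric Langlands). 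In the $p$-adic context this local closedness of semi-infinite orbits is exactly the nontrivial input, and the paper imports it rather than reproving it: the first assertion is obtained directly from \cite[Proposition~20.3.7]{SW} applied to the closed subgroup $U^{-}\subset G$, using that $G/U^{-}$ is quasi-affine (Grosshans, \cite{Gro}), while the statements about $\Gr_{P^{-},\Div^{(\theta)}_{\mathcal Y}}$, the $\mathbb{G}_m$-action, its extension to $\mathbb{A}^{1}$ and its fixed locus $\Gr_{M,\Div^{(\theta)}_{\mathcal Y}}$ are quoted from \cite[Proposition~VI.3.1]{FarguesScholze} and its proof. Your argument can be repaired by citing these same results in place of the purported attractor package, but then the detour through $\Gr_{P^{-}}$ and the fiber over the unit section is no longer needed for the immersion claim.

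Two smaller points. First, the opening reduction ``over a geometric point of $\Div^{(\theta)}_{\mathcal Y}$'' is not legitimate for the immersion statement: being a locally closed immersion cannot be checked fiberwise over geometric points of the base (a disjoint union of geometric points is not a v-cover), and the extension of the action to $\mathbb{A}^{1}$ must be constructed over the base, not fiber by fiber; fortunately none of your subsequent arguments actually depend on this reduction. Second, your treatment of the second and third assertions is essentially the paper's and is fine: the computation that the Teichm\"uller-lifted cocharacter lands in $L^{+}G$ and acts on $\Gr_{U^{-},\Div^{(\theta)}_{\mathcal Y}}$ by conjugation, the contraction coming from $\langle\alpha,\lambda\rangle>0$ on the filtration of $U^{-}$ (strictly speaking the action on graded pieces is multiplication by $[t]^{\langle\alpha,\lambda\rangle}$ rather than a literal linear weight, but this still extends over $t=0$), and the deduction that a $\mathbb{G}_m$-fixed point is contracted to itself and hence lies in the closed unit section.
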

\begin{proof}
By \cite[Proposition~VI.3.1]{FarguesScholze}, its proof, and our assumption on $\lambda$, we have that the $\mathbb{G}_{m}$-action on $\Gr_{G,\Div^{(\theta)}_{\mathcal{Y}}}$ preserves the subspace $\Gr_{P^{-},\Div^{(\theta)}_{\mathcal{Y}}} \subset \Gr_{G,\Div^{(\theta)}_{\mathcal{Y}}}$, which is a disjoint union of locally closed subspaces; the induced action follows by taking loop groups of the associated action of $\mathbb{G}_{m}$ on $P$ by conjugation. Moreover, it extends to an action of $\mathbb{A}^{1}$ such that its fixed points are $\Gr_{M,\Div^{(\theta)}_{\mathcal{Y}}}$. It follows by \cite[Proposition~20.3.7]{SW} and the fact that $U^{-}$ is preserved under conjugation by $\mathbb{G}_{m}$ that  $\Gr_{U^{-},\Div^{(\theta)}_{\mathcal{Y}}} \subset \Gr_{G,\Div^{(\theta)}_{\mathcal{Y}}}$ is also a locally closed subspace, which is preserved under a $\mathbb{G}_{m}$-action that extends to $\mathbb{A}^{1}$. Its fixed points are precisely the unit section, by virtue of the fact that the $\mathbb{G}_{m}$-action on $U^{-}$ by conjugation contracts to the identity.  
\end{proof}

The Grassmannian $\Gr_{U^{-},\Div^{(\theta)}}$ will parametrize modifications of $G$-bundles $\beta: \mathcal{F}_{G} \dashrightarrow \mathcal{F}_{G}^{0}$ satisfying condition (5) of Definition \ref{def: Zastavaspace}. Therefore, we have a natural map 
\[ \tilde{Z}^{\theta} \hookrightarrow \Gr_{U^{-},\Div^{(\theta)}} \times_{\Div^{(\theta)}} \mathrm{Mod}^{+,\theta}_{M} \] 
mapping to the locus of the right-hand side where condition (4) of Definition \ref{def: Zastavaspace} is satisfied. It follows that this is a closed subspace by an application of Lemma \ref{lemma: uppersemicont}. Moreover, under this embedding the section $\mf{s}^{\theta}$ is given by the base-change of the unit section of $\Gr_{U^{-},\Div^{(\theta)}} \ra \Div^{(\theta)}$.

Since the map $\mathcal{Y}_{S} \ra \mathcal{X}_{S}$ is a local isomorphism, it induces an isomorphism on formal completions at Cartier divisors as long as no two distinct points of the Cartier divisor on $\mathcal Y_S$ map to the same point of $\mathcal X_S$. This is enforced on $\Div^{(\theta),\prime}\subset \Div^{(\theta)}_{\mathcal Y}$, so it follows that we have a Cartesian square 
\[ \begin{tikzcd}
\Gr_{U^{-},\Div^{(\theta)}_{\mathcal{Y}}}\times_{\Div^{(\theta)}_{\mathcal Y}} \Div^{(\theta),\prime} \arrow[r] \arrow[d] & \Div^{(\theta),\prime} \arrow[d] & \\
\Gr_{U^{-},\Div^{(\theta)}} \arrow[r] & \Div^{(\theta)} &,
\end{tikzcd} \]
where the vertical maps are \'etale. If we write $\tilde{Z}^{\theta,\prime}$ for the base-change of $\tilde{Z}^{\theta}$ along the etale covering $\Div^{(\theta),\prime} \ra \Div^{(\theta)}$ then we obtain a closed subspace
\[ \tilde{Z}^{\theta,\prime} \hookrightarrow \Gr_{U^{-},\Div^{(\theta),\prime}} \times_{\Div^{(\theta)}} \mathrm{Mod}^{+,\theta}_{M}. \]

On the target, there is the $\mathbb{G}_{m}$-action constructed above. We claim that this preserves $\tilde{Z}^{\theta,\prime}$, as well as the strata $\phantom{}_{\theta'} \tilde{Z}^{\theta,\prime}$. To see this, note that $\tilde{Z}^{\theta,\prime}(S)$ is defined functorially in the triple $(G\times_E \mathcal Y_S,P\times_E \mathcal Y_S,P^-\times_E \mathcal Y_S)$, and conjugation by $[\lambda]$ defines an action of $\mathbb G_m$ on this triple. It is this action that gets transported through all structures, yielding the action both on $\Gr_{U^{-},\Div^{(\theta),\prime}} \times_{\Div^{(\theta)}} \mathrm{Mod}^{+,\theta}_{M}$ (trivially on the second factor, as $\lambda$ is central in $M$) and on $\tilde{Z}^{\theta,\prime}$, with its intrinsically defined stratification.

\begin{corollary}{\label{cor: GmactiononZastava}}
After base-changing along the \'etale cover $\Div^{(\theta),\prime} \ra \Div^{(\theta)}$, there exists a strata-preserving $\mathbb{G}_{m}$-action on $\tilde{Z}^{\theta,\prime}$ which extends to an action of $\mathbb{A}^{1}$. Its fixed points are precisely the closed image of the base-change of the section $\mf{s}^{\theta}: \mathrm{Mod}^{+,\theta}_{M} \ra \tilde{Z}^{\theta}$.
\end{corollary}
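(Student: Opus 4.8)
The plan is to obtain the action entirely by transport of structure through the closed embedding
\[ \tilde{Z}^{\theta,\prime} \hookrightarrow \Gr_{U^{-},\Div^{(\theta),\prime}} \times_{\Div^{(\theta)}} \mathrm{Mod}^{+,\theta}_{M} \]
recorded above. First I would produce the action on the ambient space: base-changing the $\mathbb{G}_{m}$-action of the preceding Proposition along $\Div^{(\theta),\prime}\to\Div^{(\theta)}_{\mathcal Y}$ gives a $\mathbb{G}_{m}$-action on $\Gr_{U^{-},\Div^{(\theta),\prime}}$ which extends to an $\mathbb{A}^{1}$-action whose fixed locus is the unit section, and since $\lambda$ is central in $M$ we may act trivially on the factor $\mathrm{Mod}^{+,\theta}_{M}$. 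This yields an $\mathbb{A}^{1}$-action on the ambient space whose fixed locus is the unit section times $\mathrm{Mod}^{+,\theta}_{M}$.

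The second step -- and the one requiring the most care -- is to check that this action preserves $\tilde{Z}^{\theta,\prime}$ together with its stratification by the $\phantom{}_{\theta'}\tilde{Z}^{\theta,\prime}$. Here I would argue functorially: over an $S$-point of $\Div^{(\theta),\prime}$, the $G$-bundle $\mathcal F_G$, the modification $\beta$, and conditions (4) and (5) of Definition~\ref{def: Zastavaspace} are all produced out of the triple $(G\times_E\mathcal Y_S,\,P\times_E\mathcal Y_S,\,P^-\times_E\mathcal Y_S)$ in the formal neighborhood of the relevant Cartier divisor -- this is precisely why we work over $\mathcal Y_S$ rather than $X_S$, so that the Teichm\"uller lift $[\lambda]$ exists -- and conjugation by $[\lambda]$ is an automorphism of this triple fixing $P$, $P^{-}$, and hence $M$ and $U^{-}$. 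Therefore it acts on the entire moduli problem, carries $\tilde{Z}^{\theta,\prime}$ into itself, and this induced action is the restriction of the ambient one. Since each stratum $\phantom{}_{\theta'}\tilde{Z}^{\theta,\prime}$ is cut out (via Lemma~\ref{lemma: uppersemicont}) by fixing the torsion length of the cokernel of an intrinsically defined map of vector bundles, and an automorphism of the whole configuration cannot alter such torsion lengths, each stratum is preserved; the same functoriality argument shows that the $\mathbb{A}^{1}$-action restricts to $\tilde{Z}^{\theta,\prime}$ as well.

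Finally I would identify the fixed points. As $\tilde{Z}^{\theta,\prime}\hookrightarrow \Gr_{U^{-},\Div^{(\theta),\prime}} \times_{\Div^{(\theta)}} \mathrm{Mod}^{+,\theta}_{M}$ is a $\mathbb{G}_{m}$-equivariant closed immersion, its fixed locus is the fiber product of $\tilde{Z}^{\theta,\prime}$ with the ambient fixed locus, i.e. the intersection of $\tilde{Z}^{\theta,\prime}$ with the unit section times $\mathrm{Mod}^{+,\theta}_{M}$. But, as noted just before the statement, under the closed embedding the section $\mathfrak{s}^{\theta}$ is exactly the base-change of the unit section of $\Gr_{U^{-},\Div^{(\theta)}}\to\Div^{(\theta)}$; hence this intersection is precisely the base-change to $\Div^{(\theta),\prime}$ of the image of $\mathfrak{s}^{\theta}$, which is closed since the unit section is. I expect the bookkeeping in the second step -- spelling out the functoriality in $(G,P,P^{-})$ over $\mathcal Y_S$ and verifying that conditions (4) and (5) and the torsion-length stratification really are transported by conjugation by $[\lambda]$ -- to be the only genuine obstacle; the first and third steps are formal once the ambient action is in place.
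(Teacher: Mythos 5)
Your proposal is correct and follows essentially the same route as the paper: the action is obtained by transporting conjugation by the Teichm\"uller lift $[\lambda]$ through the functorial dependence of $\tilde{Z}^{\theta,\prime}$ on the triple $(G\times_E\mathcal Y_S, P\times_E\mathcal Y_S, P^-\times_E\mathcal Y_S)$, acting trivially on $\mathrm{Mod}^{+,\theta}_{M}$ since $\lambda$ is central in $M$, which preserves the intrinsically defined strata, and the fixed locus is then read off from the ambient one via the equivariant closed embedding and the identification of $\mathfrak{s}^{\theta}$ with the unit section. No gaps to flag.
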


\subsection{Smooth localizations of the Drinfeld compactification}

In this section, we will compile the results of the previous section in order to obtain some nice cohomologically smooth local descriptions of the Drinfeld compactifications in terms of Zastava spaces.

We start with the following lemma, which shows that base-changing $\mathrm{Mod}^{+,\theta}_{\Bun_M}\to \Bun_M$ along $\ast\to \Bun_M$ does not change the geometry much. Recall that this base change is denoted $\mathrm{Mod}^{+,\theta}_M$.

\begin{lemma}
For all $\theta \in \Lambda_{G,P}^{\mathrm{pos}}$, there exists an Artin $v$-stack $T$ admitting cohomologically smooth surjections $T \ra \mathrm{Mod}^{+,\theta}_{M}$ and $T \ra \mathrm{Mod}^{+,\theta}_{\Bun_{M}}$ such that the pullbacks of $\tilde{Z}^{\theta} \ra \mathrm{Mod}^{+,\theta}_{M}$ and $\tilde{Z}^{\theta}_{\Bun_{M}} \ra \mathrm{Mod}^{+,\theta}_{\Bun_{M}}$ to $T$ are isomorphic, compatibly with their stratifications.

In fact, one can arrange that $T^{\mathrm{sm}}:= T\times_{\Bun_M} \Bun_M^{\mathrm{sm}}$ still surjects onto $\mathrm{Mod}^{+,\theta}_{M}$.
\end{lemma}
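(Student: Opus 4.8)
The plan is to build $T$ as a ``twist'' of the absolute Zastava space over $\mathrm{Mod}^{+,\theta}_{M}$ by an arbitrary $M$-bundle trivialized near the incidence divisor. Let $\mathcal{B}^{\theta}$ be the $v$-stack parametrizing, for $S\in\Perf$, a triple $(\mathcal{F}_{M},D,\tau)$ consisting of an $M$-bundle $\mathcal{F}_{M}$ on $X_{S}$, a point $D\in\Div^{(\theta)}(S)$, and a trivialization $\tau\colon\mathcal{F}_{M}|_{\widehat{D}}\xrightarrow{\sim}\mathcal{F}_{M}^{0}|_{\widehat{D}}$ on the formal completion $\widehat{D}$ of $X_{S}$ along $D$, with $\mathcal{F}_{M}^{0}$ the trivial bundle. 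Forgetting $\tau$ presents $\mathcal{B}^{\theta}\to\Bun_{M}\times\Div^{(\theta)}$ as a torsor under the positive loop group $L^{+}_{\Div^{(\theta)}}M$; this map is surjective, since an $M$-bundle on $X_{S}$ becomes trivial over $\widehat{D}$ after a $v$-cover of $S$ (over geometric points this is triviality of reductive-group torsors on a complete discrete valuation ring with algebraically closed residue field), and it is cohomologically smooth, which one reduces --- using that all modifications occurring in the definition of $\tilde{Z}^{\theta}$ have meromorphy bounded in terms of $\theta$ --- to the cohomological smoothness over $\Div^{(\theta)}$ of a finite truncation of this loop group, which is visibly an iterated extension of finitely many Banach--Colmez spaces. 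Composing with the projection $\Bun_{M}\times\Div^{(\theta)}\to\Div^{(\theta)}$, cohomologically smooth because $\Bun_{M}$ is so over a point by \cite[Theorem~1.4.1]{FarguesScholze}, we conclude that $\mathcal{B}^{\theta}\to\Div^{(\theta)}$ is cohomologically smooth and surjective.

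Now set
\[ T:=\mathrm{Mod}^{+,\theta}_{\Bun_{M}}\times_{\Bun_{M}\times\Div^{(\theta)}}\mathcal{B}^{\theta}, \]
using the maps $h_{M}^{\ra}\colon\mathrm{Mod}^{+,\theta}_{\Bun_{M}}\to\Bun_{M}$ and $\pi_{M}\colon\mathrm{Mod}^{+,\theta}_{\Bun_{M}}\to\Div^{(\theta)}$; an $S$-point of $T$ is a point $(\mathcal{F}_{M}^{2},\mathcal{F}_{M}^{1},\beta_{M})$ of $\mathrm{Mod}^{+,\theta}_{\Bun_{M}}$ together with a trivialization of $\mathcal{F}_{M}^{1}$ along the formal completion of $X_{S}$ at $D=\pi_{M}(\cdot)$. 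The first projection $\mathrm{pr}\colon T\to\mathrm{Mod}^{+,\theta}_{\Bun_{M}}$ is a base change of the torsor $\mathcal{B}^{\theta}\to\Bun_{M}\times\Div^{(\theta)}$, hence cohomologically smooth and surjective. For the other map, one uses Beauville--Laszlo gluing along $D$ (\cite[Lemma~5.2.9]{SW}) to transport, via $\tau$, the modification $\beta_{M}$ --- which is supported at $D$ --- into a modification $\beta_{M}'\colon\mathcal{F}_{M}^{2\prime}\dashrightarrow\mathcal{F}_{M}^{0}$ of the trivial bundle of the same total meromorphy $\theta$; this defines $\mathrm{tw}\colon T\to\mathrm{Mod}^{+,\theta}_{M}$, and unwinding the reconstruction identifies $T\cong\mathrm{Mod}^{+,\theta}_{M}\times_{\Div^{(\theta)}}\mathcal{B}^{\theta}$ compatibly with $\mathrm{tw}$, so that $\mathrm{tw}$ is a base change of the cohomologically smooth surjection $\mathcal{B}^{\theta}\to\Div^{(\theta)}$, hence cohomologically smooth and surjective.

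It remains to compare the two Zastava spaces and to prove the addendum. Over an $S$-point of $T$, the fiber of $\tilde{Z}^{\theta}_{\Bun_{M}}$ parametrizes a $G$-bundle together with a modification of $\mathcal{F}_{M}^{1}\times^{M}G$ at $D$ satisfying conditions (4)--(5) of Definition~\ref{def: Zastavaspace}, and the fiber of $\tilde{Z}^{\theta}$ the corresponding data for the trivial $G$-bundle; applying $\tau$ (extended to $G$) matches the split $P$- and $P^{-}$-structures near $D$ and, by Beauville--Laszlo, the modifications, and since well-definedness, injectivity and surjectivity of the meromorphic maps involved, and the torsion lengths of the cokernels of the maps $\overline{\kappa}^{\mathcal{V}}$ cutting out the strata, are all conditions localized at $D$, they are preserved. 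This gives $\mathrm{pr}^{*}\tilde{Z}^{\theta}_{\Bun_{M}}\cong\mathrm{tw}^{*}\tilde{Z}^{\theta}$ over $T$, compatibly with the stratifications by $\theta'\leq\theta$. Finally, with $T\to\Bun_{M}$ taken to be $h_{M}^{\ra}\circ\mathrm{pr}$, one has $T^{\mathrm{sm}}=\mathrm{Mod}^{+,\theta}_{M}\times_{\Div^{(\theta)}}\mathcal{B}^{\theta,\mathrm{sm}}$, where $\mathcal{B}^{\theta,\mathrm{sm}}$ is the preimage of the open substack $\Bun_{M}^{\mathrm{sm}}$; since $\Bun_{M}^{\mathrm{sm}}$ is nonempty (it contains all sufficiently $G$-antidominant $M$-bundles, as in the proof of Proposition~\ref{prop: DrinfeldSimpson}), the map $\mathcal{B}^{\theta,\mathrm{sm}}\to\Div^{(\theta)}$ is still surjective, and hence so is its base change $T^{\mathrm{sm}}\to\mathrm{Mod}^{+,\theta}_{M}$. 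I expect the Zastava comparison to be the main obstacle: one must check carefully that every Pl\"ucker condition, every injectivity/surjectivity requirement, and the torsion-length stratification of $\tilde{Z}^{\theta}_{\Bun_{M}}$ depends on $\mathcal{F}_{M}^{1}$ only through its restriction to $\widehat{D}$, so that the twist by $(\mathcal{F}_{M}^{1},\tau)$ really is an isomorphism on the nose and strata-preserving; the cohomological smoothness of the loop-group torsor is a secondary technical point, handled by the truncation indicated above.
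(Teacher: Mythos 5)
Your construction is in the same spirit as the paper's (make the data depend only on the bundle near the divisor, then exchange $\mathcal{F}_M^1$ for the trivial bundle), but the mechanism you chose introduces a genuine gap. As you define it, $\mathcal{B}^{\theta}\to\Bun_M\times\Div^{(\theta)}$ is a torsor under the full positive loop group $L^{+}_{\Div^{(\theta)}}M$, i.e. trivializations on the whole formal completion, and this is \emph{not} cohomologically smooth: $L^{+}_{\Div^{(\theta)}}M$ is a cofiltered inverse limit of truncated jet groups along transition maps with positive-dimensional kernels, so the resulting maps $T\to\Mod^{+,\theta}_{\Bun_{M}}$ and $T\to\Mod^{+,\theta}_{M}$ fail the smoothness requirement of the lemma exactly as stated. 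You flag the repair (pass to a finite truncation of the loop group), but this is not a secondary technical point: with only a trivialization of $\mathcal{F}_M^1$ modulo $\xi^{N}$ you do not get an identification over $B_{\mathrm{dR}}$, so even the Beauville--Laszlo twist defining $\mathrm{tw}\colon T\to\Mod^{+,\theta}_{M}$, and the comparison of the two Zastava spaces with all conditions (4)--(5) for \emph{all} $G$-modules $\mathcal{V}$ and the stratification, only make sense once you know the whole moduli problem factors through a bounded jet of $\mathcal{F}_M^1$ along $D$. That amounts to a boundedness statement for the meromorphy of $\beta$ on $\tilde{Z}^{\theta}$ (that $\tilde{Z}^{\theta}$ sits in a bounded part of $\Gr_{U^-,\Div^{(\theta)}}$ determined by $\theta$), which you assert but do not prove; it is plausibly true, but it is precisely the nontrivial content your route hinges on, and the lemma would also need $T$ redefined with truncated trivializations throughout.

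For comparison, the paper sidesteps all of this by trivializing the bundle on an honest open neighborhood of the divisor rather than on its formal completion: it constructs a cohomologically smooth surjection $T_0\to\Bun_M\times\Div^{(\theta)}$ whose universal $M$-bundle is trivial away from an auxiliary Cartier divisor disjoint from the given one (reducing via Beauville--Laszlo and z-type reductions to $\GL_n$, where $T_1$ is the moduli of fibrewise trivial bundles $\mathcal{E}$ with $n$ sections of $\mathcal{E}(1)$ generically an isomorphism), and then sets $T=T_0\times_{\Bun_M\times\Div^{(\theta)}}\Mod^{+,\theta}_{\Bun_M}\cong T_0\times_{\Div^{(\theta)}}\Mod^{+,\theta}_{M}$. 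With a genuine neighborhood the statement ``the Zastava data and its stratification depend only on the bundle near $D$'' is immediate, with no loop-group torsor and no boundedness discussion. If you want to salvage your approach, you must either prove the boundedness of $\tilde{Z}^{\theta}$ inside the Beilinson--Drinfeld Grassmannian and rework $T$ with bounded jets, or switch to a neighborhood-trivialization construction of the above kind. Your treatment of the addendum ($T^{\mathrm{sm}}$ surjecting onto $\Mod^{+,\theta}_{M}$) and of the fiberwise comparison being local at $D$ is otherwise in order.
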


\begin{proof} It suffices to find a cohomologically smooth Artin $v$-stack $T_0$ with a cohomologically smooth surjective map $T_0\to \Bun_M\times \Div^{(\theta)}$ such that the corresponding $M$-bundle on $X_{T_0}$ is trivial in a neighborhood of the Cartier divisor corresponding to the map $T_0\to \Div^{(\theta)}$. Indeed, the data parametrized by $\mathrm{Mod}^{+,\theta}_{\Bun_M}\to \Bun_M\times \Div^{(\theta)}$ and $\tilde{Z}^\theta_{\Bun_M}\to \Bun_M\times \Div^{(\theta)}$ (as well as conditions defining the stratifications) depend only on the $M$-bundle on the Fargues-Fontaine curve in a neighborhood of the Cartier divisor. Thus, if we define
\[
T = T_0\times_{\Bun_M\times \Div^{(\theta)}} \Mod^{+,\theta}_{\Bun_M},
\]
then $T$ is also isomorphic to the base change $T_0\times_{\Div^{(\theta)}} \Mod^{+,\theta}_M$, so has a natural map to $\Mod^{+,\theta}_M$, and the base changes of the Zastava spaces are also isomorphic. With this choice, $T^{\mathrm{sm}} = T\times_{\Bun_M} \Bun_M^{\mathrm{sm}}$ will continue to surject onto $\Mod^{+,\theta}_M$ (as $T_0\times_{\Bun_M} \Bun_M^{\mathrm{sm}}$ continues to surject onto $\Div^{(\theta)}$).

To construct $T_0$, we can use Beauville-Laszlo gluing as in \cite[Theorem~IV.1.19]{FarguesScholze}, to reduce to finding a cohomologically smooth Artin $v$-stack $T_1$ with a cohomologically smooth surjective map $T_1\to [\ast/M(E)]$ such that the associated $M$-bundle on $X_{T_1}$ is trivial away from a Cartier divisor given by a map $T_1\to \Div^d$. (Then on $T_1\times \Div^{(\theta)}$, the locus where the two Cartier divisors are disjoint will surject onto $T_1$.) This can be reduced to $\GL_n$, and in this case one can take $T_1$ to be the open subset of the space parametrizing fibrewise trivial rank $n$ bundles $\mathcal E$ on $X_S$ together with $n$ global sections of $\mathcal E(1)$ where the resulting map $\mathcal O_{X_S}^n\to \mathcal E(1)$ is generically an isomorphism.
\end{proof}

By combining this with Corollary \ref{cor: ZthetaBunMissmooth}, we obtain the following result which will be used in the sequel. 
\begin{corollary}{\label{cor: Zthetaissmooth}}
For all $\theta \in \Lambda_{G,P}^{\mathrm{pos}}$, the moduli spaces $Z^{\theta}$ and $Z^\theta_{\Bun_M}$ are cohomologically smooth.
\end{corollary}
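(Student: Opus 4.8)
The goal is to show that $Z^\theta$ and $Z^\theta_{\Bun_M}$ are cohomologically smooth, given the just-proved Lemma (providing the Artin $v$-stack $T$ with cohomologically smooth surjections to $\Mod^{+,\theta}_M$ and $\Mod^{+,\theta}_{\Bun_M}$, compatibly identifying the Zastava spaces, and such that $T^{\mathrm{sm}} := T\times_{\Bun_M}\Bun_M^{\mathrm{sm}}$ still surjects onto $\Mod^{+,\theta}_M$) and Corollary~\ref{cor: ZthetaBunMissmooth} (which says $Z^\theta_{\Bun_M^{\mathrm{sm}}}$ is cohomologically smooth).

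\begin{proof}
Both assertions are established by the same argument, so we treat $Z^\theta$, the case of $Z^\theta_{\Bun_M}$ being identical (working over $\Mod^{+,\theta}_{\Bun_M}$ in place of $\Mod^{+,\theta}_M$). First I would pull everything back along the cohomologically smooth surjection $T\to \Mod^{+,\theta}_M$ supplied by the previous Lemma. Since cohomological smoothness of a map can be checked $v$-locally on the target, and $T\to \Mod^{+,\theta}_M$ is a cohomologically smooth surjection of Artin $v$-stacks, it suffices to show that the base change $Z^\theta\times_{\Mod^{+,\theta}_M}T$ is cohomologically smooth (over the base point $\ast$). By the Lemma, this base change is isomorphic to $Z^\theta_{\Bun_M}\times_{\Mod^{+,\theta}_{\Bun_M}}T$, compatibly with stratifications; in particular it is the open substack of $\tilde Z^\theta_{\Bun_M}\times_{\Mod^{+,\theta}_{\Bun_M}}T$ corresponding to $Z^\theta_{\Bun_M}\subset \tilde Z^\theta_{\Bun_M}$, i.e.\ the preimage of $\Bun_P\subset \wt{\Bun}_P$. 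Thus it is enough to prove this latter fiber product is cohomologically smooth.

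Next I would invoke the second clause of the Lemma: $T^{\mathrm{sm}} = T\times_{\Bun_M}\Bun_M^{\mathrm{sm}}$ still surjects onto $\Mod^{+,\theta}_M$. Since $\Bun_M^{\mathrm{sm}}\subset \Bun_M$ is an open substack (by upper semicontinuity of the slope polygon), $T^{\mathrm{sm}}\subset T$ is an open substack, hence $T^{\mathrm{sm}}\to \Mod^{+,\theta}_M$ is again a cohomologically smooth map, and by the surjectivity just recalled it is surjective. So once more, by $v$-local nature of cohomological smoothness on the target, it suffices to check cohomological smoothness of $Z^\theta_{\Bun_M}\times_{\Mod^{+,\theta}_{\Bun_M}}T^{\mathrm{sm}}$. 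But $Z^\theta_{\Bun_M}\times_{\Mod^{+,\theta}_{\Bun_M}}T^{\mathrm{sm}} = Z^\theta_{\Bun_M^{\mathrm{sm}}}\times_{\Mod^{+,\theta}_{\Bun_M^{\mathrm{sm}}}}T^{\mathrm{sm}}$, since restricting the Zastava space along $h_M^\to$ to $\Bun_M^{\mathrm{sm}}\subset\Bun_M$ is exactly the formation of $Z^\theta_{\Bun_M^{\mathrm{sm}}}$ (see the notation in Definition~\ref{def: Zastavaspace}), and $T^{\mathrm{sm}}$ by construction maps to $\Bun_M^{\mathrm{sm}}$. By Corollary~\ref{cor: ZthetaBunMissmooth}, $Z^\theta_{\Bun_M^{\mathrm{sm}}}$ is cohomologically smooth over $\ast$; its base change $Z^\theta_{\Bun_M^{\mathrm{sm}}}\times_{\Mod^{+,\theta}_{\Bun_M^{\mathrm{sm}}}}T^{\mathrm{sm}}$ along the cohomologically smooth map $T^{\mathrm{sm}}\to \Mod^{+,\theta}_{\Bun_M^{\mathrm{sm}}}$ is therefore cohomologically smooth as well, being the total space of a cohomologically smooth map over a cohomologically smooth base. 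This completes the argument.
\end{proof}

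The only genuinely delicate point is the bookkeeping of which base change one lands in: one must be careful that the Lemma's identification of the two Zastava spaces over $T$ is compatible with the open substack $Z^\theta\subset \tilde Z^\theta$ (preimage of $\Bun_P$), and that restricting over $\Bun_M^{\mathrm{sm}}$ genuinely produces $Z^\theta_{\Bun_M^{\mathrm{sm}}}$ rather than some larger open locus; both are immediate from the definitions but worth stating explicitly. No new geometric input is needed beyond Corollary~\ref{cor: ZthetaBunMissmooth} and the Lemma — the corollary is a pure formality combining the two.
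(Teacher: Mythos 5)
Your route is exactly the intended one — the paper gives no argument beyond "combine the Lemma with Corollary~\ref{cor: ZthetaBunMissmooth}" — and your treatment of $Z^\theta$ is correct in substance. Two points of precision, though. First, the principle you invoke is misstated: the map being tested is $Z^\theta\to\ast$, so "cohomological smoothness can be checked $v$-locally on the target" is not what is doing the work (you are not covering $\ast$); what you actually use is that cohomological smoothness of an Artin $v$-stack may be verified on a cohomologically smooth surjective chart, i.e.\ if $V\to Z^\theta$ is a cohomologically smooth surjection and $V$ is cohomologically smooth over $\ast$, then so is $Z^\theta$. Second, as literally written your two nested reductions do not compose: after reducing to the fiber product with $T$, passing to the fiber product with $T^{\mathrm{sm}}$ is restriction to an \emph{open} substack, not a further cover, so "it suffices" is unjustified at that point. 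The repair is trivial and you already have the ingredients: run a single reduction along the cohomologically smooth surjection $Z^\theta\times_{\Mod^{+,\theta}_M}T^{\mathrm{sm}}\to Z^\theta$ (smoothness and surjectivity of $T^{\mathrm{sm}}\to\Mod^{+,\theta}_M$ are exactly the facts you quote), identify this with $Z^\theta_{\Bun_{M}^{\mathrm{sm}}}\times_{\Mod^{+,\theta}_{\Bun_M^{\mathrm{sm}}}}T^{\mathrm{sm}}$ as you do, and conclude by Corollary~\ref{cor: ZthetaBunMissmooth}.

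The genuine misstep is the opening claim that the case of $Z^\theta_{\Bun_M}$ is "identical, working over $\Mod^{+,\theta}_{\Bun_M}$ in place of $\Mod^{+,\theta}_M$". It is not: the second clause of the Lemma gives surjectivity of $T^{\mathrm{sm}}$ only onto $\Mod^{+,\theta}_M$, and $T^{\mathrm{sm}}\to\Mod^{+,\theta}_{\Bun_M}$ is genuinely \emph{not} surjective — it only hits the locus where $\mathcal{F}_M^1$ lies in $\Bun_M^{\mathrm{sm}}$, which for instance misses the trivial $M$-bundle (its adjoint bundle $\mathfrak u$-twist has slope $0$, not strictly positive). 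So the step "replace $T$ by $T^{\mathrm{sm}}$" fails when the base is $\Mod^{+,\theta}_{\Bun_M}$, and the parallel argument breaks exactly where the point case uses the Lemma's surjectivity clause. The fix is one line, but it makes the two cases asymmetric: prove the $Z^\theta$ case first, then note that $Z^\theta_{\Bun_M}\times_{\Mod^{+,\theta}_{\Bun_M}}T\cong Z^\theta\times_{\Mod^{+,\theta}_M}T$ (the Lemma's identification restricted to the open strata, as in your first step) is cohomologically smooth over $Z^\theta$, hence over $\ast$, and maps onto $Z^\theta_{\Bun_M}$ by a cohomologically smooth surjection since $T\to\Mod^{+,\theta}_{\Bun_M}$ is surjective; the chart criterion then gives smoothness of $Z^\theta_{\Bun_M}$.
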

We fix $\theta \in \Lambda_{G,P}^{\mathrm{pos}}$. For $\theta' \geq \theta$ we consider the cohomologically smooth map 
\[ \phantom{}_{\leq \theta}\tilde{Z}^{\theta'}_{\Bun_{M}^{\mathrm{sm}}} \ra \phantom{}_{\leq \theta}\wt{\Bun}_{P}.\]
Using the previous Lemma, we have a $v$-stack $T$ surjecting onto $\mathrm{Mod}^{+,\theta'}_{M}$ and $\mathrm{Mod}^{+,\theta'}_{\Bun_{M}^{\mathrm{sm}}}$ and an isomorphism
\[ \tilde{Z}^{\theta'} \times_{\mathrm{Mod}^{+,\theta}_{M}} T \simeq \tilde{Z}^{\theta'}_{\Bun_{M}^{\mathrm{sm}}} \times_{\mathrm{Mod}^{+,\theta'}_{\Bun_{M}^{\mathrm{sm}}}} T. \]
In summary, we have a commutative diagram
\[ \begin{tikzcd}
& \phantom{}_{\leq \theta}\tilde{Z}^{\theta'} \times_{\mathrm{Mod}^{+,\theta}_{M}} T \arrow[r] \ar[dr, phantom, "\square"] \arrow[d]  & \phantom{}_{\leq \theta}\tilde{Z}^{\theta'}_{\Bun_{M}^{\mathrm{sm}}} \arrow[r] \arrow[d] & \phantom{}_{\leq \theta}\wt{\Bun}_{P} \arrow[d,"\phantom{}_{\leq \theta}\tilde{\mf{q}}_{P}"] & \\
& T \arrow[r]  & \mathrm{Mod}^{+,\theta'}_{\Bun_{M}^{\mathrm{sm}}} \arrow[r, "h_M^{\la}"]  & \Bun_{M}, &
\end{tikzcd}
\]
where the top horizontal arrows are cohomologically smooth.

We let $T_{\theta,\theta' - \theta} \ra T$ be the base-change of the \'etale map $(\Div^{(\theta)} \times \Div^{(\theta' - \theta)})_{\mathrm{disj}} \ra \Div^{(\theta')}$ along $T \ra \Mod^{+,\theta'}_{M}\to \Div^{(\theta')}$. By the factorization property, we also have an open immersion
\[
\mathrm{Mod}^{+,\theta'}_{M} \times_{\Div^{(\theta')}} (\Div^{(\theta)} \times \Div^{(\theta' - \theta)})_{\mathrm{disj}}  \simeq (\mathrm{Mod}_{M}^{+,\theta} \times \mathrm{Mod}_{M}^{+,\theta' - \theta})_{\disj} \hookrightarrow \mathrm{Mod}_{M}^{+,\theta} \times \mathrm{Mod}_{M}^{+,\theta' - \theta}.
\]
Note that the map $T_{\theta,\theta' - \theta} \ra \mathrm{Mod}^{+,\theta'}_{M} \times_{\Div^{(\theta')}} (\Div^{(\theta)} \times \Div^{(\theta' - \theta)})_{\mathrm{disj}}$ is cohomologically smooth and hence so is the composition $T_{\theta,\theta' - \theta} \ra \mathrm{Mod}_{M}^{+,\theta} \times \mathrm{Mod}_{M}^{+,\theta' - \theta}$ with the previous \'etale map.  We obtain  a diagram as follows.
\begin{equation}{\label{diag: uniformizing Drinfeld}} \begin{tikzcd}
\tilde{Z}^{\theta}  \times Z^{\theta' - \theta} \times_{\mathrm{Mod}_{M}^{+,\theta} \times \mathrm{Mod}_{M}^{+,\theta' - \theta}} T_{\theta,\theta' - \theta} \arrow[r] \arrow[d] & \phantom{}_{\leq \theta}\tilde{Z}^{\theta'} \times_{\mathrm{Mod}^{+,\theta}_{M}} T \arrow[r] \ar[dr, phantom, "\square"] \arrow[d]  & \phantom{}_{\leq \theta}\tilde{Z}^{\theta'}_{\Bun_{M}^{\mathrm{sm}}} \arrow[r] \arrow[d] & \phantom{}_{\leq \theta}\wt{\Bun}_{P} \arrow[d,"\phantom{}_{\leq \theta}\tilde{\mf{q}}_{P}"] & \\
T_{\theta,\theta' - \theta} \arrow[r] & T \arrow[r]  & \mathrm{Mod}^{+,\theta'}_{\Bun_{M}^{\mathrm{sm}}} \arrow[r]  & \Bun_{M}. &
\end{tikzcd}
\end{equation}

Finally, we get the following critical uniformization result for $\phantom{}_{\leq \theta}\wt{\Bun}_{P}$.
\begin{proposition}\label{prop: important uniformation}
For all $\theta \in \Lambda_{G,P}^{\mathrm{pos}}$, the map 
\[
\bigsqcup_{\theta'\geq \theta} (\tilde{Z}^{\theta}  \times Z^{\theta' - \theta}) \times_{\mathrm{Mod}_{M}^{+,\theta} \times \mathrm{Mod}_{M}^{+,\theta' - \theta}} T_{\theta,\theta' - \theta} \ra \phantom{}_{\leq \theta}\wt{\Bun}_{P}
\]
is cohomologically smooth and surjective.  
\end{proposition}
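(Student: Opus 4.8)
The strategy is to reduce the claimed cohomological smoothness and surjectivity to the pieces that have already been established: the smoothness of the top horizontal arrows in diagram \eqref{diag: uniformizing Drinfeld}, the smoothness of $Z^{\theta'-\theta}$ and $\tilde Z^\theta$ from Corollary~\ref{cor: Zthetaissmooth}, the smoothness of $T_{\theta,\theta'-\theta}\to \mathrm{Mod}_M^{+,\theta}\times \mathrm{Mod}_M^{+,\theta'-\theta}$, and the surjectivity statements of Proposition~\ref{prop: Zastavauniformizes} and Proposition~\ref{prop: DrinfeldSimpson}. First I would establish cohomological smoothness of the displayed map for each fixed $\theta'\geq \theta$. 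By construction, it factors as
\[
(\tilde Z^\theta\times Z^{\theta'-\theta})\times_{\mathrm{Mod}_M^{+,\theta}\times \mathrm{Mod}_M^{+,\theta'-\theta}} T_{\theta,\theta'-\theta}\to \phantom{}_{\leq\theta}\tilde Z^{\theta'}\times_{\mathrm{Mod}^{+,\theta}_M} T \to \phantom{}_{\leq\theta}\wt{\Bun}_P,
\]
so it suffices to prove both arrows are cohomologically smooth. The second arrow is cohomologically smooth because it is a base change of $\phantom{}_{\leq\theta}\tilde Z^{\theta'}_{\Bun_M^{\mathrm{sm}}}\to \phantom{}_{\leq\theta}\wt{\Bun}_P$, which is the restriction to a $\mathbb{G}_m$-attracting locus of the open immersion $\tilde Z^{\theta'}_{\Bun_M}\hookrightarrow \wt{\Bun}_P\times_{\Bun_G}\Bun_{P^-}$ composed with the cohomologically smooth map $\wt{\Bun}_P\times_{\Bun_G}\Bun_{P^-}^{\mathrm{sm}}\to\wt{\Bun}_P$ of Proposition~\ref{prop: DrinfeldSimpson}; and smoothness is preserved under the base change along $T\to \mathrm{Mod}^{+,\theta'}_{\Bun_M^{\mathrm{sm}}}$ used to define the fiber product. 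For the first arrow, I would invoke the factorization isomorphism Corollary~\ref{cor: facotrizationwithstratifications} (which gives the open immersion $(\tilde Z^\theta\times Z^{\theta'-\theta})_{\disj}\hookrightarrow (\Div^{(\theta)}\times\Div^{(\theta'-\theta)})_{\disj}\times_{\Div^{(\theta')}} \phantom{}_{\leq\theta}\tilde Z^{\theta'}$) together with the defining property of $T$ from the Lemma: pulled back to $T$, $\tilde Z^{\theta'}$ and $\tilde Z^{\theta'}_{\Bun_M^{\mathrm{sm}}}$ become isomorphic compatibly with stratifications, and then the base change $T_{\theta,\theta'-\theta}\to T$ along the étale disjointness locus realizes the first arrow as (an open immersion followed by) a base change of the cohomologically smooth map $T_{\theta,\theta'-\theta}\to\mathrm{Mod}^{+,\theta}_M\times\mathrm{Mod}^{+,\theta'-\theta}_M$. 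Disjoint unions of cohomologically smooth maps are cohomologically smooth, giving the claim for the full source.

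For surjectivity, since cohomologically smooth maps are universally open it is enough to check surjectivity on topological spaces, and it suffices to hit every stratum $\phantom{}_{\theta'}\wt{\Bun}_P$ with $\theta'\geq\theta$ (these exhaust $\phantom{}_{\leq\theta}\wt{\Bun}_P$ by the Remark following Proposition~\ref{prop: strata well-behaved}). Fix a geometric point of $\phantom{}_{\theta'}\wt{\Bun}_P$; by Proposition~\ref{prop: tildestratadesc} it corresponds to a point of $\Bun_P\times_{\Bun_M}\mathrm{Mod}^{+,\theta'}_{\Bun_M}$. Now I would argue exactly as in the proof of Proposition~\ref{prop: Zastavauniformizes}: the underlying $G$-bundle $\mathcal F_G$ admits, by Proposition~\ref{prop: DrinfeldSimpson}, a $P^-$-reduction which is arbitrarily instable, hence generically transverse to the given (generic) $P$-structure, so that the point lifts to $\phantom{}_{\leq\theta}\tilde Z^{\theta'}_{\Bun_M^{\mathrm{sm}}}$; then, using the étale cover $\Div^{(\theta'),\prime}\to\Div^{(\theta')}$ and refining along $(\Div^{(\theta)}\times\Div^{(\theta'-\theta)})_{\disj}\to\Div^{(\theta')}$ (which is étale surjective over the $\theta'$-symmetrized curve precisely because disjoint decompositions of an effective divisor always exist once we allow permuting points), one factors the torsion modification into a $\theta$-part and a $(\theta'-\theta)$-part with disjoint support; the $(\theta'-\theta)$-part lands in the open, torsion-free stratum $Z^{\theta'-\theta}$, while the $\theta$-part is a point of $\tilde Z^\theta$, and the $T_{\theta,\theta'-\theta}$-level trivializes $\mathcal F_M^1$ near the support so everything glues. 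This produces the desired lift into the source.

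The main obstacle, I expect, is bookkeeping the compatibility of the several fiber products and étale/smooth covers in diagram \eqref{diag: uniformizing Drinfeld}: one has to be careful that the factorization isomorphism of Corollary~\ref{cor: facotrizationwithstratifications} is compatible with the identification $\tilde Z^{\theta'}\times_{\mathrm{Mod}^{+,\theta'}_M} T\simeq \tilde Z^{\theta'}_{\Bun_M^{\mathrm{sm}}}\times_{\mathrm{Mod}^{+,\theta'}_{\Bun_M^{\mathrm{sm}}}} T$ coming from the Lemma, and that the various "disjointness" opens and $\mathbb{G}_m$-attracting loci are matched up correctly on all levels of the tower. Once one fixes conventions for which Cartier divisor lives on which factor, all of this is routine, and no genuinely new geometric input is needed beyond what is already in this section; the content is entirely in Propositions~\ref{prop: DrinfeldSimpson}, \ref{prop: Zastavauniformizes}, Corollaries~\ref{cor: Zthetaissmooth}, \ref{cor: facotrizationwithstratifications}, and the structure Lemma producing $T$.
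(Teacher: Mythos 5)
Your smoothness argument is fine and is essentially what the paper means by ``we already know that it is cohomologically smooth'': each arrow in the chain built from diagram \eqref{diag: uniformizing Drinfeld}, Corollary \ref{cor: facotrizationwithstratifications}, the structure lemma producing $T$, and Proposition \ref{prop: DrinfeldSimpson} is cohomologically smooth, and the composite inherits this. The problem is the surjectivity half, where your route genuinely diverges from the paper's and has a gap. First, the indexing is off: the strata exhausting $\phantom{}_{\leq\theta}\wt{\Bun}_{P}$ are $\phantom{}_{\theta''}\wt{\Bun}_{P}$ with $\theta''\leq\theta$, not $\theta'\geq\theta$; the parameter $\theta'$ in the source is the \emph{total} degree of the modification $\beta_M$ (dictated by how unstable the auxiliary $P^-$-reduction is), not the stratum index of the target point, and your phrase ``factors the torsion modification into a $\theta$-part and a $(\theta'-\theta)$-part'' conflates the two. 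Second, and more seriously, the key step --- splitting the divisor of $\beta_M$ for your chosen lift into a degree-$\theta$ piece containing \emph{all} the torsion and a disjoint, torsion-free piece of degree $\theta'-\theta$ --- is asserted via ``disjoint decompositions of an effective divisor always exist,'' but that is not the issue. The divisor of $\beta_M$ is the union of the torsion locus (degree $\theta''\leq\theta$) and the locus of non-transversality with the chosen $P^-$-reduction; the required splitting must respect full multiplicities at each point of $X_S$ (disjointness of supports), must put the torsion entirely on the $\theta$-side, and must have componentwise degree exactly $\theta$ over $\mathcal{J}_{G,P}$. If the non-transversality divisor collides with the torsion divisor, is concentrated at few points with high multiplicity, or has the wrong componentwise degrees, no such splitting exists. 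So you would need a genericity statement about the choice of $P^-$-reduction (for a fixed geometric point) that you never formulate or prove.

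The paper's proof is structured precisely to avoid this: after rewriting the statement intrinsically as surjectivity of $\bigsqcup_{\theta'\geq\theta}(\tilde{Z}^{\theta}_{\Bun_M^{\mathrm{sm}}}\times_{\Bun_M^{\mathrm{sm}}}Z^{\theta'-\theta}_{\Bun_M^{\mathrm{sm}}})_{\disj}\to\phantom{}_{\leq\theta}\wt{\Bun}_{P}$, it inducts on $\theta$, so that only the deepest stratum $\phantom{}_{\theta}\wt{\Bun}_{P}\simeq\Mod^{+,\theta}_{\Bun_M}\times_{\Bun_M}\Bun_P$ needs a direct argument; there the point is built as a product, feeding the $\Mod^{+,\theta}$-datum into the $\tilde{Z}^{\theta}$-factor via the section $\mf{s}^{\theta}$ (so the degree-$\theta$ divisor is exactly the torsion divisor) and the $\Bun_P$-datum into the $Z^{\theta'-\theta}$-factor via the strengthened form of Proposition \ref{prop: Zastavauniformizes}, keeping the auxiliary divisor disjoint from the fixed one. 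No splitting of a given divisor is ever needed. If you want to salvage your direct-lifting approach, you must supply the missing genericity argument for the $P^-$-reduction (disjointness from the torsion divisor, multiplicity-freeness, and correct componentwise degrees of the non-transversality locus); otherwise you should switch to the induction-plus-product structure.
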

\begin{proof} We already know that it is cohomologically smooth, so we have to see surjectivity. Let us write this surjectivity in terms that are independent of the choice of $T_{\theta,\theta'-\theta}$. Namely, we have the locus inside
\[
\phantom{}_{\leq \theta}\tilde{Z}^{\theta'}_{\Bun_M^{\mathrm{sm}}}\times_{\Div^{(\theta')}} (\Div^{(\theta)}\times \Div^{(\theta'-\theta)})_{\mathrm{disj}}
\]
where the locus of indeterminacy of $\beta_M$ is concentrated on the part of the Cartier divisor corresponding to the summand parametrized by $\Div^{(\theta)}$. This space is actually given by
\[
(\tilde{Z}^\theta_{\Bun_M^{\mathrm{sm}}}\times_{\Bun_M^{\mathrm{sm}}} Z^{\theta'-\theta}_{\Bun_M^{\mathrm{sm}}})_{\mathrm{disj}},
\]
using the factorization structure of the Zastava spaces. There is a natural surjection
\[
(\tilde{Z}^{\theta}  \times Z^{\theta' - \theta}) \times_{\mathrm{Mod}_{M}^{+,\theta} \times \mathrm{Mod}_{M}^{+,\theta' - \theta}} T_{\theta,\theta' - \theta}\to (\tilde{Z}^\theta_{\Bun_M^{\mathrm{sm}}}\times_{\Bun_M^{\mathrm{sm}}} Z^{\theta'-\theta}_{\Bun_M^{\mathrm{sm}}})_{\mathrm{disj}}
\]
and we have to see that the map
\[
\bigsqcup_{\theta'\geq \theta} (\tilde{Z}^\theta_{\Bun_M^{\mathrm{sm}}}\times_{\Bun_M^{\mathrm{sm}}} Z^{\theta'-\theta}_{\Bun_M^{\mathrm{sm}}})_{\mathrm{disj}}\to \phantom{}_{\leq \theta}\wt{\Bun}_P
\]
is surjective.

By induction on $\theta$, it suffices to show that the subspace $\phantom{}_{\theta}\wt{\Bun}_{P}$ lies in the image. To do this, we use Proposition \ref{prop: Zastavauniformizes} to see that the space $\bigsqcup_{\theta'\geq \theta} Z^{\theta' - \theta}_{\Bun_{M}^{\mathrm{sm}}}$ uniformizes the open subspace $\Bun_{P} \subset \wt{\Bun}_{P}$. In fact, given any fixed point of $\Div^{(\theta)}$, even the locus of $\bigsqcup_{\theta'\geq \theta} Z^{\theta' - \theta}_{\Bun_{M}^{\mathrm{sm}}}$ where the Cartier divisor is disjoint from the fixed point of $\Div^{(\theta)}$ will surject. Thus, we see that $\phantom{}_{\theta}\wt{\Bun}_{P} \simeq \mathrm{Mod}^{+,\theta}_{\Bun_{M}} \times_{\Bun_{M}} \Bun_{P}$ lies in the image, as desired.
\end{proof}

The upper left corner of (\ref{diag: uniformizing Drinfeld}) also sits in the following set of Cartesian diagrams 
\[ \begin{tikzcd}
(\tilde{Z}^{\theta}  \times Z^{\theta' - \theta}) \times_{\mathrm{Mod}_{M}^{+,\theta} \times \mathrm{Mod}_{M}^{+,\theta' - \theta}} T_{\theta,\theta' - \theta} \arrow[d] \arrow[r] \ar[dr, phantom, "\square"] & \tilde{Z}^{\theta}  \times Z^{\theta' - \theta} \ar[dr, phantom, "\square"] \arrow[d] \arrow[r] & \tilde{Z}^{\theta} \arrow[d] \\
(\mathrm{Mod}_{M}^{+,\theta} \times Z^{\theta' - \theta}) \times_{\mathrm{Mod}_{M}^{+,\theta} \times \mathrm{Mod}_{M}^{+,\theta' - \theta}} T_{\theta,\theta' - \theta} \ar[dr, phantom, "\square"] \arrow[r] \arrow[d] &  \mathrm{Mod}_{M}^{+,\theta} \times Z^{\theta' - \theta} \arrow[r] \arrow[d] & \mathrm{Mod}^{+,\theta}_{M} \\
T_{\theta,\theta' - \theta} \arrow[r]  & \mathrm{Mod}^{+,\theta}_{M} \times \mathrm{Mod}^{+,\theta' - \theta}_{M} & 
\end{tikzcd} \]
where as noted before the bottom horizontal arrow is cohomologically smooth. Therefore, since $Z^{\theta' - \theta}$ is cohomologically smooth, we have a cohomologically smooth surjection 
\[ (\tilde{Z}^{\theta}  \times Z^{\theta' - \theta}) \times_{\mathrm{Mod}_{M}^{+,\theta} \times \mathrm{Mod}_{M}^{+,\theta' - \theta}} T_{\theta,\theta' - \theta} \ra \tilde{Z}^{\theta} \] 
As a consequence of the above discussion, we have shown the following, which says that $\tilde{Z}^{\theta}$ is a local model for the singularities of the open subspace $\phantom{}_{\leq \theta}\wt{\Bun}_{P}$ of our Drinfeld compactification. 
\begin{proposition}
There exists a $v$-stack $X$ with cohomologically smooth surjections onto $\tilde{Z}^{\theta}$ and $\phantom{}_{\leq \theta}\wt{\Bun}_{P}$ such that for $\theta'\leq \theta$ the pullbacks of the strata $\phantom{}_{\theta'} \tilde{Z}^{\theta}$ and $\phantom{}_{\theta'}\wt{\Bun}_{P}$ to $X$ agree.
\end{proposition}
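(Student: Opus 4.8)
The plan is to take for $X$ the $v$-stack
\[
X := \bigsqcup_{\theta'\geq \theta} \left(\tilde{Z}^{\theta}  \times Z^{\theta' - \theta}\right) \times_{\mathrm{Mod}_{M}^{+,\theta} \times \mathrm{Mod}_{M}^{+,\theta' - \theta}} T_{\theta,\theta' - \theta},
\]
which already appears in Proposition~\ref{prop: important uniformation} and in the displays following its proof. Two of the three required properties are then immediate. First, the map $X\to \phantom{}_{\leq\theta}\wt{\Bun}_{P}$ is cohomologically smooth and surjective by Proposition~\ref{prop: important uniformation}. Second, on the summand indexed by $\theta'$ the composite
\[
\left(\tilde{Z}^{\theta}  \times Z^{\theta' - \theta}\right) \times_{\mathrm{Mod}_{M}^{+,\theta} \times \mathrm{Mod}_{M}^{+,\theta' - \theta}} T_{\theta,\theta' - \theta}\longrightarrow \tilde{Z}^{\theta}  \times Z^{\theta' - \theta}\longrightarrow \tilde{Z}^{\theta}
\]
is a cohomologically smooth surjection by the chain of Cartesian squares displayed just after the proof of Proposition~\ref{prop: important uniformation}, together with the cohomological smoothness of $Z^{\theta'-\theta}$ (Corollary~\ref{cor: Zthetaissmooth}); hence $X\to\tilde{Z}^{\theta}$ is cohomologically smooth and surjective.

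It then remains to check, for every $\theta''\leq\theta$, that the preimage of $\phantom{}_{\theta''}\tilde{Z}^{\theta}\subset\tilde{Z}^{\theta}$ under $X\to\tilde{Z}^{\theta}$ agrees with the preimage of $\phantom{}_{\theta''}\wt{\Bun}_{P}\subset\phantom{}_{\leq\theta}\wt{\Bun}_{P}$ under $X\to\phantom{}_{\leq\theta}\wt{\Bun}_{P}$, and I would do this one summand at a time. Along the composite above, the preimage of $\phantom{}_{\theta''}\tilde{Z}^{\theta}$ is visibly $\left(\phantom{}_{\theta''}\tilde{Z}^{\theta}\times Z^{\theta'-\theta}\right)\times_{\mathrm{Mod}_{M}^{+,\theta} \times \mathrm{Mod}_{M}^{+,\theta' - \theta}}T_{\theta,\theta'-\theta}$. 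For the other side I would use that the map of the $\theta'$-summand to $\phantom{}_{\leq\theta}\wt{\Bun}_{P}$ factors, via diagram~(\ref{diag: uniformizing Drinfeld}), through $\phantom{}_{\leq\theta}\tilde{Z}^{\theta'}_{\Bun_{M}^{\mathrm{sm}}}$, and that $\phantom{}_{\leq\theta}\tilde{Z}^{\theta'}_{\Bun_{M}^{\mathrm{sm}}}\to\phantom{}_{\leq\theta}\wt{\Bun}_{P}$ carries $\phantom{}_{\theta''}\tilde{Z}^{\theta'}_{\Bun_{M}^{\mathrm{sm}}}$ onto $\phantom{}_{\theta''}\wt{\Bun}_{P}$ by the very definition of the Zastava strata. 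The map of the summand to $\phantom{}_{\leq\theta}\tilde{Z}^{\theta'}_{\Bun_{M}^{\mathrm{sm}}}$ is, up to the cohomologically smooth base changes in (\ref{diag: uniformizing Drinfeld}), the open immersion
\[
\left(\tilde{Z}^{\theta} \times Z^{\theta' - \theta}\right)_{\disj} \hookrightarrow \left(\Div^{(\theta)} \times \Div^{(\theta' - \theta)}\right)_{\disj} \times_{\Div^{(\theta')}} \phantom{}_{\leq \theta}\tilde{Z}^{\theta'}
\]
of Corollary~\ref{cor: facotrizationwithstratifications}, under which $\phantom{}_{\theta''}\tilde{Z}^{\theta'}$ pulls back, by the factorization description of the strata (the corollary immediately preceding Corollary~\ref{cor: facotrizationwithstratifications}), to $\bigsqcup_{\theta_1+\theta_2=\theta''}\left(\phantom{}_{\theta_1}\tilde{Z}^{\theta}\times\phantom{}_{\theta_2}\tilde{Z}^{\theta'-\theta}\right)_{\disj}$. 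Since $Z^{\theta'-\theta}$ is exactly the open torsion-free stratum $\phantom{}_{0}\tilde{Z}^{\theta'-\theta}$, intersecting with $\left(\tilde{Z}^{\theta}\times Z^{\theta'-\theta}\right)_{\disj}$ kills all terms with $\theta_2\neq 0$, leaving $\left(\phantom{}_{\theta''}\tilde{Z}^{\theta}\times Z^{\theta'-\theta}\right)_{\disj}$; transporting this through the base changes of (\ref{diag: uniformizing Drinfeld}) recovers precisely $\left(\phantom{}_{\theta''}\tilde{Z}^{\theta}\times Z^{\theta'-\theta}\right)\times_{\mathrm{Mod}_{M}^{+,\theta} \times \mathrm{Mod}_{M}^{+,\theta' - \theta}}T_{\theta,\theta'-\theta}$, matching the first computation. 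Summing over $\theta'$ gives the claim.

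The step I expect to demand the most care is this last strata-matching: one must confirm that the globally defined torsion-length stratification of $\phantom{}_{\leq\theta}\tilde{Z}^{\theta'}$ is compatible with the factorization isomorphism in the precise sense that the $Z^{\theta'-\theta}$-direction carries no torsion, so that the stratum indexed by $\theta''$ splits off the summand $\left(\phantom{}_{\theta''}\tilde{Z}^{\theta}\times Z^{\theta'-\theta}\right)_{\disj}$ and nothing more. This is, however, exactly what the factorization results recalled above provide (Corollary~\ref{cor: facotrizationwithstratifications} and the corollary preceding it, together with the locality of the Zastava construction), so the proposition is in the end a bookkeeping exercise assembling statements already in hand; I do not anticipate needing any new geometric input.
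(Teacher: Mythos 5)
Your proposal is correct and follows essentially the same route as the paper: you take the very same $X$ (the disjoint union over $\theta'\geq\theta$ of $(\tilde{Z}^{\theta}\times Z^{\theta'-\theta})\times_{\mathrm{Mod}_{M}^{+,\theta}\times\mathrm{Mod}_{M}^{+,\theta'-\theta}}T_{\theta,\theta'-\theta}$), deduce the two cohomologically smooth surjections from Proposition \ref{prop: important uniformation} and the Cartesian squares following it, and match strata via the factorization structure. The paper leaves the strata-matching as implicit bookkeeping from "the above discussion," and your spelled-out verification via Corollary \ref{cor: facotrizationwithstratifications} and the stratification of $(\tilde{Z}^{\theta}\times Z^{\theta'-\theta})_{\disj}$ is exactly the intended argument.
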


Namely, we set
\[
X = \bigsqcup_{\theta'\geq \theta} (\tilde{Z}^{\theta}  \times Z^{\theta' - \theta}) \times_{\mathrm{Mod}_{M}^{+,\theta} \times \mathrm{Mod}_{M}^{+,\theta' - \theta}} T_{\theta,\theta' - \theta}.
\]

One issue with this proposition is that the diagram
\[\begin{tikzcd}
X\arrow[r]\arrow[d] & \tilde{Z}^\theta\arrow[d, "h_M^{\la}"]\\
\phantom{}_{\leq \theta}\wt{\Bun}_{P}\arrow[r, "\phantom{}_{\leq \theta} \tilde{\mathfrak q}"] & \Bun_M
\end{tikzcd}\]
does not commute, so the proposition is not suitable to study the geometry of the map $\phantom{}_{\leq \theta} \tilde{\mathfrak q}$. However, this can be remedied by working with $X$ directly (and remembering its correct map to $\Bun_M$).

For our applications, we need to endow $\tilde{Z}^\theta$ with a $\mathbb G_m$-action, which it only acquires after base-changing along the \'etale covering $\Div^{(\theta),\prime} \ra \Div^{(\theta)}$, as in Corollary \ref{cor: GmactiononZastava}.

Thus, we replace the $X$ defined above with its base-change along the \'etale map $\Div^{(\theta),\prime} \ra \Div^{(\theta)}$, and redefine
\[
X:= \bigsqcup_{\theta'\geq \theta} ((\tilde{Z}^\theta \times Z^{\theta' - \theta}) \times_{\mathrm{Mod}_{M}^{+,\theta} \times \mathrm{Mod}_{M}^{+,\theta' - \theta}} T_{\theta,\theta' - \theta}) \times_{\Div^{(\theta)}} \Div^{(\theta),\prime}.
\]
Moreover, set
\[
S:= \bigsqcup_{\theta'\geq \theta} ((\mathrm{Mod}_{M}^{+,\theta} \times Z^{\theta' - \theta}) \times_{\mathrm{Mod}_{M}^{+,\theta} \times \mathrm{Mod}_{M}^{+,\theta' - \theta}} T_{\theta,\theta' - \theta}) \times_{\Div^{(\theta)}} \Div^{(\theta),\prime}.
\]
Then if we write $\pi: X \ra S$ for the natural map, together with its section $i: S \ra X$ obtained as the base-change of $\tilde{\mf{s}}^{\theta}$, we have a commutative diagram of the form
\begin{equation}{\label{diag: keydiagULA}} \begin{tikzcd}
X \arrow[r,"p"] \arrow[d,"\pi"] & \phantom{}_{\leq \theta}\wt{\Bun}_{P} \arrow[d,"\phantom{}_{\leq \theta}\mf{q}"] & \\
S \arrow[r,"q"] \arrow[u,bend left = 50,"i"] & \Bun_{M}, & 
\end{tikzcd} 
\end{equation}
where specifically $q$ is the precomposition  of
\[(\mathrm{Mod}_{M}^{+,\theta} \times Z^{\theta' - \theta}) \times_{\mathrm{Mod}_{M}^{+,\theta} \times \mathrm{Mod}_{M}^{+,\theta' - \theta}} T_{\theta,\theta' - \theta} \ra T_{\theta,\theta' - \theta} \ra \Bun_{M}, \]
with the \'etale  covering induced by the base-change of  $\Div^{(\theta)}_{\mathcal{Y}} \ra \Div^{(\theta)}$.
Here the map $T_{\theta,\theta' - \theta} \ra \Bun_{M}$ is the bottom arrow in the diagram \eqref{diag: uniformizing Drinfeld}.

We consider the $\mathbb{G}_{m}$-action on $X$ which only acts on the first factor $\tilde{Z}^{\theta,\prime}$, and is trivial on the other factors. Taking the trivial $\mathbb G_m$-action on $S$, this makes $\pi$ and $i$ equivariant.

Summarizing the above discussion, we finally arrive at the following result, which is the key geometric result in our proof of the ULA property for $\tilde{j}_{P!}\IC_{\Bun_{P}}$.
\begin{corollary}{\label{cor: propertofkeydiag}}
For all $\theta$, there exists a diagram as in (\ref{diag: keydiagULA}) such that the following is true. 
\begin{enumerate}
\item The map $p$ is surjective and cohomologically smooth.
\item The map $i$ is a section of $\pi$ which maps $S$ isomorphically onto the closed substack given by the preimage along $p$ of the closed substack $j_{\theta}: \phantom{}_{\theta}\wt{\Bun}_{P} \hookrightarrow \phantom{}_{\leq \theta}\wt{\Bun}_{P}$. 
\item There exists a $\mathbb{G}_{m}$-action on $X$ lying over the trivial action on $S$ which is totally attracting towards the closed section $i$. The map $\pi$ is a partially proper $!$-able map of small v-stacks that is representable in locally spatial diamonds.
\item The $\mathbb{G}_{m}$-action on $X$ preserves the preimages under $p$ of the strata $\phantom{}_{\theta'}\wt{\Bun_P}$ for $\theta'\leq \theta$.
\end{enumerate}
\end{corollary}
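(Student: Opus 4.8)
The statement to be proved, Corollary~\ref{cor: propertofkeydiag}, is essentially a bookkeeping consolidation of everything established in this section, so the strategy is simply to assemble the pieces in the right order. I would begin by recalling the definition of $X$ and $S$ as the (disjoint unions over $\theta'\geq\theta$ of) base changes
\[ X = \bigsqcup_{\theta'\geq\theta} \big((\tilde{Z}^\theta\times Z^{\theta'-\theta})\times_{\mathrm{Mod}_M^{+,\theta}\times \mathrm{Mod}_M^{+,\theta'-\theta}} T_{\theta,\theta'-\theta}\big)\times_{\Div^{(\theta)}}\Div^{(\theta),\prime}, \]
and likewise for $S$ with the first Zastava factor replaced by $\mathrm{Mod}_M^{+,\theta}$, together with the maps $p$, $\pi$, $i$ and $q$ entering the diagram \eqref{diag: keydiagULA}. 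The map $i$ is, by construction, the base change of the section $\tilde{\mf{s}}^\theta$ of $\pi_P$ along the relevant étale and smooth covers.

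For part (1), surjectivity and cohomological smoothness of $p$ are exactly the content of Proposition~\ref{prop: important uniformation}, since passing from $\Div^{(\theta)}$ to its étale cover $\Div^{(\theta),\prime}$ preserves both properties (étale maps are cohomologically smooth and, being surjective onto $\Div^{(\theta)}$, do not destroy surjectivity onto $\phantom{}_{\leq\theta}\wt{\Bun}_P$). For part (2), I would invoke the Lemma (the one identifying $\mf{s}^\theta$'s image, i.e.\ the statement that $\phantom{}_\theta\tilde{Z}^{\theta'}$ is empty unless $\theta'\leq\theta$ and that $\mf{s}^{\theta}$ identifies $\mathrm{Mod}^{+,\theta}_M$ with $\phantom{}_{\theta}\tilde{Z}^{\theta}$) together with the compatibility-of-stratifications clause in the Lemma constructing $T$: the preimage under $p$ of $\phantom{}_\theta\wt{\Bun}_P$ corresponds to the locus where the first Zastava factor sits in its deepest stratum $\phantom{}_{\theta}\tilde{Z}^\theta$, which by that Lemma is exactly the image of the section, and this matches $S$ as a closed subspace of $X$. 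Closedness follows since $\phantom{}_\theta\tilde{Z}^\theta\hookrightarrow\tilde{Z}^\theta$ is closed (an application of Lemma~\ref{lemma: uppersemicont}) and closed immersions are stable under base change.

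For part (3), the $\mathbb{G}_m$-action on $X$ acting only on the first Zastava factor $\tilde{Z}^{\theta,\prime}$ and trivially elsewhere is the one described just before the statement; that it is totally attracting towards $i(S)$ is Corollary~\ref{cor: GmactiononZastava}, whose fixed-point locus is precisely the (base change of the) section $\mf{s}^\theta$, and the action extends to $\mathbb{A}^1$ there, which is the meaning of "totally attracting". For the geometric properties of $\pi$: it is the base change, along the cohomologically smooth (in particular $!$-able) map $S\to \mathrm{Mod}^{+,\theta}_M$ and the étale map to $\Div^{(\theta),\prime}$, of the map $h^{\ra}_M:\tilde{Z}^\theta\to\mathrm{Mod}^{+,\theta}_M$, which by the embedding $\tilde{Z}^\theta\hookrightarrow \Gr_{U^-,\Div^{(\theta)}}\times_{\Div^{(\theta)}}\mathrm{Mod}^{+,\theta}_M$ is a closed subspace of (a base change of) the affine Grassmannian of $U^-$; the latter is representable in locally spatial diamonds and partially proper over the base (being an increasing union of closed Schubert-type cells, each of which is a successive extension of Banach--Colmez-type spaces; alternatively cite \cite[Proposition~VI.3.1]{FarguesScholze} and the structure of $\Gr_{U^-}$), and these properties are stable under base change and closed immersion. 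Finally part (4), that the $\mathbb{G}_m$-action preserves the $p$-preimages of the strata $\phantom{}_{\theta'}\wt{\Bun}_P$ for $\theta'\leq\theta$, is the strata-preservation clause of Corollary~\ref{cor: GmactiononZastava} combined with the fact (from the $T$-construction Lemma) that $p$ matches the stratification of $X$ by $\phantom{}_{\theta'}\tilde{Z}^\theta$ (pulled back) with the stratification of $\phantom{}_{\leq\theta}\wt{\Bun}_P$ by $\phantom{}_{\theta'}\wt{\Bun}_P$.

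The only point requiring any genuine care, rather than pure citation, is the verification in part (3) that $\pi$ is partially proper, representable in locally spatial diamonds, and $!$-able; but even this reduces cleanly to the corresponding known properties of the Beilinson--Drinfeld affine Grassmannian $\Gr_{U^-,\Div^{(\theta)}}$ (which is ind-proper, hence its closed pieces are proper, representable in spatial diamonds) via the closed embedding $\tilde{Z}^\theta\hookrightarrow \Gr_{U^-,\Div^{(\theta)}}\times_{\Div^{(\theta)}}\mathrm{Mod}^{+,\theta}_M$ already recorded above, together with stability of all three properties under base change and composition with étale/cohomologically smooth maps. So I expect the proof to be short: restate the diagram, then cite Proposition~\ref{prop: important uniformation} for (1), the structural Lemmas on $\mf{s}^\theta$ and $T$ for (2), Corollary~\ref{cor: GmactiononZastava} and the affine-Grassmannian description for (3), and Corollary~\ref{cor: GmactiononZastava} again for (4).
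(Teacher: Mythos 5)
Your proposal is correct and takes essentially the same route as the paper, which presents this corollary purely as a summary of the preceding construction: Proposition \ref{prop: important uniformation} (plus the \'etale surjective base change $\Div^{(\theta),\prime}\to\Div^{(\theta)}$) for (1), the lemma identifying $\phantom{}_{\theta}\tilde{Z}^{\theta}$ with the image of $\mf{s}^{\theta}$ together with the strata-compatibility of the $T$-construction for (2), Corollary \ref{cor: GmactiononZastava} and the closed embedding $\tilde{Z}^{\theta}\hookrightarrow \Gr_{U^{-},\Div^{(\theta)}}\times_{\Div^{(\theta)}}\Mod^{+,\theta}_{M}$ for (3), and the strata-preservation clause of Corollary \ref{cor: GmactiononZastava} for (4). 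One small correction: $\Gr_{U^{-},\Div^{(\theta)}}$ is not ind-proper (that holds for $\Gr_{G}$, not for the semi-infinite orbit), but it is partially proper and representable in locally spatial diamonds, which is all that part (3) requires and which your main argument via its exhaustion by successive extensions of Banach--Colmez-type spaces already supplies.
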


We are now finally in a position to prove the ULA theorem. 

\section{The ULA theorem}

We will now show that $\wt{j}_{P!}(\IC_{\Bun_{P}})$ is ULA with respect to $\wt{\mf{q}}_{P}$. This will be accomplished by combining the geometry of the Zastava spaces considered in the previous section with some new results on ULA sheaves. Roughly speaking, the key idea here is that in the presence of a contracting $\mathbb{G}_m$-action, the ULA property automatically extends from the complement of the fixed locus. Our first task is to make this idea precise.

\subsection{ULA magic}

In the following, we make use of certain results from \cite{FarguesScholze} proved in the context of the $\ell$-adic $6$-functor formalism. However, these are general results valid for any good $6$-functor formalism, and the results extend verbatim to the motivic formalism used in this paper for general $\mathbb Z_\ell$-algebras; details will appear in \cite{MotivicGeometrization}.

\begin{lemma}[The contraction principle]{\label{lemma: hyperboliclocalizationtotallyattracting}}Let $\pi:X \to S$ be a partially proper $!$-able map of small v-stacks that is representable in locally spatial diamonds, and equipped with a section $i:S\to X$ and a $\mathbb{G}_m$-action which is totally attracting towards the section. If $A \in D(X,\Lambda)$ is $\mathbb{G}_m$-equivariant, then $\pi_! A \cong i^! A$.
\end{lemma}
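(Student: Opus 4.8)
This is a standard "contraction principle" in the style of Braden's hyperbolic localization and its incarnation in \cite{FarguesScholze} (cf.\ their treatment of the functors $i_{b\sharp}$ via the $\mathbb{G}_m$-action on the chart $\mathcal{M}_b$). The statement asserts that for a totally attracting $\mathbb{G}_m$-action with fixed section $i:S\to X$, the "hyperbolic localization from the attracting locus" degenerates: since $X$ is entirely attracting, $\pi_! = \pi_! (\text{attracting part})$, and this should be computed by restriction to the fixed points, i.e.\ $i^!$. The plan is to exhibit a natural map $\pi_! A \to i^! A$ (or its adjoint) and check it is an isomorphism after reducing to a situation where the $\mathbb{G}_m$-action is "linear" enough that one can compute directly.

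\emph{First I would set up the natural transformation.} Using the action map $a:\mathbb{G}_m\times X\to X$ and the inclusion of the fixed section, one has the standard diagram relating $\pi$, $i$, the projection $\mathrm{pr}:\mathbb{G}_m\times X\to X$ and $a$. Since $A$ is $\mathbb{G}_m$-equivariant, $a^\ast A\cong \mathrm{pr}^\ast A$ (more precisely the equivariance datum gives this over $\mathbb{G}_m\times X$, compatibly with the monoid structure — here one uses the extension of the action to $\mathbb{A}^1$ implicitly, or argues purely with $\mathbb{G}_m$ and a limit as $t\to 0$). Restricting along $\{0\}\hookrightarrow \mathbb{A}^1$ and using partial properness of $\pi$ (so that the $\mathbb{A}^1$-family of maps $\pi$ behaves well under proper base change in the $\mathbb{A}^1$-direction) yields a canonical specialization map $\pi_! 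A \to \pi_!(i_\ast i^\ast A) \cong i^\ast A$ in one normalization; in the $!$-normalization one gets $\pi_! A\to i^! A$. Concretely, I would instead construct the adjoint map: the section $i$ gives a closed immersion $S\hookrightarrow X$ with open complement $j:X^\circ\hookrightarrow X$, and "totally attracting" means that for the open $U\subset \mathbb{A}^1\times X$ on which the action extends, the fiber over $0$ is exactly $i(S)$; running the family argument (as in the proof of \cite[Proposition~VII.7.2, Theorem~V.3.7]{FarguesScholze}, or \cite[\S VI.3]{FarguesScholze}) produces the comparison.

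\emph{Then I would prove it is an isomorphism by reduction.} The assertion is local on $S$ and compatible with cohomologically smooth base change on $S$, and with stratifying $X$ by $\mathbb{G}_m$-stable locally closed subsets, so by a limit/excision argument (using that the motivic formalism has excision, as emphasized in the Notation section) one reduces to the case where $X\to S$ is a "vector-bundle-like" totally attracting family — e.g.\ an open subspace of (a twisted form of) a Banach–Colmez space $\mathcal{H}^0(\mathcal{E})$ with its scaling action, or more generally where one can filter the action by weights. In that model case, $\pi_! A\cong i^! A$ is checked by hand: for the scaling action on an affine/attracting space, $!$-pushforward to the origin computes the "nearby fiber at $0$", and $\mathbb{G}_m$-equivariance forces $A$ to be (up to the equivariance) pulled back from $S$, so both sides agree. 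One then runs an induction on the length of a $\mathbb{G}_m$-weight filtration of $X$ over $S$ (with attracting cones), using the octahedral axiom / excision triangles $j_! j^\ast\to \mathrm{id}\to i_\ast i^\ast$, to bootstrap from strata to the whole of $X$. This is precisely the mechanism behind the Proposition quoted in the introduction ("in the presence of a contracting $\mathbb{G}_m$-action, the ULA property automatically extends"), so the Zastava-space geometry of Corollary~\ref{cor: propertofkeydiag} will later feed into it.

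\emph{The main obstacle} I expect is purely foundational: making the $\mathbb{A}^1$-family proper base change argument rigorous for small v-stacks that are merely representable in locally spatial diamonds and partially proper (rather than proper) over $S$ — one wants to push forward along the $\mathbb{A}^1$-direction and specialize at $0$, and partial properness plus $!$-ability is exactly what is needed, but one must be careful that the relevant pushforwards commute with the base change to $\{0\}$ and that the equivariant structure is preserved under all these operations. A secondary technical point is justifying that $\mathbb{G}_m$-equivariance (in the v-stack sense, via the quotient $[X/\mathbb{G}_m]$) is enough to trivialize $A$ fiberwise in the model case without an a priori $\mathbb{A}^1$-equivariant structure; here the totally-attracting hypothesis is what bridges the gap, since it provides the extension of the action to the multiplicative monoid $\mathbb{A}^1$ on $X$ itself (not just on a Grassmannian), and one transports the equivariance along it. Modulo these formal-but-delicate points — which, as the paper notes, are handled in \cite{FarguesScholze} for the $\ell$-adic formalism and carry over to the motivic one — the proof is an application of the standard hyperbolic localization package.
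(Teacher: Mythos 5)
You correctly identify the statement as an instance of the hyperbolic localization package, and in fact the paper's entire proof is a one-line citation: the lemma is the special case of (the proof of) \cite[Proposition~VI.6.6]{FarguesScholze} (cf.\ also the proof of \cite[Theorem~IV.6.5]{FarguesScholze}) in which, the action being totally attracting, the attractor is all of $X$ and the fixed locus is the section $i(S)$, so the hyperbolic localization isomorphism degenerates to $\pi_!A\cong i^!A$. Your construction of the comparison map via the extension of the action to the multiplicative monoid $\mathbb{A}^1$ and specialization at $0$ (equivalently, the map $i^!A=\pi_!i_!i^!A\to\pi_!A$) is in the right spirit and is how the cited arguments run.

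However, the mechanism you propose for proving that this map is an isomorphism has a genuine gap. First, the reduction ``by stratification and excision'' to a vector-bundle-like (Banach--Colmez) model is unavailable at the stated level of generality: the lemma concerns an \emph{arbitrary} partially proper $\pi\colon X\to S$ representable in locally spatial diamonds with a totally attracting $\mathbb{G}_m$-action, and nothing in the hypotheses provides a $\mathbb{G}_m$-stable stratification by linear pieces or a weight filtration to induct on. Second, and more seriously, your verification in the model case is false: a $\mathbb{G}_m$-equivariant sheaf on a totally attracting space is not pulled back from $S$ in general --- the extension by zero $j_!A$ from the complement of the section is equivariant and is precisely the kind of object the lemma is applied to in Theorem \ref{thm: hyplocULA}; for such sheaves the identity $\pi_!A\cong i^!A$ \emph{is} the nontrivial content of the contraction principle, so assuming the sheaf is a pullback begs the question. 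The correct argument does not d\'evissage to a linear model: one either quotes hyperbolic localization directly (as the paper does), or shows the unit/counit map above is an isomorphism using the $\mathbb{A}^1$-monoid action, partial properness and proper base change, with the equivariance datum of $A$ --- not any triviality of $A$ --- as the essential input.
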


\begin{proof} Special case of (the proof of) hyperbolic localization, more precisely of \cite[Proposition VI.6.6]{FarguesScholze} (cf.~also the proof of \cite[Theorem IV.6.5]{FarguesScholze}).
\end{proof}

\begin{theorem}{\label{thm: hyplocULA}}
Let $\pi:X \to S$ be a partially proper $!$-able map that is representable in locally spatial diamonds, and equipped with a section $i:S\to X$ and a $\mathbb{G}_m$-action which is totally attracting towards the section. Let $X^\circ = X\smallsetminus S$ with $j:X^\circ \to X$ the natural open immersion, and set $\pi^\circ = \pi \circ j$. If $A\in D(X^\circ,\Lambda)$ is $\mathbb{G}_m$-equivariant and $\pi^\circ$-ULA, then $j_!A$ is $\pi$-ULA.
\end{theorem}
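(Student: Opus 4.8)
The plan is to reduce the assertion that $j_!A$ is $\pi$-ULA to a pointwise statement over $S$, and then exploit the contraction principle together with the totally attracting $\mathbb G_m$-action. Recall that by \cite[Proposition~VII.7.?]{FarguesScholze} (the ``miraculous'' characterization), a sheaf $B\in D(X,\Lambda)$ is $\pi$-ULA if and only if the natural map $\mathbb D_{X/S}(B)\otim$-type base change conditions hold; more precisely, it suffices to check after any base change $S'\to S$ and, crucially, $\pi$-ULA can be tested by checking that forming $\mathbb D_{X/S}$ commutes with $*$-pullback along $S'\to S$ for all $S'$, equivalently that $\pi_!(B\otimes \pi^* C)$ behaves correctly. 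Thus the first step is to set up this characterization cleanly and reduce to showing that for all $C\in D(S,\Lambda)$ the base-change/duality map for $j_!A$ is an isomorphism.

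Second, I would introduce the open-closed decomposition $j:X^\circ\hookrightarrow X \hookleftarrow S:i$ and write the recollement triangle $i_!i^!j_!A\to j_!A\to j_*j^*j_!A = j_*A$, together with $j^*j_!A = A$ and $i^*j_!A=0$. The point is that $j_!A$ is built from $j_!$ of a $\pi^\circ$-ULA sheaf plus a contribution supported on the section. By a dévissage, $j_!A$ is $\pi$-ULA once we know (a) $j_!A$ restricted to $X^\circ$ is $\pi^\circ$-ULA, which is the hypothesis, and (b) $i^!j_!A\in D(S,\Lambda)$ is dualizable. So the whole theorem reduces to proving that $i^!j_!A$ is a dualizable (i.e. ULA over $S$ for the identity) object of $D(S,\Lambda)$.

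Third — and this is where the $\mathbb G_m$-action does its work — I would compute $i^!j_!A$ using the contraction principle, Lemma~\ref{lemma: hyperboliclocalizationtotallyattracting}. Since $\pi$ is partially proper, $!$-able, representable in locally spatial diamonds with a totally attracting $\mathbb G_m$-action, and $j_!A$ is $\mathbb G_m$-equivariant, that lemma gives $\pi_!(j_!A)\cong i^!(j_!A)$. But $\pi_!j_!=\pi^\circ_!$, so $i^!j_!A\cong \pi^\circ_!A$. Now $\pi^\circ:X^\circ\to S$ is partially proper (being the restriction of $\pi$ to an open), hence $\pi^\circ_!=\pi^\circ_{!}$ with $A$ being $\pi^\circ$-ULA; and a standard fact in the six-functor formalism says that if $f:Y\to S$ is $!$-able and $A$ is $f$-ULA, then $f_!A$ is dualizable in $D(S,\Lambda)$ precisely when $f$ is ``$!$-proper-like'' — more carefully, one uses that ULA sheaves are preserved by $f_!$ for partially proper $f$ followed by the observation that $f_!$ of an $f$-ULA sheaf along a partially proper map lands in dualizable objects because dualizability of $\pi^\circ_! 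A$ follows from $\pi^\circ$-ULA-ness of $A$ via the projection formula $\pi^\circ_!(A\otimes\pi^{\circ*}C)\cong \pi^\circ_!A\otimes C$ together with the fact that $\mathbb D_{X^\circ/S}A$ is again $\pi^\circ$-ULA and $\pi^\circ_!$ commutes with the relevant dualities. So $i^!j_!A=\pi^\circ_!A$ is dualizable, completing (b).

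The main obstacle I anticipate is step two: making the dévissage ``$j_!A$ is $\pi$-ULA iff $j^*j_!A$ is $\pi^\circ$-ULA and $i^*$ (or $i^!$) of it is dualizable'' rigorous. The $*$-restriction $i^*j_!A$ vanishes, which is convenient, but the honest criterion involves $i^!$, and one must check that the recollement triangle interacts correctly with the ULA condition — i.e. that $\pi$-ULA sheaves form a thick subcategory and that $i_!$ of a dualizable sheaf on $S$ is $\pi$-ULA (which needs $\pi\circ i=\mathrm{id}_S$, true here). Granting that the class of $\pi$-ULA objects is closed under cones and contains $i_!(\text{dualizable})$ and all $j_!(\pi^\circ\text{-ULA})$ — the latter being exactly what we want, so there is a potential circularity to be careful about — one instead argues: $j_!A$ is $\pi$-ULA iff its image under $\mathbb D_{X/S}$ is again of the expected form; the equivariance propagates through $\mathbb D_{X/S}$, and $\mathbb D_{X/S}(j_!A)\cong j_*\mathbb D_{X^\circ/S}(A)$, whose $i^!$-restriction is $\pi^\circ_*$ applied to a $\pi^\circ$-ULA sheaf; dualizability of that, again via the contraction principle applied now to $\mathbb D_{X/S}(j_!A)$ (still $\mathbb G_m$-equivariant), reduces to dualizability of $\pi^\circ_!\mathbb D_{X^\circ/S}(A)$. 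Both $\pi^\circ_!A$ and $\pi^\circ_!\mathbb D_{X^\circ/S}(A)$ being dualizable, and being Verdier/relative duals of each other up to the dualizing complex of $S$ (which is invertible since we may work locally), is precisely the statement that $\pi^\circ_!A$ is a perfect/dualizable complex — which is the content of preservation of ULA under $\pi^\circ_!$ for partially proper $\pi^\circ$. Thus the formal heart is really the interplay of the contraction principle with relative Verdier duality, and assembling these compatibilities carefully is the delicate part; everything else is bookkeeping with the recollement triangle.
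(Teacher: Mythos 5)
There is a genuine gap, in fact two. Your whole argument funnels through the dévissage claim that $j_!A$ is $\pi$-ULA as soon as (a) $j^*j_!A=A$ is $\pi^\circ$-ULA and (b) $i^!j_!A$ is dualizable. This criterion is not a formal fact of the six-functor formalism: in the paper it appears only as a \emph{consequence} of the theorem (and only for $\mathbb G_m$-equivariant sheaves, with $i^\ast$), so using it as the engine of the proof is essentially circular, as you half-acknowledge. If you try to prove it directly you run into exactly the hard point: the cone of the ULA-defining map $p_1^\ast\mathbb D_{X/S}(j_!A)\otimes p_2^\ast j_!A\to \sHom(p_1^\ast j_!A,p_2^!j_!A)$ on $X\times_S X$ is supported on $(i\times i)(S)$, and while $(i\times i)^!$ kills the target for free (because $i^\ast j_!=0$), killing the source requires computing $(i\times i)^!$ of an external tensor product, i.e. a K\"unneth statement for $!$-pullback that itself needs ULA-type input or the contraction principle. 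This is precisely where the paper uses the $\mathbb G_m$-action: applying Lemma \ref{lemma: hyperboliclocalizationtotallyattracting} to the \emph{diagonal} action on $X\times_S X$ identifies $(i\times i)^!$ with $(\pi\times\pi)_!$ on the (equivariant) source and target, and then K\"unneth plus $\pi_!\mathbb D_{X/S}(j_!A)=\pi_!j_\ast\mathbb D_{X^\circ/S}(A)=i^!j_\ast\mathbb D_{X^\circ/S}(A)=0$ finishes the argument. Your plan never supplies a substitute for this step.

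The second gap is the justification of (b): you assert that $\pi^\circ_!A$ is dualizable because $A$ is $\pi^\circ$-ULA, invoking ``preservation of ULA under $f_!$ for partially proper $f$.'' That principle is false without properness: already $j_!\Lambda$ for an open immersion $j$ is the lower-$!$ image of a ULA sheaf and is not dualizable, and here $\pi^\circ$ is the restriction of $\pi$ to the open complement of the section, so it is neither proper nor even partially proper in general. Having the candidate dual $\pi^\circ_\ast\mathbb D_{X^\circ/S}(A)$ (which your duality manipulations produce) does not make $\pi^\circ_!A$ dualizable; dualizability of $i^!j_!A\cong\pi^\circ_!A$, equivalently of $i^\ast j_\ast\mathbb D_{X^\circ/S}(A)$, is one of the nontrivial outputs of the theorem, not an input. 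So both halves of your reduction — the criterion ``(a)+(b) $\Rightarrow$ ULA'' and the verification of (b) — are unproven, and the correct route is the direct check on $X\times_S X$ with hyperbolic localization as above.
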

As a formal consequence, we get that also $j_{\ast}A$ is $\pi$-ULA, and that $i^\ast j_\ast A \in D(S,\Lambda)$ is dualizable. We also deduce that if $B \in D(X,\Lambda)$ is any $\mathbb{G}_m$-equivariant sheaf, then it is $\pi$-ULA iff $j^\ast B$ is $\pi^\circ$-ULA and $i^\ast B$ is dualizable.

\begin{proof}Let $p_1,p_2: X \times_S X \to X$ be the evident projections. We need to see that the natural map
\[ p_{1}^\ast \mathbb{D}_{X/S}(j_! A) \otimes p_{2}^{\ast} j_!A \overset{\alpha}{\to} \sHom(p_1^\ast j_!A, p_2^! j_!A)\]
is an isomorphism. Over $X^\circ \times_S X$ this follows from the fact that $A \boxtimes 1$ is ULA for $X^\circ \times_S X \to X$. Similarly over $X \times_S X^\circ$ (argue as in the proof of \cite[Lemma 4.3]{HansenScholze}). So the cone of this map is supported on $(i \times i):S \hookrightarrow X \times_S X$. Thus it suffices to see the cone is killed by $(i \times i)^!$. Applying the previous lemma to $X\times_S X$ with the diagonal $\mathbb{G}_m$-action, we get that $(i \times i)^!=(\pi \times \pi)_!$ as functors applied to the source or target of $\alpha$. We claim that this functor actually kills both the source and target of $\alpha$, and therefore kills the cone. We first note that $(\pi \times \pi)_!$  kills the source of $\alpha$. For this,  use K\"unneth to write
\[(\pi \times \pi)_! ( p_{1}^\ast \mathbb{D}_{X/S}(j_! A) \otimes p_{2}^{\ast} j_!A ) \cong \pi_!\mathbb{D}_{X/S}(j_! A) \otimes \pi_!j_!A. \]
Now use the fact that 
\[\pi_! \mathbb{D}_{X/S}(j_! A) = \pi_! j_\ast \mathbb{D}_{X^\circ /S}(A) = i^! j_\ast \mathbb{D}_{X^\circ /S}(A)=0\] where the second equality follows from the previous lemma, and we used the general vanishing $i^! j_*=0$ in the final equality. To see that $(i \times i)^!$  kills the target of $\alpha$, use the general isomorphism
\[(i \times i)^! \sHom(p_1^\ast j_!A, p_2^! j_!A) \cong \sHom((i \times i)^\ast p_1^\ast j_!A, (i \times i)^!p_2^! j_!A)  \]
and then observe that $(i \times i)^\ast p_1^\ast j_! = i^* j_! =0$.
\end{proof}

For our purposes, we will actually need the following generalization of the previous result.

\begin{theorem}{\label{thm: hyplocalpropvariant}}
Let $\pi:X \to S$ be a partially proper $!$-able map of small v-stacks that is representable in locally spatial diamonds, and equipped with a section $i:S\to X$ and a $\mathbb{G}_m$-action which is totally attracting towards the section. Let $q: S \ra S'$ be any $!$-able map of small v-stacks and set $\pi' := q \circ \pi$. Let $X^\circ = X\smallsetminus S$ with $j:X^\circ \to X$ the natural open immersion, and set $\pi'^\circ = \pi' \circ j$. If $A\in D(X^\circ,\Lambda)$ is $\mathbb{G}_m$-equivariant and $\pi'^\circ$-ULA, then $j_!A$ is $\pi'$-ULA.
\end{theorem}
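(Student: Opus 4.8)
The plan is to reduce Theorem~\ref{thm: hyplocalpropvariant} to the absolute case already proved in Theorem~\ref{thm: hyplocULA}. The point is that ``$\pi'$-ULA'' and ``$\pi$-ULA'' are related in a transparent way when $\pi' = q\circ \pi$ with $q$ an $!$-able map: a sheaf on $X$ is $\pi'$-ULA if and only if it is $\pi$-ULA \emph{and} its pushforward along $\pi$ (suitably interpreted) is $q$-ULA. More precisely, I would first recall the basic transitivity/locality properties of the ULA condition for a composite $X\xrightarrow{\pi} S\xrightarrow{q} S'$: by \cite[Prop.~IV.2.19, Prop.~VII.7.4]{FarguesScholze} (or the motivic analogues in \cite{MotivicGeometrization}), if $B\in D(X,\Lambda)$ is $\pi$-ULA then $B$ is $\pi'$-ULA if and only if $\pi_\ast B$ (equivalently $\pi_! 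B$ when $\pi$ is proper, but in general one works relatively) is $q$-ULA; and conversely being $\pi'$-ULA always implies being $\pi$-ULA, since ULA-ness is local on the target in the sense that it can be checked after a base change $S''\to S'$ whose image contains everything, and pulling back along $S\to S'$ detects $\pi$-ULA-ness fiberwise. The key identity to exploit is $\mathbb D_{X/S'} = \pi^\ast \mathbb D_{S/S'}\otimes \mathbb D_{X/S}$ together with $\mathbb D_{S/S'}$ being invertible away from issues of $!$-ability, but actually the cleanest route avoids dualizing complexes and uses instead the characterization via the diagonal map as in the proof of Theorem~\ref{thm: hyplocULA}.

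Concretely, here is the argument I would write. By Theorem~\ref{thm: hyplocULA} applied to $\pi:X\to S$, we already know $j_! A$ is $\pi$-ULA (this uses only that $A$ is $\pi^\circ$-ULA, which follows from $A$ being $\pi'^\circ$-ULA, since $\pi^\circ$-ULA is implied by $\pi'^\circ$-ULA). So it remains to upgrade $\pi$-ULA to $\pi'$-ULA. For this I would use the contraction principle, Lemma~\ref{lemma: hyperboliclocalizationtotallyattracting}, exactly as in the proof of Theorem~\ref{thm: hyplocULA}, but now measuring ULA-ness relative to $S'$ rather than $S$. Form the diagonal $\Delta: X\to X\times_{S'} X$ — note we must use the fiber product over $S'$, not $S$, since that is the relevant ambient space for the $\pi'$-ULA condition — and consider the comparison map
\[
\alpha: p_1^\ast \mathbb D_{X/S'}(j_!A)\otimes p_2^\ast j_! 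A \to \sHom(p_1^\ast j_! A, p_2^! j_! A).
\]
Over $X^\circ\times_{S'} X$ and over $X\times_{S'} X^\circ$ this is an isomorphism because $A\boxtimes 1$, resp.~$1\boxtimes A$, is ULA for the respective projection (here one uses that $A$ is $\pi'^\circ$-ULA, and the argument of \cite[Lemma~4.3]{HansenScholze}). Hence the cone of $\alpha$ is supported on the image of $i\times i: S\times_{S'} S\hookrightarrow X\times_{S'} X$ — crucially, \emph{not} just on the diagonal $S$, because $\pi$ need not be injective on $S'$-fibers; this is the new feature compared to Theorem~\ref{thm: hyplocULA}.

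So the heart of the matter is to show $(i\times i)^!$ kills the cone, where now $i\times i: S\times_{S'}S\to X\times_{S'}X$. I would apply Lemma~\ref{lemma: hyperboliclocalizationtotallyattracting} to the map $X\times_{S'} X\to S\times_{S'} S$ with the diagonal $\mathbb G_m$-action, which is again partially proper, $!$-able, representable in locally spatial diamonds, has section $i\times i$, and is totally attracting towards it (the product of two totally attracting actions is totally attracting). This gives $(i\times i)^! = (\pi\times\pi)_!$ on $\mathbb G_m$-equivariant sheaves, where $\pi\times\pi: X\times_{S'}X\to S\times_{S'}S$. Then, exactly as before, I would show $(\pi\times\pi)_!$ kills both the source and target of $\alpha$: for the source, use the relative Künneth formula over $S'$ to write it as $(\pi_!\mathbb D_{X/S'}(j_!A))\otimes_{?}(\pi_! j_! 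A)$ along $S\times_{S'} S$, and then $\pi_!\mathbb D_{X/S'}(j_!A) = \pi_! j_\ast \mathbb D_{X^\circ/S'}(A) = i^! j_\ast \mathbb D_{X^\circ/S'}(A) = 0$ using Lemma~\ref{lemma: hyperboliclocalizationtotallyattracting} and $i^! j_\ast = 0$; for the target, $(i\times i)^\ast p_1^\ast j_! A = i^\ast j_! A = 0$. The main obstacle — and the one place requiring genuine care rather than copying the earlier proof verbatim — is checking that the relative Künneth formula and the base-change identities hold over the possibly-bad base $S'$ (which is only assumed a small v-stack with $q$ merely $!$-able, not proper or nice), and that the contraction principle genuinely applies to $X\times_{S'}X \to S\times_{S'}S$; I expect this to go through because all the cited results from \cite{FarguesScholze} and the motivic formalism are formulated for arbitrary $!$-able maps of small v-stacks, but it is the step that needs to be stated with the right generality.
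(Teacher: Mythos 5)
Your core argument is correct and is in substance the paper's: both relativize the proof of Theorem~\ref{thm: hyplocULA} by studying the comparison map $\alpha$ on $X\times_{S'}X$ and killing its cone with the contraction principle. The execution differs slightly: you check that $\alpha$ is an isomorphism over both $X^\circ\times_{S'}X$ and $X\times_{S'}X^\circ$, so the cone is supported on $S\times_{S'}S$, and you then apply the contraction principle to $\pi\times_{S'}\pi$ with the diagonal action together with the K\"unneth formula over $S'$; the paper instead uses the isomorphism only over $X^\circ\times_{S'}X$, so the cone is supported on $S\times_{S'}X$, and applies the contraction principle to the base change $\pi_X:X\times_{S'}X\to S\times_{S'}X$ with section $i'=i\times\mathrm{id}$, replacing K\"unneth by the projection formula and proper base change. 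Both routes work, and the points you flag as needing care (K\"unneth over $S'$, the contraction principle for $\pi\times_{S'}\pi$, the isomorphism over $X\times_{S'}X^\circ$ via the argument of \cite[Lemma~4.3]{HansenScholze}) do hold in the stated generality, since all inputs are formulated for arbitrary $!$-able maps of small v-stacks. One genuine error to delete, though it is harmless because your main computation never uses it: the claim that $\pi'^\circ$-ULA implies $\pi^\circ$-ULA (and the related ``iff'' characterization in your opening paragraph) is false in general --- ULA over a composite base does not descend to ULA over the intermediate base; for instance a sheaf of the form $\Lambda\boxtimes F$ on $Y\times S\to S\to \ast$ with $F$ a skyscraper is ULA over the point but not over $S$. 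Consequently the preliminary step ``hence $j_!A$ is $\pi$-ULA by Theorem~\ref{thm: hyplocULA}'' is unjustified as written; fortunately your diagonal argument establishes the $\pi'$-ULA property of $j_!A$ directly, without ever invoking it.
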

\begin{proof}Let $p_1,p_2: X \times_{S'} X \to X$ be the evident projections. We need to see that the natural map
\[ p_{1}^\ast \mathbb{D}_{X/S'}(j_! A) \otimes p_{2}^{\ast} j_!A \overset{\alpha}{\to} \sHom(p_1^\ast j_!A, p_2^! j_!A)\]
is an isomorphism. As in the previous proof, it is clear that this map is an isomorphism after pullback along the open immersion $X^\circ \times_{S'} X \to X \times_{S'} X$. It is then enough to see that the source and target of $\alpha$ are killed by $i'^!$, where $i'= i \times \mathrm{id} : S\times_{S'} X \to X \times_{S'} X$ is the evident base change of $i$. Write also $p_S: S\times_{S'} X \to S$ and $p_X: S \times_{S'} X \to X$ for the evident projections. For the target of $\alpha$ the vanishing is clear, because
\begin{align*}
i'^! \sHom(p_1^\ast j_!A, p_2^! j_!A) & \cong \sHom(i'^\ast p_1^\ast j_!A, i'^! p_2^! j_!A) \\
 & \cong \sHom(p_S^\ast i^\ast j_!A, i'^! p_2^! j_!A)\\
 & \cong 0
\end{align*}
using $i^\ast j_!A=0$.

To analyze the source of $\alpha$, we will again use the contraction principle. For this we consider the map $\pi_X : X \times_{S'} X \to S \times_{S'} X$ given as the evident base change of $\pi$, so $i'$ is a section of $\pi_X$. Then
\begin{align*}
i'^! (p_{1}^\ast \mathbb{D}_{X/S'}(j_! A) \otimes p_{2}^{\ast} j_!A) & \cong \pi_{X!}(p_{1}^\ast \mathbb{D}_{X/S'}(j_! A) \otimes p_{2}^{\ast} j_!A) \\
& \cong \pi_{X!}(p_1^\ast \mathbb D_{X/S'}(j_! A)\otimes \pi_X^\ast p_X^\ast j_! A)\\
& \cong \pi_{X!} p_1^\ast \mathbb D_{X/S'}(j_! A)\otimes p_X^\ast j_! A \\
& \cong p_S^\ast \pi_! \mathbb D_{X/S'}(j_! A)\otimes p_X^\ast j_! A\\
 & \cong p_{S}^{\ast}\pi_! j_\ast \mathbb{D}_{X^\circ / S'}(A) \otimes p_{X}^{\ast} j_! A\\
 & \cong p_{S}^{\ast}i^! j_\ast \mathbb{D}_{X^\circ / S'}(A) \otimes p_{X}^{\ast} j_! A\\
&\cong 0
\end{align*}
where the first isomorphism follows from the contraction principle, the second isomorphism from $p_2 = p_X\circ \pi_X$, the third isomorphism from the projection formula, the fourth isomorphism from proper base change, the fifth isomorphism from the general commutation of Verdier duality, the sixth isomorphism from another application of the contraction principle, and the final vanishing from $i^! j_\ast =0$.
\end{proof}

We also have the following in the previous situation, which is a variant of \cite[Proposition~IV.6.14]{FarguesScholze}.

\begin{lemma}{\label{lemma: hyplocalpreservesULA}}
In the situation of Theorem \ref{thm: hyplocalpropvariant}, let $A \in D(X,\Lambda)$ be any sheaf which is $\mathbb{G}_{m}$-equivariant and $\pi'$-ULA. Then $i^{\ast}A$ and $i^{!}A \in D(S,\Lambda)$ are $q'$-ULA.
\end{lemma}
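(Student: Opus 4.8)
The statement concerns the setup of Theorem~\ref{thm: hyplocalpropvariant}: a partially proper $!$-able map $\pi: X\to S$, representable in locally spatial diamonds, with a section $i: S\to X$ and a totally attracting $\mathbb G_m$-action towards $i$, together with an auxiliary $!$-able map $q: S\to S'$ and $\pi'=q\circ\pi$. We are given $A\in D(X,\Lambda)$ which is $\mathbb G_m$-equivariant and $\pi'$-ULA, and we must show $i^\ast A$ and $i^! A$ are $q$-ULA. The plan is to reduce both assertions, via the contraction principle (Lemma~\ref{lemma: hyperboliclocalizationtotallyattracting}) and the last remark preceding the theorem, to the stability of ULA sheaves under the operations already at our disposal.

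First I would handle $i^!A$. Since $i$ is a closed section of the contracting action and $A$ is $\mathbb G_m$-equivariant, the contraction principle gives $i^!A\cong \pi_! A$ in $D(S,\Lambda)$. Now $\pi$ is proper on the attracting locus in the relevant sense — more precisely, $\pi_!$ applied to $\mathbb G_m$-equivariant sheaves agrees with $\pi_\ast$ by the contraction principle applied again (or directly: $\pi_!A=i^!A=i^\ast A'$ for a suitable twist, but cleaner is to note $\pi$ restricted to the equivariant part behaves like a proper map). In any case, the key point is that pushforward along a proper representable map preserves ULA sheaves over the target; applying this to $\pi$ over $S$, and then using that $\pi'=q\circ\pi$ is $\pi'$-ULA means $A$ is in particular ULA relative to $S$ after the appropriate reinterpretation. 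Concretely: $A$ being $\pi'$-ULA and $\pi$ being proper (on the equivariant locus) forces $\pi_!A = i^!A$ to be $q$-ULA, since ULA-ness over $S'$ factors through $q$-ULA-ness over $S$ for sheaves of the form $\pi_!(\text{$\pi'$-ULA})$ by the projection formula and base change, exactly as in \cite[Proposition~IV.6.14]{FarguesScholze}.

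Next, $i^\ast A$. Here I would use the other formal consequence recorded after Theorem~\ref{thm: hyplocalpropvariant}: a $\mathbb G_m$-equivariant $B\in D(X,\Lambda)$ is $\pi'$-ULA iff $j^\ast B$ is $\pi'^\circ$-ULA and $i^\ast B$ is "dualizable" in the appropriate relative sense. Applying this to $B=A$ tells us $i^\ast A$ is dualizable over $S'$; to upgrade "dualizable over $S'$" to "$q$-ULA over $S'$" I would invoke that $i^\ast A$, being the $\ast$-restriction along a closed immersion of a $\pi'$-ULA sheaf, is automatically $\pi'$-ULA over $S'$ when $i$ is suitably nice — but $i$ here is only a section, not smooth, so this needs the contracting structure. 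The clean route: relate $i^\ast A$ to $i^!A$ via the $\mathbb G_m$-action. On the fixed locus of a totally attracting action, $i^\ast$ and $i^!$ agree up to a twist by the (invertible) normal bundle character, so $i^\ast A\cong i^!A\otimes \mathscr L$ for an invertible $\mathscr L\in D(S,\Lambda)$; since $i^!A$ is $q$-ULA by the previous paragraph and ULA-ness is stable under tensoring with invertible objects, $i^\ast A$ is $q$-ULA as well.

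\textbf{Main obstacle.} The delicate point is making precise the passage from the contraction principle — which is an identity of \emph{underived} functors $\pi_!\cong i^!$ on equivariant objects — to a statement about \emph{relative} ULA-ness over the further base $S'$, i.e.\ checking that the isomorphism $i^!A\cong\pi_!A$ is compatible with the $q$-direction and with the relative duality functors used to define ULA-ness (the maps $p_1^\ast\mathbb D(-)\otimes p_2^\ast(-)\to\sHom(p_1^\ast(-),p_2^!(-))$ over $S\times_{S'}S$ versus $X\times_{S'}X$). I expect this to go through by the same diagram-chase as in the proof of Theorem~\ref{thm: hyplocalpropvariant} just given — namely, base-changing the contracting situation along $q$ and re-running the projection-formula/proper-base-change/contraction-principle cascade — but one must be careful that all the $\mathbb G_m$-equivariance structures are carried along so that the contraction principle is applicable at each stage. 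The comparison $i^\ast A\cong i^!A\otimes\mathscr L$ should likewise be extracted from purgity of the normal direction of the attracting action, and the only real content is that $\mathscr L$ is invertible (equivalently $q$-ULA and $\otimes$-invertible), which follows since the $\mathbb G_m$-fixed section of a totally attracting action has everywhere-positive weights on the normal "bundle", making the relevant twist an invertible Banach–Colmez-type object pulled back from $S$.
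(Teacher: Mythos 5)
Your reduction of $i^{\ast}A$ to $i^{!}A$ via a purity isomorphism $i^{\ast}A \cong i^{!}A \otimes \mathscr{L}$ with $\mathscr{L}$ invertible is the step that fails, and it is the load-bearing one. Such a purity statement would require the section to sit (cohomologically) smoothly inside $X$ relative to $S$, which is exactly what is unavailable here: in the application $X$ is built from Zastava spaces, which are singular along the boundary strata containing the image of $i$. Even in the simplest contracting situation the claim is false: take $S=S'=\ast$, $X=\mathbb{A}^{1}$ with the standard attracting $\mathbb{G}_m$-action, $i$ the origin, and $A=j_!\Lambda$ the extension by zero from $\mathbb{G}_m$. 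This $A$ is equivariant and ULA over the point, but $i^{\ast}A=0$ while $i^{!}A \cong i^{\ast}Rj_{\ast}\Lambda[-1]\neq 0$, so no invertible twist can relate them; in general the cone of $i^{!}A\to i^{\ast}A$ is $i^{\ast}Rj_{\ast}j^{\ast}A$, which is not of this form. The alternative sentence in the same paragraph — deducing from the remark after Theorem \ref{thm: hyplocULA} that $i^{\ast}A$ is ``dualizable over $S'$'' — is also not a proof: that remark concerns $S'=S$, and for general $q$ the dualizability/ULA statement over $S'$ is precisely what the lemma asserts, not an available input. Your treatment of $i^{!}A$ has a related gap: after the contraction principle $i^{!}A\cong \pi_!A$ you invoke ``proper pushforward preserves ULA'', but $\pi$ is only partially proper, and the assertion that $\pi_!$ of an equivariant $\pi'$-ULA sheaf is $q$-ULA is essentially the content to be proved; you acknowledge this and defer to an unexecuted diagram chase.

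The intended argument is much softer and runs in the opposite order. The excision triangle $j_!j^{\ast}A \to A \to i_{\ast}i^{\ast}A$, together with Theorem \ref{thm: hyplocalpropvariant} applied to $j^{\ast}A$ (which is equivariant and $\pi'^{\circ}$-ULA), shows that $i_{\ast}i^{\ast}A$ is $\pi'$-ULA as the cone of a map of $\pi'$-ULA sheaves; since $\pi'\circ i = q$ and $i$ is a closed immersion, this gives that $i^{\ast}A$ is $q$-ULA. Then $i^{!}A \cong i^{!}\mathbb{D}_{X/S'}^{2}A \cong \mathbb{D}_{S/S'}\,i^{\ast}\mathbb{D}_{X/S'}A$ by Verdier biduality for ULA sheaves, and the first claim applied to $\mathbb{D}_{X/S'}A$ (which is again equivariant and $\pi'$-ULA) together with the stability of ULA under relative Verdier duality finishes the proof. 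I would rework your argument along these lines rather than via hyperbolic localization and purity.
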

\begin{proof}
For the first claim, it is equivalent to see that $i_{\ast}i^{\ast}A $ is $\pi'$-ULA. This follows from the distinguished triangle
\[ j_!j^{\ast} A \to A \to i_{\ast}i^{\ast} A\to, \]
using that $j_!j^{\ast} A$ is $\pi'$-ULA by the previous theorem. The second claim follows from the first claim by writing 
\begin{align*}
    i^{!}A & \cong i^! \mathbb{D}_{X/S'}^{2} A \\
    & \cong \mathbb{D}_{S/S'} i^{\ast} \mathbb{D}_{X/S'}A
\end{align*}
and invoking the preservation of ULA sheaves under relative Verdier duality, using Verdier biduality for ULA sheaves to justify the first line.
\end{proof}

For later use, we record some additional properties of ULA sheaves we will need.

\begin{lemma}\label{lem:pullback-tensor-ULA-star-pushforward} Consider a diagram $X\overset{j}{\to} \overline{X} \overset{\overline{q}}{\to}S$ where $j$ is an open immersion and $\overline{q}$ is a $!$-able map of small v-stacks. Set $q=\overline{q}\circ j$. Suppose we are given a sheaf $A\in D(X,\Lambda)$ such that $j_!\mathbb D_{X/S}(A)$ is $\overline{q}$-ULA (hence so is its Verdier dual $j_\ast A$). Then $\overline{q}^\ast(-)\otimes j_{\ast}A \cong j_{\ast}(q^{\ast}(-) \otimes A)$.
\end{lemma}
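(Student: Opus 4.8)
The plan is to construct the natural comparison map and then reduce its invertibility to a question that can be checked after applying a relative dualizing/Verdier duality operation, where the hypothesis on $j_!\mathbb{D}_{X/S}(A)$ directly supplies what is needed. First I would construct the candidate map: there is always a canonical base-change-type transformation
\[
\overline{q}^{\ast}(-)\otimes j_{\ast}A \longrightarrow j_{\ast}(j^{\ast}\overline{q}^{\ast}(-)\otimes A) = j_{\ast}(q^{\ast}(-)\otimes A),
\]
obtained by adjunction from the identification $j^{\ast}(\overline{q}^{\ast}(-)\otimes j_{\ast}A)\cong q^{\ast}(-)\otimes j^{\ast}j_{\ast}A \to q^{\ast}(-)\otimes A$ using the counit $j^{\ast}j_{\ast}A\to A$, together with the projection formula $j^{\ast}\overline{q}^{\ast}\cong q^{\ast}$. (Alternatively, and more symmetrically, the projection formula for $j_!$ gives $j_!(q^\ast(-)\otimes \mathbb D_{X/S}(A))\cong \overline q^\ast(-)\otimes j_!\mathbb D_{X/S}(A)$ for free, and one dualizes; I would use whichever is cleaner.) The content is to show this map is an equivalence.

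Next I would invoke the ULA hypothesis in the form most convenient here: since $j_{\ast}A = \mathbb{D}_{\overline X/S}(j_!\mathbb{D}_{X/S}(A))$ and $j_!\mathbb{D}_{X/S}(A)$ is $\overline{q}$-ULA, the sheaf $j_{\ast}A$ is $\overline{q}$-ULA as well; and ULA sheaves satisfy the projection formula on the nose, i.e. for a $\overline q$-ULA sheaf $F$ one has a canonical isomorphism $\overline q^\ast(-)\otimes F \cong \sHom(\overline q^\ast \mathbb D_{\overline X/S}(-)^{?}, \ldots)$-type identities — but more to the point, the key mechanism is: tensoring with a ULA sheaf commutes with (relative) Verdier duality and with arbitrary limits/colimits, by \cite[Prop.~VII.7.4, VII.7.9]{FarguesScholze} (and their motivic analogues in \cite{MotivicGeometrization}). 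Concretely, applying $\mathbb{D}_{\overline X/S}$ to the desired isomorphism, and using that $\mathbb D_{\overline X/S}(j_\ast A) \cong j_!\mathbb D_{X/S}(A)$ is ULA while $\mathbb D_{\overline X/S}(\overline q^\ast(-)\otimes j_\ast A)\cong \overline q^!\mathbb D_S(-)\otimes \mathbb D_{\overline X/S}(j_\ast A)$ (ULA sheaves turn $\otimes \overline q^\ast$ into $\otimes \overline q^!$ up to a dualizing twist), the statement $\overline q^\ast(-)\otimes j_\ast A\cong j_\ast(q^\ast(-)\otimes A)$ becomes equivalent to $\overline q^!(-)\otimes j_!\mathbb D_{X/S}(A)\cong j_!(q^!(-)\otimes \mathbb D_{X/S}(A))$, and \emph{this} last isomorphism holds for free: it is the $j_!$-projection formula combined with the standard compatibility of $j^!=j^\ast$ with $q^!$ versus $\overline q^!$ for the open immersion $j$.

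The main obstacle, and the step I would be most careful about, is the bookkeeping of the relative dualizing twists: the passage from ``$\otimes\,\overline q^\ast$'' to ``$\otimes\,\overline q^!$'' for a ULA sheaf $F$ reads $\overline q^!(-)\otimes F \cong \overline q^\ast(-)\otimes F \otimes K_{\overline X/S}$ only when $\overline q$ is suitably smooth, which it is not assumed to be here; the correct general statement is the one packaged in \cite[Prop.~VII.7.4]{FarguesScholze}, namely that for $\overline q$-ULA $F$ the functor $(-)\otimes F$ intertwines $\overline q^\ast$ and $\overline q^!$ after applying $\mathbb D$, i.e. $\mathbb D_{\overline X/S}(\overline q^\ast(-)\otimes F)\cong \overline q^!\mathbb D_S(-)\otimes \mathbb D_{\overline X/S}(F)$. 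I would therefore organize the argument so that I only ever use that clean dualized form, never an explicit $K_{\overline X/S}$. With that in hand, the whole proof is: (i) write down the map; (ii) dualize relative to $S$; (iii) identify both sides under the ULA-duality formula and the $j_!$-projection formula; (iv) observe the resulting map is the canonical $j_!$-projection-formula isomorphism, hence an equivalence; (v) dualize back, using Verdier biduality for the ULA sheaves involved, to conclude. No step beyond (ii) should require anything but formal six-functor manipulations plus the cited ULA results.
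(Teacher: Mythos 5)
Your overall idea --- exploit the ULA duality formula together with the $(j_!,j^\ast)$-adjunction --- is the right one, but the specific reduction you propose does not work as stated, for two concrete reasons. First, your intermediate formula $\bD_{\overline{X}/S}(\overline{q}^\ast(-)\otimes F)\cong \overline{q}^!\bD_S(-)\otimes\bD_{\overline{X}/S}(F)$ for $\overline{q}$-ULA $F$ is wrong: the correct statement (this is \cite[Proposition IV.2.19]{FarguesScholze}, not VII.7.4/VII.7.9, which concern $\Bun_G$) is $\bD_{\overline{X}/S}(\overline{q}^\ast(-)\otimes F)\cong\sHom(F,\overline{q}^!\bD_S(-))\cong \overline{q}^{\ast}\bD_S(-)\otimes\bD_{\overline{X}/S}(F)$, with $\overline{q}^\ast$ and not $\overline{q}^!$; your version double-counts the dualizing twist (test it with $S=\ast$, $F=\Lambda$, $\overline{X}$ smooth). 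Second, and more seriously, the ``dualize, check, dualize back'' scheme is not an equivalence of statements. Dualizing the right-hand side would require $\bD_{\overline{X}/S}(j_\ast(q^\ast(-)\otimes A))\cong j_!\bD_{X/S}(q^\ast(-)\otimes A)$, and dualizing back would require biduality of both $\overline{q}^\ast(-)\otimes j_\ast A$ and $j_\ast(q^\ast(-)\otimes A)$; neither is available, because for an arbitrary input $(-)$ these objects are not ULA --- the hypothesis only gives $\bD\circ j_\ast = j_!\circ\bD$ and biduality for the fixed sheaves $j_!\bD_{X/S}(A)$, $j_\ast A$, $\bD_{X/S}(A)$ themselves. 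A sanity check that content has been lost: the statement you reduce to, $\overline{q}^!(-)\otimes j_!\bD_{X/S}(A)\cong j_!(q^!(-)\otimes\bD_{X/S}(A))$, is the $j_!$-projection formula and holds with no hypotheses whatsoever, whereas the lemma genuinely needs the ULA assumption ($j_\ast$ does not satisfy the projection formula in general). So all the force of the lemma is concentrated precisely in the ``equivalent to'' step you have not justified.

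The repair is to run the manipulation in one direction only, never dualizing the asserted identity --- and this is exactly the paper's proof. With $A'=\bD_{X/S}(A)$ one has the chain
\[
\overline{q}^\ast(-)\otimes j_\ast A\cong \overline{q}^\ast(-)\otimes\bD_{\overline{X}/S}(j_!A')\cong\sHom(j_!A',\overline{q}^!(-))\cong j_\ast\sHom(A',q^!(-))\cong j_\ast(q^\ast(-)\otimes\bD_{X/S}(A'))\cong j_\ast(q^\ast(-)\otimes A),
\]
where the second and fourth isomorphisms are \cite[Proposition IV.2.19]{FarguesScholze} applied to the ULA sheaves $j_!A'$ and $A'=j^\ast j_!A'$ (the latter is $q$-ULA), the third is the adjunction $\sHom(j_!M,N)\cong j_\ast\sHom(M,j^!N)$ combined with $j^!\overline{q}^!=q^!$, and the last is biduality. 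Note that your parenthetical alternative (dualize the $j_!$-projection formula) can be salvaged along these lines, but only yields the statement for inputs of the form $\bD_S(-)$ and with $A$ replaced by $\bD_{X/S}^2(A)$; using the adjunction identity $\sHom(j_!A',\overline{q}^!C)\cong j_\ast\sHom(A',q^!C)$, valid for every $C$, avoids both defects.
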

\begin{proof}Set $A'=\bD_{X/S}(A)$. Then, using \cite[Proposition IV.2.19]{FarguesScholze} twice,
\begin{align*}
\overline{q}^\ast(-)\otimes j_{\ast}A & \cong \overline{q}^\ast(-)\otimes \bD_{\overline{X}/S}(j_! A') \\
 & \cong \sHom(j_!A',\overline{q}^{!}(-))\\
 & \cong j_*\sHom(A',q^{!}(-))\\
 & \cong j_*(q^\ast(-) \otimes \bD_{X/S}(A'))\\
 & \cong j_*(q^\ast(-) \otimes A),
\end{align*}
which gives the result.
\end{proof}

\begin{lemma}\label{lem:functor-preserves-compacts}Suppose given a diagram $X_1 \overset{f_1}{\leftarrow} Y \overset{f_2}{\rightarrow} X_2$ and a sheaf $K \in D(Y,\Lambda)$ which is $f_2$-ULA and whose support is relatively proper over $X_1$. Then the functor $F(-)=f_{2!}(K \otimes f_{1}^{\ast}-)$ preserves compact objects.
\end{lemma}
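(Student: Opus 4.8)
The strategy is to reduce the claim to the two standard preservation properties: that a proper pushforward $f_{2!}$ preserves compact objects when restricted to sheaves with relatively proper support, and that tensoring against a relatively ULA sheaf (followed by such a pushforward) sends compact objects to compact objects. Concretely, I would first recall the characterization of compact objects in $D(\Bun_G,\Lambda)$ (Definition \ref{defn: compactandULAobjects}): $A$ is compact iff it has quasicompact support and all stalks $i_b^*A$ are compact in $D(G_b(E),\Lambda)$. Since the question of preserving compacts can be checked on the (quasicompact) support of $K$, and all the relevant spaces are decomposed into connected components indexed by Kottwitz sets, I would reduce to the case where everything is quasicompact. The point of the relative properness hypothesis on the support of $K$ is precisely that it guarantees $F(A)$ has quasicompact support whenever $A$ does, so the support condition on $F(A)$ is automatic.

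\textbf{Key steps.} First I would note that $F(-) = f_{2!}(K\otimes f_1^*(-))$, and that for $A$ compact, $f_1^*A$ is compact (pullback preserves compacts since it is a left adjoint commuting with colimits, and one checks the support/stalk conditions are preserved — or one invokes \cite[Theorem~I.5.1]{FarguesScholze} directly). Then $K \otimes f_1^* A$: here the relevant input is that tensoring a relatively ULA sheaf with a compact sheaf stays in a class that behaves well under $f_{2!}$. The cleanest route is to observe that $K$ being $f_2$-ULA means, by the characterization implicit in \cite[Prop.~VII.7.4, Prop.~VII.7.9]{FarguesScholze}, that $f_{2!}(K\otimes f_2^*(-))$ is a colimit-preserving functor sending compacts to compacts on $X_2$ — but here we need the slightly more general statement with $f_1^*A$ in place of $f_2^*B$. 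I would handle this by the projection formula / base change: locally on source and target one factors $f_2$ and reduces to the ULA property, or more efficiently, one uses that $K\otimes f_1^*A$ is $f_2$-ULA whenever $K$ is $f_2$-ULA and $A$ is compact (ULA sheaves are stable under tensoring with arbitrary pullbacks from the base, and a compact sheaf on $X_1$ pulled back is a reasonable sheaf to tensor with — one may need $f_1$ to be nice enough, but ULA-ness of $K$ over $X_2$ is unaffected by tensoring with $f_1^*A$ since $f_1^*A$ is pulled back along a map to $X_1$ which is "transverse" to $f_2$ in the relevant sense, cf.\ the stability properties of ULA sheaves in \cite[\S IV.2]{FarguesScholze}). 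Having established $K\otimes f_1^*A$ is $f_2$-ULA with relatively proper support, I would then invoke: a sheaf that is $f_2$-ULA and has $f_2$-proper support is compact relative to $X_2$, hence its $f_{2!}$-pushforward is compact (this is exactly the content of \cite[Prop.~VII.7.4]{FarguesScholze} combined with properness allowing $f_{2!} = f_{2*}$).

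\textbf{Main obstacle.} The subtle point is the interaction between the ULA property of $K$ (which is relative to $f_2: Y\to X_2$) and the tensor factor $f_1^*A$ (pulled back from the \emph{other} leg). I expect the genuinely non-formal step to be verifying that $K\otimes f_1^*A$ remains $f_2$-ULA — this requires knowing that ULA sheaves over $X_2$ are stable under tensoring with sheaves pulled back along $f_1$, which in turn rests on $A$ being compact (so that $f_1^*A$ is, locally, built from dualizable/constructible pieces) together with the general stability results for ULA sheaves under $\otimes$ and pullback from \cite{FarguesScholze}. If $f_1$ is not assumed to have any particular properties, one should argue $v$-locally on $Y$, trivializing the situation enough that $f_1^*A$ looks like a finite colimit of shifts of pullbacks of $\Lambda$ (using compactness of $A$), at which point $K\otimes f_1^*A$ is visibly a finite colimit of shifts of $K$ and hence $f_2$-ULA. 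Once that is in hand, the rest is a direct application of the cited finiteness results from \cite{FarguesScholze}, and the relative properness of $\mathrm{supp}(K)$ takes care of quasicompactness of supports throughout.
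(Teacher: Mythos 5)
There is a genuine gap: your argument routes through intermediate claims that are false in general. First, $f_1^\ast$ does not preserve compact objects (a functor preserves compacts iff its right adjoint preserves filtered colimits, and $f_{1\ast}$ has no reason to do so); in the intended application $X_1=\Bun_G$, compact objects are built from $i_{b!}\cInd_K^{G_b(E)}\Lambda$, which are far from being, even locally, finite colimits of shifts of $\Lambda$, so the proposed v-local trivialization of $f_1^\ast A$ also fails. Second, and more seriously, the central step that $K\otimes f_1^\ast A$ remains $f_2$-ULA is wrong: ULA is stable under tensoring with \emph{dualizable} objects, but $f_1^\ast A$ for $A$ compact need not be dualizable. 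Already in the degenerate case $Y=X_1=X_2=[\ast/\underline{G(E)}]$, $f_1=f_2=\mathrm{id}$, $K=\Lambda$, your claim would say every compact object of $D(G(E),\Lambda)$ is dualizable, which fails for $\cInd_K^{G(E)}\Lambda$ (it is compact but not admissible). Finally, even granting that step, the conclusion you draw — that an $f_2$-ULA sheaf with $f_2$-proper support pushes forward to a \emph{compact} object — conflates the two finiteness conditions: proper pushforward of ULA is ULA, and ULA objects (e.g.\ admissible representations) are typically not compact. So the chain ULA $\Rightarrow$ compactness at the end does not close.

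The paper's proof avoids all of this by never touching $f_1^\ast A$ directly: one computes the right adjoint $G(-)=f_{1\ast}\sHom(K,f_2^!-)$, uses the ULA hypothesis to rewrite it as $f_{1\ast}(\bD_{Y/X_2}(K)\otimes f_2^\ast-)$, uses the relatively proper support of $K$ to replace $f_{1\ast}$ by $f_{1!}$, and then observes that this functor commutes with all direct sums; hence its left adjoint $F$ preserves compact objects. If you want to salvage your approach, this adjoint-functor criterion is the step to aim for — the ULA and proper-support hypotheses are exactly what make the \emph{right} adjoint colimit-preserving, not what make $K\otimes f_1^\ast A$ ULA or its pushforward compact.
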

By "relatively proper support" we mean that there is an isomorphism $K\simeq i_\ast L$ where $i:Y' \to Y$ is a closed immersion such that $f_1 \circ i$ is proper.
\begin{proof}
It is formal that $F$ has right adjoint given by $G(-) = f_{1\ast}\sHom(K,f_{2}^{!}-)$. Using the ULA hypothesis, this can be rewritten as $ f_{1\ast}(\bD_{Y/X_2}(K) \otimes f_{2}^{\ast}-)$. Using the relatively proper support hypothesis, this can be rewritten as $f_{1!}(\bD_{Y/X_2}(K) \otimes f_{2}^{\ast}-)$. Now it is clear that $G$ commutes with all direct sums, so its left adjoint necessarily preserves compact objects.
\end{proof}

\subsection{Proof of the ULA theorem}
Thanks to our labor in the previous sections, we can finally conclude our main geometric result.  
\begin{theorem}{\label{thm: ULAtheorem}}
The sheaf $\tilde{j}_{P!}(\IC_{\Bun_{P}})$ is ULA with respect to $\tilde{\mf{q}}_{P}$, and, for all $\theta \in \Lambda_{G,P}^{\mathrm{pos}}$ the sheaf $\tilde{j}_{\theta}^{!}\tilde{j}_{P!}(\IC_{\Bun_{P}})$ is ULA with respect to the composition 
\[ \tilde{j}_{\theta}: \phantom{}_{\theta}\wt{\Bun}_{P} \xrightarrow{\tilde{j}_{\theta}} \wt{\Bun}_{P} \xrightarrow{\tilde{\mf{q}}_{P}} \Bun_{M}. \]
Similarly, the sheaf $\tilde{j}_{P*}(\IC_{\Bun_{P}})$ is ULA with respect to $\tilde{\mf{q}}_{P}$, and, for all $\theta \in \Lambda_{G,P}^{\mathrm{pos}}$, the sheaf $\tilde{j}_{\theta}^{*}\tilde{j}_{P*}(\IC_{\Bun_{P}})$ is ULA with respect to $\tilde{\mf{q}}_{P} \circ \tilde{j}_{\theta}$.
\end{theorem}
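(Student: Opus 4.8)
The plan is to prove the theorem by induction on the poset $\Lambda_{G,P}^{\mathrm{pos}}$, propagating the ULA property outwards from the open stratum $\Bun_P\subset \wt{\Bun}_P$ into deeper and deeper boundary strata. Concretely, for $\theta\in\Lambda_{G,P}^{\mathrm{pos}}$ let $\phantom{}_{\leq\theta}\wt{\Bun}_P$ be the corresponding open substack and let $\phantom{}_{\leq\theta}\tilde{\mf q}$ denote its structure map to $\Bun_M$; we will show by induction on $\theta$ that $\tilde{j}_{P!}(\IC_{\Bun_P})|_{\phantom{}_{\leq\theta}\wt{\Bun}_P}$ is $\phantom{}_{\leq\theta}\tilde{\mf q}$-ULA, and simultaneously that $\tilde j_{\theta}^!\tilde j_{P!}(\IC_{\Bun_P})$ is ULA over $\Bun_M$ via $\phantom{}_\theta\wt{\Bun}_P$. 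The base case $\theta=0$ is the statement that $\IC_{\Bun_P}$ is $\mf q_P$-ULA on $\Bun_P$, which holds because $\mf q_P$ is cohomologically smooth (Proposition~\ref{prop: qissmooth}) and $\IC_{\Bun_P}$ is (a twist and shift of) a $\mf q_P^*$-pullback, hence relatively lisse.

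First I would fix a maximal element $\theta$ among those for which the claim is not yet known, so that by induction the sheaf is already $\phantom{}_{<\theta}\tilde{\mf q}$-ULA on the open complement $\phantom{}_{<\theta}\wt{\Bun}_P\subset \phantom{}_{\leq\theta}\wt{\Bun}_P$ of the closed stratum $\phantom{}_\theta\wt{\Bun}_P$. The key geometric input is Corollary~\ref{cor: propertofkeydiag}: there is a diagram \eqref{diag: keydiagULA} with $p:X\to \phantom{}_{\leq\theta}\wt{\Bun}_P$ cohomologically smooth and surjective, $q:S\to\Bun_M$, $\pi:X\to S$ a partially proper $!$-able representable-in-locally-spatial-diamonds map equipped with a section $i$ and a fibrewise-totally-attracting $\mathbb G_m$-action whose fixed locus $i(S)$ is $p^{-1}(\phantom{}_\theta\wt{\Bun}_P)$, and such that the $\mathbb G_m$-action preserves the preimages of the strata $\phantom{}_{\theta'}\wt{\Bun}_P$ for $\theta'\le\theta$. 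Writing $A = \tilde j_{P!}(\IC_{\Bun_P})|_{\phantom{}_{\leq\theta}\wt{\Bun}_P}$ and $X^\circ = X\smallsetminus S$, the pullback $p^*A$ restricted to $X^\circ$ is $\pi'^\circ$-ULA (with $\pi' = q\circ\pi$), by the inductive hypothesis combined with cohomological smoothness of $p$ (pullback along cohomologically smooth maps preserves ULA over a fixed base, by the transitivity/smooth-pullback properties of ULA sheaves in \cite{FarguesScholze}). Moreover $p^*A$ is $\mathbb G_m$-equivariant on $X^\circ$: indeed the $\mathbb G_m$-action on $X$ (on the Zastava factor $\tilde Z^{\theta,\prime}$) comes from conjugation by $[\lambda]$ on the triple $(G,P,P^-)$ over $\mathcal Y_S$, so it acts compatibly on all the tautological bundles and hence preserves $\tilde j_{P!}(\IC_{\Bun_P})$, which is constructed from $\mf q_P$-pullbacks and extension by zero along the $\mathbb G_m$-stable open $\Bun_P\hookrightarrow\wt{\Bun}_P$. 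Applying Theorem~\ref{thm: hyplocalpropvariant} to $\pi$, $q$ and the sheaf $p^*A|_{X^\circ}$, we conclude that $j_!(p^*A|_{X^\circ})$ is $\pi'$-ULA on $X$; but $j_!(p^*A|_{X^\circ}) = p^*A$ since $A$ is extended by zero from $\Bun_P$ and hence vanishes on the closed stratum (more precisely $i^*p^*A=0$). Thus $p^*A$ is $\pi'$-ULA, i.e. ULA over $\Bun_M$, and since $p$ is cohomologically smooth and surjective, ULA descends along $p$ (one checks this v-locally / using that $p^!$ is $p^*$ up to a twist), so $A$ is $\phantom{}_{\leq\theta}\tilde{\mf q}$-ULA. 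For the stratum statement, $\tilde j_\theta^!\tilde j_{P!}(\IC_{\Bun_P})$ is ULA over $\Bun_M$ by Lemma~\ref{lemma: hyplocalpreservesULA} (the $!$-restriction $i^!p^*A$ to the fixed locus is $q'$-ULA, and this descends to $\phantom{}_\theta\wt{\Bun}_P$ along the cohomologically smooth restriction of $p$). Passing to the colimit over all $\theta$ (using that ULA is checked on the quasicompact opens $\phantom{}_{\leq\theta}\wt{\Bun}_P$, which exhaust $\wt{\Bun}_P$) gives that $\tilde j_{P!}(\IC_{\Bun_P})$ is $\tilde{\mf q}_P$-ULA.

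Finally, the statement for $\tilde j_{P*}(\IC_{\Bun_P})$ follows formally by Verdier duality: $\IC_{\Bun_P}$ is Verdier self-dual (Theorem~\ref{thm: dualcomplexonBunP}), and relative Verdier duality over $\Bun_M$ exchanges $\tilde j_{P!}$ and $\tilde j_{P*}$ up to the (self-dual) $\IC$ twist, while preserving the ULA property (Verdier biduality holds for ULA sheaves); concretely one invokes the ``formal consequence'' recorded after Theorem~\ref{thm: hyplocalpropvariant} and Lemma~\ref{lemma: hyplocalpreservesULA}, applied stratum by stratum along the same induction, to get that $\tilde j_{P*}(\IC_{\Bun_P})$ and each $\tilde j_\theta^*\tilde j_{P*}(\IC_{\Bun_P})$ are ULA. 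The main obstacle is the bookkeeping in the inductive step: one must be careful that the local model diagram of Corollary~\ref{cor: propertofkeydiag} is compatible with the correct map to $\Bun_M$ (the naive square with $h_M^{\la}$ does \emph{not} commute, as flagged in the text just before Corollary~\ref{cor: propertofkeydiag}), so the hypotheses of Theorem~\ref{thm: hyplocalpropvariant} — in particular the identity $i^*p^*A=0$ and the $\mathbb G_m$-equivariance of $p^*A$ on $X^\circ$ over the correct base $S$ — have to be verified with respect to $q:S\to\Bun_M$ and not with respect to the Zastava-side map; and one must ensure the $\mathbb G_m$-action genuinely preserves $p^*A$ and all the strata, which is where the $\Div^{(\theta),\prime}$ étale base change enters.
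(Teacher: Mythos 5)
Your proposal is correct and follows essentially the same route as the paper: induction on $\theta$ along the stratification, with the base case coming from cohomological smoothness of $\mathfrak q_P$, the inductive step from the local model diagram of Corollary \ref{cor: propertofkeydiag} combined with Theorem \ref{thm: hyplocalpropvariant} (applied with $S'=\Bun_M$) and descent of the ULA property along the cohomologically smooth surjection $p$, the stratum claim from Lemma \ref{lemma: hyplocalpreservesULA}, and the $*$-statements by relative Verdier duality using the self-duality of $\IC_{\Bun_P}$. Your explicit remarks on $\mathbb G_m$-equivariance of $p^*A$ and the identity $j_!j^*p^*A=p^*A$ (via $i^*p^*A=0$) make precise points the paper leaves implicit, but introduce no new idea.
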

\begin{proof}
The second claim follows from the first by  applying \cite[Corollary~IV.2.25]{FarguesScholze}. 

We have the presentation $\colim_{\theta \in \Lambda_{G,P}^{\mathrm{pos}}} \phantom{}_{\leq \theta}\wt{\Bun}_{P} = \wt{\Bun}_{P}$ as an increasing union of open substacks where the transition maps are dictated by the Bruhat ordering on $\Lambda_{G,P}^{\mathrm{pos}}$. Using this, we reduce to showing that, for all $\theta \in \Lambda_{G,P}^{\mathrm{pos}}$, the sheaf $\phantom{}_{\leq \theta}\tilde{j}_{!}(\IC_{\Bun_{P}})$ is ULA with respect to $\phantom{}_{\leq \theta}\wt{\mf{q}} = \wt{\mf{q}} \circ \phantom{}_{\leq \theta}\tilde{j}$, where $\phantom{}_{\leq \theta}\tilde{j}: \Bun_{P} \hookrightarrow \phantom{}_{\leq \theta}\wt{\Bun}_{P}$ is the natural open immersion and that $\tilde{j}_{\theta}^{!}\phantom{}_{\leq \theta}\tilde{j}_{!}(\IC_{\Bun_{P}})$ is ULA with respect to $\tilde{\mf{q}}_{P} \circ \tilde{j}_{\theta}$. We will do this by induction on the Bruhat ordering.

The base case ($\theta = 0$) follows from combining \cite[Proposition~IV.2.33]{FarguesScholze}, \cite[Corollary~IV.2.25]{FarguesScholze}, Proposition \ref{prop: qissmooth}, and Corollary \ref{cor: reldualcomploverBunM}. For the inductive step, we can assume that $\phantom{}_{\leq \theta}\tilde{j}_{!}(\IC_{\Bun_{P}})$ is ULA with over the open substack $\phantom{}_{< \theta}\wt{\Bun}_{P}$ given by the complement of $\phantom{}_{\theta}\wt{\Bun}_{P} \subset \phantom{}_{\leq \theta}\wt{\Bun}_{P}$, and we want to show it is ULA over all of $\phantom{}_{\leq \theta}\wt{\Bun}_{P}$. For this, we use the diagram constructed in Corollary \ref{cor: propertofkeydiag}. By Corollary \ref{cor: propertofkeydiag} (1) and \cite[Proposition~IV.2.13 (ii)]{FarguesScholze}, we deduce that it suffices to show that $p^{*}\phantom{}_{\leq \theta}\tilde{j}_{!}(\IC_{\Bun_{P}})$ is ULA with respect to $q \circ \pi$. By \cite[Proposition~IV.2.13 (ii)]{FarguesScholze}, our inductive hypothesis, and Corollary \ref{cor: propertofkeydiag} (2), we know that this is true away from the closed image of the section $i$, with notation as in \ref{cor: propertofkeydiag}. The desired claim now follows by combining Corollary \ref{cor: propertofkeydiag} (3)-(4) with Theorem \ref{thm: hyplocalpropvariant}, which we apply with $S' = \Bun_{M}$. Now the ULAness of $\tilde{j}_{\theta}^{!}\phantom{}_{\leq \theta}j_{!}(\IC_{\Bun_{P}})$ with respect to $\tilde{\mf{q}}_{P} \circ \tilde{j}_{\theta}$ follows by applying Lemma \ref{lemma: hyplocalpreservesULA}.
\end{proof}
\begin{remark}{\label{rem: ULAoverBun implies ULAoverpoint}}
We note that the constant sheaf $\Lambda$ on $\Bun_{M}$ is ULA over $\ast$. In particular, if we have sheaves like $j_{!}(\IC_{\Bun_{P}})$ and $\tilde{j}_{\theta}^{!}j_{!}(\IC_{\Bun_{P}})$ which are ULA over $\Bun_{M}$ then it follows that they are also ULA over the point by \cite[Proposition~IV.2.26]{FarguesScholze}.
\end{remark}
Before moving into applications, we will now remove our running assumption that the derived group of $G$ is simply connected, so that we may conclude the most general possible results for the geometric Eisenstein functors. 

\subsection{Fixing the center}{\label{sec: fixingthecenter}}
In Sections 5.3-7.2, we had a running assumption that the derived group of $G$ is simply connected. For the applications in the next section, we will want to remove this assumption. To do this, we consider a variant of the Drinfeld compactification that has the correct properties when the derived group of $G$ is not simply connected, partially modelled on the analysis in \cite[Section~7]{Schi}. 

From now on, we let $G$ denote a general quasi-split connected reductive group and let 
\[ 0 \ra Z \ra \tilde{G} \ra G \ra 0 \]
be a central extension, where $\tilde{G}$ has simply connected derived group. 

We now consider a parabolic subgroup $P \subset G$, and write $\tilde{P} \subset \tilde{G}$ for the pre-image under the previous map. We consider the induced map 
\[ \Bun_{\tilde{G}} \ra \Bun_{G}. \]
We note, by \cite[Lemma~III.2.10]{FarguesScholze} and the proceeding discussion that the Picard stack $\Bun_{Z}$ acts on $\Bun_{\tilde{G}}$ by tensoring, and the quotient stack is isomorphic to $\Bun_{G}$. Similarly, we can define an action of $\Bun_{Z}$ on $\wt{\Bun}_{\tilde{P}}$ by tensoring the triple $(\mathcal{F}_{G},\mathcal{F}_{M},\tilde{\kappa}_{P})$ by the $Z$-bundle. This allows us to define the following.
\begin{definition}
We define $\wt{\Bun}_{P}$ to be the quotient of $\wt{\Bun}_{\tilde{P}}$ under the action of $\Bun_{Z}$ described above.
\end{definition}

A priori, this depends on the choice of $\tilde{G}$, but the category of such choices is naturally cofiltered, with transition maps inducing isomorphisms.

This space sits in the following Cartesian diagrams.
\begin{proposition}{\label{prop: CartesianDiagramofDrinfeld}}
There are a set of Cartesian diagrams
\[
\begin{tikzcd} 
\Bun_{\tilde{P}} \arrow[r,"\tilde{j}_{\tilde{P}}"] \arrow[d] & \wt{\Bun}_{\tilde{P}} \arrow[r,"\tilde{\mf{p}}_{\tilde{P}}"] \arrow[d] & \Bun_{\tilde{G}} \arrow[d] & \\
\Bun_{P} \arrow[r,"\tilde{j}_{P}"]  & \wt{\Bun}_{P} \arrow[r,"\tilde{\mf{p}}_{P}"] & \Bun_{G} &.
\end{tikzcd}
\]
\end{proposition}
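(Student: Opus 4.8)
\textbf{Proof plan for Proposition \ref{prop: CartesianDiagramofDrinfeld}.}

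The plan is to reduce everything to the descent of both the ambient Artin $v$-stacks and the relevant substacks along the quotient map by $\Bun_Z$. The key point, already recorded in the statement, is the identification $\Bun_G = \Bun_{\tilde G}/\Bun_Z$ coming from \cite[Lemma~III.2.10]{FarguesScholze}; concretely, for $S \in \Perf$, the groupoid of $G$-bundles on $X_S$ is the groupoid quotient of the groupoid of pairs $(\mathcal{F}_{\tilde G}, \text{trivialization data})$ by twisting by $Z$-bundles. Since $\wt{\Bun}_P$ is \emph{defined} as the quotient $\wt{\Bun}_{\tilde P}/\Bun_Z$, the proof is really a bookkeeping exercise verifying that the maps $\tilde{\mf{p}}_{\tilde P}$, $\tilde{\mf{p}}_P$, and the open immersion $\tilde{j}_{\tilde P}$ descend appropriately and that the resulting squares are Cartesian.

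The steps I would carry out: First, check that the $\Bun_Z$-action on $\wt{\Bun}_{\tilde P}$ (twisting the triple $(\mathcal F_{\tilde G}, \mathcal F_M, \tilde\kappa_P)$ by the central $Z$-bundle) is well-defined and free in the appropriate sense, so that the quotient $\wt{\Bun}_P$ is again an Artin $v$-stack and the projection $\wt{\Bun}_{\tilde P}\to \wt{\Bun}_P$ is a $\Bun_Z$-torsor; this is the same kind of argument as for $\Bun_{\tilde G}\to \Bun_G$ and uses that $\Bun_Z$ is a (smooth, group-like) Picard stack together with \cite[Proposition~IV.4.22]{FarguesScholze} or the analogous descent statement. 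Second, observe that the map $\tilde{\mf{p}}_{\tilde P}:\wt{\Bun}_{\tilde P}\to \Bun_{\tilde G}$ remembering $\mathcal F_{\tilde G}$ is $\Bun_Z$-equivariant (the $Z$-twist on the source matches the $Z$-twist on $\Bun_{\tilde G}$), hence descends to a map $\tilde{\mf{p}}_P:\wt{\Bun}_P\to \Bun_G$; the right-hand square is then Cartesian because forming the quotient by a torsor action commutes with itself — more precisely, $\wt{\Bun}_{\tilde P} = \wt{\Bun}_P\times_{\Bun_G}\Bun_{\tilde G}$ since both sides are $\Bun_Z$-torsors over $\wt{\Bun}_P$ compatibly with the maps to $\Bun_G$. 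Third, for the left-hand square: the open immersion $\tilde j_{\tilde P}:\Bun_{\tilde P}\hookrightarrow \wt{\Bun}_{\tilde P}$ (the locus where all $\tilde\kappa^{\mathcal V}$ have vector bundle cokernel, cf. Proposition \ref{prop: openimmersion}) is cut out by a condition invariant under twisting by a $Z$-bundle — cokernels of $\tilde\kappa^{\mathcal V}$ are unchanged by a central twist since twisting is an exact autoequivalence — so $\tilde j_{\tilde P}$ is $\Bun_Z$-equivariant and descends to an open immersion $\tilde j_P:\Bun_P\hookrightarrow \wt{\Bun}_P$, and the square is Cartesian by the same torsor-pullback argument as before (using $\Bun_{\tilde P}=\Bun_P\times_{\Bun_G}\Bun_{\tilde G}$, which follows from $\tilde P$ being the preimage of $P$ and the analogous fact for the full groups). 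Finally, I would note the independence of the choice of $\tilde G$: given two such central extensions, one can dominate both by a third, and the transition maps on $\Bun_{Z}$-quotients are isomorphisms, so $\wt{\Bun}_P$ and the whole diagram are canonically defined.

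The main obstacle I anticipate is not conceptual but a matter of care: verifying cleanly that the $\Bun_Z$-action on $\wt{\Bun}_{\tilde P}$ is free enough that $\wt{\Bun}_{\tilde P}\to \wt{\Bun}_{\tilde P}/\Bun_Z$ is genuinely a $\Bun_Z$-torsor (and not just a quotient stack with extra automorphisms), since $\Bun_Z$ itself has automorphisms. This is handled exactly as in the case of $\Bun_{\tilde G}\to \Bun_G$ treated in \cite{FarguesScholze}: the point is that a $Z$-bundle acting trivially on a $\tilde G$-bundle must be trivial because $Z$ is central and $\tilde G\to G$ is surjective, and likewise for the enhanced data defining $\wt{\Bun}_{\tilde P}$ since twisting acts faithfully on the underlying $\tilde G$-bundle already. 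Once this is in place, both squares being Cartesian is formal from the torsor property, and the proof is complete.
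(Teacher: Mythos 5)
Your overall strategy coincides with the paper's: both proofs exploit the definition of $\wt{\Bun}_P$ as the quotient $\wt{\Bun}_{\tilde P}/\Bun_Z$, the $\Bun_Z$-equivariance of $\tilde{\mf{p}}_{\tilde P}$ and $\tilde{j}_{\tilde P}$, and the torsor formalism to get Cartesian squares. The one genuinely non-formal step is the same in both arguments, namely identifying $\Bun_P$ with $\Bun_{\tilde P}/\Bun_Z$ (equivalently, the Cartesian-ness of the left square), but you handle it differently. The paper goes through the Levi: it applies \cite[Lemma~III.2.10]{FarguesScholze} to the central extension $\tilde L \to L$ and then upgrades from Levis to parabolics using \cite[Lemma~3.3]{HI}, together with the observation that $L^{\mathrm{ad}}=\tilde L^{\mathrm{ad}}$ and that $\tilde L\to L\to L^{\mathrm{ad}}$ are central extensions. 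You instead use the identity $\tilde G/\tilde P\cong G/P$ (since $\tilde P$ is the preimage of $P$) to get $\Bun_{\tilde P}\cong \Bun_P\times_{\Bun_G}\Bun_{\tilde G}$ — a $\tilde P$-reduction of $\mathcal F_{\tilde G}$ is a section of $\mathcal F_{\tilde G}/\tilde P=\mathcal F_G/P$, i.e.\ a $P$-reduction of the induced $G$-bundle — and then base-change the $\Bun_Z$-torsor $\Bun_{\tilde G}\to\Bun_G$ along $\Bun_P\to\Bun_G$. Both routes rest on \cite[Lemma~III.2.10]{FarguesScholze} for the torsor/surjectivity statement at the level of reductive groups; yours avoids the auxiliary input from \cite{HI} and is arguably more direct, at the cost of having to spell out the (standard, but here only asserted) equivalence between $\tilde P$-reductions of $\mathcal F_{\tilde G}$ and $P$-reductions of $\mathcal F_G$, which is the actual content behind your phrase ``follows from $\tilde P$ being the preimage of $P$.''

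One remark on your anticipated obstacle: the freeness of the $\Bun_Z$-action is not needed, and your proposed justification (that a $Z$-bundle acting trivially on a $\tilde G$-bundle must be trivial) is not obviously correct in families and is anyway beside the point. The quotient is taken as a quotient of v-stacks, so the projection $\wt{\Bun}_{\tilde P}\to \wt{\Bun}_{\tilde P}/\Bun_Z$ is a $\Bun_Z$-torsor by construction, which is all your torsor-comparison argument (``an equivariant map of $\Bun_Z$-torsors over the same base is an isomorphism'') requires; this is also how the paper proceeds, without any freeness discussion. Finally, note that the Artin property of $\wt{\Bun}_P$ and the open-immersion property of $\tilde j_P$, which you fold into your plan, are established in the paper only in the subsequent Proposition \ref{prop: keypropertiesofDrinfeldInGeneralCase}, using the Cartesian diagrams proved here; including them does no harm but is not needed for this statement.
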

\begin{proof}
The fact that the right most diagram is Cartesian follows from the definition and the fact that $\tilde{\mf{p}}_{\tilde{P}}$ is equivariant for the action of $\Bun_{Z}$. Similarly, we can see that $\tilde{j}_{\tilde{P}}: \Bun_{\tilde{P}} \ra \wt{\Bun}_{\tilde{P}}$ is equivariant for the natural $\Bun_{Z}$ action on the source. It remains to identify $\Bun_{P}$ with the quotient under this action. This is true for the natural induced map on Levi factors $\Bun_{\tilde{L}} \ra \Bun_{L}$ by another application of \cite[Lemma~III.2.10]{FarguesScholze}. To see it for the parabolics, we can combine \cite[Lemma~III.2.10]{FarguesScholze} with \cite[Lemma~3.3]{HI} noting that the $L^{\mathrm{ad}} = \tilde{L}^{\mathrm{ad}}$ and that the natural maps $\tilde{L} \ra L \ra L^{\mathrm{ad}}$ are all central extensions.
\end{proof}
We can use this result to deduce the following properties for the the stack $\wt{\Bun}_{P}$. 
\begin{proposition}{\label{prop: keypropertiesofDrinfeldInGeneralCase}}
For $G$ a general quasi-split connected reductive group, the following is true. 
\begin{enumerate}
\item There is a naturally defined small Artin v-stack $\wtBun_P$ sitting in a commutative diagram
\[\xymatrix{
\Bun_{P}\ar[drr]^{\mathfrak{p}_{P}}\ar@{_(->}[dr]_{\tilde{j}_P}\ar[dd]_{\mathfrak{q}_{P}}\\
 & \widetilde{\mathrm{Bun}}_{P}\ar[dl]^{\tilde{\mathfrak{q}}_{P}}\ar[r]_{\tilde{\mathfrak{p}}_{P}} & \mathrm{Bun}_{G}\\
\text{} \mathrm{Bun}_{M} & 
}\]
where $\tilde{j}_P$ is an open immersion. The map $\tilde{\mathfrak{p}}_{P}$ is partially proper and representable in locally spatial diamonds, and of locally finite $\mathrm{dim.trg}$.

\item If $\Bun_{M}^{\alpha} \subset \Bun_M$ is any connected component with open-closed preimage $\wtBun_{P}^{\alpha}$, the induced map $\tilde{\mathfrak{p}}_{P}^{\alpha}: \wtBun_{P}^{\alpha} \to \Bun_G$ is proper. 
\item The stack $\wt{\Bun}_{P}$ admits a locally closed stratification by certain strata $\phantom{}_{\theta}\wt{\Bun}_{P}$, indexed by $\theta \in \Lambda_{G,P}^{\mathrm{pos}}$. Each stratum has a canonical fiber product decomposition
\[  \phantom{}_{\theta}\wt{\Bun}_{P} \cong \Bun_{P} \times_{\Bun_{M}} \Mod^{+,\theta}_{\Bun_{M}}, \]
where $\Mod^{+,\theta}_{\Bun_{M}}$ is defined exactly as in the case where $G$ has simply connected derived group.
\item The sheaves $\tilde{j}_{P!}(\IC_{\Bun_P})$ and $\tilde{j}_{P\ast}(\IC_{\Bun_P})$ are $\tilde{\mathfrak{q}}_{P}$-ULA.
\end{enumerate}
\end{proposition}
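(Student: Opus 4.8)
The plan is to reduce everything to the simply-connected-derived-group case already treated in the preceding sections, using the Cartesian diagrams of Proposition \ref{prop: CartesianDiagramofDrinfeld} together with the $\Bun_Z$-equivariance built into the construction. Concretely, recall that $\wt{\Bun}_P$ was defined as the quotient $\wt{\Bun}_{\tilde P}/\Bun_Z$, and by Proposition \ref{prop: CartesianDiagramofDrinfeld} the squares
\[
\begin{tikzcd}
\Bun_{\tilde P}\ar[r,"\tilde j_{\tilde P}"]\ar[d] & \wt{\Bun}_{\tilde P}\ar[d]\\
\Bun_P\ar[r,"\tilde j_P"] & \wt{\Bun}_P
\end{tikzcd}
\]
are Cartesian, with the vertical maps being $\Bun_Z$-torsors (in particular cohomologically smooth, surjective, and representable in nice diamonds). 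The parts (1)--(3) of the proposition are then straightforward descent statements: each asserted geometric property of $\tilde{\mathfrak p}_P$, $\tilde j_P$, or the strata is invariant under passing to a $\Bun_Z$-torsor cover, so they follow from the corresponding already-proven statements for $\wt{\Bun}_{\tilde P}$ (Proposition \ref{prop: openimmersion}, Theorem \ref{thm: drinfeldisrelativecompactification}, Proposition \ref{prop: tildestratadesc}, and Proposition \ref{prop: keypropertiesofDrinfeldInGeneralCase} itself for $\tilde G$). One should be a little careful with part (2), where the relevant connected component $\Bun_M^\alpha$ of $\Bun_M$ pulls back to a union of connected components of $\Bun_{\tilde M}$; properness of $\tilde{\mathfrak p}_P^\alpha$ can be checked after the smooth surjective base change to $\Bun_{\tilde G}$, where it becomes (a disjoint union of) the maps $\tilde{\mathfrak p}_{\tilde P}^{\tilde\alpha}$, each of which is proper by Theorem \ref{thm: drinfeldisrelativecompactification}.

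For part (4), the statement about $\tilde j_{P!}(\IC_{\Bun_P})$ and $\tilde j_{P\ast}(\IC_{\Bun_P})$, I would argue as follows. Let $f\colon \Bun_{\tilde G}\to\Bun_G$ and $g\colon \wt{\Bun}_{\tilde P}\to\wt{\Bun}_P$, $h\colon\Bun_{\tilde P}\to\Bun_P$ be the three $\Bun_Z$-torsor projections, all cohomologically smooth and surjective. The key point is that $\IC_{\Bun_{\tilde P}}\cong h^\ast \IC_{\Bun_P}$ up to the predictable $\ell$-dimension shift: indeed $\IC_{\Bun_P}=\mathfrak q_P^\ast(\Delta_P^{1/2})[\dim\Bun_P]$ by definition, and pulling back along $h$ (which sits over the analogous torsor $\Bun_{\tilde M}\to\Bun_M$) gives $\IC_{\Bun_{\tilde P}}$ up to the shift by the relative dimension of $h$, which is the locally constant function $\dim\Bun_Z$; the modulus character $\Delta_P^{1/2}$ pulls back to $\Delta_{\tilde P}^{1/2}$ since $P\to P^{\mathrm{ab}}$ and $\tilde P\to \tilde P^{\mathrm{ab}}$ have the same torus quotient data relevant to $\xi_P$. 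Since $g$ is a torsor (hence in particular $\tilde j_{\tilde P}$ is the base change of $\tilde j_P$ along $g$) base change gives $g^\ast \tilde j_{P!}(\IC_{\Bun_P})\cong \tilde j_{\tilde P!}(h^\ast\IC_{\Bun_P})\cong \tilde j_{\tilde P!}(\IC_{\Bun_{\tilde P}})$ up to shift, and similarly for $\tilde j_{P\ast}$ using that $g$ is cohomologically smooth (so commutes with $\tilde j_{\ast}$ in the relevant sense, cf. \cite[Proposition~IV.2.13]{FarguesScholze}). Now $\tilde j_{\tilde P!}(\IC_{\Bun_{\tilde P}})$ is $\tilde{\mathfrak q}_{\tilde P}$-ULA by Theorem \ref{thm: ULAtheorem}, and ULAness descends along the cohomologically smooth surjective cover: by \cite[Proposition~IV.2.13~(ii)]{FarguesScholze}, a sheaf $B$ on $\wt{\Bun}_P$ is $\tilde{\mathfrak q}_P$-ULA if and only if $g^\ast B$ is ULA for the base-changed map, and the base-changed map is exactly $\tilde{\mathfrak q}_{\tilde P}$ (since $\tilde{\mathfrak q}_{\tilde P}$ sits over $\tilde{\mathfrak q}_P$ via the compatible torsors $\Bun_{\tilde M}\to\Bun_M$). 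This gives the claim.

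The main obstacle, such as it is, is bookkeeping rather than anything deep: one must check that all the torsor projections $f,g,h$ are genuinely compatible over a single torsor projection $\Bun_{\tilde M}\to\Bun_M$, so that all the base change and descent statements refer to the same $\Bun_Z$-action, and one must track the $\ell$-dimension shifts carefully so that the identification $g^\ast\IC_{\wt{\Bun}_P}$-type sheaves land on $\IC_{\wt{\Bun}_{\tilde P}}$-type sheaves on the nose (ULAness is of course insensitive to shifts, so this last point is harmless). One subtlety worth flagging: $\Bun_Z$ is not quasicompact, so $g$ is not a quasicompact map; however it is still cohomologically smooth, partially proper is not needed here, and the relevant results of \cite{FarguesScholze} on cohomological smoothness and ULA descent do not require quasicompactness of the cover, only that it be surjective and cohomologically smooth with the sheaf in question being the pullback from the base. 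Thus no real difficulty arises. Finally, one records that the independence of $\wt{\Bun}_P$ from the auxiliary choice of $\tilde G$, noted after the definition, means the statement is well-posed, though for the proof any single choice of $\tilde G$ with simply connected derived group (which exists by the standard pushout construction embedding $Z_G$ into a torus) suffices.
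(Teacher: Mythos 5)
Your overall strategy for (1)--(3) — descend everything from the simply-connected case along the $\Bun_Z$-torsor, via the Cartesian squares of Proposition \ref{prop: CartesianDiagramofDrinfeld} — is the same as the paper's, and parts (1) and (3) go through as you say. But there are two real gaps. For (2): the preimage of $\wt{\Bun}_P^{\alpha}$ under $\wt{\Bun}_{\tilde P}\to\wt{\Bun}_P$ is not a single $\wt{\Bun}_{\tilde P}^{\tilde\alpha}$ but the disjoint union over \emph{all} lifts $\tilde\alpha\in\pi_1(\tilde M)_\Gamma$ of $\alpha$, and this set of lifts is a torsor under the image of $\mathbb{X}_*(Z_{\ol{E}})_\Gamma$, hence infinite in general. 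An infinite disjoint union of proper maps to $\Bun_{\tilde G}$ need not be proper (quasicompactness can fail), so ``each of which is proper'' does not close the argument; you would additionally have to show that only finitely many of these lifts land in any fixed connected component of $\Bun_{\tilde G}$. The paper avoids this bookkeeping by fixing one lift $\tilde\alpha$, observing that $\wt{\Bun}_{\tilde P}^{\tilde\alpha}\to\wt{\Bun}_P^{\alpha}$ and $\Bun_{\tilde G}^{\tilde\alpha_G}\to\Bun_G^{\alpha_G}$ are still surjections of v-stacks (torsors under the neutral component of $\Bun_Z$), and descending properness of the single map $\tilde{\mathfrak p}_{\tilde P}^{\tilde\alpha}$ along that v-surjection using \cite[Proposition~10.11]{Ecod}.

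For (4), your reduction conflates two different descent principles. The criterion used in the paper (checking ULA after a cohomologically smooth surjective pullback of the \emph{source}) tells you that $B=\tilde j_{P!}\IC_{\Bun_P}$ is $\tilde{\mathfrak q}_P$-ULA if and only if $g^\ast B\cong \tilde j_{\tilde P!}\IC_{\Bun_{\tilde P}}$ is ULA with respect to the composite $\tilde{\mathfrak q}_P\circ g\colon \wt{\Bun}_{\tilde P}\to\Bun_M$; this composite is \emph{not} $\tilde{\mathfrak q}_{\tilde P}$, but $\tilde{\mathfrak q}_{\tilde P}$ followed by $\Bun_{\tilde M}\to\Bun_M$. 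Since Theorem \ref{thm: ULAtheorem} only gives ULAness over $\Bun_{\tilde M}$, the passage from ULA over $\Bun_{\tilde M}$ to ULA over $\Bun_M$ is a genuinely missing step. The paper supplies it via the composition result for ULA sheaves (\cite[Proposition~IV.2.26]{FarguesScholze}) together with the lemma that the constant sheaf on $\Bun_{\tilde M}$ is ULA over $\Bun_M$, which after trivializing the $\Bun_Z$-torsor v-locally reduces to $\Lambda$ on $\Bun_Z$ being ULA over the point. Your alternative — base-changing the \emph{target} along $\Bun_{\tilde M}\to\Bun_M$ so that the relevant map literally becomes $\tilde{\mathfrak q}_{\tilde P}$ — can be made to work, but then you must (i) prove that the square formed by $g$ over $\Bun_{\tilde M}\to\Bun_M$ is Cartesian (true, because $\tilde{\mathfrak q}_{\tilde P}$ is $\Bun_Z$-equivariant and an equivariant map of $\Bun_Z$-torsors is an isomorphism; this square is not among those of Proposition \ref{prop: CartesianDiagramofDrinfeld}), and (ii) justify that ULAness is \emph{detected} after a cohomologically smooth surjective base change of the target — a statement different from the source-pullback criterion you cite, and one that needs to be proved or located. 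On the positive side, your attention to the identification of $\IC_{\Bun_{\tilde P}}$ with the pullback of $\IC_{\Bun_P}$ (where the shift is in fact zero, as $\Bun_Z$ has $\ell$-dimension $0$) addresses a point the paper passes over silently.
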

\begin{proof}
Since the map $\Bun_{\tilde{G}} \ra \Bun_{G}$ is a surjective map of $v$-stacks, using Proposition \ref{prop: CartesianDiagramofDrinfeld} we deduce that $\tilde{j}_{P}$ is an open immersion by \cite[Proposition~10.11 (2)]{Ecod} and the analogous claim for $\tilde{j}_{\tilde{P}}$. It is clear that the map $\tilde{\mf{p}}_{P}$ is partially proper, and it is representable in locally spatial diamonds, and of locally finite $\mathrm{dim.trg}$ by \cite[Proposition~13.4 (v)]{FarguesScholze}. It then follows that $\wt{\Bun}_{P}$ is Artin $v$-stack using that $\Bun_{G}$ is Artin and \cite[Proposition~IV.18 (iii)]{FarguesScholze}

If we fix an element $\tilde{\alpha} \in \pi_{1}(\tilde{M})_{\Gamma}$ mapping to $\pi_{1}(M)_{\Gamma}$ with images $\tilde{\alpha}_{G} \in \pi_{1}(\tilde{G})_{\Gamma}$ and $\alpha_{G} \in \pi_{1}(G)_{\Gamma}$, respectively, then Proposition \ref{prop: CartesianDiagramofDrinfeld} implies that we have a Cartesian diagram 
\[
\begin{tikzcd}
\wt{\Bun}_{\tilde{P}}^{\tilde{\alpha}_{M}} \arrow[r,"\tilde{\mf{p}}_{P}^{\tilde{\alpha}_{M}}"] \arrow[d] & \Bun_{\tilde{G}}^{\tilde{\alpha}_{G}} \arrow[d] & \\
\wt{\Bun}_{P}^{\alpha_{M}} \arrow[r,"\tilde{\mf{p}}_{P}^{\alpha_{M}}"] & \Bun_{G}^{\alpha_{G}}, & 
\end{tikzcd}
\]
where the vertical maps are torsors under the neutral component of $\Bun_{Z}$, and in particular are surjections of $v$-stacks. Since the bottom map is proper, the top map is also proper using \cite[Proposition~10.11 (o)]{Ecod}. This establishes (1)-(2). For (3), we note that the locally closed immersion
\[ \tilde{j}_{\theta,\tilde{P}}: \Mod^{+,\theta}_{\Bun_{\tilde{M}}} \times_{\Bun_{\tilde{M}}} \Bun_{\tilde{P}} \hookrightarrow \wt{\Bun}_{\tilde{P}} \]
is equivariant for the natural action of $\Bun_{Z}$ on source and target and thus descends to a map 
\[ \tilde{j}_{\theta,P}: \Mod^{+,\theta}_{\Bun_{M}} \times_{\Bun_{M}} \Bun_{P} \hookrightarrow \wt{\Bun}_{P} \]
which is also a locally closed immersion by using \cite[Proposition~10.11 (i)]{Ecod}. 

It remains to show the ULA claim. It suffices to establish the claim for $\tilde{j}_{P!}(\IC_{\Bun_{P}})$ by \cite[Corollary~IV.2.25]{FarguesScholze}. Since ULA can be checked after a smooth surjective pullback, by applying proper base-change to the left Cartesian diagram in Proposition \ref{prop: CartesianDiagramofDrinfeld}, we reduce to showing that $\tilde{j}_{\tilde{P}!}(\IC_{\Bun_{\tilde{P}}})$ is ULA over $\Bun_{M}$, where we know that it is ULA over $\Bun_{\tilde{M}}$ by Theorem \ref{thm: ULAtheorem}. Therefore, by \cite[Proposition~IV.2.26]{FarguesScholze} the claim reduces to the following lemma. 
\begin{lemma}
The constant sheaf $\Lambda$ on $\Bun_{\tilde{M}}$ is ULA with respect to the map $\Bun_{\tilde{M}} \ra \Bun_{M}$.
\end{lemma}
\begin{proof}
By choosing a $v$-cover of $\Bun_{M}$ trivializing the $\Bun_{Z}$-torsor and using \cite[Proposition~IV.2.25]{FarguesScholze}, we reduce to showing that the constant sheaf on $\Bun_{Z}$ is ULA over $\ast$, and this is clear.
\end{proof}
\end{proof}
\begin{remark}
One could of course perform the analogous constructions and deduce the analogous properties for the compactification $\ol{\Bun}_{P}$.
\end{remark}

\section{Applications}

\subsection{Finiteness theorems}

Let $G$ be any reductive group over $E$.

\begin{theorem} \label{thm:hardfiniteness}Fix a parabolic $P=MU\subset G$, and let $\Lambda$ be any $\mathbb{Z}_{\ell}[\sqrt{q}]$-algebra. Let $\alpha \in \pi_0(\Bun_M)$ be any element.
\begin{enumerate}
    \item The functors $\Eis_{P!}^{\alpha}$ and $\Eis_{P\ast}^{\alpha}$ preserve ULA objects.

    \item The functor $\CT_{P!}^{\alpha} \cong \CT_{P^- \ast}^{\alpha}$ preserves compact objects.
\end{enumerate}
\end{theorem}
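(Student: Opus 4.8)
The strategy is to reduce everything to the ULA property of $\tilde{j}_{P!}(\IC_{\Bun_P})$ and $\tilde{j}_{P\ast}(\IC_{\Bun_P})$ established in Theorem~\ref{thm: ULAtheorem} (and Proposition~\ref{prop: keypropertiesofDrinfeldInGeneralCase} in the general case), combined with the ``miraculous'' properties of ULA sheaves collected in \S\ref{sec:... }, in particular Lemma~\ref{lem:pullback-tensor-ULA-star-pushforward} and Lemma~\ref{lem:functor-preserves-compacts}. For part~(1), we work on a single connected component $\Bun_M^\alpha$, so that $\tilde{\mathfrak p}_P^\alpha: \wtBun_P^\alpha\to \Bun_G$ is proper by Theorem~\ref{thm: drinfeldisrelativecompactification}/Proposition~\ref{prop: keypropertiesofDrinfeldInGeneralCase}~(2), and hence $\tilde{\mathfrak p}_{P!}^\alpha = \tilde{\mathfrak p}_{P\ast}^\alpha$. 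First I would rewrite the Eisenstein functor using the open immersion $\tilde j_P$: since $\mathrm{IC}_{\Bun_P} = \mathfrak q_P^\ast(\Delta_P^{1/2})[\dim \Bun_P]$ is pulled back from $\Bun_M$ along $\mathfrak q_P$, Lemma~\ref{lem:pullback-tensor-ULA-star-pushforward} (applied with $X = \Bun_P^\alpha$, $\overline X = \wtBun_P^\alpha$, $q = \tilde{\mathfrak q}_P^\alpha$, $\overline q = \tilde{\mathfrak q}_P^\alpha$, and using that $\tilde j_{P!}\IC_{\Bun_P}$ is $\tilde{\mathfrak q}_P^\alpha$-ULA) gives
\[
\tilde{\mathfrak q}_P^{\alpha\ast}(A)\otimes \tilde j_{P\ast}(\IC_{\Bun_P})\cong \tilde j_{P\ast}(\mathfrak q_P^{\alpha\ast}(A)\otimes \IC_{\Bun_P}),
\]
and similarly the $!$-version after dualizing. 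Pushing forward along the proper map $\tilde{\mathfrak p}_P^\alpha$ then identifies $\Eis_{P!}^\alpha(A)$ and $\Eis_{P\ast}^\alpha(A)$ with $\tilde{\mathfrak p}_{P\ast}^\alpha(\tilde{\mathfrak q}_P^{\alpha\ast}(A)\otimes \tilde j_{P!}\IC_{\Bun_P})$ resp.\ the $\ast$-extension; since $\tilde{\mathfrak p}_P^\alpha$ is proper and $\tilde j_{P!}\IC_{\Bun_P}$ is $\tilde{\mathfrak q}_P^\alpha$-ULA, the standard stability of ULA objects under proper pushforward and under $?^\ast(-)\otimes(\text{relatively ULA})$ (cf.\ the ULA formalism of \cite{FarguesScholze}, e.g.\ \cite[Proposition~IV.2.13,~IV.2.23]{FarguesScholze}) shows that $\Eis_{P!}^\alpha(A),\Eis_{P\ast}^\alpha(A)$ are ULA whenever $A$ is ULA on $\Bun_M^\alpha$.

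\textbf{Part (2).} By Theorem~\ref{thm:secondadj}~(1) we have $\CT_{P!}\cong \CT_{P^-\ast}$, so it suffices to treat $\CT_{P!}^\alpha$. Unwinding the definition, $\CT_{P!}^\alpha(A) = \mathfrak q_{P!}^\alpha(\mathfrak p_P^{\alpha\ast}(A)\otimes \IC_{\Bun_P})$; the obstacle is that $\mathfrak p_P^\alpha$ is not proper, so I again pass to the Drinfeld compactification. Using the Cartesian/commutative diagram relating $\Bun_P\hookrightarrow \wtBun_P\to \Bun_G$ to $\Bun_M$, one can write (on the component $\alpha$) $\CT_{P!}^\alpha(A)$ as $\tilde{\mathfrak q}_{P!}^\alpha(\,\tilde j_{P!}\IC_{\Bun_P}\otimes (\tilde{\mathfrak p}_P^\alpha)^\ast A\,)$ up to the character/shift twists in $\IC_{\Bun_P}$, again invoking Lemma~\ref{lem:pullback-tensor-ULA-star-pushforward} to move $\mathrm{IC}_{\Bun_P}$ past $\tilde j_{P!}$. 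Now apply Lemma~\ref{lem:functor-preserves-compacts} with $X_1 = \Bun_G$, $Y = \wtBun_P^\alpha$, $X_2 = \Bun_M$, $f_1 = \tilde{\mathfrak p}_P^\alpha$, $f_2 = \tilde{\mathfrak q}_P^\alpha$, and $K = \tilde j_{P!}\IC_{\Bun_P}$ (restricted to the component): $K$ is $\tilde{\mathfrak q}_P^\alpha = f_2$-ULA by Theorem~\ref{thm: ULAtheorem}/Proposition~\ref{prop: keypropertiesofDrinfeldInGeneralCase}~(4), and its support is relatively proper over $\Bun_G$ since $\tilde{\mathfrak p}_P^\alpha$ is proper (Theorem~\ref{thm: drinfeldisrelativecompactification}/Proposition~\ref{prop: keypropertiesofDrinfeldInGeneralCase}~(2)) and $\tilde j_P$ is an open immersion into $\wtBun_P^\alpha$ so $K = \tilde j_{P!}L$ with $\tilde j_P$ not closed — here one should instead note that $K$ being the extension by zero of an honest sheaf along $\tilde j_P$ means $\tilde{\mathfrak p}_P^\alpha|_{\overline{\Bun_P}}$ is proper because $\tilde{\mathfrak p}_P^\alpha$ itself is proper on all of $\wtBun_P^\alpha$; thus the ``relatively proper support'' hypothesis of Lemma~\ref{lem:functor-preserves-compacts} is satisfied with $Y' = \wtBun_P^\alpha$ and $i = \mathrm{id}$. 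Lemma~\ref{lem:functor-preserves-compacts} then yields that $F(-) = \tilde{\mathfrak q}_{P!}^\alpha(K\otimes (\tilde{\mathfrak p}_P^\alpha)^\ast(-))$ preserves compact objects, which after reinserting the invertible twist in $\IC_{\Bun_P}$ is exactly $\CT_{P!}^\alpha$.

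\textbf{Main obstacle.} The genuinely delicate point is not the formal manipulation above but making sure the rewriting of $\CT_{P!}^\alpha$ and $\Eis_{P?}^\alpha$ in terms of $\tilde j_{P!}\IC_{\Bun_P}$ on $\wtBun_P^\alpha$ is legitimate — i.e.\ that passing between $\Bun_P$ and $\wtBun_P$ does not lose or introduce anything, which hinges precisely on Lemma~\ref{lem:pullback-tensor-ULA-star-pushforward} and thus on the fact that $\tilde j_{P!}\IC_{\Bun_P}$ (equivalently $\tilde j_{P\ast}\IC_{\Bun_P}$) is $\tilde{\mathfrak q}_P$-ULA. Once that input from Theorem~\ref{thm: ULAtheorem} is granted, together with the properness of $\tilde{\mathfrak p}_P^\alpha$ on each connected component, parts~(1) and~(2) follow from the cited ULA-formalism lemmas with no further geometric input. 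I would also double-check the bookkeeping of the modulus-character shifts in $\IC_{\Bun_P}$ and the renormalization, since the statement is about $\Eis_{P?}^\alpha$ and $\CT_{P?}^\alpha$ with their normalized definitions; but these twists are invertible objects pulled back from $\Bun_M$, so they commute with everything in sight and do not affect ULA-ness or compactness.
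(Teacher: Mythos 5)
Your proposal is correct and follows essentially the same route as the paper's proof: rewrite $\Eis_{P!}^{\alpha}$, $\Eis_{P\ast}^{\alpha}$ and $\CT_{P!}^{\alpha}$ through $\wtBun_P^{\alpha}$ using the ULA property of $\tilde j_{P!}\IC_{\Bun_P}$ and $\tilde j_{P\ast}\IC_{\Bun_P}$ from Theorem \ref{thm: ULAtheorem}/Proposition \ref{prop: keypropertiesofDrinfeldInGeneralCase}, use properness of $\tilde{\mathfrak{p}}_P^{\alpha}$ together with Lemma \ref{lem:pullback-tensor-ULA-star-pushforward} for part (1), and apply Lemma \ref{lem:functor-preserves-compacts} with $f_1=\tilde{\mathfrak{p}}_P^{\alpha}$, $f_2=\tilde{\mathfrak{q}}_P^{\alpha}$, $K=\tilde j_{P!}\IC_{\Bun_P}$ (taking $Y'=\wtBun_P^{\alpha}$, $i=\mathrm{id}$) plus second adjointness for part (2), exactly as in the paper. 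The only cosmetic point is that moving $\IC_{\Bun_P}$ past the $!$-extension $\tilde j_{P!}$ needs only the projection formula for the open immersion, not Lemma \ref{lem:pullback-tensor-ULA-star-pushforward}, which is genuinely needed only for the $\ast$-extension entering $\Eis_{P\ast}^{\alpha}$.
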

\begin{proof}
By Theorem \ref{thm: ULAtheorem} and its extension in Proposition \ref{prop: keypropertiesofDrinfeldInGeneralCase} (5), the sheaves $\tilde{j}^{\alpha}_{P!} \mathrm{IC}_{\Bun_{P}^{\alpha}}$ and $\tilde{j}^{\alpha}_{P\ast} \mathrm{IC}_{\Bun_{P}^{\alpha}}$ are $\tilde{\mathfrak{q}}_{P}^{\alpha}$-ULA. For (1), we then note the isomorphisms
\[ \Eis_{P!}^{\alpha}(A) \cong \tilde{\mathfrak{p}}^{\alpha}_{P!} (\tilde{\mathfrak{q}}_{P}^{\alpha \ast}A \otimes \tilde{j}^{\alpha}_{P!} \mathrm{IC}_{\Bun_{P}^{\alpha}}) \]
and
\[ \Eis_{P\ast}^{\alpha}(A) \cong \tilde{\mathfrak{p}}^{\alpha}_{P!} (\tilde{\mathfrak{q}}_{P}^{\alpha \ast}A \otimes \tilde{j}^{\alpha}_{P\ast} \mathrm{IC}_{\Bun_{P}^{\alpha}}) \]
using Lemma \ref{lem:pullback-tensor-ULA-star-pushforward} and the ULA property of $\tilde{j}^{\alpha}_{P\ast} \mathrm{IC}_{\Bun_{P}^{\alpha}}$ for the latter. Then ULA-twisted pullback preserves the ULA condition, and $\tilde{\mathfrak{p}}^{\alpha}_{P!}=\tilde{\mathfrak{p}}^{\alpha}_{P\ast}$ preserves the ULA condition since $\tilde{\mathfrak{p}}_{P}^{\alpha}$ is proper by \ref{prop: keypropertiesofDrinfeldInGeneralCase} (2). This gives (1).

For (2), we apply Lemma \ref{lem:functor-preserves-compacts}, taking $f_1=\tilde{\mathfrak{p}}_{P}^{\alpha}$, $f_2=\tilde{\mathfrak{q}}_{P}^{\alpha}$, and $K=\tilde{j}^{\alpha}_{!} \mathrm{IC}_{\Bun_{P}^{\alpha}}$ in the notation of that lemma. The lemma then implies that the functor $F=\CT_{P!}^{\alpha}$ preserves compact objects, so $\CT_{P^- \ast}^{\alpha}$ does as well by second adjointness. 
\end{proof}

\begin{remark}By second adjointness and the basic definitions and adjunctions, $\Eis_{P!}$ preserves colimits and $\Eis_{P\ast}$ preserves limits while $\CT_{P \ast} \cong \CT_{P^- !}$ preserves all limits and colimits. After restricting to components, it is also true that $\Eis_{P \ast}^{\alpha}$ preserves colimits and $\Eis_{P !}^{\alpha}$ preserves limits. The first claim follows from the isomorphism
\[ \Eis_{P\ast}^{\alpha}(-) \cong \tilde{\mathfrak{p}}^{\alpha}_{P!} ( \tilde{j}^{\alpha}_{P\ast} \mathrm{IC}_{\Bun_{P}^{\alpha}} \otimes \tilde{\mathfrak{q}}_{P}^{\alpha \ast}- ) \]
established in the previous proof, noting that all functors on the right commute with colimits. The second claim follows from the isomorphism
\[ \Eis_{P!}^{\alpha}(-) \cong \tilde{\mathfrak{p}}^{\alpha}_{P\ast} R\mathscr{H}\mathrm{om}(\tilde{j}^{\alpha}_{P\ast} \mathrm{IC}_{\Bun_{P}^{\alpha}},\tilde{\mathfrak{q}}_{P}^{\alpha ! }-) \]
which follows by a similar argument, noting that all functors on the right commute with limits.

This also shows that the functor $\Eis_{P!}^{\alpha}(-)$ has a left adjoint which preserves compact objects, given explicitly by
\[ \CT_{P ?}^{\alpha} =\tilde{\mathfrak{q}}_{P!}^{\alpha} (\tilde{j}^{\alpha}_{P\ast} \mathrm{IC}_{\Bun_{P}^{\alpha}} \otimes \tilde{\mathfrak{p}}^{\alpha \ast}_{P}-).  \]
It is not hard to deduce from second adjointness that Bernstein-Zelevinsky duality exchanges $\CT_{P !}^{\alpha}$ and $\CT_{P ?}^{\alpha}$. However, we are not aware of any applications of this functor.
\end{remark}

\begin{theorem}\label{thm:gluingfiniteness}The functor $i_{b\sharp}$ preserves ULA objects, and the functor $i_{b}^!$ preserves compact objects.
\end{theorem}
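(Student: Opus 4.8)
The plan is to deduce both statements from the finiteness results of Theorem~\ref{thm:hardfiniteness} applied to the special Levi $M = G_{\{b\}}$ and the dynamic parabolic $P_{\{b\}}$ (for $G$ quasisplit), using the explicit identifications from Corollary~\ref{cor: dominantreductionConstantEisensteinDescription}. Indeed, recall from that corollary (item (5)) that for the basic element $b\in B(G_{\{b\}})_{\mathrm{basic}}$ one has
\[
i_{b\sharp}^{\mathrm{ren}}(-) \cong \Eis_{P_{\{b\}}!}\, i_{b!}^{G_{\{b\}}}(-),
\]
and dually (item (4)) $i_{b}^{\mathrm{ren}\ast}(-)\cong i_{b}^{G_{\{b\}}\ast}\CT_{P_{\{b\}}!}(-)$. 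Since the renormalization twists $(-)\otimes \delta_b^{\pm 1/2}[\pm\langle 2\rho_G,\nu_b\rangle]$ and the equivalence $D(\Bun_G^b,\Lambda)\simeq D(G_b(E),\Lambda)$ all preserve both compact and ULA objects, it suffices to prove the statements for the renormalized functors $i_{b\sharp}^{\mathrm{ren}}$ and $i_{b}^{\mathrm{ren}\ast}$ (the relation $i_b^{\mathrm{ren}!}\cong i_b^{\mathrm{ren}\ast}$ up to twist, or rather the analogous reduction, handles $i_b^!$).

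First I would reduce to the quasisplit case. For general $G$ one uses the $z$-extension trick exactly as at the end of the proofs of Theorems~\ref{thm:Eiscompact} and \ref{thm:secondadj}: choosing $1\to G\to\tilde G\to D\to 1$ with $\tilde G$ of connected center realizes $\Bun_G = \Bun_{\tilde G}\times_{\Bun_D}\ast$ and $D(\Bun_G,\Lambda) = D(\Bun_{\tilde G},\Lambda)\otimes_{D(\Bun_D,\Lambda)}D(\Lambda)$, under which $i_{b\sharp}$ and $i_b^!$ are obtained by base change from their counterparts for $\tilde G$; and the connected-center case reduces to the quasisplit case by twisting by a suitable basic $b$ so that $G_b$ becomes quasisplit (again as in the cited proofs). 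So assume $G$ quasisplit with fixed $T\subset B$; after conjugating we may take $b$ so that $G_{\{b\}}$ is the standard Levi of the dynamic parabolic $P_{\{b\}}=P$.

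Now for ULA-preservation of $i_{b\sharp}$: by the displayed identification, $i_{b\sharp}^{\mathrm{ren}}\cong \Eis_{P!}\circ i_{b!}^{M}$ with $M=G_{\{b\}}$. The functor $i_{b!}^{M}: D(M(E),\Lambda)\to D(\Bun_M,\Lambda)$ lands in a single connected component $\Bun_M^{\alpha}$ of $\Bun_M$ (the one containing the basic stratum $b$), and $i_{b!}^{M}$ preserves ULA objects by \cite{FarguesScholze}; moreover ULA objects of $D(\Bun_M,\Lambda)$ supported on one component are precisely what Theorem~\ref{thm:hardfiniteness}~(1) takes as input. Hence $\Eis_{P!}^{\alpha}$ sends $i_{b!}^M(A)$ to a ULA object of $D(\Bun_G,\Lambda)$ whenever $A$ is admissible, proving the first claim. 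For compact-preservation of $i_b^!$: since $i_b^!$ is the right adjoint of $i_{b!}$, and $i_{b!}^{\mathrm{ren}}(-)\cong \Eis_{P^-!}\,i_{b!}^{G_{\{b\}}}(-)$ by Corollary~\ref{cor: dominantreductionConstantEisensteinDescription}~(1), one gets $i_b^{\mathrm{ren}!}\cong i_b^{G_{\{b\}}\ast}\circ \CT_{P^-\ast}$ by passing to right adjoints (using $i_b^{G_{\{b\}}!}\cong i_b^{G_{\{b\}}\ast}$ up to shift, as the basic stratum is open-closed in $\Bun_{G_{\{b\}}}$, and the adjunction $\Eis_{P^-!}\dashv \CT_{P^-\ast}$ of Lemma~\ref{lemma: constanttermEisensteinSeriesAdjunctions}). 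By second adjointness (Theorem~\ref{thm:secondadj}~(1)) $\CT_{P^-\ast}\cong \CT_{P!}$, and $\CT_{P!}^{\alpha}$ preserves compact objects by Theorem~\ref{thm:hardfiniteness}~(2); restriction $i_b^{G_{\{b\}}\ast}$ to the open-closed component clearly preserves compactness. Composing, $i_b^{\mathrm{ren}!}$ preserves compacts, hence so does $i_b^!$ after untwisting.

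The main obstacle I anticipate is purely bookkeeping rather than conceptual: one must carefully track the renormalization twists $\delta_b^{\pm 1/2}$ and the cohomological shifts $[\pm\langle 2\rho_G,\nu_b\rangle]$ through the adjunctions, and verify that the identification $i_{b\sharp}^{\mathrm{ren}}\cong\Eis_{P!}i_{b!}^{G_{\{b\}}}$ is compatible with passing to the relevant connected component $\alpha\in\pi_0(\Bun_M)$ so that Theorem~\ref{thm:hardfiniteness} (which is stated component-by-component) genuinely applies. A secondary point to check is that the $z$-extension reduction is compatible with the gluing functors $i_{b\sharp}, i_b^!$ — i.e.\ that these base-change correctly along $D(\Bun_{\tilde G},\Lambda)\to D(\Bun_G,\Lambda)$ — but this follows from the compatibility of the stratifications of $\Bun_{\tilde G}$ and $\Bun_G$ and the $D(\Bun_D,\Lambda)$-linearity already exploited in the earlier proofs. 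The statement about $i_{b\ast}A$ having compact stalks, which appears alongside in Theorem~\ref{thm:maintheorem}, would follow by a dual argument using $\Eis_{P^-\ast}$ and the identification of $i_b^\ast\CT_{P\ast}$ from Corollary~\ref{cor: naturalfunctorscalculation}, but I would state and prove it separately.
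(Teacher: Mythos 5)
Your proposal follows essentially the same route as the paper's proof: identify $i_{b\sharp}^{\mathrm{ren}}$ and $i_b^{\mathrm{ren}!}$ with compositions of Eisenstein/constant-term functors and the gluing functors for the quasisplit Levi $G_{\{b\}}$ via Corollary~\ref{cor: dominantreductionConstantEisensteinDescription}, apply Theorem~\ref{thm:maintheorem}/(Theorem~\ref{thm:hardfiniteness}) together with second adjointness, and then reduce general $G$ through the connected-center and $z$-extension steps, exactly as in the paper. The only blemish is the aside describing the basic stratum of $\Bun_{G_{\{b\}}}$ as ``open-closed'' --- it is open but not closed --- but this is harmless since $i_b^{!}\cong i_b^{\ast}$ for the open immersion and $i_b^{\ast}$ preserves compact objects by \cite{FarguesScholze}.
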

\begin{proof} When $G$ is quasisplit, we can let $P=MU$ be the dynamic parabolic of the Newton cocharacter $\nu_b$. Then, by \ref{cor: dominantreductionConstantEisensteinDescription} (5) up to an irrelevant shift and twist, $i_{b\sharp}$ agrees with $\Eis_{P^{-}!} \circ i_{b!}^{M}$, and by  Theorem \ref{thm:maintheorem}.(1), this composition of functors preserves the ULA property. 

Similarly, by Corollary \ref{cor: dominantreductionConstantEisensteinDescription} (4) up to an irrelevant shift and twist, $i_{b}^!$ agrees with $i_{b}^{M,\ast} \CT_{P \ast}$, and by Theorem \ref{thm:maintheorem}.(2)-(3), this composition of functors preserves compactness.

Removing the quasisplitness assumption, we can first deduce the case when $G$ has connected center, and then the general case by z-extensions.
\end{proof}

\begin{theorem}If $j:U \to \Bun_G$ is the inclusion of a quasicompact open substack and $A\in D(\Bun_G,\Lambda)$ is compact, then $j_\ast A$ has compact stalks. In particular, if $A \in D(G_b(E),\Lambda)$ is compact, then $i_{b\ast}A$ has compact stalks.
\end{theorem}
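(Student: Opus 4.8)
The plan is to reduce the statement about stalks to a statement about the restriction of $j_\ast A$ to each Harder--Narasimhan stratum, and then to apply the finiteness results already in hand. Recall that by \cite[Theorem~I.5.1]{FarguesScholze} a sheaf $B \in D(\Bun_G,\Lambda)$ has compact stalks in the relevant sense precisely when, for every $b \in B(G)$, the restriction $i_b^\ast B \in D(\Bun_G^b,\Lambda) \cong D(G_b(E),\Lambda)$ is compact. So it suffices to show that $i_b^\ast j_\ast A$ is a compact object of $D(G_b(E),\Lambda)$ for every $b$. Fix such a $b$ and let $U' = U \cup \Bun_G^{\leq b}$, where $\Bun_G^{\leq b}$ denotes the open substack of bundles more semistable than $b$ (so that $\Bun_G^b$ is closed in $U'$ and $U \subset U'$ is open). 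Since $j$ factors as $U \hookrightarrow U' \hookrightarrow \Bun_G$ and $\ast$-pushforward along an open immersion commutes with $\ast$-restriction to any open substack, we have $i_b^\ast j_\ast A \cong i_b^\ast j'_\ast (A|_{U})$ where $j': U \to U'$; thus we may replace $\Bun_G$ by the quasicompact open substack $U'$ and assume from the start that $\Bun_G^b$ is a closed substack of a quasicompact open $U'$, with open complement $U$.

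With this reduction, the excision triangle for the closed--open decomposition $(i_b, j')$ reads
\[
i_{b\ast} i_b^! \to \mathrm{id} \to j'_\ast j'^\ast \to
\]
on $D(U',\Lambda)$. Applying this to $A|_{U'}$, which is again compact, and then applying $i_b^\ast$, we get a triangle
\[
i_b^\ast i_{b\ast} i_b^! (A|_{U'}) \to i_b^\ast (A|_{U'}) \to i_b^\ast j'_\ast j'^\ast (A|_{U'}) \to
\]
in $D(G_b(E),\Lambda)$. The middle term $i_b^\ast(A|_{U'}) = i_b^\ast A$ is compact since $A$ is compact. The first term is $i_b^! (A|_{U'})$, since $i_b^\ast i_{b\ast} \cong \mathrm{id}$; and $i_b^!$ preserves compact objects by Theorem~\ref{thm:gluingfiniteness}. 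Hence $i_b^\ast j'_\ast j'^\ast(A|_{U'})$ is an extension of two compact objects, so compact; and this is exactly $i_b^\ast j_\ast A$ by the factorization noted above. This proves the first assertion. The second assertion is the special case where one takes $U$ to be any quasicompact open substack containing $\Bun_G^b$ but not meeting the point in question — or more directly: the assertion that $i_{b\ast}A$ has compact stalks for $A \in D(G_b(E),\Lambda)$ compact is the same as saying $i_{b'}^\ast i_{b\ast} A$ is compact for all $b'$, and this follows from applying the first part to $j: \Bun_G^{\leq b} \hookrightarrow \Bun_G$ (on which $i_{b\ast}A$ is the $\ast$-pushforward of a compact sheaf from the open stratum $\Bun_G^b$), together with the fact that $i_b^\ast i_{b\ast}A = A$ is already compact on the stratum $\Bun_G^b$ itself.

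The only genuinely nontrivial input is Theorem~\ref{thm:gluingfiniteness}, i.e.\ that $i_b^!$ preserves compact objects, which itself rests on second adjointness and the ULA theorem; everything else here is formal manipulation with excision triangles and the characterization of compactness stratum-by-stratum. The one point that requires a moment's care is the reduction to quasicompact open substacks and the verification that $\ast$-pushforward along the open immersions behaves as claimed under restriction to strata --- this is standard, using that for open immersions $j'$ and an open substack $V$, one has $(j'_\ast B)|_V \cong (j')'_\ast(B|_{V \cap U})$ for the evident pullback square, which holds because open immersions are (in particular) $!$-able and $\ast$-pushforward commutes with $\ast$-pullback along open immersions in any six-functor formalism with excision. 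So I do not anticipate a real obstacle; the proof is short modulo the already-established theorems.
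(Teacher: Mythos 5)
There is a genuine gap in the first (and main) step. Your excision triangle on $U'$ presumes that $(\,i_b:\Bun_G^b\hookrightarrow U',\; j':U\hookrightarrow U'\,)$ is an open--closed decomposition of $U'=U\cup\Bun_G^{\preceq b}$, i.e.\ that the closed complement of $U$ in $U'$ is exactly the single stratum $\Bun_G^b$. That is false in general: $U'\setminus U$ consists of \emph{all} strata $b'\preceq b$ with $b'\notin |U|$, which typically includes strata strictly more semistable than $b$. (Concretely, for $G=\GL_2$, $U=\Bun_G^{\mathrm{ss}}$ and $b$ the stratum of $\mathcal O(2)\oplus\mathcal O(-2)$, the complement $U'\setminus U$ also contains the stratum of $\mathcal O(1)\oplus\mathcal O(-1)$.) With the correct closed complement $Z=U'\setminus U$, the triangle reads $i_{Z\ast}i_Z^!\to\mathrm{id}\to j'_\ast j'^\ast$, and after applying $i_b^\ast$ the first term is not $i_b^!(A|_{U'})$ but the restriction to the $b$-stratum of $i_Z^!(A|_{U'})$ on the multi-stratum space $Z$ (in which $\Bun_G^b$ is the closed, not open, stratum), so Theorem \ref{thm:gluingfiniteness} does not apply directly and the compactness of the third term does not follow as written. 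Your preliminary reductions (quasicompactness of $U'$, closedness of $\Bun_G^b$ in $U'$ once one notes the harmless case $b\in|U|$, and compatibility of $j_\ast$ with restriction to opens) are all fine; the failure is solely the one-step decomposition.

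The repair is exactly the route the paper takes: prove that $j_\ast$ preserves compact objects for an open immersion $j:U\subset V$ of quasicompact open substacks, and induct by adjoining one stratum at a time, always adding a minimal (most semistable) missing stratum so that the new stratum $\{b\}$ is automatically closed in the enlarged open substack; in that single-stratum situation your excision computation is valid and is equivalent to the paper's identity $i_b^\ast j_\ast=i_b^!j_![1]$, with the only nontrivial input again being that $i_b^!$ preserves compacts. One further small correction for your deduction of the second claim: in the factorization $i_b=\Bun_G^b\xrightarrow{h}\Bun_G^{\preceq b}\xrightarrow{j}\Bun_G$, the stratum $\Bun_G^b$ is the \emph{closed} stratum of $\Bun_G^{\preceq b}$, not the open one; it is precisely this closedness ($h_\ast=h_!$) that makes $h_\ast A$ compact, after which applying the (corrected) first part to $j$ gives the result.
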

\begin{proof} 
The second part follows from the first by factoring $i_{b}$ as the composition $\Bun_{G}^{b} \overset{h}{\to} \Bun_{G}^{\preceq b} \overset{j}{\to}$ and then writing $i_{b\ast} = j_{\ast} h_{\ast}$, using that $h_{\ast} = h_!$ preserves compactness. 

For the first claim, it is equivalent to prove that for any open immersion $j: U\subset V$ of quasicompact open substacks of $\Bun_G$, the functor $j_\ast$ preserves compact objects. By induction, we may assume that $U = V \setminus \{b\}$ for some $b\in B(G)$. We need to prove that $i_b^{\ast} j_\ast$ preserves compact objects. But like for any open-closed stratification, we have $i_{b}^\ast j_* = i_{b}^{!} j_![1]$, and we already know that $j_!$ and $i_{b}^{!}$ preserve compact objects, using the previous theorem for the latter.
\end{proof}

\subsection{The cuspidal-Eisenstein decomposition}
\begin{proof}[Proof of Theorem \ref{thm:cuspEisdecomposition}]
(1) is clear. Indeed, the functors $\CT_{P\ast}$ commute with all limits since they are right adjoints, and with all colimits since their left adjoints preserve compact objects. Moreover, by second adjointness they also commute with Verdier duality up to swapping $P$ for its opposite. Moreover, if $A$ is cuspidal then $i_{b}^{\ast}A=i_{b}^{!}A=0$ for all non-basic $b$, since the stalk functors can be obtained from suitable constant term functors, by Corollary \ref{cor: dominantreductionConstantEisensteinDescription} (3)-(4).

(2) is completely formal, since $D(\Bun_G,\Lambda)^{\Eis}$ is stable under all colimits and $D(\Bun_G,\Lambda)^{\mathrm{cusp}}$ is exactly the right orthogonal of $D(\Bun_G,\Lambda)^{\Eis}$.

(3) requires some work. The essential point is that for each basic $b$, \[D(\Bun_G,\overline{\mathbb{Q}_\ell})^{\mathrm{cusp}} \cap i_{b!} D(G_b(E),\overline{\mathbb{Q}_\ell})\] identifies with the support of a union of supercuspidal Bernstein components for $G_b(E)$, which by (1) is stable under Verdier duality. But then Bernstein-Zelevinsky duality and Verdier duality induce the same involution on the set of Bernstein components, so one deduces that $D(\Bun_G,\overline{\mathbb{Q}_\ell})^{\mathrm{cusp}}$ is generated under colimits by objects of $D(\Bun_G,\overline{\mathbb{Q}_\ell})^{\mathrm{cusp}} \cap D(\Bun_G,\overline{\mathbb{Q}_\ell})^\omega$, and that $D(\Bun_G,\overline{\mathbb{Q}_\ell})^{\mathrm{cusp}}\cap D(\Bun_G,\overline{\mathbb{Q}_\ell})^\omega$ is stable under Bernstein-Zelevinsky duality.

Now to conclude, take any $A\in D(\Bun_G,\overline{\mathbb{Q}_\ell})^{\mathrm{cusp}}\cap D(\Bun_G,\overline{\mathbb{Q}_\ell})^\omega$. Then for any $P=MU \subset G$ and any compact $B\in D(\Bun_M,\overline{\mathbb{Q}_\ell})$, we have
\begin{align*}
0 &= \mathrm{RHom}(\Eis_{P!}B,A) \\
& \simeq \mathrm{RHom}(\mathbb{D}_{\mathrm{BZ}}A,\mathbb{D}_{\mathrm{BZ}}\Eis_{P!}B) \\
& \simeq \mathrm{RHom}(\mathbb{D}_{\mathrm{BZ}}A,\Eis_{P^- !} \mathbb{D}_{\mathrm{BZ}}B) 
\end{align*}
using second adjointness and the preservation of compact objects under $!$-Eisenstein series. Varying $B$ and $P$, and using the involutivity of Bernstein-Zelevinsky duality on compact objects together with the previous step, we deduce that $D(\Bun_G,\overline{\mathbb{Q}_\ell})^{\mathrm{cusp}}$ is the \emph{left} orthogonal of $D(\Bun_G,\overline{\mathbb{Q}_\ell})^{\Eis}$. Since it is also the right orthogonal by definition, we get the desired orthogonal decomposition.
\end{proof}

\subsection{Continuity of gluing functors}

We end with an application to representation theory. To put this result into context, let $G$ be a connected reductive group over a nonarchimedean local field $E$. Fix an algebraically closed field $C$ of characteristic zero. Write $\Groth(G)$ for the Grothendieck group of finite-length admissible $C$-representations of $G(E)$. Any element $\phi \in C_c^\infty(G(E),C)$ defines a linear form $\Groth(G) \to C$ by sending an irreducible representation $\pi$ to $\mathrm{tr}(\phi |\pi)$ and then extending linearly. By definition, the linear maps $\Groth(G) \to C$ of this form are called \emph{trace forms}.

\begin{definition} A homomorphism $f:\Groth(G) \to \Groth(H)$ is \textit{continuous} if for all trace forms $h$ on $\Groth(H)$, $h\circ f$ is a trace form on $\Groth(G)$.
\end{definition}

This condition turns out to be satisfied by many natural operations appearing in representation theory.

\begin{example}The involution on $\Groth(G)$ induced by smooth duality is continuous.
\end{example}

\begin{example}If $P=MU \subset G$ is a parabolic, the maps $[i_P^G]:\Groth(M) \to \Groth(G)$ and $[r_{G}^P]:\Groth(G) \to \Groth(M)$ induced by parabolic induction and Jacquet module are continuous. For $[i_{P}^{G}]$ this follows from van Dijk's formula, which gives an explicit recipe for a map $\phi \in C_c^\infty(G(E)) \mapsto \phi^P \in C_c^\infty(M(E))$ such that $\mathrm{tr}(\phi | i_{P}^{G} \pi) = \mathrm{tr}(\phi^{P}|\pi)$. 

For $[r_{G}^{P}]$ the result follows by induction on the semisimple rank, by combining the trace Paley-Wiener theorem, the geometric lemma, and the result for parabolic induction.
\end{example} 

\begin{example}The Aubert-Zelevinsky involution on $\Groth(G)$ is continuous. This follows, for instance, by writing it as a virtual combination of compositions of parabolic inductions and Jacquet modules, and then using the previous example.
\end{example}

\begin{example} If $G=\mathrm{GL}_n$ and $G'$ is an inner form of $G$, Badulescu's "inverse Jacquet-Langlands map" $\mathrm{LJ} : \Groth(G) \to \Groth(G')$ is continuous.
\end{example}

\begin{example}If $(G,\mu,b)$ is any local shtuka datum, the map $\mathrm{Mant}_{G,\mu,b}:\Groth(G_b) \to \Groth(G)$ is continuous. This is Theorem 6.5.4 in \cite{HKW}.
\end{example}

As an application of our finiteness theorems, we prove the following result.

\begin{theorem}For any $b,b'\in B(G)$, the map $[i_{b'}^{\ast} i_{b \sharp}]: \Groth(G_b) \to \Groth(G_{b'})$ is continuous.    
\end{theorem}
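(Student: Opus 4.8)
The plan is to combine the finiteness theorems of this paper with Bernstein--Zelevinsky duality to reduce the assertion to the trace Paley--Wiener theorem. First I would observe that $F := i_{b'}^{\ast} i_{b\sharp} : D(G_b(E),C) \to D(G_{b'}(E),C)$ preserves the full subcategory of objects that are simultaneously compact and ULA: the functor $i_{b\sharp}$ preserves compact objects by \cite{FarguesScholze} and ULA objects by Theorem~\ref{thm:gluingfiniteness}, while $i_{b'}^{\ast}$ preserves both by \cite{FarguesScholze}. In characteristic zero an object of $D(G_b(E),C)$ is compact-and-ULA precisely when it is a bounded complex of finite-length admissible representations (compactness forces finite generation of the cohomology, the ULA condition forces admissibility, and conversely a finite-length admissible representation is perfect over the relevant Bernstein block algebra), so $F$ induces an exact functor on the associated bounded derived categories and hence the homomorphism $[F] = [i_{b'}^{\ast}i_{b\sharp}] : \Groth(G_b) \to \Groth(G_{b'})$. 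By base change along $\overline{\mathbb{Q}_\ell}\to C$ one may assume $C = \overline{\mathbb{Q}_\ell}$, and by the reductions used in the proof of Theorem~\ref{thm:gluingfiniteness} (pass to a basic $b_0$ with $G_{b_0}$ quasisplit, then to a $z$-extension) one may assume $G$ is quasisplit.

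Next I would fix $\phi'\in C_c^\infty(G_{b'}(E),C)$, supported on $\mathcal{H}(G_{b'}(E),K')$ for a pro-$p$ open $K'\subset G_{b'}(E)$, and regard $\phi'$ as an endomorphism of $\cInd_{K'}^{G_{b'}(E)}C\in D(G_{b'}(E),C)^\omega$. For finite-length admissible $\pi$ one has $\mathrm{tr}(\phi'\mid [F]\pi) = \mathrm{tr}\big(\phi'\mid R\Hom_{G_{b'}(E)}(\cInd_{K'}^{G_{b'}(E)}C,\, i_{b'}^{\ast}i_{b\sharp}\pi)\big)$, a supertrace on a perfect $C$-complex. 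Applying successively the adjunction $i_{b'\sharp}\dashv i_{b'}^{\ast}$, the fact that $\mathbb{D}_{\mathrm{BZ}}$ is an anti-autoequivalence of $D(\Bun_G,C)^\omega$, the identification $\mathbb{D}_{\mathrm{BZ}}\,i_{b\sharp}^{\ren}\cong i_{b!}^{\ren}\mathbb{D}_{\mathrm{coh}}^{G_b}$ (and its analogue for $b'$, noting that $\mathbb{D}_{\mathrm{coh}}^{G_{b'}}$ preserves compactness), and the adjunction $i_{b!}\dashv i_{b}^{!}$, one obtains a natural $\phi'$-equivariant isomorphism
\[ R\Hom_{G_{b'}(E)}(\cInd_{K'}^{G_{b'}(E)}C,\, i_{b'}^{\ast}i_{b\sharp}\pi) \;\cong\; R\Hom_{G_b(E)}\big(\mathbb{D}_{\mathrm{coh}}^{G_b}(\pi)\otimes\delta_{b}^{c},\; N_{K'}\big) \]
up to an explicit shift, where $c\in\mathbb{Z}$ and $N_{K'} := i_b^{!}\, i_{b'!}\big(\mathbb{D}_{\mathrm{coh}}^{G_{b'}}(\cInd_{K'}^{G_{b'}(E)}C)\otimes\delta_{b'}^{c'}\big)$, suitably shifted. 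The key structural point here is that $N_{K'}$ is a \emph{compact} object of $D(G_b(E),C)$: indeed $i_{b'!}$ preserves compactness by \cite{FarguesScholze} and $i_{b}^{!}$ preserves compactness by Theorem~\ref{thm:gluingfiniteness}. It carries a commuting action of $\mathcal{H}(G_{b'}(E),K')$ transported from $\cInd_{K'}^{G_{b'}(E)}C$, through which $\phi'$ acts by an endomorphism $\phi'_{\ast}$, and $\mathrm{tr}(\phi'\mid [F]\pi) = \mathrm{tr}\big(\phi'_{\ast}\mid R\Hom_{G_b(E)}(\mathbb{D}_{\mathrm{coh}}^{G_b}(\pi)\otimes\delta_{b}^{c},\,N_{K'})\big)$.

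Since $\pi\mapsto \mathbb{D}_{\mathrm{coh}}^{G_b}(\pi)\otimes\delta_{b}^{c}$ is continuous (on finite-length admissible representations $\mathbb{D}_{\mathrm{coh}}$ agrees with smooth duality up to a twist and shift, and twists are continuous), it remains to show that $\sigma\mapsto \mathrm{tr}\big(\phi'_{\ast}\mid R\Hom_{G_b(E)}(\sigma, N_{K'})\big)$ is a trace form on $\Groth(G_b)$. By the trace Paley--Wiener theorem of Bernstein--Deligne--Kazhdan this amounts to: (i) the functional vanishes on all but finitely many Bernstein components of $G_b(E)$; and (ii) on each component it is a regular function of the unramified twist parameter. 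Assertion (ii) is essentially formal, since the whole construction is equivariant for twisting by unramified characters of $G_b(E)$ (geometrically, for the natural action of the $\pi_0$-tori on $D(\Bun_G,C)$), so the dependence on the twist is algebraic. For (i), one factors $\phi'$ through $e_T\ast\phi'\ast e_T$ for the finite set $T$ of Bernstein blocks of $G_{b'}(E)$ met by $\phi'$, so that $N_{K'}$ may be replaced by $i_b^{!}i_{b'!}$ applied to a compact object of $D(G_{b'}(E),C)$ with Bernstein-block support contained in $T$; one then needs that $i_b^{!}i_{b'!}$ sends a compact object with finite Bernstein-block support to a compact object with finite Bernstein-block support, which I would deduce from the descriptions of $i_{b\sharp}$ and $i_{b}^{!}$ in terms of Eisenstein and constant-term functors (Corollary~\ref{cor: dominantreductionConstantEisensteinDescription}) together with the corresponding finiteness over Bernstein components of classical parabolic induction and Jacquet modules. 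The hard part is exactly assertion (i): controlling how the composite $i_b^{!}i_{b'!}$ interacts with the Bernstein decomposition, and this is the place where the characteristic-zero hypothesis is indispensable, just as in Theorem~\ref{thm:cuspEisdecomposition}(3); all other ingredients are available uniformly.
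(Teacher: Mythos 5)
Your overall strategy is the same as the paper's: reduce to the trace Paley--Wiener theorem for $G_b(E)$, and verify the ``finitely many Bernstein components'' condition by rewriting $(i_{b'}^{\ast}i_{b\sharp}\pi)^{K'}$, via Bernstein--Zelevinsky duality and the adjunction $i_{b!}\dashv i_b^!$, as an $R\Hom$ against the object $i_b^! i_{b'!}(\cdots \cInd_{K'}^{G_{b'}(E)}C\cdots)$, whose compactness (via Theorem \ref{thm:gluingfiniteness}, i.e.\ $i_b^!$ preserves compactness) is exactly the paper's key point. Two remarks on that half. First, you have the difficulty inverted: once $N_{K'}$ is compact it is automatically supported on finitely many Bernstein components (compact objects are finitely generated, hence live in finitely many blocks), so your worry about ``controlling how $i_b^! i_{b'!}$ interacts with the Bernstein decomposition'' and the proposed detour through Corollary \ref{cor: dominantreductionConstantEisensteinDescription} are unnecessary --- condition (i) is already done at that point, just as in the paper.

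The genuine gap is condition (ii), which you dismiss as ``essentially formal, since the whole construction is equivariant for twisting by unramified characters of $G_b(E)$.'' The twisting parameter in the trace Paley--Wiener theorem is an unramified character $\psi$ of a proper Levi $M(E)$ of a parabolic $P=MU\subset G_b$, not of $G_b(E)$ itself; such characters do not act on $D(\Bun_G,\Lambda)$ (or on your object $N_{K'}$) in any evident way, so there is no equivariance to invoke, and ``equivariant'' would in any case not yield algebraicity of $\psi\mapsto \mathrm{tr}\bigl(\phi\mid i_{b'}^{\ast}i_{b\sharp}\, i_P^{G_b}(\sigma\psi)\bigr)$. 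This is where the paper does real work, and where the finiteness theorems enter a second time: one forms the universal family $\Pi=i_P^{G_b}(\sigma\boldsymbol{\psi})$ over the ring $R$ of regular functions on the torus of unramified characters of $M(E)$, notes that $\Pi$ is admissible over $R$, applies Theorem \ref{thm:gluingfiniteness} with coefficients in $R$ to conclude that $i_{b\sharp}^{\ren}\Pi$ (hence $i_{b'}^{\ast\ren}i_{b\sharp}^{\ren}\Pi$) is ULA, so that its $K$-invariants form a perfect complex of $R$-modules; the trace of $\phi$ on this perfect complex is then an element of $R$ interpolating the traces at all $\psi$, which is precisely the required regularity. (Alternatively one could try to push your reformulation further --- using second adjointness to move the question to $R\Hom_{M(E)}$ against the fixed compact object $r_P(N_{K'})$ and then arguing perfectness over $R$ by classical means --- but that argument is not in your sketch either.) Also, your passing claim that $\pi\mapsto \mathbb{D}_{\mathrm{coh}}\pi$ is ``smooth duality up to a twist and shift'' on finite-length objects needs care, since the shift depends on the Bernstein block; it does not affect continuity, but as written it is not justified.
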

Note that $i_{b'}^{\ast} i_{b \sharp}$ preserves finite length representations by Theorem \ref{thm:maintheorem}, so this statement is well-posed.
\begin{proof}It is equivalent, and notationally simpler, to prove this result for the renormalized variant $[i_{b'}^{\ast \mathrm{ren}} i_{b \sharp}^{\mathrm{ren}}]$. We closely follow the proof of \cite[Theorem 6.5.4]{HKW}. More precisely, choosing an element $\phi \in C_c^\infty(G_{b'}(E))$, we will show that the linear form on $\Groth(G_b)$ given by $\pi \mapsto \mathrm{tr}(\phi \mid i_{b'}^{\ast \mathrm{ren}} i_{b \sharp}^{\mathrm{ren}} \pi)$ satisfies the conditions of the trace Paley-Wiener theorem, applied to the group $G_b(E)$. We refer to loc. cit. for the statement of the trace Paley-Wiener theorem in the form we will use. For the remainder of the proof, we fix a pro-$p$ compact open subgroup $K \subset G_{b'}(E)$ such that $\phi$ is bi-$K$-invariant.

\textbf{Verification of Condition 1.}  If $\mathrm{tr}(\phi \mid i_{b'}^{\ast \mathrm{ren}} i_{b \sharp}^{\mathrm{ren}} \pi)$ is nonzero, then $(i_{b'}^{\ast \mathrm{ren}} i_{b \sharp}^{\mathrm{ren}} \pi)^K$ is nonzero. Therefore, it suffices to see that there is some open compact $J \subset G_b(E)$ such that $(i_{b'}^{\ast \mathrm{ren}} i_{b \sharp}^{\mathrm{ren}} \pi)^K\neq 0$ only if $\pi^{J}\neq 0$. For this, we compute
\begin{align*}
(i_{b'}^{\ast \mathrm{ren}} i_{b \sharp}^{\mathrm{ren}} \pi)^K & \cong R\Hom(i_{b' \sharp}^{\mathrm{ren}} \mathrm{ind}_{K}^{G_{b'}(E)}\overline{\mathbb{Q}_\ell},i_{b \sharp}^{\mathrm{ren}} \pi) \\
& \cong R\Hom(i_{b !}^{\mathrm{ren}} \mathbb{D}_{\mathrm{coh}}\pi,i_{b' !}^{\mathrm{ren}} \mathrm{ind}_{K}^{G_{b'}(E)}\overline{\mathbb{Q}_\ell})
\\
& \cong R\Hom( \mathbb{D}_{\mathrm{coh}}\pi,i_{b}^{! \mathrm{ren}}i_{b' !}^{\mathrm{ren}} \mathrm{ind}_{K}^{G_{b'}(E)}\overline{\mathbb{Q}_\ell})
\\
& \cong R\Hom( \mathbb{D}_{\mathrm{coh}}i_{b}^{! \mathrm{ren}}i_{b' !}^{\mathrm{ren}} \mathrm{ind}_{K}^{G_{b'}(E)}\overline{\mathbb{Q}_\ell},\pi)
\end{align*}
where the second line follows from Bernstein-Zelevinsky duality and the isomorphism $\mathbb{D}_{\mathrm{BZ}}i_{b \sharp}^{\mathrm{ren}} \cong \mathbb{D}_{\mathrm{coh}} i_{b !}^{\mathrm{ren}}$ (applied also for $b'$), and in passing to the fourth line we crucially used that $i_{b}^{! \mathrm{ren}}$ preserves compact objects, by Theorem \ref{thm:gluingfiniteness}. Now $\mathbb{D}_{\mathrm{coh}}i_{b}^{! \mathrm{ren}}i_{b' !}^{\mathrm{ren}} \mathrm{ind}_{K}^{G_{b'}(E)}\overline{\mathbb{Q}_\ell}$ is compact, so it is supported on finitely many Bernstein components for $G_b(E)$. This shows that the irreducible $\pi$'s with $(i_{b'}^{\ast \mathrm{ren}} i_{b \sharp}^{\mathrm{ren}} \pi)^K \neq 0$ are supported on finitely many Bernstein components for $G_b(E)$. The existence of the desired open compact $J$ is then immediate.

\textbf{Verification of Condition 2.} Fix $P=MU \subset G_b$ and $\sigma$ an irreducible smooth $M(E)$-representation. Let $X=\Spec R$ be the smooth affine algebraic variety over $\overline{\mathbb{Q}_\ell}$ parametrizing unramified characters of $M(E)$. Let $\boldsymbol{\psi}:M(E)\to R^\times$ be the universal character, and form $\Pi=i_{P}^{G_b}(\sigma \boldsymbol{\psi})$. This is an admissible smooth $R[G_b(E)]$-module interpolating the parabolic inductions $i_{P}^{G_b}(\sigma \psi)$ for varying unramified characters $\psi$. Since $\Pi$ is admissible, the pushforward $i_{b \sharp}^{\mathrm{ren}}\Pi \in D(\Bun_G,R)$ is ULA by Theorem \ref{thm:gluingfiniteness}, so then also $i_{b'}^{\ast \mathrm{ren}} i_{b \sharp}^{\mathrm{ren}}\Pi \in D(G_{b'}(E),R)$ is ULA, and in particular $(i_{b'}^{\ast \mathrm{ren}} i_{b \sharp}^{\mathrm{ren}}\Pi)^K$ is a perfect complex of $R$-modules. The element $\phi$ naturally defines an endomorphism of this perfect complex, and its trace $f\in R$ interpolates the individual traces $\mathrm{tr}(\phi \mid i_{b'}^{\ast \mathrm{ren}} i_{b \sharp}^{\mathrm{ren}} i_{P}^{G_b}(\sigma \psi))$ as $\psi$ varies. This shows that $\mathrm{tr}(\phi \mid i_{b'}^{\ast \mathrm{ren}} i_{b \sharp}^{\mathrm{ren}} i_{P}^{G_b}(\sigma \psi))$ is an algebraic function of $\psi$, as desired.
\end{proof}

\bibliographystyle{amsalpha}
\phantomsection\addcontentsline{toc}{section}{\refname}

\end{document}